\documentclass{amsart}

\setlength{\textwidth}{450pt}
\setlength{\oddsidemargin}{3pt}
\setlength{\evensidemargin}{3pt}

\usepackage{subfigure}
\usepackage{amsmath,amssymb, amsthm}
\usepackage{enumerate}
\usepackage{fancybox}
\usepackage{graphicx,color}
\usepackage{amsfonts}
\usepackage{multicol}
\usepackage{hyperref}
\usepackage{thmtools,thm-restate}
% AMS %
%\usepackage{amsrefs}
%\usepackage[shortalphabetic]{amsrefs}
\usepackage[all]{xy}
\usepackage{amscd}

\numberwithin{equation}{section}

\newcommand{\dashedrightarrow}{\mathrel{-\,}\rightarrow}

\newcommand{\h}{\mathfrak{H}}

\newcommand{\sm}{\operatorname{sm}}
\newcommand{\id}{\operatorname{id}}

\newcommand{\C}{\mathbb{C}}
\newcommand{\R}{\mathbb{R}}
\newcommand{\Z}{\mathbb{Z}}
\newcommand{\CP}{\mathbb{CP}}
\newcommand{\Pa}{\partial}
\newcommand{\Diff}{\operatorname{Diff}}
\newcommand{\Int}{\operatorname{Int}}
\newcommand{\Crit}{\operatorname{Crit}}
\newcommand{\Ker}{\operatorname{Ker}}
\newcommand{\Sing}{\operatorname{Sing}}
\renewcommand{\Im}{\operatorname{Im}}

\newcommand{\rank}{\operatorname{rank}}
\newcommand{\reg}{\operatorname{reg}}

\newcommand{\Hom}{\operatorname{Hom}}

\newcommand{\NS}{\operatorname{NS}}

\newcommand{\Mod}{\operatorname{Mod}}
\newcommand{\Hess}{\operatorname{Hess}}

\theoremstyle{plain}
\newtheorem{theorem}{Theorem}[section]
\newtheorem{corollary}[theorem]{Corollary}
\newtheorem{lemma}[theorem]{Lemma}

\newtheorem{problem}[theorem]{Problem}

\theoremstyle{definition}

\newtheorem{example}[theorem]{Example}
\newtheorem{conjecture}[theorem]{Conjecture}
\newtheorem{remark}[theorem]{Remark}

\title{Topology of holomorphic Lefschetz pencils on the four-torus}

\author{Noriyuki Hamada}
\address{Graduate School of Mathematical Sciences, The University of Tokyo, Komaba, Meguro-ku, Tokyo 153-8914, Japan}
\email{nhamada@ms.u-tokyo.ac.jp}

\author{Kenta Hayano}
\address{Department of Mathematics, Graduate School of Science, Hokkaido University, Sapporo, Hokkaido 060-0810, Japan}
\email{k-hayano@math.sci.hokudai.ac.jp}

\begin{document}

\begin{abstract}
In this paper we discuss topological properties of holomorphic Lefschetz pencils on the four-torus. 
Relying on the theory of moduli spaces of polarized abelian surfaces, we first prove that, under some mild assumption, the (smooth) isomorphism class of a holomorphic Lefschetz pencil on the four-torus is uniquely determined by its genus and divisibility. 
We then explicitly give a system of vanishing cycles of the genus-$3$ holomorphic Lefschetz pencil on the four-torus due to Smith, and obtain those of holomorphic pencils with higher genera by taking finite unbranched coverings. 
One can also obtain the monodromy factorization associated with Smith's pencil in a combinatorial way. 
This construction allows us to generalize Smith's pencil to higher genera, which is a good source of pencils on the (topological) four-torus.
As another application of the combinatorial construction, for any torus bundle over the torus with a section we construct a genus-$3$ Lefschetz pencil whose total space is homeomorphic to that of the given bundle. 
\end{abstract}
\maketitle

%\layout

\section{Introduction}

Lefschetz pencils on smooth four-manifolds are closely related to symplectic structures by Donaldson's construction \cite{Donaldson_1999} of Lefschetz pencils on symplectic manifolds and Gompf's generalization \cite{Gompf_2004} of Thurston's construction \cite{Thurston_1976} of symplectic structures on surface bundles. 
Moreover, Kas \cite{Kas_1980} and Matsumoto \cite{Matsumoto_1996} gave a combinatorial interpretation of isomorphism classes of Lefschetz fibrations, in particular their results enable us to construct Lefschetz fibrations (and symplectic four-manifolds) using simple closed curves on oriented surfaces (these results are generalized to that for Lefschetz pencils in \cite{BaykurHayano_isomHurwitz}).  
For these reasons Lefschetz pencils and fibrations have attracted a lot of interest from four-dimensional topologists in the last two decades. 
On the other hand, Lefschetz originally introduced Lefschetz pencils as generic pencils (i.e.~linear $1$-systems) of very ample line bundles in order to study the topology of algebraic varieties (e.g.~\cite{Lamotke_1981}). 
It is therefore natural to pay our attention to holomorphic Lefschetz pencils as well as smooth ones. 
In this paper we study holomorphic Lefschetz pencils on the four-torus from a topological point of view. 

In order to explain our main result, we first introduce two invariants for Lefschetz pencils. 
The \emph{genus} of a Lefschetz pencil is the genus of the closure of a regular fiber, and the \emph{divisibility} of a Lefschetz pencil is the maximum integer by which we can divide the integral homology class represented by the closure of a regular fiber. 
Two Lefschetz pencils on the same four-manifold have the same genus and divisibility if these are isomorphic, but the converse does not hold in general (the reader can find a counterexample for the converse in \cite{BaykurHayano_multisection}, for example). 
Our main result states that the converse becomes true for holomorphic Lefschetz pencils on the four-torus under some assumptions: 

\begin{theorem}\label{T:main_uniquenessLP}

Let $f_0,f_1$ be holomorphic Lefschetz pencils on the four-torus. 
Suppose either that the genus of $f_0$ is greater than $5$ or that the divisiblity of $f_0$ is greater than $1$. 
Then $f_0$ and $f_1$ are isomorphic if and only if they have the same genus and divisibility. 

\end{theorem}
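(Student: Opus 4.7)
The plan is to identify holomorphic Lefschetz pencils on $T^4$ with (an open piece of) a parameter space built from the moduli of polarized abelian surfaces, show that this parameter space is connected, and conclude by a parallel-transport argument. A holomorphic Lefschetz pencil on $T^4$ forces the ambient complex structure to make $T^4$ into an algebraic, hence abelian, surface $A$, and the pencil is cut out by a two-dimensional subspace $W \subset H^0(A,L)$ of sections of an ample line bundle $L$ of some polarization type $(d_1,d_2)$ with $d_1 \mid d_2$.

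First, I would set up the dictionary between the two pencil invariants and the polarization type. By adjunction on an abelian surface ($K_A = 0$), the genus of a smooth fiber is $g = 1 + d_1 d_2$. A direct Poincar\'e duality computation, using a symplectic basis of $H^2(T^4,\Z)$ in which $c_1(L) = d_1\,\alpha_1\wedge\beta_1 + d_2\,\alpha_2\wedge\beta_2$, shows that the fiber class $[C] \in H_2(T^4,\Z)$ has coordinates proportional to $(d_2,d_1)$ in the Poincar\'e dual basis, so its divisibility is exactly $\gcd(d_1,d_2) = d_1$. Hence the pair (genus, divisibility) determines and is determined by $(d_1,d_2)$.

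Next, I would invoke the moduli of polarized abelian surfaces. The coarse moduli space $\mathcal{A}_{(d_1,d_2)}$ is a connected, irreducible arithmetic quotient of the Siegel upper half-space $\mathfrak{H}_2$. After passing to a suitable level cover $S \to \mathcal{A}_{(d_1,d_2)}$ one obtains a connected complex manifold supporting a genuine universal family $(\mathcal{X},\mathcal{L}) \to S$. The relative Grassmannian $\mathcal{P} = \mathrm{Gr}(2,\pi_*\mathcal{L}) \to S$ parametrizes pencils in $|L|$ of the prescribed type and is connected (fiber and base are). Inside $\mathcal{P}$ the locus $\mathcal{P}^{\mathrm{Lef}}$ of honest Lefschetz pencils is Zariski open on each fiber; the stated assumption excludes precisely the small polarization types $(1,1),(1,2),(1,3),(1,4)$ for which a Lefschetz pencil does not exist (either $h^0(L) < 2$, or $|L|$ has an unavoidable fixed component, or the generic member is singular along a curve), so under our hypothesis $\mathcal{P}^{\mathrm{Lef}}$ meets every fiber nontrivially and is itself connected.

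Finally, comes the parallel transport step: lifting $f_0$ and $f_1$ to points $p_0,p_1\in \mathcal{P}^{\mathrm{Lef}}$ and joining them by a smooth path yields a smooth one-parameter family of holomorphic Lefschetz pencils on $T^4$. Blowing up the (smoothly varying) base locus produces a family of honest Lefschetz fibrations with disjoint exceptional sections; Ehresmann's fibration theorem away from the Lefschetz critical set, combined with the standard local models at the nodal critical points and at the exceptional sections, trivializes the family smoothly and so produces a smooth isomorphism $f_0 \cong f_1$ of pencils. The main obstacle I expect to face lies in the Lefschetz-locus step: one must verify that the stated assumption guarantees the existence of a Lefschetz pencil on \emph{every} polarized abelian surface of type $(d_1,d_2)$, not merely on a generic one, which requires a Reider-type analysis for abelian surfaces to control base points and tangencies of $|L|$ uniformly across the moduli, and careful accounting of the classical small-type exceptions.
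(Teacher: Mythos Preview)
Your overall architecture---identify the invariants $(g,d)$ with the polarization type $(d_1,d_2)$, build a connected parameter space over the Siegel moduli, and deduce isomorphism by parallel transport---is exactly the paper's strategy, and your endgame (Corollary~\ref{C:isomLP_component}) is correct. But the middle step has a genuine gap.

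You assert that under the hypothesis $\mathcal{P}^{\mathrm{Lef}}$ meets \emph{every} fiber, and you misread the excluded types $(1,2),(1,3),(1,4)$ as types where no Lefschetz pencil exists. Both claims are false. Smith's genus-$3$ pencil is precisely the $(1,2)$ case, and more importantly, even for $d_1d_2\ge 5$ with $d_1=1$ there are points $Z\in\h_2$ (namely products of elliptic curves, Lemma~\ref{L:(1,d)not product}) over which \emph{no} Lefschetz pencil exists: the linear system $|L_Z|$ has a fixed component. Your proposed Reider-type uniform analysis therefore cannot succeed, and the Zariski-open-in-connected argument does not immediately give connectedness of $\mathcal{P}^{\mathrm{Lef}}$ because you have not controlled the codimension of its complement.

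The paper circumvents this by never claiming fiberwise nonemptiness. Instead it (i) proves classical-topology openness of the Lefschetz locus $\mathcal{W}\subset\h_2\times\mathcal{L}$ (Lemma~\ref{L:openness_SpaceOfLP}); (ii) shows that for \emph{generic} $Z$ the fiber $\mathcal{L}_Z^0$ is nonempty and path-connected (Lemma~\ref{L:connectedness_lines}), which requires a dimension bound $\dim\mathcal{D}_Z'\le N-2$ on a discriminant-type locus (condition~($\ast$)); and (iii) runs a supremum/extension argument along a path in $\h_2$ chosen to stay in the generic locus (Lemma~\ref{L:existence_pathLP}). The hypothesis ``genus $>5$ or divisibility $>1$'' enters only to verify~($\ast$): for $d_1d_2\ge 5$ via very-ampleness (Lemma~\ref{L:conditionast_d1d2geq5}), and for $(2,2)$ via the Kummer quartic (Lemma~\ref{L:conditionast_22}). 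The cases $(1,3),(1,4)$ are left open not because pencils fail to exist but because~($\ast$) is unverified there.
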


\noindent
The assumption on the genus and the divisibility of $f_0$ in the theorem above is needed for some technical reasons and we believe that the theorem still holds without the assumption (see the last paragraph of Section~\ref{S:uniqueness_holLP}).

As we mentioned in the first paragraph, Lefschetz pencils are not only objects in complex geometry but also related to symplectic topology. 
It is especially important to find out how smooth Lefschetz pencils differ from holomorphic ones, which is related to the difference between complex (or K\"ahler) surfaces and symplectic four-manifolds. 
Since there exist non-complex symplectic four-manifolds, we can easily obtain Lefschetz pencils on non-complex four-manifolds using Donaldson's construction \cite{Donaldson_1999}. 
While it is in general hard to obtain monodromy factorizations of Lefschetz pencils coming from Donaldson's construction, several ingenious techniques, such as fiber sum operations and substitution operations, have been employed in order to give non-holomorphic Lefschetz pencils and fibrations (on possibly non-complex four-manifolds) with explicit monodromy factorizations (e.g.~\cite{Baykur_preprint_genus3LP,BaykurKorkmaz_smallLF,FS_1998_const_4mfd,HamadaKobayashiMonden,Korkmaz_2001,Monden_2014,OS_2000,Smith_2001_LP_divisor}). 
Furthermore, Li~\cite{Li_2008} constructed non-holomorphic Lefschetz pencils on minimal K\"ahler surfaces of general type. 
The construction in \cite{Li_2008} relies not only on Donaldson's result \cite{Donaldson_1999} but also on the differences between cohomology K\"ahler cones and symplectic cones. 
Since the cohomology K\"ahler cone of the four-torus coincides with its symplectic cone (see \cite[Proposition 4.10]{Li_2008}), this construction cannot give the affirmative answer to the following question:

\begin{problem}\label{P:exist_nonholLP_4torus}

Does there exist a non-holomorphic Lefschetz pencil on the four-torus? 

\end{problem}

\noindent
Problem~\ref{P:exist_nonholLP_4torus} is also important in complex geometry since it might be related to existence of non-K\"ahler symplectic forms on the four-torus. (Here, a symplectic form $\omega$ is said to be \textit{non-K\"ahler} if there do not exist complex structures compatible with $\omega$.) 
Indeed, for a holomorphic Lefschetz pencil we can take a symplectic form on the total space taming the complex structure by using \cite[Theorem 2.11 b)]{Gompf_2004}. 
Such a symplectic form is K\"ahler if it is further compatible with the complex structure. 
Theorem \ref{T:main_uniquenessLP}, together with explicit examples we will construct, gives rise to several constraints on monodromy factorizations of holomorphic Lefschetz pencils on the four-torus, in particular it might be possible to construct a non-holomorphic Lefschetz pencil on the four-torus using Theorem~\ref{T:main_uniquenessLP} (see Remark~\ref{R:relation_generalizationSmith_nonholLP}). 

As we mentioned earlier, a system of vanishing cycles of a Lefschetz pencil completely determines its isomorphism class. 
Thus we can find a non-holomorphic Lefschetz pencil on the four-torus using Theorem~\ref{T:main_uniquenessLP} once we can get vanishing cycles of a holomorphic Lefschetz pencil on the four-torus with sufficiently large genus or divisibility, and find another system of simple closed curves (associated with a Lefschetz pencil on the four-torus) which is not Hurwitz equivalent to the system of the vanishing cycles. 
In this paper we first analyze the simplest example of a holomorphic pencil on the four-torus: a genus-$3$ Lefschetz pencil due to Smith~\cite{Smith_2001_torus}. 

\begin{theorem}\label{T:VC_Smith}

The simple closed curves in Figure~\ref{F:VC_Smith} are vanishing cycles of a genus-$3$ Lefschetz pencil constructed in \cite{Smith_2001_torus}. 

\end{theorem}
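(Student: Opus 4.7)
The plan is to trace through Smith's construction concretely enough to pin down each vanishing cycle, then match the resulting isotopy classes with the curves depicted in Figure~\ref{F:VC_Smith}. First, I would recall that Smith's pencil arises from a $(1,2)$-polarized abelian surface $A = \C^2/\Lambda$: if $L \to A$ denotes the associated line bundle, then $h^0(A,L) = 2$, so two linearly independent sections $s_0, s_1$ generate a pencil $\{s_0 + \lambda s_1 = 0\}_{\lambda \in \CP^1}$. Since $c_1(L)^2 = 4$ and $K_A = 0$, adjunction yields generic fibers of genus $3$, and an Euler-characteristic count using $\chi(T^4) = 0$ forces exactly $4$ base points and $12$ nodal singular fibers.

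The next step is to exploit the natural symmetry of Smith's pencil. A finite subgroup of translations of $A$ preserving $L$ acts on the pencil, permuting fibers, base points, and critical points in a completely prescribed way. This reduces the local analysis dramatically: only a few representative vanishing cycles need to be identified, and the remainder are obtained by pushforward under these translations. At each critical point the holomorphic Morse lemma gives the standard model $z_1^2 + z_2^2 = \lambda$, so the vanishing cycle is read off as the $S^1$ collapsing as $\lambda \to 0$ in the corresponding Milnor fiber. I would then transport each such cycle to a fixed smooth reference fiber $C_{\lambda_0}$ along a matching path in $\CP^1$, using a smooth trivialization of the family over a simply connected region in the complement of the $12$ critical values.

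The main obstacle is this final identification step: to match the transported cycles with explicit isotopy classes on a \emph{standard} genus-$3$ surface as drawn in the figure. The fiber $C_{\lambda_0}$ sits in $A$ in a genuinely nontrivial way, so fixing a concrete diffeomorphism $C_{\lambda_0} \cong \Sigma_3$ requires careful combinatorial bookkeeping; the most natural tool is a distinguished involution of $C_{\lambda_0}$, for example the one induced from $-\id$ on $A$ or the $\Z/2$-action coming from the type-$(1,2)$ polarization, whose fixed points provide landmarks against which the twelve curves can be anchored on a standard model of $\Sigma_3$. As a global consistency check, I would finally verify that the product of right-handed Dehn twists along the twelve identified curves equals the product of four boundary-parallel Dehn twists in $\Mod(\Sigma_3^4)$ after blowing up the base points, and descends to a valid monodromy factorization for a Lefschetz pencil on $T^4$ in the pointed mapping class group; this matches the topology of the four-torus and closes the argument.
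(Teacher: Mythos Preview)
Your proposal takes a genuinely different route from the paper, and the step you yourself flag as ``the main obstacle'' is left unresolved in a way that makes the argument incomplete.

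The paper does not work with theta functions on an abstract $(1,2)$-polarized abelian surface at all. Instead it follows Smith's construction literally: start with the singular quadric cone $Q=V(x^2+y^2+z^2)\subset\CP^3$, project to the plane conic $S$, choose six explicit conics $C_1,\dots,C_6\subset Q$, take a double cover of $Q$ branched along $C_1\cup\cdots\cup C_4$, resolve, then take a second double cover branched along sixteen $(-2)$-spheres. The point of this tower is that the monodromy of the bottom map $\pi:Q\setminus\{\text{vertex}\}\to S$ is just point-pushing on a six-times marked sphere, and the paper computes the relevant paths $\alpha_k^{(j)}$ in $\C$ by elementary algebra. Lemma~\ref{L:relation_monodromy_covering} then says the monodromy of each successive branched cover is a lift of the previous one, and the paper tracks these lifts explicitly: first to the genus-$1$ fibration on $Z_{\sm}$ (Figure~\ref{F:SCCForTwist}), then, after checking via Lemma~\ref{L:DBCalongSphinFiber_VanCyc} which double cover of the torus is the correct one, to the genus-$3$ surface via the hyperelliptic involution $\eta$ of Figure~\ref{F:involution_genus3}. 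The curves in Figure~\ref{F:VC_Smith} are simply the preimages under $\eta$ of the genus-$1$ vanishing cycles.

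Your approach, by contrast, proposes to read off Milnor cycles from local models $z_1^2+z_2^2=\lambda$ inside $A$ and then transport them to a standard $\Sigma_3$. The difficulty you name---fixing a diffeomorphism $C_{\lambda_0}\cong\Sigma_3$ and locating twelve curves on it---is exactly what the branched-cover method is designed to avoid: in the paper's setup the reference fiber \emph{comes with} a canonical presentation as a double cover of a torus which is itself a double cover of a marked sphere, so the landmarks are built in. Saying you will use ``an involution'' and ``careful combinatorial bookkeeping'' does not constitute a method; making this precise without the branched-cover scaffolding would require substantial additional work that you have not outlined. There is also a smaller gap at the start: the pencil you describe (generic sections of the polarizing bundle) is not \emph{a priori} the same pencil Smith constructs via the quadric cone, and Theorem~\ref{T:main_uniquenessLP} does not cover the case $(d_1,d_2)=(1,2)$, so you would need a separate argument to identify them.
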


\noindent
We can obtain holomorphic Lefschetz pencils on the four-torus with larger genera and divisibilities using finite unbranched coverings; The composition of a Lefschetz pencil and a finite unbranched covering of its total space is again a Lefschetz pencil, and any finite unbranched covering of the four-torus is also the four-torus. 
We will indeed prove that any holomorphic Lefschetz pencil on the four-torus with odd genus satisfying the assumption in Theorem~\ref{T:main_uniquenessLP} is isomorphic to the composition of the genus-$3$ Lefschetz pencil in \cite{Smith_2001_torus} and a finite unbranched covering. (See Lemma~\ref{L:relation_covering_polarization} and the observation following it.)

Baykur \cite{Baykur_preprint_genus3LP} constructed genus-$3$ Lefschetz pencils on symplectic Calabi-Yau four-manifolds (i.e. symplectic manifolds with trivial canonical classes) with positive $b_1$ relying on combinatorial techniques. 
The family of Lefschetz pencils given in \cite{Baykur_preprint_genus3LP} covers all possible rational homology types of symplectic Calabi-Yau four-manifolds with $b_1>0$ (cf.~\cite{Li_2006}), in particular it contains a four-manifold \textit{homeomorphic} to the four-torus. 
We will also construct a genus-$3$ Lefschetz pencil in a similar manner (by giving vanishing cycles, see Figure~\ref{F:SmithsLPConfiguration}) and prove that our pencil is isomorphic to both the pencil with vanishing cycles in Figure~\ref{F:VC_Smith}, that is, Smith's pencil, and the pencil given by Baykur~\cite{Baykur_preprint_genus3LP} (Lemma~\ref{L:HurwitzEquivalence} and Remark~\ref{R:HurwitzEquivalenceBaykur}).  
Our construction of the genus-$3$ pencil can be generalized to that of a genus-$g$ symplectic Calabi-Yau Lefschetz pencil $f_g$ for any $g\geq 3$. 
We will prove that the pencils with odd genera are compositions of Smith's pencil with finite unbranched coverings, and thus these are holomorphic pencils on the four-torus (Lemma~\ref{L:generalization_coveringSmith}). 
We further expect that the family of the pencils $\{ f_g~|~g-1 \text{~is prime}\}$ is a candidate for \emph{all essential} holomorphic Lefschetz pencils on the four-torus, where the tentative term essential means that
they cannot be decomposed as the composition of a holomorphic pencil and a finite unbranched covering of the four-torus (Conjectures~\ref{C:polarization_generalization} and~\ref{C:covering_generalisedSmithsLP}).
Applying a combinatorial operation to our genus-$3$ pencil, we will obtain a family of genus-$3$ Lefschetz pencils $\{f_{\alpha,\beta}\}$ parametrized by $\alpha,\beta\in \Mod(\Sigma_1^1;U)$ with $[\alpha,\beta]=1$, where $U=\{u\}\subset \Pa \Sigma_1^1$ and $\Mod(\Sigma_1^1;U)$ is the mapping class group of the one-holed torus $\Sigma_1^1$ fixing $U$ (for the precise definition of this mapping class group, see Subsection~\ref{S:monodromy_LFLP}). 

\begin{restatable}{theorem}{LPtorusbdl}\label{T:LPtorusbdl}

The total space of $f_{\alpha,\beta}$ is homeomorphic to that of the torus bundle over the torus with a section whose monodromy representation sends two elements generating $\pi_1(T^2)$ to $\alpha$ and $\beta$. 

\end{restatable}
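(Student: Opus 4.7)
The plan is to extract an explicit $T^2$-bundle structure on $X_{\alpha,\beta}$ directly from the combinatorial definition of $f_{\alpha,\beta}$, and to match its monodromy with $(\alpha,\beta)$. By construction, $f_{\alpha,\beta}$ is obtained from Smith's genus-$3$ pencil $f_{1,1}$ (whose total space is $T^4=T^2\times T^2$) by twisting, at two prescribed positions in the monodromy factorization, a pair of Dehn-twist factors by the one-holed-torus mapping classes $\alpha$ and $\beta$. Translating this into a picture on total spaces, one expects the modification to correspond to cutting out neighborhoods of two disjoint torus fibers of the trivial projection $T^4\to T^2$ onto one factor, and re-gluing them via $\alpha$ and $\beta$ respectively.

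To make this precise, I would first fix an embedding $\Sigma_1^1\subset \Sigma_3$ on which $\alpha$ and $\beta$ act, compatibly with the genus-$3$ fibers of $f_{1,1}$, so that the $\Sigma_1^1$-subsurfaces sweep out the vertical $T^2$-fibers of the auxiliary bundle $T^4\to T^2$. The choice is dictated by the explicit vanishing cycles of Figure~\ref{F:SmithsLPConfiguration} and by the fact that the monodromy of $f_{1,1}$ restricted to the $\Sigma_1^1$-side is trivial. Next I would verify the local dictionary \textit{``twist by $\alpha$ in the monodromy factorization $\longleftrightarrow$ $T^2$-surgery with framing $\alpha$''} by comparing the handle picture near a modified Lefschetz singular fiber with the handle picture of $T^2\times D^2$ as a neighborhood of a torus fiber of $T^4\to T^2$; the two agree up to the re-gluing specified by $\alpha$ on $T^2\times S^1$. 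Performing the two surgeries along disjoint loops generating $\pi_1(T^2)$ then produces from $T^4$ a $T^2$-bundle over $T^2$ whose monodromy sends the two generators of $\pi_1(T^2)$ to $\alpha$ and $\beta$; the hypothesis $[\alpha,\beta]=1$ ensures that the two surgeries are independent and that the resulting monodromy factors through $\pi_1(T^2)=\Z^2$. A section of the bundle is provided by a base point of the pencil lying inside the $\Sigma_1^1$-piece of each generic fiber.

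The principal obstacle, I expect, is establishing the local dictionary between the combinatorial twist operation on the monodromy factorization and the geometric $T^2$-surgery in the $4$-manifold: this requires careful tracking of the handle attachments around the two modified Lefschetz singularities, and identification of the boundary re-gluing with $\id\times \alpha$ (resp.\ $\id\times \beta$). Some additional bookkeeping is needed to certify that none of the other Lefschetz singularities is disturbed by the two surgeries, so that the outcome is a genuine $T^2$-bundle over $T^2$ rather than a Lefschetz fibration with residual singular fibers; here the commutativity $[\alpha,\beta]=1$ is again essential, as it allows the two surgery loops to be chosen to avoid all the untouched singular fibers.
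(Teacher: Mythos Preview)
Your approach is fundamentally different from the paper's, and as written it has a genuine gap at its core.

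The paper does \emph{not} attempt to exhibit a $T^2$-bundle structure on $X_{\alpha,\beta}$ directly. Instead, it proceeds in three steps: (i) compute $\pi_1(X_{\alpha,\beta})$ from the monodromy factorization~\eqref{eq:LPonT2bdl} by taking $\pi_1(\Sigma_3)$ modulo the vanishing cycles, and check that the presentation so obtained agrees with the standard presentation of the fundamental group of the relevant $T^2$-bundle (Lemmas~\ref{L:FundamentalGroup_T^2bdl} and~\ref{L:FundamentalGroupOfXalphabeta}); (ii) observe that $\chi(X_{\alpha,\beta})=\sigma(X_{\alpha,\beta})=0$ and invoke \cite[Theorem~4.1]{BaykurHayano_multisection} to conclude that $f_{\alpha,\beta}$ is symplectic Calabi--Yau; (iii) apply \cite[Corollary~3.3]{FriedlVidussi}, which says that a symplectic Calabi--Yau four-manifold with the fundamental group of a given $T^2$-bundle over $T^2$ is \emph{homeomorphic} to that bundle. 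The argument is therefore algebraic-topological plus a black-box input from symplectic topology; it never constructs a bundle map.

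Your proposal, by contrast, would (if it worked) give a \emph{diffeomorphism}, which is strictly stronger than what the theorem asserts---and which the paper explicitly leaves as a conjecture at the end of Subsection~\ref{Subsection:T2bdloverT2}. The difficulty is exactly where you place it: the ``local dictionary'' asserting that a partial conjugation of subwords of the monodromy factorization by $\alpha_R,\beta_R,\beta_L$ corresponds, on the level of total spaces, to a genuine torus surgery (Luttinger-type) along fibers of an auxiliary projection $T^4\to T^2$. You state this as something to ``verify by comparing handle pictures,'' but this is the entire content of the claim, and no mechanism is provided. In particular, the twisting in~\eqref{eq:LPonT2bdl} modifies \emph{nine} of the twelve vanishing cycles, not just two localized ones, so the picture of ``two surgeries leaving the other singularities undisturbed'' is not accurate. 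There is also a conceptual slip in your last paragraph: $f_{\alpha,\beta}$ is a genus-$3$ Lefschetz pencil with $12$ singular fibers, and the putative $T^2$-bundle map is a \emph{different} map on the same four-manifold, not a smoothing of $f_{\alpha,\beta}$; so there is no question of ``residual singular fibers'' to worry about.

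If you want to pursue the geometric route, the honest target is the conjecture at the end of the section, and you would need a rigorous surgery interpretation of the partial conjugation---something closer to the Luttinger-surgery/monodromy-substitution correspondence---rather than a handle comparison sketch.
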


\noindent
We will give a monodromy factorization of $f_{\alpha,\beta}$ explicitly in \eqref{eq:LPonT2bdl}.
Note that Smith~\cite{Smith_2001_torus} observed that any torus bundle over the torus with a section admits a genus-$3$ Lefschetz pencil. 
We believe that this pencil is isomorphic to ours, especially the total space of $f_{\alpha,\beta}$ is \emph{diffeomorphic} to that of a torus bundle over the torus. 

The constructions of Lefschetz pencils in the previous paragraph are related to the smooth classification problem of symplectic four-manifolds with Kodaira dimension $0$, which is one of the central concerns in symplectic topology. 
It is conjectured that any Kodaira diemnsion $0$ symplectic manifold is diffeomorphic to the K3 surface, the Enriques surface or a torus bundle over the torus. 
The family of symplectic Calabi-Yau manifolds given in \cite{Baykur_preprint_genus3LP} contains potential counter-examples of the conjecture. 
Furthermore, we would obtain a new symplectic four-manifold with Kodaira dimension $0$ once we can apply partial conjugations to any of the Lefschetz pencils in the previous paragraph so that the fundamental group of the total space of the resulting pencil is not a $4$-dimensional solvmanifold group (see \cite[Remark 18]{Baykur_preprint_genus3LP}).

In Section~\ref{S:uniqueness_holLP} we will prove Theorem~\ref{T:main_uniquenessLP} relying on the theory of moduli spaces of polarized abelian surfaces. 
In Section~\ref{S:example_holLP} we will first prove Theorem~\ref{T:VC_Smith}, that is, we will obtain vanishing cycles of the genus-$3$ holomorphic Lefschetz pencil due to Smith~\cite{Smith_2001_torus}. 
We will then discuss compositions of this pencil with finite unbranched coverings. 
In Section~\ref{Section:CombinatorialApproach} we will first re-construct Smith's pencil from a combinatorial point of view, and generalize the construction to obtain Lefschetz pencils with higher genera. 
Utilizing the technique Appendix~\ref{A:2ndhomology} we will prove that the divisibilities of these Lefschetz pencils are all $1$ (Lemma~\ref{L:divisibility_generalization}). 
We will further modify Smith's pencil to prove Theorem~\ref{T:LPtorusbdl}.

\section{Preliminaries}\label{S:preliminaries}

Throughout the paper, we will always assume that all the manifolds are smooth, oriented and connected unless otherwise noted. 

\subsection{Lefschetz pencils and fibrations}
Let $X$ be a closed $4$-manifold and $B\subset X$ a non-empty discrete set.
A smooth map $f:X\setminus B\to \CP^1$ is called a \text{Lefschetz pencil} if it satisfies the following conditions: 

\begin{enumerate}\label{P:definition_LP}

\item
the restriction $f|_{\Crit(f)}$ is injective, 

\item
each $x\in \Crit(f)$ is of \textit{Lefschetz type}, that is, there exists a complex coordinate $(U,\varphi:U\to \C^2)$ (resp.~$(V,\psi:V\to \C)$) of $x$ (resp.~$f(x)$) compatible with the orientation such that $\psi\circ f \circ \varphi^{-1}(z,w)$ is equal to $z^2+w^2$, 

\item
for any $b\in B$ there exist a complex coordinate $(U,\varphi)$ of $x$ compatible with the orientation and an orientation preserving self-diffeomorphism $\xi:\CP^1\to \CP^1$ such that $\xi\circ f \circ \varphi^{-1}(z,w)$ is equal to $[z:w]\in \CP^1$.

\end{enumerate}
Each point in $B$ is called a \textit{base point} of $f$. 
A smooth map $f:X\to \CP^1$ satisfying the conditions (1) and (2) above is called a \textit{Lefschetz fibration}. 
A Lefschetz pencil or fibration $f$ is said to be \textit{holomorphic} if there exists a complex structure of $X$ such that $f$ is holomorphic and we can take biholomorphic $\varphi,\psi$ and $\xi$ in the conditions above. 

\begin{remark}\label{R:complexity_Lefsing}
Since a Lefschetz singularity germ has infinite $\mathcal{A}_e$-codimension as a real germ, it is not finitely determined in smooth category, especially it is basically hard to determine whether a given smooth germ is of Lefschetz type or not. 
However, in complex category there is a useful criterion for a critical point to be of Lefschetz type: a critical point $x\in \C^2$ of a holomorphic function $f:\C^2\to \C$ is of Lefschetz type if and only if the \textit{complex Hessian} $\Hess(f)_x = \det\left(\left(\dfrac{\Pa^2 f }{\Pa z_k\Pa z_l}(x)\right)_{1\leq k,l\leq 2}\right)$ is not equal to $0$ (\cite[Lemma 2.11]{Voisin_2003}). 
\end{remark}

For a Lefschetz pencil or fibration $f:X\setminus B\to \CP^1$, the genus of the closure $\overline{f^{-1}(\ast)}$ of a regular fiber is called the \textit{genus} of $f$, which is denoted by $g(f)$. 
We further define the following number using a regular fiber: 
\[
d(f) = \sup\left\{n\in \Z~\left|~{}^\exists \alpha\in H_2(X;\Z)\text{ s.t. }\left[\overline{f^{-1}(\ast)}\right] = n\alpha\right.\right\} \in \Z_{>0}\cup \{\infty\}. 
\]
This number is called the \textit{divisibility} of $f$. 
Two Lefschetz pencils or fibrations $f_0:X_0\setminus B_0\to \CP^1$ and $f_1:X_1\setminus B_1\to \CP^1$ are said to be \textit{isomorphic} if there exist diffeomorphisms $\Phi:X_0\to X_1$ and $\phi:\CP^1\to \CP^1$ which make the following diagram commute:
\[
\begin{CD}
X_0\setminus B_0  @> \Phi >>  X_1\setminus B_1 \\
@V f_0 VV  @VV f_1 V\\
\CP^1 @> \phi >> \CP^1.
\end{CD}
\]
Obviously two isomorphic Lefschetz pencils or fibrations have the same number of base points, genus and divisibility, but the converse does not hold in general (a pair $f_{(2,2)}$ and $f_{(3,1)}$ in \cite{BaykurHayano_multisection}, for example, is a counterexample of the opposite direction).

\subsection{Monodromy factorizations of Lefschetz fibrations/pencils}\label{S:monodromy_LFLP}
Let $\Sigma=\Sigma_g^p$ be a compact genus-$g$ surface with $p$ boundary components. 
We take points $u_1,\ldots,u_p\in \Pa \Sigma$ from each of the component of $\Pa \Sigma$ and let $\delta_i\subset \Int(\Sigma)$ be a simple closed curve parallel to the boundary component containing $u_i$. 
Let $U$ be the set $\{u_1,\ldots,u_p\}$ and $\Diff(\Sigma;U)$ the group of orientation-preserving diffeomorphisms of $\Sigma$ which preserve the set $U$. 
We call the set $\pi_0(\Diff(\Sigma;U))$ the \textit{mapping class group of $\Sigma$} and denote it by $\Mod(\Sigma;U)$. 
An element of $\Mod(\Sigma;U)$ is the isotopy class of an element in $\Diff(\Sigma;U)$, where isotopies fix the set $U$. 
The group structure of $\Mod(\Sigma;U)$ is induced by compositions of maps, that is, $[\varphi_1]\cdot [\varphi_2] = [\varphi_1\circ \varphi_2]$ for $\varphi_1,\varphi_2 \in \Diff(\Sigma;U)$.

Now let $f:X\setminus B\to \CP^1$ be a Lefschetz pencil or fibration with $n$ critical points. 
Set $f(\Crit(f)) = \{a_1,\ldots,a_{n}\}$, and take paths $\alpha_1,\ldots,\alpha_{n}\subset \CP^1$ with a common initial point  $a_0 \in \CP^1\setminus f(\Crit(f))$ such that
\begin{itemize}
\item
$\alpha_1,\ldots,\alpha_{n}$ are mutually disjoint except at $p_0$, 
\item
$\alpha_i$ connects $a_0$ with $a_i$, 
\item
$\alpha_1,\ldots,\alpha_{n}$ are ordered counterclockwise around $a_0$, i.e.~there exists a small loop around $a_0$ oriented counterclockwise, hitting each $\alpha_i$ only once in the given order.
\end{itemize}  

\noindent
We take a loop $\widetilde{\alpha}_i$ with the base point $a_0$ by connecting $\alpha_i$ with a small counterclockwise circle with center $a_i$. 
We call a system of paths $\widetilde{\alpha}_1,\ldots,\widetilde{\alpha}_n$ obtained by the procedure above a \textit{Hurwitz path system} of $f$. 
For each $b\in B$, let $D_b$ be a sufficiently small $4$-ball neighborhood of $b$ and $\nu B$ the disjoint union $\sqcup_{b\in B}D_b$. 
For each $b$ we take a section $S_b\subset \Pa D_b$ of $f$. 
Let $\mathcal{H}$ be a horizontal distribution of $f|_{X\setminus (\nu B\cup \Crit(f))}$, that is, $\mathcal{H} = \{\mathcal{H}_x\}_{x\in X\setminus (\nu B\cup\Crit(f))}$ is a plane field such that $\Ker(df_x) \oplus \mathcal{H}_x = T_xX$ for any $x\in X\setminus (\nu B\cup\Crit(f))$. 
We assume that $\mathcal{H}_x = T_x S_b$ for any $x\in S_b$ and $\mathcal{H}_x \subset T_x \Pa D_b$ for any $x\in\Pa D_b$. 
Using $\mathcal{H}$, we can take a lift of the direction vector field of $\widetilde{\alpha}_i$ and a flow of this lift gives rise to a self-diffeomorphism of $f^{-1}(a_0)$. 
We call this diffeomorphism a \textit{parallel transport} of $\widetilde{\alpha}_i$ and its isotopy class a \textit{local monodromy} around $a_i$. 
Note that a local monodromy does not depend on the choice of $\mathcal{H}$. 

Under an identification of the pair $(f^{-1}(a_0)\setminus \nu B, f^{-1}(a_0)\cap \sqcup_{b\in B}S_b)$ with the pair $(\Sigma_g^p,U)$, we can regard a parallel transport as a diffeomorphism in $\Diff(\Sigma_g^p;U)$, and thus, a local monodromy as a mapping class in $\Mod(\Sigma_g^p;U)$. 
A local monodromy around $a_i$ is a Dehn twist $t_{c_i}$ along some simple closed curve $c_i \subset \Int\Sigma_g^p$ (see \cite{Kas_1980}). 
The curve $c_i$ is called a \textit{vanishing cycle} of $f$. 
Since the concatenation $\widetilde{\alpha}_1\cdots \widetilde{\alpha}_{n}$ is null-homotopic in $\CP^1\setminus f(\Crit(f))$ and each $S_b$ has the self-intersection $-1$, the composition $t_{c_n}\cdots t_{c_1}$ is equal to $t_{\delta_1}\cdots t_{\delta_p}$ in $\Mod(\Sigma_g^p;U)$ (which is the identity if $B=\emptyset$). 
The factorization 
\[
t_{c_n}\cdots t_{c_1} =t_{\delta_1}\cdots t_{\delta_p}
\]
is called a \textit{monodromy factorization} of $f$. 
Two factorizations $t_{c_n}\cdots t_{c_1} = t_{d_n}\cdots t_{d_1} =t_{\delta_1}\cdots t_{\delta_p}$ are said to be \textit{Hurwitz equivalent} if one can obtained from the other by successive applications of the following two kinds of moves: 

\begin{itemize}

\item
\textit{Elementary transformation}: 
$t_{c_n}\cdots t_{c_{i+1}}t_{c_i}\cdots t_{c_1} \rightarrow t_{c_n}\cdots t_{t_{c_{i+1}}(c_i)}t_{c_{i+1}}\cdots t_{c_1}$, 

\item
\textit{Simultaneous conjugation}: 
$t_{c_n}\cdots t_{c_1} \rightarrow t_{\varphi(c_n)}\cdots t_{\varphi(c_1)}$ for $\varphi \in \Mod(\Sigma_g^p;U)$.  

\end{itemize}

\begin{theorem}[\cite{Kas_1980},\cite{Matsumoto_1996} and \cite{BaykurHayano_isomHurwitz}]\label{T:isom_Hurwitzeq}

Assume that $2-2g-p$ is negative. 
Two Lefschetz pencils or fibrations of genus $g$ with $p$ base points are isomorphic if and only if the corresponding monodromy factorizations are Hurwitz equivalent. 

\end{theorem}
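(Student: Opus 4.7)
The plan is to prove the two directions of this ``if and only if'' statement separately, modeling the strategy after Kas's and Matsumoto's classical arguments for Lefschetz fibrations and dealing with the base points in the pencil case by combining them with the boundary-component data recorded in $\Mod(\Sigma_g^p;U)$. Throughout, I will use the rigidity fact that, when $2-2g-p<0$, a Dehn twist $t_c$ determines its core simple closed curve $c$ up to isotopy, so the notion of ``vanishing cycle'' is well-defined independently of choices made inside its Dehn twist.

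For the forward direction, suppose $(\Phi,\phi)$ is an isomorphism between $f_0$ and $f_1$. Starting from any Hurwitz path system, base point, base sections $\{S_b\}_{b\in B_0}$ and fiber identification for $f_0$, transport all of these data to $f_1$ via $\Phi$ and $\phi$; clearly the resulting monodromy factorization for $f_1$ is literally identical to that of $f_0$. It then suffices to show that, for a single Lefschetz pencil or fibration, any two monodromy factorizations are related by elementary transformations and simultaneous conjugations. Changing the fiber identification $(f^{-1}(a_0)\setminus \nu B, f^{-1}(a_0)\cap \sqcup S_b)\cong (\Sigma_g^p,U)$ or changing the base point and sections by parallel transport along a path precisely produces a simultaneous conjugation by an element of $\Mod(\Sigma_g^p;U)$. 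Changing the Hurwitz path system while keeping the base point fixed is accomplished by a finite sequence of half-twist moves on adjacent paths, each of which replaces $t_{c_{i+1}}t_{c_i}$ by $t_{t_{c_{i+1}}(c_i)}t_{c_{i+1}}$ (or its inverse), i.e.\ by elementary transformations.

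For the backward direction, I will explicitly reconstruct a Lefschetz pencil or fibration from a factorization and show that Hurwitz moves yield isomorphic outputs. Given $t_{c_n}\cdots t_{c_1} = t_{\delta_1}\cdots t_{\delta_p}$, choose a disk $D\subset \CP^1$ containing distinct marked points $a_1,\dots,a_n$ and a Hurwitz path system in $D$. Over $D$ one builds a Lefschetz fibration $W\to D$ with fiber $\Sigma_g^p$ and vanishing cycles $c_i$ by iteratively attaching $2$-handles with fiber framing $-1$; the monodromy along $\partial D$ is $t_{\delta_1}\cdots t_{\delta_p}$. Over $\CP^1\setminus \Int D$, the relation $t_{c_n}\cdots t_{c_1}=t_{\delta_1}\cdots t_{\delta_p}$ allows one to trivialize the monodromy on each component of $\partial \Sigma_g^p$, so that for each marked point $u_i$ one can glue in a disk bundle $D^2\times D^2$ to cap the corresponding boundary component and create a base point. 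The resulting closed $4$-manifold carries a tautological Lefschetz pencil (or fibration, if $p=0$) whose monodromy factorization coincides with the input. Elementary transformations correspond to swapping the order of consecutive critical values by an isotopy of the Hurwitz path system, and simultaneous conjugations correspond to regluing the fiber via an element of $\Mod(\Sigma_g^p;U)$; both are visibly diffeomorphism-invariants of the total picture, so they produce isomorphic pencils.

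The main obstacle is the inverse-construction step: namely, verifying that applying the reconstruction procedure to the monodromy factorization of a given pencil $f$ returns a pencil isomorphic to $f$ itself, rather than merely one with the same factorization. This is where the local normal form in condition (2) of the definition of a Lefschetz pencil and the rigidity $t_c=t_{c'}\Rightarrow c\simeq c'$ (valid under the assumption $2-2g-p<0$) enter crucially: the Lefschetz model determines the handle attaching data near each critical point, and the rigidity ensures that the combinatorial curve $c_i$ recovers the geometric vanishing cycle up to isotopy. For fibrations this is Kas's and Matsumoto's theorem, and the generalization to pencils, carefully matching the boundary sections with the $\delta_i$-caps, is the content of the third cited reference \cite{BaykurHayano_isomHurwitz}; assembling these ingredients completes the proof.
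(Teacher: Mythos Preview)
The paper does not supply its own proof of this theorem: it is stated with attributions to \cite{Kas_1980}, \cite{Matsumoto_1996}, and \cite{BaykurHayano_isomHurwitz} and then used as a black box. So there is no ``paper's proof'' to compare against; your sketch is essentially the standard argument one finds in those references.

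Your outline is broadly correct and follows the classical route. A few remarks on precision. First, the hypothesis $2-2g-p<0$ does more than guarantee that $t_c$ determines $c$: it ensures (via Earle--Eells type results) that the identity component of $\Diff(\Sigma_g^p;U)$ is contractible, which is what makes the reconstruction-from-factorization well defined up to isomorphism rather than just up to some bundle-theoretic ambiguity. You gesture at this in your last paragraph but attribute it only to the curve-recovery property; the gluing step over $\CP^1\setminus \Int D$ also needs this rigidity to pin down the fiberwise identifications. Second, in the pencil case the capping of each boundary component by a disk bundle must be done with the correct Euler number (namely $-1$) so that the resulting neighborhood of each base point matches the local model $[z:w]$; you implicitly use this when you say ``glue in a disk bundle $D^2\times D^2$,'' but the framing needs to be specified. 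These are exactly the points handled carefully in \cite{BaykurHayano_isomHurwitz}, which you cite at the end, so the argument closes once those details are imported.
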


\subsection{Moduli spaces of polarized abelian surfaces}\label{S:moduli_abelian_surface}

By an \textit{abelian surface}, we mean a complex torus of dimension $2$ which can be holomorphically embedded into $\CP^N$ for sufficiently large $N$. 
For a complex torus $T$, a \textit{polarization} of $T$ is a cohomology class $H\in H^2(T;\Z)$ which is the first Chern class of an ample line bundle. 
Let $\Lambda\subset \C^2$ be a lattice and $T=\C^2/\Lambda$. 
We can canonically identify the group $H_1(T;\Z)$ with the lattice $\Lambda$. 
Using this identification we can regard polarizations of $T$ as integer-valued alternating forms on $\Lambda$. 
For any polarization $E$ we can take a basis $\mu_1,\mu_2,\lambda_1,\lambda_2$ of $\Lambda$ such that $E$ is represented by the following matrix with respect to this basis: 
\[
E = \begin{pmatrix}
0&D\\-D&0
\end{pmatrix}\text{, where }D=(\boldsymbol{d}_1,\boldsymbol{d}_2)=\begin{pmatrix}
d_1&0\\0&d_2
\end{pmatrix},\hspace{.3em}d_i>0,\hspace{.3em}d_1|d_2. 
\]
We call the pair $(d_1,d_2)$ or the matrix $D$ the \textit{type} of the polarization $E$.

We denote by $\h_2$ the set of symmetric complex $2\times 2$ matrices with positive definite imaginary part, which is a connected complex manifold of dimension $3$. 
For $Z=(\boldsymbol{z}_1,\boldsymbol{z}_2)\in \h_2$, we denote the ordered set $(\boldsymbol{z}_1,\boldsymbol{z}_2,\boldsymbol{d}_1,\boldsymbol{d}_2)$ by $\Lambda_Z$. 
The set $\Lambda_Z$ is a basis of a lattice in $\C^2$, which we denote by $\overline{\Lambda_Z}$. 
In particular, $\Lambda_Z$ gives rise to a complex torus $T_Z = \C^2/\overline{\Lambda_Z}$. 
Let $H_Z$ be the imaginary part of a hermitian form represented by the matrix $\Im(Z)^{-1}$ with respect to the standard basis of $\C^2$. 
The form $H_Z$ is a real-valued alternating form on $\C^2$. 
It is easy to check that the representation matrix of $H_Z|_{\overline{\Lambda_Z}}$ with respect to the basis $\Lambda_Z$ is $\begin{pmatrix}
0&D\\-D&0
\end{pmatrix}$. 
Thus, $H_Z$ is a $(d_1,d_2)$-polarization of $T_Z$. 
Conversely, any polarized abelian surface can be obtained by the construction above. 
More precisely, it is known that for any complex torus $T=\C^2/\overline{\Lambda}$, its polarization $H$ and a basis $\Lambda$ of the lattice $\overline{\Lambda}$ with respect to which the representation of $H|_{\overline{\Lambda}}$ is $\begin{pmatrix}
0&D\\-D&0
\end{pmatrix}$, there exists a matrix $Z\in \h_2$ and a biholomorphic map $\Psi:\C^2\to \C^2$ such that two  triples $(T,H,\Lambda)$ and $(T_Z,H_Z,\Lambda_Z)$ correspond by $\Psi$ (see \cite[\S.8.1]{LangeBirkenhake_1992}). 
In particular, $\h_2$ is a moduli space of $(d_1,d_2)$-polarized abelian surfaces with a symplectic basis of the lattice. 
The followings are basic properties of this moduli space which will be used in this paper: 

\begin{lemma}\label{L:nongeneric_subset_moduli}

We fix the pair $(d_1,d_2)$ and we regard $\h_2$ as a moduli space of $(d_1,d_2)$-polarized abelian surfaces as explained above. 

\begin{enumerate}

\item
the subset $S_0=\{Z\in \h_2~|~\NS(T_Z)\not\cong \Z\}$ is contained in a countable union of proper analytic subsets of $\h_2$, where $\NS(T_Z)= \Im(c_1:H^1(T_Z;\mathcal{O}^\ast_{T_Z})\to H^2(T_Z;\Z))$ is the N\'eron-Severi group of $T_Z$. 

\item
the subset $S_1=\{Z\in \h_2~|~T_Z \cong E_1\times E_2\text{ for some elliptic curves }E_1,E_2\}$ is contained in $S_0$. 

\end{enumerate}

\end{lemma}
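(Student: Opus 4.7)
The plan for (1) is to invoke Hodge theory. By the exponential sheaf sequence $0 \to \Z \to \O_{T_Z} \to \O_{T_Z}^\ast \to 0$, the group $\NS(T_Z)$ is the kernel of $H^2(T_Z;\Z) \to H^2(T_Z;\O_{T_Z})$, equivalently the subgroup of integer Hodge classes of type $(1,1)$. Identifying $H^2(T_Z;\Z)\cong\bigwedge^2\Hom(\overline{\Lambda_Z},\Z)$ via the basis $\Lambda_Z$, an integer alternating form $E$ represents a class in $\NS(T_Z)$ if and only if $E\wedge\omega_Z = 0$ in $H^4(T_Z;\C)\cong\C$, where $\omega_Z = dz_1\wedge dz_2$ spans $H^{2,0}(T_Z)$.

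Next I would expand $\omega_Z$ in the basis $\{\lambda_i^\ast\wedge\lambda_j^\ast\}_{i<j}$ dual to $\Lambda_Z$; its six coefficients turn out to be polynomials in $d_1,d_2$ and the entries $Z_{11},Z_{12},Z_{22}$ of $Z$ of total degree at most $2$ in the $Z_{ij}$. Hence for each integer alternating form $E = (e_{ij})$ the condition $E\wedge\omega_Z = 0$ reduces to a single polynomial equation in $(Z_{11},Z_{12},Z_{22})$, so the locus
\[
V_E := \{Z\in\h_2 \mid E\wedge\omega_Z = 0\}
\]
is an analytic subset of $\h_2$, and the Hodge description above expresses $S_0$ as the union of those $V_E$ that are proper in $\h_2$, a countable family.

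The technical heart of the argument, which I expect to be the main obstacle, is to pin down exactly which $E$ give $V_E = \h_2$ and hence to verify that the remaining $V_E$ really are proper. I would do this by comparing coefficients: requiring the quadratic polynomial $E\wedge\omega_Z$ to vanish identically in $(Z_{11},Z_{12},Z_{22})$ yields a linear system on $(e_{ij})$ whose integer solutions form exactly the cyclic subgroup $\Z\cdot\tfrac{1}{d_1}E_0$, where $E_0 = \bigl(\begin{smallmatrix}0 & D\\ -D & 0\end{smallmatrix}\bigr)$ represents the polarization $H_Z$ and $\tfrac{1}{d_1}E_0$ is integer-valued because $d_1\mid d_2$. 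Consequently $\Z\cdot\tfrac{1}{d_1}E_0\subseteq\NS(T_Z)$ for every $Z\in\h_2$, giving $\NS(T_Z)\cong\Z$ generically, while $S_0$ is the countable union of the proper analytic subsets $V_E$ with $E\notin\Z\cdot\tfrac{1}{d_1}E_0$. This proves (1).

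For (2), if $T_Z\cong E_1\times E_2$ as complex tori then the Poincar\'e duals of $E_1\times\{\mathrm{pt}\}$ and $\{\mathrm{pt}\}\times E_2$ are algebraic, hence lie in $\NS(T_Z)$, and they are linearly independent because their intersection form on $T_Z$ is the hyperbolic $\bigl(\begin{smallmatrix}0 & 1\\ 1 & 0\end{smallmatrix}\bigr)$. Therefore $\NS(T_Z)$ has rank at least two and cannot be isomorphic to $\Z$, so $S_1\subset S_0$.
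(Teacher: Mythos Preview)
Your proof is correct and, for part~(2), essentially identical to the paper's: both exhibit the divisor classes $[E_1\times\{\mathrm{pt}\}]$ and $[\{\mathrm{pt}\}\times E_2]$ in $\NS(T_Z)$ and observe that they are independent (you make the independence explicit via the intersection form; the paper leaves it implicit).

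For part~(1) the paper simply cites \cite[Exercise~8.1]{LangeBirkenhake_1992} and gives no argument, whereas you actually carry out the computation. Your approach---identifying $\NS(T_Z)$ with integral $(1,1)$-classes via the exponential sequence, writing the condition $E\wedge\omega_Z=0$ as a polynomial of degree~$\le 2$ in the entries of $Z$, and solving for the $E$ that make this polynomial vanish identically---is exactly the standard route to this exercise and is correct. Your identification of the ``always-Hodge'' sublattice as $\Z\cdot\tfrac{1}{d_1}E_0$ is right: the vanishing of the quadratic polynomial forces $e_{12}=e_{14}=e_{23}=e_{34}=0$ and $d_1 e_{24}=d_2 e_{13}$, whose integer solutions (using $d_1\mid d_2$) are precisely the stated cyclic group. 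So your write-up in fact supplies the details the paper omits.
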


\begin{proof}
The first statement is in \cite[Exercise 8.1]{LangeBirkenhake_1992} and the details are left to the reader. 
In order to prove the second one, assume that there exists elliptic curves $E_1,E_2$ such that $T_Z$ is biholomorphic to $E_1\times E_2$.
The cohomology classes represented by the divisors $E_1\times \{0\}$ and $\{0\}\times E_2$ are both contained in $\NS(T_Z)$. 
Thus the rank of $\NS(T_Z)$ is at least two. 
\end{proof}

For a holomorphic line bundle $L$ we denote the set of holomorphic sections by $\Gamma(L)$, which is a finite-dimensional complex vector space. 
In the rest of this subsection we will construct an ample line bundle $L_Z$ with $c_1(L_Z)=H_Z$ and a basis of $\Gamma(L_Z)$ explicitly (for more systematic constructions of line bundles on complex tori and their sections, see \cite[Chapters 2 and 3]{LangeBirkenhake_1992}).
For $Z\in \h_2$ we denote the submodules $\left<\boldsymbol{z}_1,\boldsymbol{z}_2\right>$ and $\left<\boldsymbol{d}_1,\boldsymbol{d}_2\right>$ of the lattice $\overline{\Lambda_Z}$ by $\Lambda_Z^1$ and $\Lambda_Z^2$, respectively. 
Let $V_Z^i$ be a real subspace of $\C^2$ generated by $\Lambda_Z^i$. 
It is easy to see that $\C^2$ is equal to the direct sum $V_Z^1\oplus V_Z^2$. 
Using this decomposition we define a map $\chi_Z:\C^2\to S^1$ as follows: 
\[
\chi_Z(v_1+v_2)=\exp\left(\pi i H_Z(v_1,v_2)\right), 
\]
where $v_i \in V_i$. 
We further define a map $a_Z:\overline{\Lambda_Z}\times \C^2\to \C^\times$ as follows: 
\[
a_Z(\lambda,v) = \chi_Z(\lambda)\exp\left(\pi \Im(Z)^{-1}(v,\lambda)+\frac{\pi}{2}\Im(Z)^{-1}(\lambda,\lambda)\right), 
\]
where $\Im(Z)^{-1}$ is regarded as a hermitian form on $\C^2$. 
We then define a line bundle $L_Z$ as follows: 
\[
L_Z = (\C^2\times \C)/\sim, 
\]
where the equivalence relation $\sim$ is generated by the following relation: 
\[
(v+\lambda,z) \sim (v,a(\lambda,v)z),\text{ for }\lambda\in \overline{\Lambda_Z}\text{ and }v\in \C^2. 
\]
By the assumption the alternating form $H_Z$ is trivial on $V_Z^2$. 
Thus the restriction $\Im(Z)^{-1}|_{V_Z^2}$ is symmetric. 
Since $\C$-extension of $V_Z^2$ is the whole space $\C^2$, we can define a symmetric form $B_Z$ on $\C^2$ by extending $\Im(Z)^{-1}|_{V_Z^2}$. 
We define a holomorphic map $\vartheta_Z^{00}:\C^2\to \C$ as follows: 
\[
\vartheta_Z^{00}(v) =  \exp\left(\frac{\pi}{2}B_Z(v,v)\right)\sum_{\lambda\in \Lambda_Z^1}\exp\left(\pi(\Im(Z)^{-1}-B_Z)(v,\lambda)-\frac{\pi}{2}(\Im(Z)^{-1}-B_Z)(\lambda,\lambda)\right).
\]
We can verify that the map $T_Z\ni [v]\mapsto [(v,\vartheta_Z^{00}(v))]\in L_Z$ is well-defined, especially $\vartheta_Z^{00}$ gives rise to a section of $L_Z$ (\cite[Lemma 3.2.4]{LangeBirkenhake_1992}). 
For two integers $0\leq i< d_1$ and $0\leq j < d_2$ we define the map $\vartheta_Z^{ij}:\C^2\to \C$ as follows: 
\[
\vartheta_Z^{ij}(v) = a_Z(w_{ij},v)^{-1}\vartheta_Z^{00}(v+w_{ij}), 
\]
where $w_{ij} = \dfrac{i}{d_1}\boldsymbol{z}_1+\dfrac{j}{d_2}\boldsymbol{z}_2$. 
This also gives rise to a section of $L_Z$ for each $i,j$ (\cite[Corollary 3.2.6]{LangeBirkenhake_1992}). 

\begin{theorem}[{\cite[Theorem 3.2.7]{LangeBirkenhake_1992}}]\label{T:basis_sections}

The set $\{\vartheta_Z^{ij}\in \Gamma(L_Z)~|~0\leq i<d_1, 0\leq j<d_2\}$ is a basis of $\Gamma(L_Z)$. 

\end{theorem}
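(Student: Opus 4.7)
The plan is to match the cardinality of $\{\vartheta_Z^{ij}\}$ against $\dim \Gamma(L_Z)$ and then check linear independence via the explicit series expression. Since $H_Z$ is a polarization, the line bundle $L_Z$ is ample on the abelian surface $T_Z$, so by Kodaira vanishing (equivalently, by the index theorem for line bundles on abelian varieties applied with index $0$) one has $H^i(T_Z,L_Z)=0$ for $i>0$, and hence $\dim\Gamma(L_Z)=\chi(L_Z)$. Riemann--Roch on $T_Z$ then yields
\[
\chi(L_Z)=\tfrac{1}{2}\bigl(c_1(L_Z)^2\bigr)[T_Z]=\operatorname{Pf}(E)=d_1 d_2,
\]
where $E$ is the alternating form representing $H_Z$ on $\overline{\Lambda_Z}$; this matches the cardinality of the indexing set $\{(i,j):0\le i<d_1,\ 0\le j<d_2\}$.

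Since each $\vartheta_Z^{ij}$ is already known to define a section of $L_Z$ by the remark preceding the theorem, it remains to show the $d_1 d_2$ sections are linearly independent. The plan is to exploit the explicit series for $\vartheta_Z^{00}$: after dividing by the nowhere-vanishing trivialising factor $\exp(\tfrac{\pi}{2}B_Z(v,v))$, the sum runs over the rank-$2$ sublattice $\Lambda_Z^1=\langle\boldsymbol{z}_1,\boldsymbol{z}_2\rangle$, so the resulting function has a Fourier-type expansion in the directions parametrising $V_Z^2/\Lambda_Z^2$. Translating $v\mapsto v+w_{ij}$ with $w_{ij}=\tfrac{i}{d_1}\boldsymbol{z}_1+\tfrac{j}{d_2}\boldsymbol{z}_2$ and absorbing the automorphy factor $a_Z(w_{ij},v)^{-1}$ shifts the summation index and introduces on each Fourier mode a character of $\mathbb{Z}/d_1\times\mathbb{Z}/d_2$ depending on $(i,j)$. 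Given a hypothetical relation $\sum_{i,j}c_{ij}\vartheta_Z^{ij}\equiv 0$, I would restrict to a generic translate of $V_Z^1$ and take Fourier coefficients along $V_Z^2/\Lambda_Z^2$: contributions coming from different $(i,j)$ then lie in distinct isotypic components, and orthogonality of characters forces each $c_{ij}$ to vanish.

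The main obstacle is the bookkeeping of the automorphy cocycle $a_Z$ under the translation $v\mapsto v+w_{ij}$: this is where the \emph{semi-character} attached to $L_Z$ enters the picture, and verifying that it produces genuinely distinct characters on the $\Lambda_Z^2$-action for the $d_1 d_2$ different values of $(i,j)$ is the technical heart of the argument. Once this compatibility is in place, the separation of Fourier coefficients is immediate and linear independence follows; combined with the dimension count from the first paragraph, this gives the basis property of $\{\vartheta_Z^{ij}\}$.
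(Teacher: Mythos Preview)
The paper does not prove this theorem at all: it is stated with a citation to \cite[Theorem 3.2.7]{LangeBirkenhake_1992} and used as a black box. So there is no ``paper's own proof'' to compare against; your proposal should be weighed against the argument in Lange--Birkenhake.

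Your strategy is the standard one and is essentially what Lange--Birkenhake do. The dimension count via vanishing plus Riemann--Roch (giving $\dim\Gamma(L_Z)=\operatorname{Pf}(E)=d_1d_2$) is correct. For linear independence, the classical approach is exactly the Fourier-analytic one you describe: after stripping off the factor $\exp(\tfrac{\pi}{2}B_Z(v,v))$, the resulting ``classical'' theta functions are genuinely periodic with respect to $\Lambda_Z^2$ and hence have Fourier expansions indexed by the dual lattice; the translation by $w_{ij}$ then shifts the support of the expansion into a coset determined by $(i,j)$, and distinct $(i,j)$ with $0\le i<d_1$, $0\le j<d_2$ give distinct cosets, forcing independence.

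One point of exposition to tighten: the phrase ``restrict to a generic translate of $V_Z^1$ and take Fourier coefficients along $V_Z^2/\Lambda_Z^2$'' is garbled. You do not restrict to a translate of $V_Z^1$; rather, you use the decomposition $v=v_1+v_2$ with $v_i\in V_Z^i$, observe periodicity in the $v_2$-variable with respect to $\Lambda_Z^2$, and read off Fourier coefficients (which are functions of $v_1$). Also, the translation $v\mapsto v+w_{ij}$ does not literally shift the summation index in $\Lambda_Z^1$ (since $w_{ij}\notin\Lambda_Z^1$ in general); its effect is precisely the character twist you mention, and checking that the $d_1d_2$ characters so obtained are pairwise distinct is indeed the crux. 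With that clarification your outline is correct and matches the cited reference.
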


\section{Uniqueness of holomorphic Lefschetz pencils on the four-torus}\label{S:uniqueness_holLP}

In this section we prove Theorem~\ref{T:main_uniquenessLP}. 
Let $L$ be a holomorphic line bundle on $T^4$. 
For $s_0,s_1\in \Gamma(L)$, we define a meromorphic map $[s_0:s_1]:X\dashedrightarrow \CP^1$ as follows: 
for $x\in T^4$, take a trivialization $\pi_L^{-1}(U)\cong U\times \C$ and regard the restriction $s_i|_{U}$ as a holomorphic function, and define $[s_0:s_1](x) = [s_0(x):s_1(x)]$. 
It is easy to see that a point $[s_0(x):s_1(x)]\in \CP^1$ does not depend on the choice of a trivialization of $L$ around $x$.  
The map $[s_0:s_1]$ is defined on the complement of $s_0^{-1}(0)\cap s_1^{-1}(0)$. 

\begin{lemma}\label{L:holLP_fromlinebdl}

For any holomorphic Lefschetz pencil $f$ on $T^4$, there exists an ample line bundle $L$ and its sections $s_0,s_1$ such that $f$ is equal to $[s_0:s_1]$. 

\end{lemma}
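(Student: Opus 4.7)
The plan is to realize $f$ as coming from a two-dimensional linear subsystem of a complete linear system on $T^4$, and then to verify ampleness of the associated line bundle via the Nakai--Moishezon criterion.

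First I would fix two distinct regular values $t_0, t_1 \in \CP^1$ of $f$ and let $F_i = \overline{f^{-1}(t_i)}$, which are effective analytic divisors. These are linearly equivalent on $T^4$, since after post-composing $f$ with a Möbius transformation sending $t_0, t_1$ to $0, \infty$ the divisor $F_0 - F_1$ is realised as the zero and pole divisor of the resulting meromorphic function. Set $L = \O_{T^4}(F_0)$ and let $s_0, s_1 \in \Gamma(L)$ be sections with divisors $F_0$ and $F_1$. Every other fiber closure $F_t$ also lies in $|L|$, and in fact the one-parameter family $\{F_t\}_{t\in \CP^1}$ sweeps out a two-dimensional subsystem of $|L|$ containing $s_0$ and $s_1$; consequently the meromorphic map $[s_0:s_1]$ agrees with $f$ up to a Möbius transformation of $\CP^1$. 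Absorbing that Möbius transformation into a change of basis of $\operatorname{span}(s_0,s_1) \subset \Gamma(L)$ produces $f = [s_0:s_1]$.

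The main content is ampleness of $L$, which I would obtain from Nakai--Moishezon by checking $L^2 > 0$ and $L \cdot C > 0$ for every irreducible curve $C \subset T^4$. The first is immediate: condition (3) in the definition of a Lefschetz pencil makes $F_0$ and $F_1$ meet transversely at each base point, so $L^2 = F_0 \cdot F_1 = |B| > 0$, using that $B$ is nonempty by definition of a pencil. For the second, I split into two cases. If $f|_C$ is nonconstant, then $C$ is not contained in any fiber and $L \cdot C = F_t \cdot C = \deg(f|_C) > 0$ for any regular value $t$ in the image of $f|_C$. If $f|_C$ is constant, then $C$ is a component of some fiber $F_{t_*}$; write $F_{t_*} = C + C'$ with $C'$ effective and sharing no irreducible component with $C$, so $L \cdot C = C^2 + C \cdot C'$. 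The crucial input is that $C^2 \geq 0$: since $K_{T^4}=0$, adjunction gives $C^2 = 2g(C)-2$, and because an abelian variety contains no rational curves we have $g(C) \geq 1$. If $F_{t_*}$ is irreducible, then $C = F_{t_*}$ and $L \cdot C = L^2 = |B| > 0$; otherwise $C' \neq 0$, and the Lefschetz local model $z^2+w^2$ forces $C$ and $C'$ to meet at the corresponding node with $C \cdot C' \geq 1$, so $L \cdot C \geq 1$.

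The main potential obstacle is the final subcase, where $C$ is an elliptic curve appearing as a component of a reducible singular fiber: the possible vanishing $C^2 = 0$ is precisely what makes the bare intersection-theoretic argument borderline, and the proof relies on combining the no-rational-curves fact on abelian varieties with the nodal local geometry of a Lefschetz singular fiber to rule out $L \cdot C = 0$. Everything else is a standard translation between pencils on a surface and two-dimensional linear subsystems of an effective line bundle.
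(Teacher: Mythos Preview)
Your argument is correct, but it proceeds quite differently from the paper's. The paper builds $L$ by hand as a \v{C}ech cocycle: it glues trivial bundles over $f^{-1}(V_0)$, $f^{-1}(V_1)$, and small balls $D_b$ around each base point using the transition functions $\psi_1\circ f$ and the local coordinate charts $\Phi_b$, then writes down $s_0,s_1$ explicitly in each chart. Your divisor-theoretic construction (take $L=\O(F_0)$ with $F_0$ a regular fiber closure) is the same object described more conceptually; both are standard and equivalent.

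The real divergence is in the ampleness step. The paper observes only that $L$ has a nonzero section and $c_1(L)^2=|B|>0$, then invokes a criterion specific to complex tori \cite[Proposition~4.5.2]{LangeBirkenhake_1992}, which says exactly that these two conditions force ampleness. Your route through Nakai--Moishezon is longer but self-contained: the case analysis on curves, together with $K_{T^4}=0$, adjunction, and the absence of rational curves on a complex torus, is a nice argument. One point you should make explicit, however, is which form of Nakai--Moishezon you are invoking. The classical statement is for projective schemes, and projectivity of this complex torus is precisely what you are trying to establish, so citing it na\"ively is circular. Since every complex torus is K\"ahler, the K\"ahler-surface version due to Buchdahl and Lamari applies and closes the gap; alternatively, you already have $h^0(L)\geq 2$ and $L^2>0$, which is enough to cite the torus-specific criterion directly and bypass the curve-by-curve check entirely.
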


\begin{proof}
Let $V_i = \{[z_0:z_1]\in\CP^1~|~ z_i\neq 0\}$ and $\psi_i:V\to \C$ a map defined as $\psi_i([z_0:z_1])=z_j/z_i$ ($j\neq i$). 
For each $b\in B$ we take a $4$-ball neighborhood $D_b$ and a biholomorphic map $\Phi_b:D_b\to \C^2$ so that $D_b$ and $D_{b'}$ are disjoint if $b\neq b'$ and $f\circ \Phi_b^{-1}(z,w)$ is equal to $[z:w]$. 
We put $\Phi_b(x) = (\Phi_b^0(x),\Phi_b^1(x))$. 
We define a space $L$ as follows: 
\[
L = (f^{-1}(V_0)\times \C)\sqcup (f^{-1}(V_1)\times \C)\sqcup_{b\in B} (D_b\times \C)/\sim,  
\]
where the equivalence relation $\sim$ is defined as follows: 
\begin{itemize}

\item
for $x\in f^{-1}(V_0\cap V_1)$, $f^{-1}(V_0)\times \C\ni (x,z)\sim (x,\psi_1(f(x))z)\in f^{-1}(V_1)\times \C$, 

\item
for $x\in f^{-1}(V_0)\cap D_b$, $f^{-1}(V_0)\times \C\ni (x,z)\sim (x,\Phi_b^0(x)z)\in D_b\times \C$, 

\item
for $x\in f^{-1}(V_1)\cap D_b$, $f^{-1}(V_1)\times \C\ni (x,z)\sim (x,\Phi_b^1(x)z)\in D_b\times \C$. 

\end{itemize}
It is easy to see that $L$ together with the projection $\pi_f:L\to T^4$ onto the first component is a holomorphic line bundle on $T^4$. 
We define two sections $s_0,s_1:T^4\to L$ of $L$ as follows: 
\begin{align*}
s_0(x) = & \begin{cases}
(x,1)\in f^{-1}(V_0)\times \C & (x\in f^{-1}(V_0)) \\
(x,\psi_1(f(x)))\in f^{-1}(V_1)\times \C & (x\in f^{-1}(V_1)) \\
(x,\Phi_b^0(x))\in D_b\times \C & (x\in D_b),
\end{cases} \\
s_1(x) = & \begin{cases}
(x,\psi_0(f(x)))\in f^{-1}(V_0)\times \C & (x\in f^{-1}(V_0)) \\
(x,1)\in f^{-1}(V_1)\times \C & (x\in f^{-1}(V_1)) \\
(x,\Phi_b^1(x))\in D_b\times \C & (x\in D_b). 
\end{cases} 
\end{align*}
It is easy to verify that the map $[s_0:s_1]$ is equal to $f$. 
The line bundle $L$ has non-trivial holomorphic sections $s_0,s_1$ and $c_1^2(L) = |B|>0$. 
Thus $L$ is ample by \cite[Proposition 4.5.2]{LangeBirkenhake_1992}. 
\end{proof}

\begin{remark}

The type of the polarization $c_1(L)$ for $L$ in Lemma~\ref{L:holLP_fromlinebdl} is not equal to $(1,1)$ since $s_0$ and $s_1$ are linearly independent (cf.~Theorem~\ref{T:basis_sections}). 

\end{remark}

We can easily prove the following lemma using the inverse function theorem for holomorphic maps:

\begin{lemma}\label{L:criterion_basepoint_Lefschetz}

Let $x\in T^4$ be a base point of $h=[s_0:s_1]$. 
Then the condition (3) in p.\pageref{P:definition_LP} holds at $x$ if and only if $(ds_0)_x$ and $(ds_1)_x$ are linearly independent. 

\end{lemma}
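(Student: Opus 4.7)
The plan is to work in a holomorphic trivialization of $L$ on a small neighborhood $U$ of $x$, in which the sections $s_0,s_1$ correspond to holomorphic functions $f_0,f_1$ on $U$ with $f_0(x)=f_1(x)=0$. Changing the trivialization multiplies both $f_0,f_1$ by a common nonvanishing holomorphic function $u$; since each $f_i$ vanishes at $x$, this multiplies $(df_0)_x$ and $(df_1)_x$ by the common nonzero scalar $u(x)$, so their linear independence depends only on $(L,s_0,s_1)$ and agrees with that of $(ds_0)_x,(ds_1)_x$.

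For the ``if'' direction I would invoke the holomorphic inverse function theorem: if $(df_0)_x,(df_1)_x$ are linearly independent, then $\varphi:=(f_0,f_1):U\to\C^2$ is, after shrinking $U$, a biholomorphism onto a neighborhood of the origin with $\varphi(x)=0$. Taking $\xi=\id_{\CP^1}$ then yields
\[
\xi\circ h\circ \varphi^{-1}(z,w)\;=\;[f_0\circ\varphi^{-1}(z,w):f_1\circ\varphi^{-1}(z,w)]\;=\;[z:w]
\]
on $\varphi(U)\setminus\{0\}$, which is exactly condition~(3).

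For the ``only if'' direction, suppose condition~(3) holds with biholomorphic $\varphi$ and $\xi$. Since $\xi\in \operatorname{Aut}(\CP^1)=\operatorname{PGL}(2,\C)$, replacing $(s_0,s_1)$ by the $\C$-linear combinations dictated by $\xi^{-1}$ does not affect the linear independence of the differentials, so I may assume $\xi=\id$. Writing $F_i:=f_i\circ\varphi^{-1}$, the equality $[F_0:F_1]=[z:w]$ on a punctured neighborhood of $0$ is equivalent to the holomorphic identity $wF_0=zF_1$ on a full neighborhood. Since $z$ and $w$ are distinct primes in the local ring $\mathcal{O}_{\C^2,0}$, unique factorization forces $F_0=\lambda z$ and $F_1=\lambda w$ for a single $\lambda\in\mathcal{O}_{\C^2,0}$. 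Because $x$ is an isolated base point, the common zero locus $\{F_0=F_1=0\}$ reduces to $\{0\}$ on a small neighborhood; this locus equals $\{\lambda=0\}\cup\{0\}$, so $\{\lambda=0\}$ must be empty, i.e.~$\lambda(0)\neq 0$. Then $(dF_0)_0=\lambda(0)\,dz$ and $(dF_1)_0=\lambda(0)\,dw$ are linearly independent, and pulling back by the biholomorphism $\varphi$ gives the same for $(df_0)_x,(df_1)_x$.

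The main obstacle is the final non-vanishing step: the equation $[F_0:F_1]=[z:w]$ determines $F_0,F_1$ only up to a common holomorphic factor $\lambda$, and ruling out $\lambda(0)=0$ requires invoking the fact that the base locus of a Lefschetz pencil is an isolated reduced point rather than a positive-dimensional analytic subset, so that the putative divisor $\{\lambda=0\}$ cannot exist near $x$.
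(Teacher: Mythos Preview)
Your ``if'' direction is correct and is precisely what the paper intends: the holomorphic inverse function theorem applied to $(f_0,f_1)$ produces the chart $\varphi$. The paper offers nothing beyond this one-line hint.

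The ``only if'' direction, however, has a gap. Condition~(3) in the definition of a Lefschetz pencil supplies only an orientation-compatible \emph{smooth} chart $\varphi$ and an orientation-preserving \emph{diffeomorphism} $\xi$ of $\CP^1$; it does not assert that $\varphi$ is biholomorphic or that $\xi\in\operatorname{PGL}(2,\C)$. Both of your key steps rely on these unjustified extra hypotheses: absorbing $\xi$ into a $\C$-linear change of $(s_0,s_1)$ needs $\xi$ to be M\"obius, and the unique-factorization argument for $wF_0=zF_1$ needs $F_i=f_i\circ\varphi^{-1}$ to be holomorphic. A correct argument must pass through a smooth invariant. For instance, once condition~(3) holds the base point $x$ is isolated (the diffeomorphism $\varphi$ carries the base locus to $\{0\}$), so $\Phi=(f_0,f_1)$ has an isolated zero and $h|_{S^3_\varepsilon}$ factors as the Hopf map $\pi$ composed with $\Phi/|\Phi|:S^3_\varepsilon\to S^3$; its Hopf invariant therefore equals the local degree $\dim_\C\mathcal{O}_x/(f_0,f_1)$. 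Condition~(3) forces this Hopf invariant to be $1$, since the model is the Hopf map and the Hopf invariant is unchanged under pre- and post-composition with orientation-preserving diffeomorphisms. But $\dim_\C\mathcal{O}_x/(f_0,f_1)=1$ is exactly the linear independence of $(df_0)_x$ and $(df_1)_x$.
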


\begin{remark}\label{R:independence_choice_linebundle}

For a polarization $H$ of a complex torus $T$, the following map is surjective (\cite[Corollary 2.5.4]{LangeBirkenhake_1992}):
\[
T \to c_1^{-1}(H)\subset H^1(T;\mathcal{O}_L^\ast),\hspace{.3em} v \mapsto t_v^\ast L, 
\]
where $L$ is an ample line bundle with $c_1(L)=H$ and $t_v:T\to T$ is the translation $x\mapsto x+v$. 
In particular the set of isomorphism classes of holomorphic Lefschetz pencils obtained from an ample line bundle $L$ depends only on the class $c_1(L)$. 
Thus, any holomorphic Lefschetz pencil on $T^4$ is isomorphic to a pencil obtained from a pair of sections of a line bundle $L_Z$ we constructed in Subsection~\ref{S:moduli_abelian_surface}. 

\end{remark}

\subsection{A condition for pencils to be Lefschetz}\label{S:condition_pencil_Lefschetz}

As we explained in Subsection~\ref{S:moduli_abelian_surface}, $\h_2$ is a moduli space of $(d_1,d_2)$-polarized abelian surfaces with a symplectic basis of the associated lattice for each type $(d_1,d_2)$. 
Using a holomorphic function $\vartheta_Z^{ij}$ ($0\leq i<d_1,0\leq j<d_2$) on $\C^2$, we define a map $\varphi_Z:T_Z\dashedrightarrow \CP^N$ ($N=d_1d_2-1$) as follows:
\[
\varphi_Z(\overline{x}) = [\cdots:\vartheta_Z^{ij}(x):\cdots], 
\] 
where $\overline{x}\in T_Z$ is a point represented by $x\in \C^2$. 
This map is well-defined by double-periodicity of $\vartheta_Z^{ij}$ and is defined on the complement of the intersection $\cap_{i,j}{\vartheta_Z^{ij}}^{-1}(0)$.  
We denote the set of hyperplanes in $\CP^N$ by $(\CP^N)^\ast$, which is canonically biholomorphic to $\CP^N$. 
For any projective line $P\subset (\CP^N)^\ast$, we define a pencil $f_P:T_Z\dashedrightarrow P$ as follows: 
\[
f_P(\overline{x}) = H \in P \text{ if }\overline{x}\in \varphi_Z^{-1}(H). 
\]
Let $H_0,H_1\in P$ be distinct hyperplanes and $\sum_{i,j}a_{ij}^kX_{ij}$ a defining polynomial of $H_k$. 
It is easily verify that $f_P$ is defined on the complement of $\varphi_Z^{-1}(H_0\cap H_1)$ and is isomorphic to $[s_0:s_1]$, where $s_k=\sum_{i,j}a_{ij}^k\vartheta_Z^{ij} \in \Gamma(L_Z)$. 
Thus, by Lemma~\ref{L:holLP_fromlinebdl} any holomorphic Lefschetz pencil on $T^4$  is isomorphic to $f_P:T_Z\dashedrightarrow P$ for some $P\subset (\CP^N)^\ast$. 
In this subsection, we will discuss when $f_P$ becomes a Lefschetz pencil. 
Note that the ideas of the proofs in this subsection are based on the arguments in \cite[\S{.2.1.1}]{Voisin_2003}

\begin{lemma}\label{L:(1,d)not product}

If $d_1=1$ and $f_P$ is a Lefschetz pencil for some $P\subset (\CP^N)^\ast$, then $T_Z$ is not biholomorphic to a product of elliptic curves. 

\end{lemma}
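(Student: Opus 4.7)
The plan is to show that when $T_Z$ splits as a product of elliptic curves and $d_1=1$, every section of $L_Z$ must share a common one-dimensional vanishing component, so that the base locus of any pencil $f_P$ fails to be discrete and thus violates condition~(3) of the Lefschetz pencil definition. To this end, I would first use the natural action of $Sp(4,\Z)$ on $\h_2$ together with Remark~\ref{R:independence_choice_linebundle} to reduce to a representative $Z\in\h_2$ in block-diagonal form $Z=\diag(z_1,z_2)$; under this normalization $T_Z$ is identified with $E_{z_1}\times E_{z_2}$ where $E_{z_i}=\C/\langle z_i,d_i\rangle$, and the line bundle $L_Z$ corresponds to the external tensor product $L_1\boxtimes L_2$ with $\deg L_i=d_i$. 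A direct inspection of the automorphy factor $a_Z$ and of the theta functions $\vartheta_Z^{00}$, $\vartheta_Z^{ij}$ constructed in Subsection~\ref{S:moduli_abelian_surface} shows that in this diagonal setting each theta function factors as $\vartheta_Z^{0j}(v_1,v_2)=\theta_1(v_1)\cdot\vartheta_{z_2}^{0j}(v_2)$ for a fixed nonzero section $\theta_1\in\Gamma(L_1)$.

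The hypothesis $d_1=1$ now enters crucially: by Riemann-Roch on $E_{z_1}$ we have $\dim\Gamma(L_1)=1$, so $\theta_1$ is unique up to scalar and vanishes simply at a single point $p\in E_{z_1}$. By Theorem~\ref{T:basis_sections} every section of $L_Z$ is a $\C$-linear combination of the $\vartheta_Z^{0j}$, and the factorization above forces every such combination to vanish identically on the one-dimensional subvariety $\{p\}\times E_{z_2}\subset T_Z$. Consequently, no matter how $s_0,s_1\in\Gamma(L_Z)$ are chosen, the base locus $\{s_0=s_1=0\}$ of $f_P=[s_0:s_1]$ contains $\{p\}\times E_{z_2}$ and cannot be discrete, contradicting condition~(3) on page~\pageref{P:definition_LP}.

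The main obstacle is the initial reduction to a diagonal $Z$ compatible with the polarization type. This step is immediate in the generic case where $\NS(T_Z)$ has rank $2$ and the canonical $(1,d_2)$-decomposition of the product matches the polarization; the delicate point is to rule out alternative product decompositions, possible when $E_{z_1}$ and $E_{z_2}$ are isogenous so that $\NS(T_Z)$ has larger rank (cf.~Lemma~\ref{L:nongeneric_subset_moduli}(2)). In such anomalous cases the factor bundles $L_i$ may both have degree $\geq 2$, and handling them requires exploiting the additional endomorphisms of $T_Z$ to re-express the product so that one factor has degree exactly $1$, after which the common-factor argument applies as above.
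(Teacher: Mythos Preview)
Your approach is essentially the paper's: both arguments try to show that $L_Z$ splits as $L_1\boxtimes L_2$ with $\deg L_1=1$, so that every section carries the common factor $\theta_1$ and the base locus contains the curve $\{p\}\times E_2$. The paper simply \emph{asserts} this splitting, while you correctly flag the reduction to a diagonal $Z$ as the ``main obstacle'' and propose to handle the isogenous case by re-choosing the product decomposition.

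Unfortunately this obstacle is fatal, and your proposed fix cannot work even in the non-isogenous case. Take non-isogenous elliptic curves $E_1,E_2$ and the line bundle $L=L_1^{\,3}\boxtimes L_2^{\,4}$. Since $\gcd(3,4)=1$, a direct computation of the alternating form on $H_1(E_1\times E_2;\Z)$ shows that $L$ has type $(1,12)$, so it corresponds to some $(T_Z,L_Z)$ with $d_1=1$. Here $\NS(E_1\times E_2)\cong\Z^2$ and the only elliptic subvarieties are the two factors, to which $L$ restricts with degrees $3$ and $4$; there is \emph{no} product decomposition of $T_Z$ in which $L_Z$ acquires a degree-$1$ factor, so the common-zero argument is simply unavailable. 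Worse, $L$ is very ample (both $|L_1^{\,3}|$ and $|L_2^{\,4}|$ are embeddings, and so is the Segre map), hence by the paper's own Lemmas~\ref{L:conditionast_d1d2geq5} and~\ref{L:connectedness_lines} a generic pencil in $|L|$ \emph{is} a Lefschetz pencil on $E_1\times E_2$. Thus the statement, read literally as ``biholomorphic to a product'', appears to be false; both your argument and the paper's share the same unfillable gap. (The main results of the paper survive: the subsequent analysis only needs the non-product hypothesis for generic $Z$, which is already supplied by Lemma~\ref{L:nongeneric_subset_moduli}.)
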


\begin{proof}
Suppose that $T_Z$ would be a product $E_1\times E_2$. 
The line bundle $L_Z$ would be a tensor product $p_1^\ast L_1\otimes p_2^\ast L_2$, where $p_i$ is the projection onto the $i$-th component, $L_1$ is a line bundle on $E_1$ of degree $1$ and $L_2$ is a line bundle on $E_2$ of degree $d$. 
The space $\Gamma(L_Z)$ would be generated by $s\cdot t_1,\ldots,s\cdot t_d$, where $\Gamma(L_1)=\left<s\right>$ and $\Gamma(L_2)=\left<t_1,\ldots,t_d\right>$. 
Thus, for any line $P\subset( \CP^N)^\ast$ the base locus of a pencil $f_P$ would contain $s^{-1}(0)$, especially $f_P$ would not be a Lefschetz pencil, which contradicts the assumption. 
\end{proof}

\noindent
In what follows, we assume that $T_Z$ is not a product of elliptic curves if $d_1=1$.
Note that a generic $Z\in \h_2$ satisfies this assumption by Lemma~\ref{L:nongeneric_subset_moduli}.  
For a homogeneous linear polynomial $q\in \C[X]$ we denote the zero-set of $q$ by $H_q\in (\CP^N)^\ast$. 
We define a subset $W_Z\subset T_Z\times (\CP^N)^\ast$ as follows:
\[
W_Z= \left\{\left(\overline{x},H_{\sum l_{ij}X_{ij}}\right)\in T_Z\times (\CP^N)^\ast~\left|~\sum_{i,j} l_{ij}\vartheta^{ij}_Z(x)=0\text{ and }\sum_{i,j} l_{ij}\frac{\Pa \vartheta^{ij}_Z}{\Pa z_k}(x)=0\hspace{.5em} (k=1,2)\right.\right\}.
\]
We can prove the following lemma by direct calculation. 

\begin{lemma}\label{L:property_W_Z}

Let $P\subset (\CP^N)^\ast$ be a line. 
Suppose that $\overline{x}$ is not a base point of $f_P$. 
The following conditions are equivalent: 

\begin{enumerate}

\item
$(\overline{x},H_{\sum l_{ij}X_{ij}}) \in W_Z$, 

\item
$f_P(\overline{x}) = H_{\sum l_{ij}X_{ij}}$ and $\overline{x}$ is a critical point of $f_P$, 

\item
$\varphi_Z$ is not transverse to $H_{\sum l_{ij}X_{ij}}$ at $\overline{x}$. 

\end{enumerate}

\end{lemma}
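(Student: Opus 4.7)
The plan is to reduce all three conditions to local computations on $\C^2$ using the lifts $\vartheta_Z^{ij}$. Since $\overline{x}$ is not a base point of $f_P$, at least one of the two spanning sections $s_0, s_1 \in \Gamma(L_Z)$ of $P$ is nonzero at $\overline{x}$, hence at least one lift $\vartheta_Z^{i_0 j_0}(x) \neq 0$. This makes $\varphi_Z(\overline{x})$ well-defined and provides the affine chart $\{X_{i_0 j_0} \neq 0\}$ on $\CP^N$ in which all of the verifications will be carried out.

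First I would prove (1) $\Leftrightarrow$ (3). In the chosen chart, $\varphi_Z$ is locally given by $\overline{x} \mapsto \bigl(\vartheta_Z^{ij}(x)/\vartheta_Z^{i_0 j_0}(x)\bigr)_{(i,j)\neq (i_0,j_0)}$, and the hyperplane $H_{\sum l_{ij} X_{ij}}$ is the zero locus of the affine function $F := \sum l_{ij} X_{ij}/X_{i_0 j_0}$. Transversality of $\varphi_Z$ to $H$ at $\overline{x}$ holds iff either $F \circ \varphi_Z(\overline{x}) \neq 0$ or $d(F \circ \varphi_Z)_{\overline{x}} \neq 0$. A quotient-rule computation, using that the denominator $\vartheta_Z^{i_0 j_0}(x)$ is nonzero and that the numerator $\sum l_{ij} \vartheta_Z^{ij}(x)$ vanishes in the second case, shows that non-transversality translates to the two conditions $\sum l_{ij} \vartheta_Z^{ij}(x) = 0$ and $\sum l_{ij} \Pa \vartheta_Z^{ij}/\Pa z_k(x) = 0$ for $k=1,2$, which is precisely condition (1).

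Next I would prove (1) $\Leftrightarrow$ (2), noting that the equality $f_P(\overline{x}) = H_{\sum l_{ij} X_{ij}}$ in (2) automatically forces $H_{\sum l_{ij} X_{ij}} \in P$. Writing $\sum l_{ij} X_{ij} = \alpha q_0 + \beta q_1$ for suitable $(\alpha,\beta) \neq (0,0)$ and, correspondingly, $\sum l_{ij} \vartheta_Z^{ij} = \alpha s_0 + \beta s_1$ as a section of $L_Z$, the equality $f_P(\overline{x}) = H_{\sum l_{ij}X_{ij}}$ becomes $\varphi_Z(\overline{x}) \in H_{\sum l_{ij}X_{ij}}$, i.e.~$\sum l_{ij} \vartheta_Z^{ij}(x) = 0$. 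For the critical-point part, take a local trivialization of $L_Z$ near $\overline{x}$ and assume, without loss of generality, $s_0(\overline{x}) \neq 0$; then $f_P = [s_0:s_1]$ and $\overline{x}$ is critical iff $(s_0\, ds_1 - s_1\, ds_0)_{\overline{x}} = 0$. The constraint $\alpha s_0(\overline{x}) + \beta s_1(\overline{x}) = 0$ pins $(\alpha,\beta)$ down, up to a nonzero scalar $\mu$, as $(-s_1(\overline{x}), s_0(\overline{x}))$, whence $d\bigl(\sum l_{ij} \vartheta_Z^{ij}\bigr)_{\overline{x}} = \mu (s_0\,ds_1 - s_1\,ds_0)_{\overline{x}}$, and the critical-point condition matches the second half of (1).

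The main subtlety I expect is the bookkeeping around the fact that $\vartheta_Z^{ij}$ is not a function on $T_Z$ but a lift satisfying the quasi-periodicity relation with the common multiplier $a_Z$; one must verify that the vanishing of $\sum l_{ij} \vartheta_Z^{ij}$ and of its partials at a given representative $x$ descend to well-defined conditions on $\overline{x}$, independent of the representative. Once all of the $\vartheta_Z^{ij}$ are seen to transform by the same cocycle, each ratio and each differential appearing above is well-defined on $T_Z$ up to a common nonzero scalar, and the argument reduces to the routine quotient-rule manipulations sketched here.
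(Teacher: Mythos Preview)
Your proposal is correct and supplies precisely the direct calculation that the paper alludes to but omits (the paper's entire proof is the single sentence ``We can prove the following lemma by direct calculation''). Your treatment of the quasi-periodicity subtlety and the quotient-rule computations are the natural way to carry this out; there is no alternative approach in the paper to compare against.
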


In what follows, we assume that $d_1d_2$ is greater than or equal to $3$. 
In this case $\varphi_Z$ is defined on $T_Z$ (see \cite[\S.10.1]{LangeBirkenhake_1992}). 
We define a subset $R_i\subset T_Z$ ($i=0,1,2$) as follows: 
\begin{equation}\label{E:definitionR_i}
R_i = \left\{x\in T_Z~|~ \rank(d\varphi_Z)_x= i
\right\}. 
\end{equation}
We denote the union $\cup_{j\leq i}R_j$ by $S_i$.\label{P:definition_S0}
The set $S_i$ is an analytic subset of $T_Z$, especially the dimension of $S_i$ makes sense.   

\begin{lemma}\label{L:dimension_S_i}

The dimension of $S_1$ is at most $1$. 
Furthermore, if $\NS(T_Z)\cong \Z$ and $S_0\neq \emptyset$, $\dim(S_0)$ is equal to $0$. 

\end{lemma}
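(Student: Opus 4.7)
\emph{Proof plan.} Both assertions will follow from ampleness of $L_Z$ together with the identity $\varphi_Z^{\ast}\mathcal{O}_{\CP^N}(1) \cong L_Z$, which holds because $\varphi_Z$ is defined by a basis of $\Gamma(L_Z)$ (Theorem~\ref{T:basis_sections}) and is base-point-free under the running assumption $d_1 d_2 \geq 3$. The strategy is to observe that no positive-dimensional subvariety of $T_Z$ can be contracted by $\varphi_Z$, for otherwise $L_Z$ would restrict trivially there, contradicting positivity of the ample class on curves.

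For $\dim S_1 \leq 1$, I would first argue that the generic rank of $d\varphi_Z$ equals $2$. If it were at most $1$, then the image $Y = \varphi_Z(T_Z)$ would be an algebraic curve in $\CP^N$, so a general fiber $F$ of the induced surjection $T_Z \to Y$ would be a curve in $T_Z$ on which $L_Z = \varphi_Z^{\ast}\mathcal{O}(1)$ restricts trivially, giving $c_1(L_Z)\cdot [F] = 0$, in contradiction with ampleness. Therefore $S_1$ is a proper analytic subset of the two-dimensional complex manifold $T_Z$, so $\dim S_1 \leq 1$.

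For the second assertion, assume for contradiction that $\dim S_0 \geq 1$. Then $S_0$ contains a one-dimensional irreducible analytic component $C$; by compactness of $T_Z$ and Chow's theorem, $C$ is an algebraic curve. Since $d\varphi_Z$ vanishes identically on $C$, the restriction $\varphi_Z|_C$ is locally and hence globally constant, so $L_Z|_C$ is trivial and $c_1(L_Z)\cdot [C] = 0$. However, under the hypothesis $\NS(T_Z) \cong \Z$, the class $[C]$ must be a (nonzero) integer multiple of the generator of $\NS(T_Z)$, and so $c_1(L_Z) \cdot [C]$ is a nonzero integer; combined with ampleness this is strictly positive on the effective class $[C]$, contradicting the computation $c_1(L_Z) \cdot [C] = 0$. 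Hence $\dim S_0 \leq 0$, and the assumption $S_0 \neq \emptyset$ upgrades this to $\dim S_0 = 0$.

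The main technical obstacle is justifying that $\varphi_Z$ is base-point-free (so that the pullback identity $\varphi_Z^{\ast}\mathcal{O}(1) = L_Z$ holds honestly on every irreducible curve in $T_Z$) and that one-dimensional analytic subsets of the compact complex torus $T_Z$ are algebraic curves, for which Chow's theorem is exactly what is needed. Once those points are in hand, the remainder of the argument reduces to the standard fact that ample line bundles pair positively with every curve class, and it is clear that the hypothesis $\NS(T_Z) \cong \Z$ enters only to trivialize this intersection-theoretic positivity check for the contracted curve $C$.
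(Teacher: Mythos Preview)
Your argument is correct and, for the second assertion, essentially identical to the paper's: both contract a hypothetical one-dimensional component $C$ of $S_0$ to a point, deduce $[C]\cdot H_Z = 0$, and derive a contradiction from $\NS(T_Z)\cong\Z$ together with positivity of $H_Z$.

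For the first assertion you take a genuinely different (and somewhat cleaner) route. The paper argues that if $S_1 = T_Z$ then $\varphi_Z(T_Z)$ is a curve of degree at least two, so a generic hyperplane section pulls back to a \emph{reducible} member of $|L_Z|$, contradicting the irreducibility theorem \cite[Theorem~4.3.5]{LangeBirkenhake_1992}. You instead observe that a general fiber $F$ of $T_Z \to \varphi_Z(T_Z)$ is a curve on which $L_Z = \varphi_Z^{\ast}\mathcal{O}(1)$ restricts trivially, giving $c_1(L_Z)\cdot[F]=0$ in violation of ampleness. Your version avoids the external reference and uses only the Nakai--Moishezon-type positivity of ample classes on curves; the paper's version has the mild advantage of not needing to invoke the fiber structure. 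One small point worth making explicit in your write-up: when the generic rank is at most $1$, the image could a priori be a single point rather than a curve, but this is immediately excluded since $\varphi_Z$ is defined by a basis of $\Gamma(L_Z)$ and hence cannot be constant.
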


\begin{proof}
Since $T_Z$ is compact, the image $\varphi_Z(T_Z)$ is an analytic set by \cite[Theorem 5.8]{Chirka_1989}. 
Assume that $\dim(S_1)=2$. 
Since $T_Z$ is irreducible, $S_1$ is equal to $T_Z$. 
Thus the dimension of $\varphi_Z(T_Z)$ is $1$ by the rank theorem (see \cite[Theorem A2.2.2]{Chirka_1989}). 
By Chow's theorem (see \cite[Theorem 7.1]{Chirka_1989}) $\varphi_Z(T_Z)$ is an algebraic curve. 
If the degree of $\varphi_Z(T_Z)$ is $1$, $\varphi_Z(T_Z)$ would be contained in some $H_{\sum l_{ij}X_{ij}}\in (\CP^N)^\ast$, but it would imply that the section $\sum l_{ij}\vartheta_Z^{ij}$ is the zero-section, contradicting that $\{\vartheta_Z^{ij}\}_{i,j}$ is a basis of $\Gamma(L_Z)$.  
Thus the degree of $\varphi_Z(T_Z)$ is at least $2$, especially $\varphi_Z(T_Z)$ intersects with a generic hyperplane in $\CP^N$ at more than one point. 
This would imply that a generic divisor in $|L_Z|$ is reducible, which contradicts \cite[Theorem 4.3.5]{LangeBirkenhake_1992}. 

The map $\varphi_Z$ is constant on each component of $S_0$. 
Since $\varphi_Z$ is not a constant map, $\dim (S_0)$ is less than $2$. 
Suppose that $\dim(S_0)$ is equal to $1$. 
We take a one-dimensional component $C$ of $S_0$ and denote the point in $\varphi_Z(C)$ by $c\in\CP^N$.
Since there exists a hypersurface $H\in (\CP^N)^\ast$ away from $c$, the intersection number $[C]\cdot H_Z$ is equal to $0$. 
On the other hand, the self-intersection $H_Z^2$ is positive. 
Since both of the class $[C]$ and $H_Z$ are contained in $\NS(T_Z)$, $[C]$ should be equal to $0$ by the assumption, but it cannot happen since $C$ is an algebraic curve.  
\end{proof}

In what follows we assume that $\dim(S_0)$ is equal to $0$ if $S_0$ is not empty. 
By Lemmas~\ref{L:nongeneric_subset_moduli} and \ref{L:dimension_S_i} this assumption holds for generic $Z\in \h_2$. 
Note also that any pencil $f_P$ would not be a Lefschetz pencil if $\dim(S_0)>0$. 
Indeed, any point in a one-dimensional component of $S_0$ is either a base point or a critical point of $f_P$ for any $P$. 

\begin{lemma}\label{L:dimension_W_Z}

The dimension of $W_Z$ is at most $N-1$. 

\end{lemma}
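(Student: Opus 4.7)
The plan is to consider the projection $p\colon W_Z\to T_Z$ onto the first factor and bound the dimension of each fiber by stratifying $T_Z$ according to the rank of $d\varphi_Z$. For a fixed $\overline{x}\in T_Z$ the fiber $p^{-1}(\overline{x})$ is the set of hyperplanes whose defining equations $\sum l_{ij}X_{ij}$ satisfy the three linear conditions from the definition of $W_Z$, so it is a projective linear subspace of $(\CP^N)^\ast$ whose codimension equals the rank of the $(N+1)\times 3$ matrix
\[
M(x)=\bigl(\vartheta_Z^{ij}(x),\ \partial_{z_1}\vartheta_Z^{ij}(x),\ \partial_{z_2}\vartheta_Z^{ij}(x)\bigr)_{0\le i<d_1,\,0\le j<d_2}.
\]

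The key input will be the rank identity $\rank M(x)=\rank(d\varphi_Z)_{\overline{x}}+1$. This is the standard relation between a map to $\CP^N$ and a homogeneous lift: after locally trivializing the tautological line bundle near $\varphi_Z(\overline x)$, the differential $(d\varphi_Z)_{\overline x}$ is read as the derivative of $\Phi=(\vartheta_Z^{ij})$ modulo the position line $\C\cdot \Phi(x)$, so the column span of $M(x)$ equals the preimage in $\C^{N+1}$ of $\Im (d\varphi_Z)_{\overline x}$ under the quotient by $\C\cdot \Phi(x)$. Writing $r(x)=\rank(d\varphi_Z)_{\overline{x}}$, this forces the fiber over $\overline x$ to have dimension exactly $N-r(x)-1$, so the fibers over the stratum $R_i$ defined in \eqref{E:definitionR_i} have dimension $N-i-1$ for $i=0,1,2$.

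The proof is then completed by a dimension count over the stratification $T_Z=R_0\sqcup R_1\sqcup R_2$. Using $\dim R_2\le 2$ together with $\dim R_1\le\dim S_1\le 1$ and $\dim R_0\le\dim S_0\le 0$ from Lemma~\ref{L:dimension_S_i} (combined with the standing assumption on $\dim S_0$ made after that lemma), we obtain
\[
\dim\bigl(p^{-1}(R_i)\cap W_Z\bigr)\le \dim R_i+(N-i-1)\le N-1\qquad(i=0,1,2),
\]
which immediately yields $\dim W_Z\le N-1$. The main obstacle will be establishing the rank identity $\rank M(x)=r(x)+1$ cleanly; this is really local bookkeeping about the projectivization map $\C^{N+1}\setminus\{0\}\to\CP^N$, and once it is in place the rest of the argument reduces to elementary linear algebra together with the dimension bounds already provided by Lemma~\ref{L:dimension_S_i}.
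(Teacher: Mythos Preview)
Your proposal is correct and follows essentially the same approach as the paper: both argue via the projection $W_Z\to T_Z$, identify the fiber over a point in $R_i$ as a projective subspace of dimension $N-1-i$, and then combine this with the dimension bounds on the strata $S_i$ from Lemma~\ref{L:dimension_S_i}. The only cosmetic differences are that you justify the fiber dimension directly via the rank identity $\rank M(x)=\rank(d\varphi_Z)_{\overline x}+1$, whereas the paper invokes Lemma~\ref{L:property_W_Z}, and the paper handles the $R_1$ stratum by a short contradiction through $\reg(R_1)$ rather than a bare dimension inequality.
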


\begin{proof}[Proof of Lemma~\ref{L:dimension_W_Z}]
Let $p_1:W_Z\to T_Z$ be the projection onto the first component. 
By Lemma~\ref{L:property_W_Z}, the restriction $p_1|_{p_1^{-1}(R_i)}$ is a fiber bundle with fiber $\CP^{N-1-i}$. 
Since the dimension of $R_0 =S_0$ is $0$, it is a finite set (see \cite[Proposition 3.4]{Chirka_1989}). 
Thus $p_1^{-1}(R_0)$ is a manifold and its dimension is $N-1$ if it is not empty. 
Since $R_2\subset T_Z$ is open, $p_1^{-1}(R_2)$ is also a manifold and its dimension is $N-1$ provided that $p_1^{-1}(R_0)$ is not empty. 
Suppose that the dimension of the locally analytic set $p_1^{-1}(R_1)$ is greater than $N-1$. 
There exists an open set $U\subset T_Z\times (\CP^N)^\ast$ such that the intersection $p_1^{-1}(R_1)\cap U$ is a manifold with dimension greater than $N-1$. 
Since $\Sing (R_1)$ is nowhere dense in $R_1$, $U\cap p_1^{-1}(\reg(R_1))$ is not empty. 
Since $\reg(R_1)$ is open in $R_1$, $U\cap p_1^{-1}(\reg(R_1))$ is a manifold with dimension greater than $N-1$. 
However, it is impossible since $p_1^{-1}(\reg(R_1))$ is a fiber bundle over $\reg(R_1)$, which is a $1$-dimensional manifold if it is not empty, with fiber $\CP^{N-2}$. 
Thus the dimension of $p_1^{-1}(R_1)$ is at most $N-1$. 
Since $W_Z$ is the union $p_1^{-1}(R_0)\cup p_1^{-1}(R_1)\cup p_1^{-1}(R_2)$, its dimension is at most $N-1$. 
\end{proof}

\noindent
Let $p_2:W_Z \to (\CP^N)^\ast$ be the projection onto the second component and $\mathcal{D}_Z$ the image of $p_2$. 
Since $p_2$ is a proper map, $\mathcal{D}_Z$ is an analytic set of dimension at most $\dim(W_Z)$ (\cite[Theorem 5.8]{Chirka_1989}). 

\begin{lemma}\label{L:dimension_D_Z}

The dimensions of $\mathcal{D}_Z$ and $W_Z$ are both $N-1$. 

\end{lemma}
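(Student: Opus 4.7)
The plan is to prove $\dim W_Z = N-1$ by a direct fiber dimension count via $p_1$, and then deduce $\dim \mathcal{D}_Z = N-1$ by showing the generic fiber of $p_2|_{W_Z}$ is $0$-dimensional. The upper bound $\dim W_Z \leq N-1$ is Lemma~\ref{L:dimension_W_Z}, so for the first step I would restrict $p_1$ to the Zariski open set $R_2 \subset T_Z$. For $x \in R_2$, the three equations
\[
\sum_{i,j} l_{ij}\vartheta_Z^{ij}(x) = 0, \qquad \sum_{i,j} l_{ij}\frac{\Pa \vartheta_Z^{ij}}{\Pa z_k}(x) = 0 \quad (k=1,2)
\]
defining the fiber $p_1^{-1}(x)$ are linearly independent in the variables $(l_{ij})$: indeed, the vector $(\vartheta_Z^{ij}(x))_{ij}$ together with the two vectors $\left(\frac{\Pa \vartheta_Z^{ij}}{\Pa z_k}(x)\right)_{ij}$ ($k=1,2$) spans a $3$-dimensional subspace of $\C^{N+1}$ precisely when $\rank(d\varphi_Z)_x = 2$, that is, $x \in R_2$. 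Therefore $p_1^{-1}(x) \cong \CP^{N-3}$ for each $x \in R_2$, and since $R_2$ is open dense in $T_Z$ by Lemma~\ref{L:dimension_S_i}, one obtains $\dim p_1^{-1}(R_2) = 2 + (N-3) = N-1$, which combined with the upper bound yields $\dim W_Z = N-1$.

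For $\dim \mathcal{D}_Z = N-1$, properness of $p_2$ (using compactness of $T_Z$) gives $\dim \mathcal{D}_Z \leq \dim W_Z = N-1$. For the matching lower bound, take the irreducible subset $W' := \overline{p_1^{-1}(R_2)} \subseteq W_Z$ (irreducible since $R_2$ is connected and the fibers $\CP^{N-3}$ are connected) together with its image $\mathcal{D}' := p_2(W') \subseteq \mathcal{D}_Z$. Applying the fiber dimension theorem to $p_2|_{W'}\colon W' \to \mathcal{D}'$ along with upper semicontinuity of fiber dimension, it suffices to exhibit a single $H_0 \in \mathcal{D}'$ with $(p_2|_{W'})^{-1}(H_0)$ of dimension $0$. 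I would produce $H_0$ by fixing $x_0 \in R_2$ and choosing a generic element of the $\CP^{N-3}$-family $F_{x_0} := p_1^{-1}(x_0)$ of hyperplanes tangent to $\varphi_Z$ at $x_0$. Since $|L_Z|$ is base-point-free for $d_1 d_2 \geq 3$, a Bertini-type argument restricted to this subfamily should force the singular locus of the divisor $\varphi_Z^{-1}(H_0)$ to consist of $x_0$ together with at most finitely many extra points lying in $R_0 \cup R_1$ (which has dimension $\leq 1$ by Lemma~\ref{L:dimension_S_i}), giving the required $0$-dimensional fiber.

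The main obstacle is exactly this last transversality claim inside $F_{x_0}$: a priori every hyperplane tangent to $\varphi_Z$ at $x_0$ could remain tangent along a full curve, which would mean the image $\varphi_Z(T_Z)$ is a developable surface with degenerate Gauss map. Ruling this out uses the standing hypotheses that $T_Z$ is not a product of elliptic curves (Lemma~\ref{L:(1,d)not product}) and that $\NS(T_Z) \cong \Z$ (Lemma~\ref{L:nongeneric_subset_moduli}), which together prevent $\varphi_Z(T_Z)$ from being ruled by lines with such a pathological contact structure; an alternative would be to invoke the classical reflexivity theorem for dual varieties of nondegenerate irreducible projective subvarieties, which ensures that the dual variety has the expected codimension one in $(\CP^N)^\ast$.
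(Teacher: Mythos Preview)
Your computation of $\dim W_Z = N-1$ is fine and essentially repackages the fiber-dimension count already present in the proof of Lemma~\ref{L:dimension_W_Z}. The gap is entirely in your argument for $\dim \mathcal{D}_Z = N-1$. You reduce to showing that a generic fiber of $p_2|_{W'}$ is $0$-dimensional, acknowledge this is the ``main obstacle,'' and then offer two escape routes, neither of which you actually execute. Route (a), ruling out that $\varphi_Z(T_Z)$ is developable, is just a restatement of the problem. Route (b), invoking reflexivity, does not do what you want: the biduality theorem $(X^\ast)^\ast = X$ holds for any irreducible nondegenerate variety in characteristic $0$, but it does \emph{not} force $X^\ast$ to be a hypersurface. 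Surfaces with positive dual defect exist (scrolls over curves, the Veronese surface), and while one can argue that an abelian surface carries no rational curves and hence its image cannot literally be a scroll, here $\varphi_Z$ need not be an embedding (for $(2,2)$ it factors through the Kummer, for $(1,3)$ and $(1,4)$ the situation is delicate), so the image $\varphi_Z(T_Z)$ is not a priori covered by any off-the-shelf classification. You would have to carry out a genuine case analysis, and you have not.

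The paper bypasses all of this with a short topological contradiction. Since $\dim \mathcal{D}_Z \leq \dim W_Z \leq N-1$, there is a point $H \in (\CP^N)^\ast \setminus \mathcal{D}_Z$. If $\dim \mathcal{D}_Z < N-1$, then projecting from $H$ one finds a line $P_x \subset (\CP^N)^\ast$ missing $\mathcal{D}_Z$ entirely; by Lemma~\ref{L:property_W_Z} and Lemma~\ref{L:criterion_basepoint_Lefschetz} the pencil $f_{P_x}$ is then Lefschetz with \emph{no} critical points, so the blow-up of $T_Z$ at the base locus fibers smoothly over $\CP^1$. But a genus-$g$ surface bundle over $\CP^1$ has Euler characteristic $4-4g$, while blowing up $T_Z$ at $2d_1d_2$ points gives Euler characteristic $2d_1d_2 > 0$, a contradiction. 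This argument needs no information about the projective geometry of $\varphi_Z(T_Z)$ and works uniformly for all $(d_1,d_2)$ with $d_1d_2 \geq 3$.
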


\begin{proof}
Since $\dim(\mathcal{D}_Z)\leq \dim(W_Z)$, it is enough to prove $\dim(\mathcal{D}_Z)=N-1$ by Lemma~\ref{L:dimension_W_Z}. 
Since $\dim(\mathcal{D}_Z)$ is at most $N-1$, there exists a point $H \in (\CP^N)^\ast$ away from $\mathcal{D}_Z$. 
Let $\pi_H:(\CP^N)^\ast\setminus \{H\} \to \CP^{N-1}$ be the projection from $H$. 
The image $\pi_H(\mathcal{D}_Z)$ is an analytic set since the restriction $\pi_H|_{\mathcal{D}_Z}$ is proper.  
If $\dim(\mathcal{D}_Z)$ were less than $N-1$, the dimension of $\pi_H(\mathcal{D}_Z)$ would also be less than $N-1$.  
Thus we could take a point $x\in \CP^{N-1}$ away from $\pi_H(\mathcal{D}_Z)$. 
We denote the closure $\overline{\pi_H^{-1}(x)}$ by $P_x$, which is a line in $(\CP^N)^\ast$. 
Using Lemma~\ref{L:criterion_basepoint_Lefschetz} we can verify that $f_{P_x}$ is a Lefschetz pencil on $T_Z$ without critical points. 
This would imply that a blow-up of $T_Z$ admits a surface bundle over $\CP^1$, which is impossible. 
\end{proof}

We define the subset $W_Z^0\subset W_Z$ as follows: 
\[
W_Z^0 = \left\{(\overline{x},H_{\sum l_{ij}X_{ij}})\in W_Z~\left|~\det\left(\left(\sum_{i,j}l_{ij}\frac{\Pa^2 \vartheta_Z^{ij}}{\Pa z_k\Pa z_l}(x)\right)_{1\leq k,l\leq 2}\right)\neq 0\right.\right\}. 
\]

\begin{lemma}\label{L:criterion_Lefsing_W_Z}

Suppose that $\overline{x}\in T_Z$ is not a base point of $f_P$ for a line $P\subset (\CP^N)^\ast$. 
The following conditions are equivalent: 

\begin{enumerate}

\item
$(\overline{x},H) \in W_Z^0$, 

\item
$f_P(\overline{x})=H$ and $\overline{x}$ is a Lefschetz type critical point of $f_P$. 

\end{enumerate}

\noindent
Furthermore, if $(\overline{x},H)\in W_Z^0$, then $(\overline{x},H)$ is regular, $\dim_{(\overline{x},H)}W_Z=N-1$ and $p_2$ is an immersion at $(\overline{x},H)$. 

\end{lemma}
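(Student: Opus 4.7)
The plan is to work in a local holomorphic chart around $\overline{x}$, expressing everything in terms of the functions $\vartheta_Z^{ij}$ on $\C^2$. Take a lift $x\in \C^2$ of $\overline{x}$ and a small neighborhood $\tilde{U}\ni x$ on which the defining quotient $\C^2\times \C \to L_Z$ gives a trivialization $L_Z|_{U}\cong U\times \C$; under this trivialization the section of $L_Z$ associated with $\sum_{i,j}l_{ij}^k\vartheta_Z^{ij}$ is represented by the holomorphic function $\tilde{s}_k(v)=\sum l_{ij}^k\vartheta_Z^{ij}(v)$. Writing $H=H_{\sum l_{ij}^0 X_{ij}}$ and picking a second hyperplane $H'=H_{\sum l_{ij}^1 X_{ij}}\in P$ not containing $\varphi_Z(\overline{x})$ (possible since $\overline{x}$ is not a base point), the pencil $f_P$ is locally the ratio $\tilde{s}_0/\tilde{s}_1$ with $\tilde{s}_1(x)\neq 0$. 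By Lemma~\ref{L:property_W_Z} we already have $(\overline{x},H)\in W_Z \Longleftrightarrow f_P(\overline{x})=H$ and $\overline{x}\in \Crit(f_P)$, so for the equivalence $(1)\Leftrightarrow (2)$ it remains to match the extra determinantal condition in $W_Z^0$ with the Lefschetz condition. A direct calculation shows that at a critical point where $\tilde{s}_0(x)=d\tilde{s}_0(x)=0$,
\[
\frac{\Pa^2}{\Pa z_k \Pa z_l}\left(\frac{\tilde{s}_0}{\tilde{s}_1}\right)(x) = \frac{1}{\tilde{s}_1(x)}\,\frac{\Pa^2 \tilde{s}_0}{\Pa z_k \Pa z_l}(x),
\]
so the complex Hessian of $f_P$ at $\overline{x}$ equals $\tilde{s}_1(x)^{-2}\det\!\left(\Pa^2\tilde{s}_0/\Pa z_k\Pa z_l(x)\right)_{1\le k,l\le 2}$; this is nonzero exactly when $(\overline{x},H)\in W_Z^0$, which by Remark~\ref{R:complexity_Lefsing} is equivalent to $\overline{x}$ being of Lefschetz type.

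For the remaining three assertions, I would regard $W_Z$ locally as the common zero set of the three holomorphic functions
\[
F_0(x,l)=\sum_{i,j} l_{ij}\vartheta_Z^{ij}(x),\qquad F_k(x,l)=\sum_{i,j}l_{ij}\frac{\Pa \vartheta_Z^{ij}}{\Pa z_k}(x)\quad (k=1,2),
\]
on a local chart of $T_Z\times (\CP^N)^\ast$ of complex dimension $2+N$ around $(\overline{x},H)$. Their Jacobian at $(\overline{x},H)$ has the block form
\[
J=\begin{pmatrix} 0 & 0 & \vartheta_Z^{ij}(x) \\ H_{11} & H_{12} & \Pa \vartheta_Z^{ij}/\Pa z_1(x) \\ H_{21} & H_{22} & \Pa \vartheta_Z^{ij}/\Pa z_2(x) \end{pmatrix},
\]
where $(H_{kl})=\bigl(\sum l_{ij}^0 \Pa^2 \vartheta_Z^{ij}/\Pa z_k\Pa z_l(x)\bigr)$ is nonsingular by the $W_Z^0$ condition, and the top row is nonzero since some $\vartheta_Z^{ij}(x)\neq 0$ (because $\varphi_Z$ is defined at $\overline{x}$). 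Any linear relation among the three rows would, by nonsingularity of $(H_{kl})$, force the coefficients of the second and third rows to vanish, thus contradicting nonvanishing of the top row. Hence $\rank J = 3$, so $(\overline{x},H)$ is a regular point of $W_Z$ and $\dim_{(\overline{x},H)} W_Z=(2+N)-3=N-1$.

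The immersion claim then comes from the same Jacobian: $p_2$ is an immersion at $(\overline{x},H)$ iff $\Ker J = T_{(\overline{x},H)}W_Z$ meets the vertical subspace $\langle \Pa/\Pa z_1,\Pa/\Pa z_2\rangle$ only at the origin, and a vertical vector $(v_1,v_2,0)\in \Ker J$ satisfies $(H_{kl})(v_1,v_2)^{\top} = 0$, which forces $v_1=v_2=0$ by nonsingularity of $(H_{kl})$. The only mild subtlety in the whole argument is the bookkeeping identifying the section of $L_Z$ with the function $\sum l_{ij}\vartheta_Z^{ij}$ in the chosen local trivialization; once this and the Hessian-of-a-ratio identity above are settled, every remaining step reduces to direct linear algebra on the $3\times(2+N)$ Jacobian.
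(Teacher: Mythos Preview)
Your proof is correct and follows essentially the same route as the paper's: the paper also leaves the equivalence $(1)\Leftrightarrow(2)$ to Remark~\ref{R:complexity_Lefsing}, then passes to an affine chart of $(\CP^N)^\ast$ (fixing one $l_{ij}$ to~$1$), writes down the $3\times(N+2)$ Jacobian of the three defining functions, and uses nonsingularity of the Hessian block $(H_{kl})$ twice---once for regularity and once for the immersion claim---exactly as you do. One tiny point of bookkeeping: in an affine chart (say $l_{00}=1$) the $l$-entries of the top row are $\vartheta_Z^{ij}(x)$ only for $(i,j)\neq(0,0)$, so ``some $\vartheta_Z^{ij}(x)\neq 0$'' is not quite enough on its own; you also need the constraint $F_0(x,l)=0$ to rule out the possibility that only $\vartheta_Z^{00}(x)$ is nonzero, which the paper makes explicit.
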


\begin{proof}
We can easily prove equivalence of the two conditions (1) and (2) using the criterion in Remark~\ref{R:complexity_Lefsing}. 
The details are left to the reader. 
We take a point $(\overline{x},H_{\sum l_{ij}X_{ij}})\in W_Z^0$. 
One of the values $l_{00},\ldots,l_{ij},\ldots$ is not equal to $0$. 
For simplicity we assume $l_{00}\neq 0$ (we can deal with the other cases in the same way). 
We put $l_{ij}' = l_{ij}/l_{00}$. 
Define a holomorphic map $\Phi:\C^2\times \C^N \to \C^3$ as follows: 
\begin{align*}
&\Phi(x, y_{01},\ldots,y_{ij},\ldots) = \\
&\left(\vartheta^{00}_Z(x)+\sum_{(i,j)\neq (0,0)} y_{ij}\vartheta^{ij}_Z(x),\frac{\Pa \vartheta^{00}_Z}{\Pa z_1}(x)+\sum_{(i,j)\neq (0,0)} y_{ij}\frac{\Pa \vartheta^{ij}_Z}{\Pa z_1}(x),\frac{\Pa \vartheta^{00}_Z}{\Pa z_2}(x)+\sum_{(i,j)\neq (0,0)} y_{ij}\frac{\Pa \vartheta^{ij}_Z}{\Pa z_2}(x)\right). 
\end{align*}
The analytic set germ of $W_Z$ at $(\overline{x},H_{\sum l_{ij}X_{ij}})$ is biholomorphic to that of $\Phi^{-1}(0)$ at $(x,(l_{ij}'))$. 
It is easy to verify by direct calculation that the differential $(d\Phi)_{(x,(l_{ij}'))}$ is the following matrix:
\[
\begin{pmatrix}
0&0& \cdots & \vartheta_Z^{ij}(x)&\cdots \\
\frac{\Pa^2 \vartheta^{00}_Z}{\Pa z_1^2}(x)+\sum_{(i,j)\neq (0,0)} l_{ij}'\frac{\Pa^2 \vartheta^{ij}_Z}{\Pa z_1^2}(x) & \frac{\Pa^2 \vartheta^{00}_Z}{\Pa z_1\Pa z_2}(x)+\sum_{(i,j)\neq (0,0)} l_{ij}'\frac{\Pa^2 \vartheta^{ij}_Z}{\Pa z_1\Pa z_2}(x)&\cdots &\frac{\Pa \vartheta^{ij}_Z}{\Pa z_1}(x)& \cdots \\
\frac{\Pa^2 \vartheta^{00}_Z}{\Pa z_1\Pa z_2}(x)+\sum_{(i,j)\neq (0,0)} l_{ij}'\frac{\Pa^2 \vartheta^{ij}_Z}{\Pa z_1\Pa z_2}(x) & \frac{\Pa^2 \vartheta^{00}_Z}{\Pa z_2^2}(x)+\sum_{(i,j)\neq (0,0)} l_{ij}'\frac{\Pa^2 \vartheta^{ij}_Z}{\Pa z_2^2}(x)&\cdots &\frac{\Pa \vartheta^{ij}_Z}{\Pa z_2}(x)& \cdots 
\end{pmatrix}. 
\]
Since $(\overline{x},H_{\sum l_{ij}X_{ij}})$ is contained in $W_Z^0$, the determinant of the matrix
\[
\begin{pmatrix}
\frac{\Pa^2 \vartheta^{00}_Z}{\Pa z_1^2}(x)+\sum_{(i,j)\neq (0,0)} l_{ij}'\frac{\Pa^2 \vartheta^{ij}_Z}{\Pa z_1^2}(x) & \frac{\Pa^2 \vartheta^{00}_Z}{\Pa z_1\Pa z_2}(x)+\sum_{(i,j)\neq (0,0)} l_{ij}'\frac{\Pa^2 \vartheta^{ij}_Z}{\Pa z_1\Pa z_2}(x) \\
\frac{\Pa^2 \vartheta^{00}_Z}{\Pa z_1\Pa z_2}(x)+\sum_{(i,j)\neq (0,0)} l_{ij}'\frac{\Pa^2 \vartheta^{ij}_Z}{\Pa z_1\Pa z_2}(x) & \frac{\Pa^2 \vartheta^{00}_Z}{\Pa z_2^2}(x)+\sum_{(i,j)\neq (0,0)} l_{ij}'\frac{\Pa^2 \vartheta^{ij}_Z}{\Pa z_2^2}(x)
\end{pmatrix}
\]
is not equal to $0$. 
Since one of the values $\vartheta_Z^{01}(x),\ldots, \vartheta_Z^{ij}(x),\ldots$ is not equal to zero, $(x,(l_{ij}'))$ is a regular point of $\Phi$. 
Thus the analytic set germ of $W_Z$ at $(\overline{x},H_{\sum l_{ij}X_{ij}})$ is an $(N-1)$-dimensional submanifold germ, which means $(\overline{x},H)$ is regular and $\dim_{(\overline{x},H)}W_Z=N-1$. 
The tangent space $T_{(\overline{x},H)}W_Z$ is identified with $\Ker(d\Phi)_{(x,(l_{ij}'))}\subset T_x \C^2\oplus T_{(l_{ij}')}\C^N$. 
Under this identification, $(dp_2)_{(\overline{x},H)}$ coincides with the restriction of the projection $T_x \C^2\oplus T_{(l_{ij}')}\C^N\to T_{(l_{ij}')}\C^N$. 
Suppose that $c_1\left(\frac{\Pa}{\Pa z_1}\right)+c_2\left(\frac{\Pa}{\Pa z_2}\right)$ is contained in $\Ker(d\Phi)_{(x,(l_{ij}'))}$. 
The following equality holds for $k=1,2$: 
{\allowdisplaybreaks
\begin{align*}
&c_1\left(\frac{\Pa^2 \vartheta^{00}_Z}{\Pa z_1\Pa z_k}(x)+\sum_{(i,j)\neq (0,0)} l_{ij}'\frac{\Pa^2 \vartheta^{ij}_Z}{\Pa z_1\Pa z_k}(x)\right)
+c_2\left(\frac{\Pa^2 \vartheta^{00}_Z}{\Pa z_2\Pa z_k}(x)+\sum_{(i,j)\neq (0,0)} l_{ij}'\frac{\Pa^2 \vartheta^{ij}_Z}{\Pa z_2\Pa z_k}(x)\right)=0 \\
\Leftrightarrow & e_k\cdot\left(\frac{\Pa^2\vartheta^{00}_Z}{\Pa z_k\Pa z_l}(x)+\sum_{(i,j)\neq (0,0)}l_{ij}'\frac{\Pa^2\vartheta_Z^{ij}}{\Pa z_k\Pa z_l}(x)\right)_{1\leq k,l\leq 2}\cdot \begin{pmatrix}c_1\\c_2\end{pmatrix}=0, 
\end{align*}
}
where $e_k$ is the unit vector in $\C^2$. 
Since the determinant of $\left(\frac{\Pa^2\vartheta^{00}_Z}{\Pa z_k\Pa z_l}(x)+\sum_{(i,j)\neq (0,0)}l_{ij}'\frac{\Pa^2\vartheta_Z^{ij}}{\Pa z_k\Pa z_l}(x)\right)_{1\leq k,l\leq 2}$ is equal to $0$, $(c_1,c_2)$ must be equal to $(0,0)$. 
Thus $(dp_2)_{(\overline{x},H_{\sum l_{ij}X_{ij}})}$ is injective. 
\end{proof}

The proof of Lemma~\ref{L:criterion_Lefsing_W_Z} yields the following corollary: 

\begin{corollary}\label{C:property_image_dp_2}

Let $(\overline{x},H)\in W_Z^0$ and $P\subset (\CP^N)^\ast$ a line containing $H$. 
Then $T_HP$ is contained in $(dp_2)_{(\overline{x},H)}(T_{(\overline{x},H)}W_Z)$ if and only if $\varphi_Z^{-1}(H')$ contains $\overline{x}$ for any $H'\in P$. 

\end{corollary}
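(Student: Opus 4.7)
The plan is to extract the corollary directly from the explicit description of $T_{(\overline{x},H)}W_Z$ obtained in the proof of Lemma~\ref{L:criterion_Lefsing_W_Z}. I would work in the affine chart of $(\CP^N)^\ast$ in which the defining polynomial is normalized by $l_{00}=1$, so that hyperplanes near $H$ are parametrized by $(l_{ij}')_{(i,j)\neq(0,0)}\in \C^N$ and the germ of $W_Z$ at $(\overline{x},H)$ is cut out by the map $\Phi$ displayed in that proof. Under the identification $T_{(\overline{x},H)}W_Z=\Ker(d\Phi)_{(x,(l_{ij}'))}\subset T_x\C^2\oplus \C^N$, the proof of Lemma~\ref{L:criterion_Lefsing_W_Z} already observes that $(dp_2)_{(\overline{x},H)}$ is simply the projection onto the $\C^N$ factor, so the entire task reduces to computing this projection.

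The first key step is to show
\[
(dp_2)_{(\overline{x},H)}\bigl(T_{(\overline{x},H)}W_Z\bigr)=\Bigl\{(y_{ij})_{(i,j)\neq (0,0)}\in \C^N~\Big|~\sum_{(i,j)\neq (0,0)} y_{ij}\vartheta^{ij}_Z(x)=0\Bigr\}.
\]
This is immediate from the explicit form of $(d\Phi)_{(x,(l_{ij}'))}$ recorded in the proof of Lemma~\ref{L:criterion_Lefsing_W_Z}: because $(\overline{x},H)\in W_Z^0$, the $2\times 2$ complex Hessian block appearing in the second and third rows of $(d\Phi)_{(x,(l_{ij}'))}$ is nondegenerate, so for any prescribed $(y_{ij})$ those two rows uniquely determine the $(c_1,c_2)$-components of a kernel vector, leaving the first row of $(d\Phi)_{(x,(l_{ij}'))}$ as the sole constraint on $(y_{ij})$.

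The second step is a direct translation. Any line $P\subset (\CP^N)^\ast$ through $H$ can be written in the chosen affine chart as $\{H_{\sum (l_{ij}+tv_{ij})X_{ij}}~|~t\in \C\}$ (together with the corresponding point at infinity) for some direction vector $v=(v_{ij})_{(i,j)\neq (0,0)}\in \C^N$ spanning $T_HP$. Since $\sum l_{ij}\vartheta^{ij}_Z(x)=0$ by the very definition of $W_Z$, the condition $\overline{x}\in \varphi_Z^{-1}\bigl(H_{\sum (l_{ij}+tv_{ij})X_{ij}}\bigr)$ for every $t\in \C$ is equivalent to $\sum v_{ij}\vartheta^{ij}_Z(x)=0$, which by the first step is equivalent to $v\in (dp_2)(T_{(\overline{x},H)}W_Z)$, i.e.~to $T_HP\subset (dp_2)(T_{(\overline{x},H)}W_Z)$. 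Checking that the pencil $P$ also contains the point at infinity of this affine line does not alter the equivalence, as that point corresponds to the hyperplane $H_{\sum v_{ij}X_{ij}}$ and the same relation $\sum v_{ij}\vartheta^{ij}_Z(x)=0$ governs whether $\overline{x}$ lies on it.

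I do not expect a serious obstacle: the entire argument is a linear-algebra distillation of the Jacobian matrix $(d\Phi)_{(x,(l_{ij}'))}$ already written out in the proof of Lemma~\ref{L:criterion_Lefsing_W_Z}. The only point that merits a brief sanity check is chart-independence, namely that the characterization of $(dp_2)(T_{(\overline{x},H)}W_Z)$ and of $T_HP$ does not depend on the normalization $l_{00}=1$; this is immediate because all of the defining relations are linear in $(l_{ij})$ and hence projectively well-defined.
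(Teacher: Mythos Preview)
Your proposal is correct and follows essentially the same approach as the paper: both arguments work in an affine chart of $(\CP^N)^\ast$, read off the identity $(dp_2)_{(\overline{x},H)}(T_{(\overline{x},H)}W_Z)=\{(y_{ij})\mid \sum y_{ij}\vartheta_Z^{ij}(x)=0\}$ from the Jacobian displayed in the proof of Lemma~\ref{L:criterion_Lefsing_W_Z}, and then compare this hyperplane with $T_HP$. The only cosmetic difference is that the paper parametrizes $P$ by a second point $H'=H_{\sum m_{ij}X_{ij}}$ rather than a direction vector $v$, so its computation of $T_HP$ is written in terms of $(m_{ij})$ instead of $(v_{ij})$; the translation between the two is immediate.
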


\begin{proof}
Let $H = H_{\sum l_{ij} X_{ij}}$ and suppose that $l_{kl}$ is not $0$. 
Let $H'=H_{\sum m_{ij} X_{ij}}\in P\setminus \{H\}$ be another line.
Since each line in $P$ has $\sum (\lambda_0l_{ij}+\lambda_1m_{ij}) X_{ij}$ as a defining equation, it suffices to show that $T_HP\subset (dp_2)_{(\overline{x},H)}(T_{(\overline{x},H)}W_Z)$ if and only if $\varphi_Z(\overline{x})$ is on $H'$. 
Using the affine coordinates in $\{H_{\sum y_{ij}X_{ij}}\in (\CP^N)^\ast~|~y_{kl}\neq 0\}$, we can identify the tangent space $T_{H}(\CP^N)^\ast$ with $\C^N$. 
Under this identification, the following equality holds: 
\[
T_HP = \left<\sum_{(i,j)\neq (k,l)} (m_{ij}-\frac{l_{ij}}{l_{kl}}m_{kl}) \left(\frac{\Pa}{\Pa y_{ij}}\right)\right>. 
\]
On the other hand, the proof of Lemma~\ref{L:criterion_Lefsing_W_Z} yields the following equality: 
\[
(dp_2)_{(\overline{x},H)}(T_{(\overline{x},H)}W_Z)= \left\{\sum_{(i,j)\neq(k,l)}r_{ij}\left(\frac{\Pa}{\Pa y_{ij}}\right)~\left|~\sum_{(i,j)\neq(k,l)}r_{ij}\vartheta_Z^{ij}(x)=0\right.\right\}. 
\]
Thus $T_HP\subset (dp_2)_{(\overline{x},H)}(T_{(\overline{x},H)}W_Z)$ if and only if $\sum_{(i,j)\neq(k.l)}(m_{ij}-\frac{l_{ij}}{l_{kl}}m_{kl})\vartheta_Z^{ij}(x)=0$. 
Since $\sum l_{ij}\vartheta_Z^{ij}(x)$ is equal to $0$, the latter condition is equivalent to the following equality: 
\[
\sum m_{ij} \vartheta_Z^{ij}(x)=0, 
\]
which is equivalent to the condition $\varphi_Z(\overline{x})\in H'$. 
\end{proof}

We define two subsets $\mathcal{D}_Z'$ and $\mathcal{D}_Z''$ of $\mathcal{D}_Z$ as follows: 
\begin{align*}
\mathcal{D}_Z' = & p_2(W_Z\setminus W_Z^0), \\
\mathcal{D}_Z'' = & \left\{H \in \mathcal{D}_Z\setminus \mathcal{D}_Z'~\left|~\sharp(p_2^{-1}(H))\neq 1\right.\right\}. 
\end{align*}
Since $W_Z\setminus W_Z^0$ is analytic and $p_2$ is proper, $\mathcal{D}_Z'$ is analytic by \cite[Theorem 5.8]{Chirka_1989}.

\begin{lemma}\label{L:positive_codimension_D''}

The subset $\mathcal{D}_Z''$ is contained in an analytic subset of $\mathcal{D}_Z$ with dimension at most $N-2$. 

\end{lemma}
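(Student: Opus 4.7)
The plan is to construct an analytic subset containing $\mathcal{D}_Z''$ via an incidence set of pairs with equal $p_2$-image, and to bound its dimension by combining Lemma~\ref{L:criterion_Lefsing_W_Z} and Corollary~\ref{C:property_image_dp_2}. Specifically, I would consider
\[
Y = \overline{\left\{(w_1, w_2) \in W_Z \times W_Z : w_1 \neq w_2,\ p_2(w_1) = p_2(w_2)\right\}},
\]
the closure taken in $W_Z \times W_Z$. Since $W_Z$ is closed in the compact space $T_Z \times (\CP^N)^\ast$, the set $Y$ is analytic, and its image $p_2(\pi_1(Y))$ under the proper map $p_2 \circ \pi_1$ is an analytic subset of $(\CP^N)^\ast$ containing $\mathcal{D}_Z''$ by construction. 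Hence it suffices to show $\dim Y \leq N - 2$.

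The dimension estimate I propose splits into three contributions. First, the points of $Y$ with at least one component in $W_Z \setminus W_Z^0$ contribute at most $N - 2$, since $W_Z \setminus W_Z^0$ is the vanishing locus of the non-trivial holomorphic determinant that cuts out $W_Z^0$ in $W_Z$, hence a proper analytic subset of the $(N-1)$-dimensional $W_Z$; combined with the generic finiteness of $\pi_1|_Y$-fibers over it, the contribution is at most $N-2$. Second, for $(w_1, w_2) \in Y \cap (W_Z^0 \times W_Z^0)$ with $w_i = (\overline{x_i}, H)$ and $\varphi_Z(\overline{x_1}) \neq \varphi_Z(\overline{x_2})$, Lemma~\ref{L:criterion_Lefsing_W_Z} gives that $p_2$ is an immersion at each $w_i$, and the proof of Corollary~\ref{C:property_image_dp_2} identifies $V_i := (dp_2)_{w_i}(T_{w_i}W_Z)$ with the tangent space at $H$ to the hyperplane $(\varphi_Z(\overline{x_i}))^\vee \subset (\CP^N)^\ast$ consisting of hyperplanes through $\varphi_Z(\overline{x_i})$. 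When the two image points differ, $V_1$ and $V_2$ are distinct hyperplanes in $T_H(\CP^N)^\ast$, so $V_1 + V_2 = T_H(\CP^N)^\ast$; hence locally $Y$ is the transverse preimage of the diagonal under $p_2 \times p_2$ and has dimension $2(N - 1) - N = N - 2$.

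The delicate third case is $\varphi_Z(\overline{x_1}) = \varphi_Z(\overline{x_2})$ with $\overline{x_1} \neq \overline{x_2}$, where $V_1 = V_2$ and a priori the local dimension of $Y$ at $(w_1, w_2)$ could be as large as $N - 1$. Under the generic assumptions on $Z$ already in force in this subsection, the map $\varphi_Z$ is generically injective onto its image, so the non-injectivity locus $\Sigma \subset T_Z$ is a proper analytic subset of dimension at most $1$. For each $\overline{x_1} \in \Sigma$, the set of $H \in (\CP^N)^\ast$ with $(\overline{x_1}, H) \in W_Z$ is cut out by three independent holomorphic equations and has dimension at most $N - 3$. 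Thus this subcase contributes at most $1 + (N - 3) = N - 2$ to $\dim Y$.

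Combining the three contributions yields $\dim Y \leq N - 2$, from which the conclusion follows. The main obstacle I foresee is rigorously justifying the generic injectivity of $\varphi_Z$ in the third case; this may require either invoking classical very-ampleness criteria for polarized abelian surfaces or replacing the argument by a more careful direct codimension count on the defining equations of $W_Z$ that bypasses injectivity of $\varphi_Z$ entirely.
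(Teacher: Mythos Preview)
Your approach via an incidence variety in $W_Z\times W_Z$ is genuinely different from the paper's, which is much shorter. The paper simply shows $\mathcal{D}_Z''\subset\Sing(\mathcal{D}_Z)$ and then invokes the general fact that the singular locus of an analytic set has strictly smaller dimension. The argument is: for $H\in\mathcal{D}_Z''$ one has $p_2^{-1}(H)\subset W_Z^0$, so by Lemma~\ref{L:criterion_Lefsing_W_Z} each preimage point contributes a smooth $(N-1)$-dimensional immersed branch of $\mathcal{D}_Z$ through $H$; Corollary~\ref{C:property_image_dp_2} then gives that any two such branches have distinct tangent hyperplanes, so the germ of $\mathcal{D}_Z$ at $H$ cannot be a submanifold germ. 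This is precisely your ``second case'' observation, but used to conclude singularity rather than to feed a transverse-preimage dimension count on $Y$.

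Your first case contains an unjustified step: at this point in the paper it is \emph{not} known that $\dim(W_Z\setminus W_Z^0)\le N-2$. The Hessian determinant could in principle vanish identically on an entire $(N-1)$-dimensional component of $W_Z$ (recall from the proof of Lemma~\ref{L:dimension_W_Z} that $p_1^{-1}(R_0)$, when nonempty, already has dimension $N-1$); irreducibility of $W_Z$ is only established later, in Lemma~\ref{L:conditionast_d1d2geq5}, under the additional hypothesis that $L_Z$ is very ample. The paper's route sidesteps this entirely, since by definition $\mathcal{D}_Z''$ is disjoint from $\mathcal{D}_Z'=p_2(W_Z\setminus W_Z^0)$, so only points of $W_Z^0$ are ever in play.

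Your third case is exactly the subtlety behind the paper's transversality claim, and you are right to flag it. In the paper's formulation it is less costly: one only needs the two immersed branch germs at $H$ to be \emph{distinct} (a reducible germ is already singular), and transversality of their tangent spaces is one clean way to see this. In your formulation you need an honest dimension bound on that stratum of $Y$, and the generic injectivity of $\varphi_Z$ you propose is not available under the standing hypotheses of this subsection (only $d_1d_2\ge3$, $T_Z$ not a product when $d_1=1$, and $\dim S_0=0$ are assumed). The $\Sing(\mathcal{D}_Z)$ argument is both shorter and more robust here; I would recommend adopting it.
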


\begin{proof}
The set $\Sing(\mathcal{D}_Z)$ of singular points of $\mathcal{D}_Z$ is an analytic set with dimension at most $N-2$ by \cite[Theorem 5.2.2]{Chirka_1989}. 
Thus it is enough to show that $\mathcal{D}_Z''$ is contained in $\Sing(\mathcal{D}_Z)$. 
For $H\in \mathcal{D}_Z''$ the preimage $p_2^{-1}(H)$ is contained in $W_Z^0$. 
By Lemma~\ref{L:criterion_Lefsing_W_Z}, $p_2$ is an immersion at any point in $W_Z^0$, in particular $p_2^{-1}(H)$ is a finite set and the analytic set germ of $\mathcal{D}_Z$ at $H$ contains $|p_2^{-1}(H)|>1$ submanifold components. 
By Lemma~\ref{C:property_image_dp_2}, any two of such components intersect transversely at $H$. 
In particular, the analytic set germ of $\mathcal{D}_Z$ at $H$ would not be a submanifold germ. 
Hence $H$ is contained in $\Sing(\mathcal{D}_Z)$. 
\end{proof}

\begin{theorem}\label{T:criterion_LP}

For a line $P\subset (\CP^N)^\ast$, the following conditions are equivalent: 

\begin{enumerate}

\item
the map $f_P$ is a Lefschetz pencil, 

\item
the line $P$ is away from $\mathcal{D}_Z'\cup \mathcal{D}_Z''$ and $(dp_2)_{(\overline{x},H)}(T_{(\overline{x},H)}W_Z)+T_{H}P=T_H(\CP^N)^\ast$ for any $(\overline{x},H) \in p_2^{-1}(\mathcal{D}_Z\cap P)$. 

\end{enumerate}

\end{theorem}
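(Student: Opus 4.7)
The plan is to prove the two implications separately by translating each clause of the Lefschetz pencil definition into the language of the incidence variety $W_Z$, using Lemmas~\ref{L:criterion_basepoint_Lefschetz}, \ref{L:property_W_Z}, \ref{L:criterion_Lefsing_W_Z} and Corollary~\ref{C:property_image_dp_2}. The guiding observation is that $p_2^{-1}(\mathcal{D}_Z\cap P)$ a priori mixes points coming from critical points of $f_P$ with points coming from base points, and that Corollary~\ref{C:property_image_dp_2} is precisely the tool that tells these two kinds of points apart on $W_Z^0$.

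For the direction $(1)\Rightarrow (2)$, I would first show that under (1) no base point contributes to $p_2^{-1}(\mathcal{D}_Z\cap P)$. Writing an arbitrary $H\in P$ as $H_{\lambda_0 q_0+\lambda_1 q_1}$ for fixed defining polynomials $q_0,q_1$ of two distinct hyperplanes in $P$, the condition $(\overline{x},H)\in W_Z$ at a base point $\overline{x}$ becomes a $2\times 2$ linear system on $(\lambda_0,\lambda_1)$ whose coefficient matrix corresponds, in a local trivialization, to $((ds_0)_x,(ds_1)_x)$; by Lemma~\ref{L:criterion_basepoint_Lefschetz} and condition (3) of the Lefschetz pencil definition, this matrix is nonsingular, forcing $(\lambda_0,\lambda_1)=(0,0)$. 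Thus every point of $p_2^{-1}(\mathcal{D}_Z\cap P)$ comes from a critical point; Lemma~\ref{L:criterion_Lefsing_W_Z} together with the Lefschetz-type assumption gives $P\cap \mathcal{D}_Z'=\emptyset$, and injectivity of $f_P|_{\Crit(f_P)}$ gives $P\cap \mathcal{D}_Z''=\emptyset$. At each such $(\overline{x},H)$, the point $\overline{x}$ is a critical but not a base point, so Corollary~\ref{C:property_image_dp_2} yields $T_HP\not\subset (dp_2)_{(\overline{x},H)}(T_{(\overline{x},H)}W_Z)$; combined with the immersion and dimension statements at the end of Lemma~\ref{L:criterion_Lefsing_W_Z}, the transversality equality follows.

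For $(2)\Rightarrow (1)$, I verify the three defining conditions of a Lefschetz pencil together with the discreteness of the base locus. At a putative base point $\overline{x}$ with linearly dependent $(ds_0)_x,(ds_1)_x$, a nontrivial linear relation produces $H\in P$ with $(\overline{x},H)\in W_Z$; the hypothesis $P\cap\mathcal{D}_Z'=\emptyset$ places this point in $W_Z^0$, and since $\overline{x}\in\varphi_Z^{-1}(H')$ for every $H'\in P$ (being a base point), Corollary~\ref{C:property_image_dp_2} forces $T_HP\subset (dp_2)(T_{(\overline{x},H)}W_Z)$, contradicting the transversality in (2). Hence condition (3) holds by Lemma~\ref{L:criterion_basepoint_Lefschetz}, and transverse intersection of $\{s_0=0\}$ and $\{s_1=0\}$ at each base point forces the base locus to be $0$-dimensional, hence finite by compactness of $T_Z$. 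For a critical point $\overline{x}$ with $H:=f_P(\overline{x})$, Lemma~\ref{L:property_W_Z} yields $(\overline{x},H)\in W_Z$, and $P\cap\mathcal{D}_Z'=\emptyset$ places it in $W_Z^0$, so Lemma~\ref{L:criterion_Lefsing_W_Z} gives condition (2). Finally, two distinct critical points mapping to the same $H\in P$ would put $H$ in $\mathcal{D}_Z''$, contradicting $P\cap \mathcal{D}_Z''=\emptyset$ and yielding condition (1).

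The only delicate step is the linear-algebra bookkeeping in $(1)\Rightarrow (2)$ ruling out base-point contributions to $p_2^{-1}(\mathcal{D}_Z\cap P)$, since $W_Z$ is defined intrinsically from $\varphi_Z$ without reference to $P$; once this is in place, everything else is a mechanical application of Lemma~\ref{L:criterion_Lefsing_W_Z} and Corollary~\ref{C:property_image_dp_2}.
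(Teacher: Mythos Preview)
Your proof is correct and follows essentially the same approach as the paper: both arguments split the three conditions of the Lefschetz pencil definition according to $\mathcal{D}_Z'$, $\mathcal{D}_Z''$, and the transversality clause, and both hinge on Lemma~\ref{L:criterion_Lefsing_W_Z} and Corollary~\ref{C:property_image_dp_2}. You are somewhat more explicit than the paper on two points---ruling out base-point contributions to $p_2^{-1}(\mathcal{D}_Z\cap P)$ in $(1)\Rightarrow(2)$ via the $2\times 2$ linear system, and noting discreteness of the base locus in $(2)\Rightarrow(1)$---but these are elaborations of steps the paper handles implicitly rather than a different route.
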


\begin{proof}
By Lemma~\ref{L:criterion_Lefsing_W_Z}, $P$ is away from $\mathcal{D}_Z'\cup \mathcal{D}_Z''$ if and only if $f_P$ satisfies the conditions (1) and (2) in p.\pageref{P:definition_LP}. 
Thus it suffices to show that $T_{H}P\subset (dp_2)_{(\overline{x},H)}(T_{(\overline{x},H)}W_Z)$ for some $(\overline{x},H)\in p_2^{-1}(\mathcal{D}_Z\cap P)$ if and only if $f_P$ has a base point at which the condition (3) does not hold.
By Corollary~\ref{C:property_image_dp_2}, $T_{H}P\subset (dp_2)_{(\overline{x},H)}(T_{(\overline{x},H)}W_Z)$ if and only if $\varphi_Z^{-1}(H)$ contains $\overline{x}$ for any $H\in P$. 
Thus, if $T_{H}P$ is contained in $(dp_2)_{(\overline{x},H)}(T_{(\overline{x},H)}W_Z)$ for some $(\overline{x},H)\in p_2^{-1}(\mathcal{D}_Z\cap P)$, $\overline{x}$ is a base point of $f_P$ and the condition (3) does not hold at $\overline{x}$ by Lemma~\ref{L:criterion_basepoint_Lefschetz}. 
Suppose conversely that $f_P$ has a base point at which the condition (3) does not hold.
Let $H_{\sum l_{ij}^0X_{ij}}, H_{\sum l_{ij}^1X_{ij}}$ be distinct hyperplanes in $P$ and $s_k = \sum l_{ij}^k\vartheta_Z^{ij}$ a section of $L_Z$. 
By Lemma~\ref{L:criterion_basepoint_Lefschetz} and the assumption there exist a base point $x$ of $f_P$ and $\lambda_k \in \C$ such that $\lambda_0(ds_0)_x+\lambda_1(ds_1)_x=0$. 
It is easily verify that $(x, \tilde{H}=H_{\sum (\lambda_0l_{ij}^0 + \lambda_1l_{ij}^1)X_{ij}})$ is contained in $W_Z$. 
Since $x$ is a base point of $f_P$, $x$ is contained in $\varphi_Z^{-1}(H)$ for any $H\in P$. 
Hence $T_{\tilde{H}}P$ is contained in $(dp_2)_{(x,\tilde{H})}(T_{(x,\tilde{H})}W_Z)$ by Corollary~\ref{C:property_image_dp_2}. 
\end{proof}

\noindent
By Lemma~\ref{L:criterion_Lefsing_W_Z} the set $\mathcal{D}_Z^0=\mathcal{D}_Z\setminus (\mathcal{D}_Z'\cup \mathcal{D}_Z'')$ is a submanifold of $(\CP^N)^\ast$ of dimension $N-1$. 
We can easily deduce the following corollary from Theorem~\ref{T:criterion_LP}. 

\begin{corollary}\label{C:criterion_LP_projection}

Suppose that $H\in (\CP^N)^\ast$ is away from $\mathcal{D}_Z$. 
Let $\pi_H:\mathcal{D}_Z\to \CP^{N-1}$ be the projection from $H$. 
The map $f_{\overline{\pi_H^{-1}(x)}}:T_Z \dashedrightarrow \overline{\pi_H^{-1}(x)}$ is a Lefschetz pencil if and only if $\pi_H^{-1}(x)$ is away from $\mathcal{D}_Z'\cup \mathcal{D}_Z''$ and $x$ is a regular value of $\pi_H|_{\mathcal{D}_Z^0}$. 

\end{corollary}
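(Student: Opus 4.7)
The plan is to read off the corollary as a direct translation of Theorem~\ref{T:criterion_LP} through the geometry of the projection $\pi_H$. Set $P := \overline{\pi_H^{-1}(x)}$, which is a line in $(\CP^N)^\ast$ passing through $H$. Since by hypothesis $H\notin \mathcal{D}_Z$, the intersection $P\cap \mathcal{D}_Z$ is contained in $\pi_H^{-1}(x)$, so the first clause of condition (2) of Theorem~\ref{T:criterion_LP} (that $P$ avoids $\mathcal{D}_Z'\cup \mathcal{D}_Z''$) is literally equivalent to the condition that $\pi_H^{-1}(x)$ avoids $\mathcal{D}_Z'\cup \mathcal{D}_Z''$. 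So the first condition on both sides of the claimed equivalence matches up automatically.

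Next I would translate the transversality clause in condition (2) of Theorem~\ref{T:criterion_LP} into the regular value condition on $\pi_H|_{\mathcal{D}_Z^0}$. Assuming $P$ avoids $\mathcal{D}_Z'\cup \mathcal{D}_Z''$, every $H'\in P\cap \mathcal{D}_Z = P\cap \mathcal{D}_Z^0$ has a unique preimage $(\overline{x},H')\in W_Z$ under $p_2$, this preimage lies in $W_Z^0$, and by Lemma~\ref{L:criterion_Lefsing_W_Z} the map $p_2$ is an immersion there with $\dim_{(\overline{x},H')}W_Z = N-1$. Since $\mathcal{D}_Z^0$ is itself an $(N-1)$-dimensional submanifold containing $H'$, I conclude that
\[
(dp_2)_{(\overline{x},H')}\bigl(T_{(\overline{x},H')}W_Z\bigr) = T_{H'}\mathcal{D}_Z^0.
\]
Hence the condition $(dp_2)_{(\overline{x},H')}(T_{(\overline{x},H')}W_Z) + T_{H'}P = T_{H'}(\CP^N)^\ast$ of Theorem~\ref{T:criterion_LP} is exactly the transversality condition $T_{H'}\mathcal{D}_Z^0 + T_{H'}P = T_{H'}(\CP^N)^\ast$.

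To finish I identify this transversality with $x$ being a regular value of $\pi_H|_{\mathcal{D}_Z^0}$. The projection $\pi_H$ from the point $H$ has as fiber through $H'\in (\CP^N)^\ast\setminus\{H\}$ the punctured line $\overline{HH'}\setminus\{H\}$, so $\ker (d\pi_H)_{H'} = T_{H'}P$. Therefore the restricted differential $(d\pi_H|_{\mathcal{D}_Z^0})_{H'}\colon T_{H'}\mathcal{D}_Z^0 \to T_x\CP^{N-1}$ is surjective if and only if $T_{H'}\mathcal{D}_Z^0$ is transverse to $T_{H'}P$ inside $T_{H'}(\CP^N)^\ast$. Taking this over all $H'\in \pi_H^{-1}(x)\cap \mathcal{D}_Z^0$ gives the claimed reformulation of Theorem~\ref{T:criterion_LP} for the line $P$.

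This is a clean bookkeeping argument rather than a genuinely new computation; the only place where care is needed is the dimension count identifying $(dp_2)(T W_Z)$ with $T\mathcal{D}_Z^0$, which relies essentially on the immersion statement in Lemma~\ref{L:criterion_Lefsing_W_Z} together with the fact that $\mathcal{D}_Z^0$ has the expected dimension $N-1$. I do not expect any real obstacle beyond ensuring that these pieces of the previous subsection are invoked correctly.
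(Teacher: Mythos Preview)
Your argument is correct and is exactly the deduction the paper has in mind; the paper does not actually prove this corollary but simply states that it follows easily from Theorem~\ref{T:criterion_LP}. Your identification $(dp_2)_{(\overline{x},H')}(T_{(\overline{x},H')}W_Z)=T_{H'}\mathcal{D}_Z^0$ via Lemma~\ref{L:criterion_Lefsing_W_Z} and the dimension count, together with the observation $\ker(d\pi_H)_{H'}=T_{H'}P$, is precisely the bookkeeping required.
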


\noindent
We denote the set of projective lines in $(\CP^N)^\ast$ by $\mathcal{L}$, which can be identified with the Grassmannian manifold $G_2(\C^{N+1})$. 

\begin{corollary}\label{C:genericness_injectivity}

Suppose that $f_P:T_Z\dashedrightarrow P$ satisfies the conditions (2) and (3) in p.\pageref{P:definition_LP}. 
For any open neighborhood $U\subset \mathcal{L}$ of $P$, there exists a line $P'\in U$ such that $f_{P'}$ is a Lefschetz pencil. 

\end{corollary}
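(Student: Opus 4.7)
The plan is to apply Theorem~\ref{T:criterion_LP}, under which $f_{P'}$ is a Lefschetz pencil if and only if $P'\cap (\mathcal{D}_Z'\cup \mathcal{D}_Z'')=\emptyset$ and the transversality condition $(dp_2)_{(\overline{x},H)}(T_{(\overline{x},H)}W_Z)+T_H P'=T_H(\CP^N)^\ast$ holds at every $(\overline{x},H)\in p_2^{-1}(\mathcal{D}_Z\cap P')$. By the proof of that theorem, the hypotheses that $f_P$ satisfies (2) and (3) together yield $P\cap \mathcal{D}_Z'=\emptyset$ and the transversality condition at every $(\overline{x},H)\in p_2^{-1}(\mathcal{D}_Z\cap P)$; only the disjointness $P\cap \mathcal{D}_Z''=\emptyset$ may fail, and it remains to verify that the first and third conditions are open on $\mathcal{L}$ in a neighborhood of $P$.

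First, $\mathcal{D}_Z'$ is a closed analytic subset of $(\CP^N)^\ast$, and the projection onto the first factor of the incidence variety $I=\{(P'',H)\in \mathcal{L}\times (\CP^N)^\ast\mid H\in P''\}$ is proper; hence the set of lines meeting $\mathcal{D}_Z'$ is closed in $\mathcal{L}$, and there is an open neighborhood $U_1\subset U$ of $P$ consisting of lines disjoint from $\mathcal{D}_Z'$. Second, by Lemma~\ref{L:positive_codimension_D''} the set $\mathcal{D}_Z''$ is contained in an analytic subset $A\subset (\CP^N)^\ast$ with $\dim A\leq N-2$, and an incidence dimension count shows that $V=\{P''\in \mathcal{L}\mid P''\cap A\neq \emptyset\}$ has dimension at most $(N-2)+(N-1)=2N-3<2N-2=\dim \mathcal{L}$; thus $V$ is a proper closed analytic subset of $\mathcal{L}$, and $U_1\setminus V$ is open and dense in $U_1$.

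Finally, I would check that the transversality condition is open near $P$. For $P''\in U_1\setminus V$, every $H\in P''\cap \mathcal{D}_Z$ lies in the smooth hypersurface $\mathcal{D}_Z^0$, and by Lemma~\ref{L:criterion_Lefsing_W_Z} the restriction of $p_2$ to $W_Z^0$ is a local biholomorphism from a neighborhood of the unique point of $p_2^{-1}(H)$ onto a neighborhood of $H$ in $\mathcal{D}_Z^0$; in particular $(dp_2)_{(\overline{x},H)}(T_{(\overline{x},H)}W_Z)=T_H\mathcal{D}_Z^0$. So the transversality condition of Theorem~\ref{T:criterion_LP} reduces to ordinary transversality of the line $P''$ to the hypersurface $\mathcal{D}_Z^0$ at every intersection point. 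Since $P\cap \mathcal{D}_Z$ is a finite set and transversality to a smooth hypersurface is a local open condition on the line, an open neighborhood $U_2\subset \mathcal{L}$ of $P$ consists of lines whose intersections with $\mathcal{D}_Z^0$ are everywhere transverse; any $P'\in (U_1\cap U_2)\setminus V$ then produces the desired Lefschetz pencil. The main subtlety is making openness of transversality precise as the intersection points vary with $P''$, which is handled by passing to the incidence subvariety $\{(P'',H)\in I\mid H\in \mathcal{D}_Z^0\}$ and invoking standard openness of the submersivity locus for its holomorphic projection to $\mathcal{L}$.
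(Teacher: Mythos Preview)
Your argument is essentially correct and takes a genuinely different route from the paper. The paper fixes a point $H\in P\setminus\mathcal{D}_Z$ and uses the projection $\pi_H:(\CP^N)^\ast\setminus\{H\}\to\CP^{N-1}$ to translate the problem into Corollary~\ref{C:criterion_LP_projection}: one then needs a point $y$ in a small neighborhood of $\pi_H(H')$ that avoids the analytic images $\pi_H(\mathcal{D}_Z')$, $\pi_H(\Sing(\mathcal{D}_Z))$ and is a regular value of $\pi_H|_{\mathcal{D}_Z^0}$; the last is handled by Sard's theorem. Your approach instead works intrinsically in $\mathcal{L}$ via incidence varieties and a dimension count for lines meeting $A\supset\mathcal{D}_Z''$, together with openness of the transversality condition. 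Both arguments are short; the paper's has the advantage of reusing Corollary~\ref{C:criterion_LP_projection} verbatim, while yours avoids choosing an auxiliary basepoint and is closer to a standard ``Lefschetz pencils are generic'' argument.

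One imprecision worth flagging: in your last sentence you pass to the incidence subvariety $\{(P'',H)\in I\mid H\in \mathcal{D}_Z^0\}$, but this does not see the points where $P$ meets $\mathcal{D}_Z''$, so openness of submersivity there does not directly control what happens to $P''$ near such a point. The clean fix is to work instead with
\[
\tilde{J}=\{(P'',w)\in \mathcal{L}\times W_Z \mid p_2(w)\in P''\}.
\]
Over $U_1$ the fiber of $\tilde{J}\to\mathcal{L}$ lies in $\mathcal{L}\times W_Z^0$ (because $\mathcal{D}_Z'=p_2(W_Z\setminus W_Z^0)$), where $\tilde{J}$ is smooth of dimension $\dim\mathcal{L}$ by Lemma~\ref{L:criterion_Lefsing_W_Z}; the transversality condition of Theorem~\ref{T:criterion_LP} is exactly submersivity of $\tilde{J}\to\mathcal{L}$ at each point of the (finite) fiber over $P$, and properness of this projection (since $W_Z$ is compact) then gives the open neighborhood $U_2$ you want. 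Equivalently, one can argue branch-by-branch at each $H_0\in P\cap\mathcal{D}_Z''$ using the local smooth sheets $p_2(\text{nbhd of }(\overline{x}_i,H_0))\subset\mathcal{D}_Z$.
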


\begin{proof}
The proof of Theorem~\ref{T:criterion_LP} implies that $P$ is away from $\mathcal{D}_Z'$ and $(dp_2)_{(\overline{x},H)}(T_{(\overline{x},H)}W_Z)+T_{H}P=T_H(\CP^N)^\ast$ for any $(\overline{x},H) \in p_2^{-1}(\mathcal{D}_Z\cap P)$. 
Let $H\in P$ be away from $\mathcal{D}_Z$. 
We take an open neighborhood $V\subset \CP^{N-1}$ of $\pi_H(H')$ for $H'\in P\setminus\{H\}$ so that $\overline{\pi_H^{-1}(x)}$ is in $U$ for any $x\in V$. 
Since both of the images $\pi_H(\mathcal{D}_Z')$ and $\pi_H(\Sing(\mathcal{D}_Z))$ are analytic sets and the dimension of $\pi_H(\Sing(\mathcal{D}_Z))$ is at most $N-2$ by Lemma~\ref{L:positive_codimension_D''}, there exists a point $y\in V$ away from $\pi_H(\mathcal{D}_Z'\cup \mathcal{D}_Z'')$ (note that $\mathcal{D}_Z''$ is contained in $\Sing(\mathcal{D}_Z)$). 
Furthermore, since $\pi_H(H')$ is a regular value of $\pi_H|_{\mathcal{D}_Z^0}$ we can take such a point $y\in V$ so that $y$ is a regular value of $\pi_H|_{\mathcal{D}_Z^0}$ by Sard's theorem for holomorphic maps (see \cite[Corollary V.1.2]{Lojasiewicz_1991}). 
By Corollary~\ref{C:criterion_LP_projection} the map $f_{\overline{\pi_H^{-1}(y)}}$ is a Lefschetz pencil. 
\end{proof}

\subsection{Existence of paths connecting two Lefschetz pencils}

Let $d_1,d_2$ be positive integers with $d_1|d_2$. 
Throughout this subsection, we assume $d_1d_2\geq 3$. 
As we observed in the beginning of Subsection~\ref{S:condition_pencil_Lefschetz}, any holomorphic Lefschetz pencil on $T^4$ with genus $d_1d_2+1$ and divisibility $d_1$ is isomorphic to $f_P:T_Z\dashedrightarrow P$ for $Z\in \h_2$ and $P\in \mathcal{L}$. 
The aim of this subsection is to take a path in $\h_2\times \mathcal{L}$ which connects two points associated with two given Lefschetz pencils. 

\begin{lemma}\label{L:openness_SpaceOfLP}

The subset of $\h_2\times \mathcal{L}$ consisting of points which yield Lefschetz pencils is open. 

\end{lemma}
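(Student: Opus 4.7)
The plan is to show that each of the three conditions defining a Lefschetz pencil (Lefschetz type at every critical point, distinct critical values, and linear independence of the two section-differentials at every base point, via Lemma~\ref{L:criterion_basepoint_Lefschetz}) is open under perturbation of $(Z, P) \in \h_2 \times \mathcal{L}$. The main technical complication is that the total space $T_Z$ itself varies with $Z$; I would deal with this by lifting everything to the fixed universal cover $\C^2$ using the theta-function realization from Subsection~\ref{S:moduli_abelian_surface}, so that the holomorphic implicit function theorem can be applied inside a single ambient space.

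Concretely, I fix $(Z_0, P_0)$ giving a Lefschetz pencil, choose two defining linear forms for distinct hyperplanes of $P_0$, and parametrize a neighborhood of $P_0$ in $\mathcal{L}$ by nearby pairs of linear forms. Combining this with the formulas for $\vartheta_Z^{ij}$, whose coefficients depend holomorphically on $Z$, produces a holomorphic family $s_0^{Z,P}, s_1^{Z,P}$ of pairs of sections whose lifts to $\C^2$ are holomorphic functions varying holomorphically in $(x, Z, P)$. At each critical point $[x_0]$ of $f_{P_0}$, after trivializing $L_{Z_0}$ near a lift $x_0 \in \C^2$ so that $f_{P_0}$ is represented by $s_1^{Z,P}/s_0^{Z,P}$, the Lefschetz type hypothesis, which by Remark~\ref{R:complexity_Lefsing} amounts to non-vanishing of the complex Hessian, guarantees that the critical point equation $\nabla(s_1^{Z,P}/s_0^{Z,P}) = 0$ is cut out transversally at $(x_0, Z_0, P_0)$. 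The implicit function theorem then provides a unique nearby critical point $x(Z, P)$ depending holomorphically on parameters, and openness of non-vanishing of the Hessian ensures $x(Z, P)$ remains Lefschetz type; continuity of the finitely many critical values preserves their distinctness. An analogous argument at each base point $[y_0]$ uses Lemma~\ref{L:criterion_basepoint_Lefschetz} to see that linear independence of $(ds_0^{Z_0,P_0})_{y_0}$ and $(ds_1^{Z_0,P_0})_{y_0}$ makes the map $x \mapsto (s_0^{Z,P}(x), s_1^{Z,P}(x))$ submersive at $y_0$, so again the implicit function theorem tracks a unique nearby common zero satisfying the linear independence condition.

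The remaining point is to rule out \emph{new} critical or base points appearing away from the perturbations of the existing ones. I would do this by fixing a closed fundamental domain $K \subset \C^2$ for $\Lambda_{Z_0}$, large enough to cover a $\Lambda_Z$-fundamental domain for every $Z$ in a neighborhood of $Z_0$, and arguing by compactness: if there were sequences $(Z_n, P_n) \to (Z_0, P_0)$ together with new critical or base points lifting to $z_n \in K$, a subsequence would converge to some $z_\infty \in K$ which, by continuity of the defining holomorphic equations in $(x, Z, P)$, must itself lift a critical or base point of $f_{P_0}$; but near such a limit point, the local uniqueness from the implicit function theorem contradicts the existence of distinct $z_n$. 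The principal obstacle is precisely this compactness bookkeeping between the varying lattices $\Lambda_Z$ and the fixed ambient $\C^2$; once it is in place, openness of $\{(Z,P) \in \h_2 \times \mathcal{L} \mid f_P \text{ is a Lefschetz pencil}\}$ follows at once.
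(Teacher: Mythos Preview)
Your proposal is correct and follows the same overall strategy as the paper: lift to the universal cover $\C^2$, exploit the holomorphic dependence of the theta functions $\vartheta_Z^{ij}$ on $Z$, and combine local stability of the Lefschetz/base-point conditions with a compactness argument on a fundamental domain. The organizational details differ, however, and it is worth noting how.

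The paper trivializes the family of tori by the linear map $\psi_Z:\R^4\to\C^2$, $x\mapsto(Z,D)x$, which sends $\Z^4$ onto $\overline{\Lambda_Z}$ for every $Z$; this gives a single fixed fundamental domain $\Delta=[-1/2,1/2]^4$ working uniformly, so your ``large enough $K$'' bookkeeping disappears. It then packages all three Lefschetz conditions into one map $\Phi(Z,(v_0,v_1)):\Delta\times\CP^1\to\C^6$, so that conditions~(2) and~(3) become ``$\Phi(\Delta\times\CP^1)$ misses two fixed closed subsets of $\C^6$'', which is open by compactness of $\Delta\times\CP^1$. For condition~(1) the paper does \emph{not} track critical points individually via the implicit function theorem; instead it takes small disjoint disks $D_i$ around the original critical values, shows (again by compactness) that nearby pencils have all critical values in $\sqcup_iD_i$, observes that the local monodromy around each $D_i$ is unchanged so each $D_i$ contains at least one critical value, and then concludes each contains exactly one by the Euler-characteristic count $\chi(T^4)=4-4g+n'-b$. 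Your IFT-based tracking plus continuity-of-distinct-values argument achieves the same end; the paper's version avoids invoking IFT at the individual points at the cost of the monodromy/Euler-characteristic step.
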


\begin{proof}
Let $S_2(\C^{N+1})$ be the space of pairs of $\C$-linearly independent vectors in $\C^{N+1}$ endowed with the relative topology of $(\C^{N+1})^2$ (that is, $S_2(\C^{N+1})$ is a non-compact Stiefel manifold). 
Since the quotient map $\pi:S_2(\C^{N+1})\to \mathcal{L}$ is continuous and open, it is enough to show that the set 
\[
\left\{\left.(Z,(v_0,v_1))\in \h_2\times S_2(\C^{N+1})~\right|~(Z,\pi(v_0,v_1))\text{ yields a Lefschetz pencil}\right\}
\]
is an open subset. 

For $(Z,(v_0,v_1))\in \h_2\times S_2(\C^{N+1})$, we define diffeomorphisms $\phi_{v_0,v_1}:\CP^1\to \pi(v_0,v_1)$ and $\psi_Z:\R^4\to \C^2$ as follows: 
\begin{align*}
& \phi_{v_0,v_1}([l_0:l_1]) = H_{\sum (l_0v_0^{ij}+l_1v_1^{ij})X_{ij}}, \\
& \psi_Z(x) = (Z,D)x, 
\end{align*}
where we put $v_k = (\ldots,v_k^{ij},\ldots)$ and $D=\begin{pmatrix}
d_1&0\\0&d_2
\end{pmatrix}$. 
By Lemma~\ref{L:criterion_Lefsing_W_Z}, $f_{\pi(v_0,v_1)}(\psi_Z(x)) = \phi_{v_0,v_1}([l_0:l_1])$ and $\psi_Z(x)$ is a Lefschetz critical point if and only if the following three conditions are satisfied: 
{\allowdisplaybreaks
\begin{align*}
&\sum_{i,j} (l_0v_0^{ij}+l_1v_1^{ij})\vartheta^{ij}_Z(\psi_Z(x))=0, \\
&\sum_{i,j} (l_0v_0^{ij}+l_1v_1^{ij})\frac{\Pa \vartheta^{ij}_Z}{\Pa z_k}(\psi_Z(x))=0\hspace{.5em} (k=1,2), \\
&\det\left(\left(\sum_{i,j}(l_0v_0^{ij}+l_1v_1^{ij})\frac{\Pa^2 \vartheta_Z^{ij}}{\Pa z_k\Pa z_l}(\psi_Z(x))\right)_{1\leq k,l\leq 2}\right)\neq 0. 
\end{align*}
}
Furthermore, by Corollary~\ref{C:property_image_dp_2} the following two conditions are equivalent: 
\begin{itemize}

\item
$T_{\phi_{v_0,v_1}([l_0:l_1])}\pi(v_0,v_1)\subset(dp_2)_{(\overline{\psi_Z(x)},\phi_{v_0,v_1}([l_0:l_1]))}(T_{(\overline{\psi_Z(x)},\phi_{v_0,v_1}([l_0:l_1]))}W_Z)$ and $(\psi_Z(x),\pi(v_0,v_1))$ is contained in $W_Z^0$, 

\item
$(Z,(v_0,v_1))$ satisfies the three conditions above and $\sum_{i,j}v_k^{ij} \vartheta_Z^{ij}(\psi_Z(x))=0$ for $k=0,1$.

\end{itemize}
We define a map $\Phi(Z,(v_0,v_1)):\R^4\times \CP^1 \to \C^6$ as follows: 
\begin{align*}
& \Phi(Z,(v_0,v_1))(x,[l_0:l_1]) = \\
&\left(\sum_{i,j} (l_0v_0^{ij}+l_1v_1^{ij})\vartheta^{ij}_Z(\psi_Z(x)),\sum_{i,j} (l_0v_0^{ij}+l_1v_1^{ij})\frac{\Pa \vartheta^{ij}_Z}{\Pa z_1}(\psi_Z(x)), \sum_{i,j} (l_0v_0^{ij}+l_1v_1^{ij})\frac{\Pa \vartheta^{ij}_Z}{\Pa z_2}(\psi_Z(x)), \right.\\
&\left.\det\left(\left(\sum_{i,j}(l_0v_0^{ij}+l_1v_1^{ij})\frac{\Pa^2 \vartheta_Z^{ij}}{\Pa z_k\Pa z_l}(\psi_Z(x))\right)_{1\leq k,l\leq 2}\right),\sum_{i,j}v_0^{ij} \vartheta_Z^{ij}(\psi_Z(x)),\sum_{i,j}v_1^{ij} \vartheta_Z^{ij}(\psi_Z(x))
\right). 
\end{align*}
Let $V_1= \{0\}\times \{0\}\times \C^2, V_2=\{0\}\times \C\times \{0\} \subset \C^3\times\C\times \C^2 = \C^6$ and $\Delta=\{(x_1,x_2,x_3,x_4)\in \R^4~|~ |x_i| \leq 1/2\}$.  
By double-periodicity of the Theta functions, $f_{\pi(v_0,v_1)}:T_Z \dashedrightarrow \pi(v_0,v_1)$ satisfies the conditions (2) and (3) in p.\pageref{P:definition_LP} if and only if $\Phi(Z,(v_0,v_1))(\Delta\times \CP^1)\cap (V_1\cup V_2)$ is empty. 
Since $V_1$ and $V_2$ are closed and $\Delta\times \CP^1$ is compact, the subset 
\[
W_0=\left\{(Z,(v_0,v_1))\in \h_2\times S_2(\C^{N+1})~|~\Phi(Z,(v_0,v_1))(\Delta\times \CP^1)\cap (V_1\cup V_2)=\emptyset \right\}
\]
is open. 

We take a point $(Z,(v_0,v_1))\in W_0$ and suppose that $f_{\pi(v_0,v_1)}:T_Z\dashedrightarrow \pi(v_0,v_1)$ is a Lefschetz pencil. 
We put $\phi_{v_0,v_1}^{-1}(f_{\pi(v_0,v_1)}(\Crit(f_{\pi(v_0,v_1)}))) = \{y_1,\ldots,y_n\}$ ($n=6d_1d_2$ is the number of critical points of $f_{\pi(v_0,v_1)}$).  
For each $i$, we take a disk neighborhood $D_i$ of $y_i$ in $\CP^1$ so that $D_i\cap D_j=\emptyset$ if $i\neq j$. 
The former three components of $\Phi(Z,(v_0,v_1))(x,[l_0:l_1])$ never be equal to $0$ simultaneously for $(x,[l_0:l_1]) \in \Delta\times (\CP^1\setminus \sqcup_i D_i)$. 
Since $\Delta\times (\CP^1\setminus \sqcup_i D_i)$ is compact, there exists an open neighborhood $U\subset W_0$ of $(Z,(v_0,v_1))$ such that for any $(Z',(v_0',v_1'))\in U$ the former three components of $\Phi(Z',(v_0',v_1'))(x,[l_0:l_1])$ never be equal to $0$ simultaneously for $(x,[l_0:l_1]) \in \Delta\times (\CP^1\setminus \sqcup_i D_i)$. 
Thus, all the critical values of $f_{\pi(v_0,v_1)}:T_Z\dashedrightarrow \pi(v_0,v_1)$ are contained in the disjoint union $\sqcup_iD_i$. 
Furthermore, we can make $U$ sufficiently small so that the conjugacy class of a local monodromy of $f_{\pi(v_0',v_1')}:T_{Z'}\dashedrightarrow \pi(v_0',v_1')$ around $D_i$ are independent of the choice of $(Z',(v_0',v_1'))\in U$. 
In particular for any $(Z',(v_0',v_1'))\in U$ and $i$, the preimage $f_{\pi(v_0',v_1')}^{-1}(D_i)$ contains at least one critical point of $f_{\pi(v_0',v_1')}:T_{Z'}\dashedrightarrow \pi(v_0',v_1')$. 
Since $f_{\pi(v_0',v_1')}$ has genus $g=d_1d_2+1$ and $b=2d_1d_2$ base points, $0=\chi(T_{Z'})$ is equal to $4-4g+n'-b = n'-6d_1d_2$, where $n'$ is the number of critical points of $f_{\pi(v_0',v_1')}$. 
Thus $n'$ is equal to $n$ and $f_{\pi(v_0',v_1')}^{-1}(D_i)$ contains exactly one critical point for each $i$, which implys that $f_{\pi(v_0',v_1')}$ satisfies the condition (1) for any $(Z',(v_0',v_1'))\in U$.
We can eventually conclude that the set of points in $\h_2\times S_2(\C^{N+1})$ giving rise to a Lefschetz pencil is open. 
\end{proof}

\noindent
The proof of Lemma~\ref{L:openness_SpaceOfLP} implies the following corollaries:

\begin{corollary}\label{C:isomLP_component}

Let $\mathcal{W}$ be the set of $(Z,P)\in \h_2\times \mathcal{L}$ such that $f_P:T_Z\dashedrightarrow P$ is a Lefschetz pencil and $(Z_i,P_i)\in \mathcal{W}$. 
The two pencils $f_{P_0}$ and $f_{P_1}$ are isomorphic if $(Z_0,P_0)$ and $(Z_1,P_1)$ are contained in the same connected component of $\mathcal{W}$. 

\end{corollary}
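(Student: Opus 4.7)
The plan is to show that the isomorphism class of $f_P$ is locally constant on $\mathcal{W}$; since $\mathcal{W}$ is open in $\h_2\times \mathcal{L}$ by Lemma~\ref{L:openness_SpaceOfLP}, each connected component is path-connected, and local constancy then implies the corollary by covering a path between $(Z_0,P_0)$ and $(Z_1,P_1)$ with finitely many neighborhoods of local constancy. Fix $(Z,P)\in \mathcal{W}$ together with a neighborhood $U\subset \mathcal{W}$ of the kind produced in the proof of Lemma~\ref{L:openness_SpaceOfLP}, and pick any $(Z',P')\in U$. I would apply Theorem~\ref{T:isom_Hurwitzeq} by exhibiting monodromy factorizations of $f_P$ and $f_{P'}$ that differ only by a simultaneous conjugation.

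The key input is that the proof of Lemma~\ref{L:openness_SpaceOfLP} furnishes disjoint disks $D_1,\ldots,D_n\subset \CP^1$, together with a smoothly varying identification $P'\cong \CP^1$ for $(Z',P')\in U$, such that every $f_{P'}$ has exactly one critical value in each $D_i$ and the conjugacy class of the local monodromy around $D_i$ is independent of $(Z',P')\in U$. A single Hurwitz path system $\widetilde{\alpha}_1,\ldots,\widetilde{\alpha}_n$ whose loops encircle the $D_i$ therefore serves every pencil in $U$ at once. Over the complement $\CP^1\setminus\sqcup_i D_i$, parametrized by $U$, the total spaces of the restricted pencils assemble into a smooth fiber bundle, and Ehresmann's theorem yields a smooth family of diffeomorphisms between the reference fibers $f_{P'}^{-1}(a_0)$. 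These can be made to preserve the marked base points: the local normal form $[z:w]$ from Lemma~\ref{L:criterion_basepoint_Lefschetz}, together with the implicit function theorem applied to the equations $s_0=s_1=0$ in the parameter direction, shows that the base locus varies as a smooth family of sections and the local model extends uniformly on $U$.

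Under the resulting identification of reference fibers, the Dehn twists forming the monodromy factorization of $f_{P'}$ are obtained from those of $f_P$ by applying the diffeomorphism from Ehresmann's theorem to the corresponding vanishing cycles, so the two factorizations differ only by a simultaneous conjugation and are therefore Hurwitz equivalent. Theorem~\ref{T:isom_Hurwitzeq} then gives $f_P\cong f_{P'}$. The main obstacle is the fiber-bundle trivialization across the base locus, where the pencil is not a submersion; handling this requires combining Ehresmann's theorem on the complement of the critical and base loci with the explicit $[z:w]$ model near each base point to build a horizontal distribution compatible with the smooth family of base-point sections.
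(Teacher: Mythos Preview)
Your proposal is correct and follows essentially the same approach as the paper. The paper's own proof is a two-line appeal to the proof of Lemma~\ref{L:openness_SpaceOfLP} (which already establishes the local constancy of the monodromy factorization up to Hurwitz equivalence) together with Theorem~\ref{T:isom_Hurwitzeq}; you have simply unpacked what that appeal entails---the Ehresmann-type trivialization of reference fibers, the fixed Hurwitz path system encircling the disks $D_i$, and the care needed near the base locus---all of which the paper leaves implicit.
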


\begin{proof}
Suppose that $(Z_0,P_0)$ and $(Z_1,P_1)$ are contained in the same connected component of $\mathcal{W}$. 
The proof of Lemma~\ref{L:openness_SpaceOfLP} shows that the monodromy factorizations of $f_{P_0}$ and $f_{P_1}$ are Hurwitz equivalent. 
Thus $f_{P_0}$ and $f_{P_1}$ are isomorphic by Theorem~\ref{T:isom_Hurwitzeq}. 
\end{proof}

\begin{corollary}\label{C:nonexistence_sepVC}

A genus-$g\geq 4$ holomorphic Lefschetz pencil on $T^4$ does not have a reducible fiber. 

\end{corollary}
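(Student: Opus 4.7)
The plan is to combine the moduli-theoretic framework built in this section with a short intersection-theoretic argument on abelian surfaces. By Lemma~\ref{L:holLP_fromlinebdl} and Remark~\ref{R:independence_choice_linebundle}, any genus-$g\geq 4$ holomorphic Lefschetz pencil on $T^4$ is isomorphic to $f_P\colon T_Z\dashedrightarrow P$ for some $(Z,P)\in\mathcal{W}$ with polarization type $(d_1,d_2)$ satisfying $d_1 d_2=g-1\geq 3$. The number of reducible fibers equals the number of separating vanishing cycles, which is a Hurwitz invariant, so by Corollary~\ref{C:isomLP_component} it is constant on each connected component of $\mathcal{W}$. Hence it will suffice to prove that $\mathcal{W}$ is connected and to exhibit one pencil in $\mathcal{W}$ with no reducible fiber.

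For the connectedness step I would argue as follows. The ambient space $\h_2\times\mathcal{L}$ is a connected complex manifold and $\mathcal{W}$ is open in it by Lemma~\ref{L:openness_SpaceOfLP}. The complement of $\mathcal{W}$ is contained in a (countable) union of proper complex analytic subsets: the $Z$-locus where $\NS(T_Z)\not\cong\Z$ (Lemma~\ref{L:nongeneric_subset_moduli}, which also absorbs the product case when $d_1=1$ by Lemma~\ref{L:(1,d)not product}); the locus where $P\cap(\mathcal{D}_Z'\cup\mathcal{D}_Z'')\neq\emptyset$ (using Lemma~\ref{L:positive_codimension_D''} to control $\mathcal{D}_Z''$); and the locus where the transversality condition of Theorem~\ref{T:criterion_LP} fails at a base point. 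Each of these has real codimension at least $2$, so removing their countable union from $\h_2\times\mathcal{L}$ leaves a path-connected set.

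For the existence step I would choose $(Z,P)\in\mathcal{W}$ with $Z$ generic, so that $\NS(T_Z)=\Z\langle H\rangle$ for a primitive polarization $H$ of type $(1,e)$ with $e=d_2/d_1$. Then $H^2=2e$ and $c_1(L_Z)=d_1 H$. Suppose for contradiction that some fiber $F=\overline{f_P^{-1}(p)}$ is reducible. The Lefschetz normal form $z^2+w^2=t$ at the critical point forces $F$ to be the transverse union $D_1\cup D_2$ of two smooth irreducible curves meeting at a single point, so $D_1\cdot D_2=1$. Since each $D_i$ is a nonzero effective divisor and $H$ is ample, $[D_i]=m_i H$ with $m_i\geq 1$ and $m_1+m_2=d_1$. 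Then
\[
D_1\cdot D_2 = m_1 m_2 H^2 = 2e\, m_1 m_2 \geq 2,
\]
contradicting $D_1\cdot D_2=1$ (and the case $d_1=1$ is already ruled out, since $m_1+m_2=1$ has no solution with $m_i\geq 1$).

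The main obstacle I expect is the careful bookkeeping needed for the connectedness of $\mathcal{W}$: one has to verify rigorously that each excluded locus really is contained in a (countable union of) proper complex analytic subsets of $\h_2\times\mathcal{L}$, where some pieces require combining several of the lemmas of this section. Once this global topological input is in place, the intersection-theoretic contradiction under the hypothesis $\NS(T_Z)\cong\Z$ is elementary.
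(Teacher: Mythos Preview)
Your intersection-theoretic ``existence step'' is correct and is essentially the paper's own argument, though the paper phrases it a bit more tersely (it writes $[F_i]=n_i\alpha$, gets $n_1n_2\alpha^2=1$, and concludes $[F]^2=4$, contradicting $[F]^2=2g-2\geq 6$).

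The real issue is the connectedness step. You do not need it, and as sketched it does not work. The paper's proof is much lighter: since $\mathcal{W}$ is open (Lemma~\ref{L:openness_SpaceOfLP}) and the locus $\{Z:\NS(T_Z)\not\cong\Z\}$ is contained in a countable union of proper analytic subsets of $\h_2$ (Lemma~\ref{L:nongeneric_subset_moduli}), every connected component of $\mathcal{W}$ already contains a point $(Z,P)$ with $\NS(T_Z)\cong\Z$. By Corollary~\ref{C:isomLP_component} the number of reducible fibers is constant on that component, so one may simply assume from the outset that $\NS(T_Z)\cong\Z$ and run your intersection argument. No global connectedness of $\mathcal{W}$ is invoked.

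Your sketch of connectedness has two concrete problems. First, listing ``the $Z$-locus where $\NS(T_Z)\not\cong\Z$'' among the pieces of the complement of $\mathcal{W}$ is incorrect: there are Lefschetz pencils over such $Z$, so this locus is not contained in $\h_2\times\mathcal{L}\setminus\mathcal{W}$. Second, and more seriously, bounding the locus $\{(Z,P):P\cap\mathcal{D}_Z'\neq\emptyset\}$ inside a proper analytic set requires knowing $\dim\mathcal{D}_Z'\leq N-2$ for generic $Z$; otherwise, if $\mathcal{D}_Z'$ has an $(N-1)$-dimensional component, every line $P\subset(\CP^N)^\ast$ meets it and that fiber is all of $\{Z\}\times\mathcal{L}$. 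Controlling $\dim\mathcal{D}_Z'$ is exactly the condition~($\ast$) of Lemma~\ref{L:existence_pathLP}, which the paper verifies only for $d_1d_2\geq 5$ and $(d_1,d_2)=(2,2)$ (Lemmas~\ref{L:conditionast_d1d2geq5} and~\ref{L:conditionast_22}), not for $(1,3)$ or $(1,4)$. Thus your approach, taken literally, would leave the cases $g=4$ and $g=5$ with divisibility $1$ unproven, whereas the paper's local perturbation argument handles all $g\geq 4$ uniformly.
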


\begin{proof}
Suppose that $f_P:T_Z\dashedrightarrow P$ has a reducible fiber $F=F_1+F_2$. 
By Lemma~\ref{L:nongeneric_subset_moduli}, Lemma~\ref{L:openness_SpaceOfLP} and Corollary~\ref{C:isomLP_component}, we may assume that $\NS(T_Z)$ is isomorphic to $\Z$ without loss of generality. 
Since $[F_i]\in \NS(T_Z)$ for $i=1,2$, $[F_i]=n_i \alpha$ for some $n_i\in \Z$ and $\alpha\in H^2(T_Z;\Z)$. 
Since $F_1$ and $F_2$ intersect on one point (which is a Lefschetz singularity of $f_P$), $n_1n_2\alpha^2$ is equal to $1$. 
Thus $2g-2 = [F]^2$ must be equal to $4$, which contradicts the assumption. 
\end{proof}

\begin{remark}\label{R:nonhol_reduciblefiber}

We can deduce from Corollary~\ref{C:nonexistence_sepVC} that a genus-$g\geq 4$ Lefschetz pencil on the four-torus $T^4$ with reducible fibers cannot be holomorphic. 

\end{remark}

\begin{lemma}\label{L:connectedness_lines}

Suppose that the dimension of $\mathcal{D}_Z'\subset \mathcal{D}_Z$ is at most $N-2$. 
Then the set 
\[
\mathcal{L}_Z^0=\{P\in \mathcal{L}~|~f_P:T_Z\dashedrightarrow P\text{ is a Lefschetz pencil}\}
\]
is non-empty and path-connected.  

\end{lemma}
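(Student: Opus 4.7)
The plan is to exhibit a closed analytic subset $A\subset \mathcal{L}$ of complex codimension at least one such that $\mathcal{L}\setminus A\subset \mathcal{L}_Z^0$. Non-emptiness will then be automatic, and path-connectedness of $\mathcal{L}_Z^0$ will follow by combining path-connectedness of $\mathcal{L}\setminus A$ with the openness of $\mathcal{L}_Z^0$ (Lemma~\ref{L:openness_SpaceOfLP}) via a short density argument.

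First I would characterise the bad locus $\mathcal{B}:=\mathcal{L}\setminus \mathcal{L}_Z^0$. By Theorem~\ref{T:criterion_LP} a line $P\in \mathcal{L}$ belongs to $\mathcal{B}$ exactly when either $P\cap (\mathcal{D}_Z'\cup \mathcal{D}_Z'')\neq \emptyset$, or the tangency condition $(dp_2)_{(\overline{x},H)}(T_{(\overline{x},H)}W_Z)+T_HP=T_H(\CP^N)^\ast$ fails at some $(\overline{x},H)\in p_2^{-1}(\mathcal{D}_Z\cap P)$. Since $\mathcal{D}_Z''\subset \Sing(\mathcal{D}_Z)$ by the proof of Lemma~\ref{L:positive_codimension_D''}, and since the remark after Corollary~\ref{C:property_image_dp_2} identifies $\mathcal{D}_Z^0$ with a submanifold of $(\CP^N)^\ast$ of dimension $N-1$, I can conclude that $\mathcal{D}_Z^0\subset \reg(\mathcal{D}_Z)$ and that $(dp_2)(T_{(\overline{x},H)}W_Z)$ equals the codimension-one subspace $T_H\mathcal{D}_Z\subset T_H(\CP^N)^\ast$ for every $H\in \mathcal{D}_Z^0$. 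Consequently, when $P\cap (\mathcal{D}_Z'\cup \mathcal{D}_Z'')=\emptyset$, failure of the tangency condition at $H\in P\cap \mathcal{D}_Z^0$ becomes equivalent to $T_HP\subset T_H\mathcal{D}_Z$, i.e., to $P$ being tangent to $\reg(\mathcal{D}_Z)$ at $H$.

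Next I would realise $A$ as a projection from the flag variety $F=\{(P,H)\in \mathcal{L}\times (\CP^N)^\ast\mid H\in P\}$, which has complex dimension $2N-1$ and admits a proper holomorphic projection $\phi:F\to \mathcal{L}$ onto the first factor. Inside $F$ set
\[
R=\{(P,H)\in F\mid H\in \mathcal{D}_Z'\cup \Sing(\mathcal{D}_Z)\},\quad T=\{(P,H)\in F\mid H\in \reg(\mathcal{D}_Z),\ T_HP\subset T_H\mathcal{D}_Z\}.
\]
Since $\dim (\mathcal{D}_Z'\cup \Sing(\mathcal{D}_Z))\leq N-2$ (by hypothesis and by Lemma~\ref{L:positive_codimension_D''}) and the fibres of the second projection $R\to \mathcal{D}_Z'\cup \Sing(\mathcal{D}_Z)$ are copies of $\CP^{N-1}$, one has $\dim R\leq 2N-3$; similarly, since the lines through a regular point $H\in \mathcal{D}_Z$ tangent to $\mathcal{D}_Z$ at $H$ form $\PP(T_H\mathcal{D}_Z)\cong \CP^{N-2}$, one has $\dim T\leq 2N-3$. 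Taking $\overline{T}$ to be the analytic closure of $T$ in $F$ and setting $A:=\phi(R\cup \overline{T})$, the proper mapping theorem makes $A$ a closed analytic subset of $\mathcal{L}$ of complex dimension at most $2N-3$, hence of complex codimension at least one in $\mathcal{L}$ (which has complex dimension $2N-2$). The previous paragraph shows $\mathcal{B}\subset A$.

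Finally, since $A$ has positive complex codimension in the connected complex manifold $\mathcal{L}$, its complement $\mathcal{L}\setminus A$ is open, dense and path-connected, and is contained in $\mathcal{L}_Z^0$; in particular $\mathcal{L}_Z^0\neq \emptyset$. For two points $P_0,P_1\in \mathcal{L}_Z^0$, openness of $\mathcal{L}_Z^0$ supplies connected open neighbourhoods $U_i\subset \mathcal{L}_Z^0$ of $P_i$, density of $\mathcal{L}\setminus A$ yields points $P_i'\in U_i\cap (\mathcal{L}\setminus A)$, and concatenating short paths from $P_i$ to $P_i'$ inside $U_i$ with a path from $P_0'$ to $P_1'$ inside $\mathcal{L}\setminus A$ produces the required path from $P_0$ to $P_1$ in $\mathcal{L}_Z^0$. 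The most delicate step in the plan is the tangency reduction: one must use Lemma~\ref{L:criterion_Lefsing_W_Z} carefully to recast the abstract condition in Theorem~\ref{T:criterion_LP}(2) as ordinary tangency of $P$ to the regular part of $\mathcal{D}_Z$, which is precisely what makes the locus $T$ (and hence $A$) genuinely of the expected lower dimension.
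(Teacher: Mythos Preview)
Your argument is essentially correct and takes a genuinely different route from the paper's. The paper proceeds constructively: it fixes a point $H\in (\CP^N)^\ast\setminus \mathcal{D}_Z$, uses the projection $\pi_H$ together with Corollary~\ref{C:criterion_LP_projection} and Sard's theorem to produce one line in $\mathcal{L}_Z^0$, and then, given $P_0,P_1\in\mathcal{L}_Z^0$, builds a chain of intermediate lines $P_1\to P_1'\to P_1''\to P_0$ by repeatedly choosing suitable centers of projection and applying transversality in $\CP^{N-1}$. Your approach is global: you package the obstructions from Theorem~\ref{T:criterion_LP} into a subset $A\subset\mathcal{L}$ of positive complex codimension and invoke the connectedness of the complement of such a set, finishing with an openness/density argument via Lemma~\ref{L:openness_SpaceOfLP}. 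Your method is cleaner and more conceptual; the paper's is more elementary in that it never needs to argue that the bad locus itself is (contained in) an analytic set in $\mathcal{L}$.

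One technical point deserves a sentence of care. You write ``taking $\overline{T}$ to be the analytic closure of $T$''; as stated, $T$ is only locally analytic over the non-closed set $\reg(\mathcal{D}_Z)$, and the topological closure of a locally analytic set need not be analytic. The cleanest fix is to observe that $F$, $\mathcal{L}$, and $\mathcal{D}_Z$ are all projective (the last by Chow's theorem, already used in the paper), so $T$ is constructible and its Zariski closure is an algebraic subvariety of $F$ of the same dimension $\le 2N-3$; then $A=\phi(R\cup\overline{T}^{\mathrm{Zar}})$ is algebraic of dimension $\le 2N-3$ by Chevalley and your argument goes through verbatim. Alternatively, you could bypass $T$ entirely by noting that $R\cup T$ is contained in $\{(P,H)\in F: H\in\mathcal{D}_Z,\ \text{the intersection multiplicity of }P\text{ and }\mathcal{D}_Z\text{ at }H\text{ is }\ge 2\}$, which is visibly closed analytic once one writes $\mathcal{D}_Z$ locally as a hypersurface.
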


\begin{proof}
Assume that $H\in (\CP^N)^\ast$ is away from $\mathcal{D}_Z$. 
By the assumption the image $\pi_{H}(\mathcal{D}_Z'\cup \mathcal{D}_Z'')$ is contained in an analytic set with dimension at most $N-2$. 
Furthermore, the set of critical values of $\pi_H|_{\mathcal{D}_Z^0}$ is a countable union of submanifolds of $\CP^{N-1}$ with dimension at most $N-2$ by Sard's theorem (see \cite[Corollary V.1.2]{Lojasiewicz_1991}). 
Thus, the image $\pi_{H}(\mathcal{D}_Z'\cup \mathcal{D}_Z''\cup \Crit(\pi_H|_{\mathcal{D}_Z^0}))$ is contained in a countable union of submanifolds of $\CP^{N-1}$. 
In particular we can take a point $x\in \CP^{N-1}$ away from $\pi_{H}(\mathcal{D}_Z'\cup \mathcal{D}_Z''\cup \Crit(\pi_H|_{\mathcal{D}_Z^0}))$. 
By Corollary~\ref{C:criterion_LP_projection} the line $\overline{\pi_H^{-1}(x)}$ is contained in $\mathcal{L}_Z^0$, especially $\mathcal{L}_Z^0$ is not empty. 

Let $P_0,P_1\in \mathcal{L}_Z^0$.
We take distinct hyperplanes $H_0,H_0'\in P_0$ away from $\mathcal{D}_Z$. 
Since $\pi_{H_0}(H_0')$ is not contained in  $\pi_{H_0}(\mathcal{D}_Z'\cup \mathcal{D}_Z''\cup \Crit(\pi_{H_0}|_{\mathcal{D}_Z^0}))$ by Corollary~\ref{C:criterion_LP_projection}, there exists an open neighborhood $U_0\subset (\CP^N)^\ast$ of $H_0'$ such that it is away from $\mathcal{D}_Z$ and $\pi_{H_0}(U_0)\cap \pi_{H_0}(\mathcal{D}_Z'\cup \mathcal{D}_Z''\cup \Crit(\pi_{H_0}|_{\mathcal{D}_Z^0})) = \emptyset$. 
We take a hyperplane $H_1\in P_1\setminus \mathcal{D}_Z$. 
Since $\pi_{H_1}$ is an open map, $\pi_{H_1}(U_0)$ is an open subset of $\CP^{N-1}$. 
In particular, the set $\pi_{H_1}(U_0) \setminus (\pi_{H_1}(\mathcal{D}_Z'\cup \mathcal{D}_Z''\cup \Crit(\pi_{H_1}|_{\mathcal{D}_Z^0})))$ is not empty. 
Let $x_1'$ be a point in this set and $P_1' = \overline{\pi_{H_1}^{-1}(x_1')}$.  
By the transversality theorem (\cite[Theorem 4.9]{GG_1973}), we can take a path in $\CP^{N-1}$ away from $\pi_{H_1}(\mathcal{D}_Z'\cup \mathcal{D}_Z''\cup \Crit(\pi_{H_1}|_{\mathcal{D}_Z^0}))$ which connects $x_1\in \pi_{H_1}(P_1)$ and $x_1'$. 
Taking preimages of this path, we can take a path in $\mathcal{L}_Z^0$ from $P_1$ to $P_1'$. 
We take a hyperplane $H_1'\in P_1'\cap U_0$ and denote the line going through $H_0$ and $H_1'$ by $P_1''$. 
Since $f_{P_1''}$ is a Lefschetz pencil, $\pi_{H_1'}(P_1''\setminus \{H_1''\})$ is not contained in $\pi_{H_1'}(\mathcal{D}_Z'\cup \mathcal{D}_Z''\cup \Crit(\pi_{H_1'}|_{\mathcal{D}_Z^0}))$. 
Thus, in the same way as above we can take a path in $\mathcal{L}_Z^0$ connecting $P_1'$ and $P_1''$. 
Lastly, since $\pi_{H_0}(P_1''\setminus \{H_0\})$ is not contained in $\pi_{H_0}(\mathcal{D}_Z'\cup \mathcal{D}_Z''\cup \Crit(\pi_{H_0}|_{\mathcal{D}_Z^0}))$, we can take a path in $\mathcal{L}_Z^0$ connecting $P_1''$ and $P_0$. 
\end{proof}

\begin{lemma}\label{L:existence_pathLP}

Suppose that the following condition \textnormal{($\ast$)} (for $(d_1,d_2)$) is satisfied: 

\begin{enumerate}

\item[\textnormal{($\ast$)}]
The set $\{Z\in \h_2~|~\dim(\mathcal{D}_Z')\geq N-1\}$ is contained in a countable union of analytic sets with positive codimensions. 

\end{enumerate}

\noindent
Then $\mathcal{W}$ defined in Corollary~\ref{C:isomLP_component} is path-connected. 

\end{lemma}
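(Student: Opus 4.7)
The strategy is to combine the connectedness of a generic stratum of $\h_2$ with the fiberwise path-connectedness supplied by Lemma~\ref{L:connectedness_lines}, then lift an arc in $\h_2$ to $\mathcal{W}$ by a staircase argument based on a finite open cover.

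First I would fix the subset $\h_2^\mathrm{gen}\subset \h_2$ consisting of those $Z$ for which every genericity assumption introduced in Subsection~\ref{S:condition_pencil_Lefschetz} holds simultaneously, namely $\NS(T_Z)\cong \Z$, $T_Z$ is not a product of elliptic curves when $d_1=1$, $\dim S_0=0$, and $\dim \mathcal{D}_Z'\leq N-2$. By Lemma~\ref{L:nongeneric_subset_moduli}, Lemma~\ref{L:dimension_S_i} and condition $(\ast)$, the complement $\h_2\setminus \h_2^\mathrm{gen}$ is contained in a countable union of positive-codimension analytic subsets of the connected complex manifold $\h_2$ of complex dimension $3$. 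Hence $\h_2^\mathrm{gen}$ is open, dense and path-connected (a countable union of complex-analytic subsets of positive codimension has real codimension at least $2$ and therefore cannot disconnect). For every $Z\in \h_2^\mathrm{gen}$, Lemma~\ref{L:connectedness_lines} then gives that $\mathcal{L}_Z^0$ is non-empty and path-connected.

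Given two points $(Z_0,P_0),(Z_1,P_1)\in \mathcal{W}$, I would first use openness of $\mathcal{W}$ (Lemma~\ref{L:openness_SpaceOfLP}) together with density of $\h_2^\mathrm{gen}$ to connect each endpoint in $\mathcal{W}$ to a nearby point $(Z_i',P_i)$ with $Z_i'\in \h_2^\mathrm{gen}$, so I may assume $Z_0,Z_1\in \h_2^\mathrm{gen}$ at the outset. Next I would choose a continuous path $\gamma:[0,1]\to \h_2^\mathrm{gen}$ from $Z_0$ to $Z_1$. For each $t\in [0,1]$, pick any $Q_t\in \mathcal{L}_{\gamma(t)}^0$; openness of $\mathcal{W}$ supplies an open interval $I_t\ni t$ such that $(\gamma(s),Q_t)\in \mathcal{W}$ for every $s\in I_t$. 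Using compactness of $[0,1]$, I would extract a finite subcover $I_{t_0},\ldots,I_{t_k}$ with $t_0=0$, $t_k=1$, and $I_{t_j}\cap I_{t_{j+1}}\neq\emptyset$, and choose $s_j\in I_{t_j}\cap I_{t_{j+1}}$. Since $Q_{t_j},Q_{t_{j+1}}\in \mathcal{L}_{\gamma(s_j)}^0$ and the latter is path-connected, I can alternate horizontal segments $s\mapsto (\gamma(s),Q_{t_j})$ (inside $\mathcal{W}$ by construction) with vertical segments inside $\{\gamma(s_j)\}\times \mathcal{L}_{\gamma(s_j)}^0$ to build a path in $\mathcal{W}$ from $(Z_0,Q_{t_0})$ to $(Z_1,Q_{t_k})$. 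Finally I connect $P_0$ to $Q_{t_0}$ inside $\{Z_0\}\times \mathcal{L}_{Z_0}^0$ and $Q_{t_k}$ to $P_1$ inside $\{Z_1\}\times \mathcal{L}_{Z_1}^0$ using Lemma~\ref{L:connectedness_lines} once more.

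The main technical point is the staircase lift, which crucially uses Lemma~\ref{L:openness_SpaceOfLP}: without openness of $\mathcal{W}$ one could not produce the horizontal intervals $I_t$ on which a single line $Q_t$ continues to define a Lefschetz pencil. The underlying reason such a lift exists at all is that condition $(\ast)$, taken together with the earlier genericity statements in Subsections~\ref{S:moduli_abelian_surface} and~\ref{S:condition_pencil_Lefschetz}, confines the entire bad locus in $\h_2$ to positive codimension, so that $\h_2^\mathrm{gen}$ remains path-connected and the total space $\mathcal{W}$ has path-connected fibers over it.
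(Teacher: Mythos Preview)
Your argument is correct and follows essentially the same strategy as the paper: reduce to a path in the generic locus of $\h_2$, use Lemma~\ref{L:openness_SpaceOfLP} to propagate a fixed line along short arcs, and use Lemma~\ref{L:connectedness_lines} to switch lines at intermediate points. The paper keeps the endpoints $Z_0,Z_1$ arbitrary and makes only the interior of the arc generic (handling the ends with an $\varepsilon$-neighborhood), then builds the lift by a supremum argument; you instead first slide the endpoints into the generic locus and then use a finite-subcover staircase. These are routine variants of one another.

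One small inaccuracy worth flagging: $\h_2^{\mathrm{gen}}$ need not be \emph{open}, since conditions such as $\NS(T_Z)\cong\Z$ and condition~($\ast$) only confine the bad locus to a \emph{countable} union of analytic subsets, whose complement is a dense $G_\delta$ rather than an open set. Your proof never actually uses openness of $\h_2^{\mathrm{gen}}$---you only use its density (to move the endpoints, via openness of $\mathcal{W}$) and its path-connectedness (which follows because the complement has real codimension $\geq 2$)---so the slip is harmless, but you should drop the word ``open'' from that sentence.
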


\begin{proof}
For $(Z_i,P_i)\in \mathcal{W}$ ($i=0,1$), we first take a path $\beta:[0,1]\to \h_2$ which satisfies the following properties: 

\begin{itemize}

\item
$\beta(i)=Z_i$ for $i=0,1$, 

\item
$\dim(\mathcal{D}_{\beta(t)}')<N-1$ for any $t\in (0,1)$,

\item
the group $\NS(T_{\beta(t)})$ is the infinite cyclic group for any $t\in (0,1)$. 

\end{itemize}

\noindent
We can take such a path by the transversality theorem, the assumption and Lemma~\ref{L:nongeneric_subset_moduli}. 
We may further assume that $T_{\beta(t)}$ is not a product of elliptic curves by Lemma~\ref{L:nongeneric_subset_moduli} and an analytic set $S_0\subset T_{\beta(t)}$ defined in p.\pageref{P:definition_S0} has dimension $0$ by Lemma~\ref{L:dimension_S_i}.
By Lemma~\ref{L:openness_SpaceOfLP} there exists $\varepsilon >0$ such that $f_{P_0}:Z_t\dashedrightarrow P_0$ and $f_{P_1}:Z_{1-t}\dashedrightarrow P_1$ are both Lefschetz pencils for $t\in [0,\varepsilon]$. 
We will prove that there exists a piecewise smooth path $\gamma:[t_0,t_1]\to \h_2 \times \mathcal{L}$ which satisfy the following conditions: 

\begin{enumerate}

\item
$\gamma_2(t_0) = P_0$, 

\item
$f_{\gamma_2(t)}:T_{\gamma_1(t)} \dashedrightarrow \gamma_2(t)$ is a Lefschetz pencil for any $t\in[t_0,t_1]$, 

\item
there exists a monotone non-decreasing function $\delta:[t_0,t_1]\to [\varepsilon,1-\varepsilon]$ such that $\delta(t_0)=\varepsilon$ and $\gamma_1(t)=\beta(\delta(t))$ for any $t\in [t_0,t_1]$, 

\item
$\delta(t_1)=1-\varepsilon$ and $\gamma_2(t_1)=P_1$,

\end{enumerate}
where $\gamma_i(t)$ is the $i$-th component of $\gamma(t)$.
In order to prove existence of such a path, we define a value $T\leq 1-\varepsilon$ as follows: 
\[
T = \sup \left\{t \leq 1-\varepsilon ~\left|~{}^\exists \gamma:[t_0,t_1]\to \mathcal{L}\text{ s.t. }\gamma\text{ satisfies (1), (2) and (3) and }\gamma_1(t_1)=\beta(t) \right.\right\}. 
\]
The value $T$ is equal to $1-\varepsilon$. 
To see this, suppose that $T$ is less than $1-\varepsilon$. 
By Lemma~\ref{L:connectedness_lines}, there exists a line $P\in \mathcal{L}$ such that $f_P:Z_{\beta(T)}\dashedrightarrow P$ is a Lefschetz pencil. 
By Lemma~\ref{L:openness_SpaceOfLP} we can take $\varepsilon'>0$ such that $f_P:Z_{\beta(t)}\dashedrightarrow P$ is a Lefschetz pencil for any $t\in [T-\varepsilon',T+\varepsilon']$. 
By the definition of $T$, there exists a path $\gamma:[t_0,t_1]\to \mathcal{L}$ and $s \in (T-\varepsilon',T]$ such that $\gamma$ satisfies the conditions (1)--(3) and $\gamma_1(t_1)=\beta(s)$. 
Using Lemma~\ref{L:connectedness_lines} we can take a path in $\mathcal{L}_{\beta(s)}^0$ which connects $\gamma_2(s)$ and $P$. 
We can then extend a path $\gamma$ so that the extended path $\widetilde{\gamma}$ satisfies the conditions (1)--(3) and $\widetilde{\gamma}(t_1)=T+\varepsilon'$, which contradicts the definition of $T$. 
Thus we can conclude that $T=1-\varepsilon$. 
In the same way as above, we can then take a path $\gamma$ which satisfies the conditions (1)--(4). 
Eventually we can obtain a path connecting $(Z_0,P_0)$ and $(Z_1,P_1)$ by concatenate the three paths $[0,\varepsilon] \ni t\mapsto (\beta(t),P_0)$, $\gamma$ obtained above and $[1-\varepsilon,1] \ni t\mapsto (\beta(t),P_1)$. 
\end{proof}

\noindent
We can eventually deduce the following theorem from Corollary~\ref{C:isomLP_component} and Lemma~\ref{L:existence_pathLP}:

\begin{theorem}\label{T:uniqueness_isomLP}

Suppose that the condition \textnormal{($\ast$)} in Lemma~\ref{L:existence_pathLP} is satisfied. 
Then any two holomorphic Lefschetz pencils on $T^4$ with genus-$(d_1d_2+1)$ and divisibility $d_1$ are isomorphic. 

\end{theorem}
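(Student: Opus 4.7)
The plan is to assemble the theorem from the preceding machinery, with the bulk of the work already distilled into Lemmas~\ref{L:holLP_fromlinebdl}, \ref{L:existence_pathLP} and Corollary~\ref{C:isomLP_component}. Given two holomorphic Lefschetz pencils $f_0,f_1$ on $T^4$ with genus $d_1d_2+1$ and divisibility $d_1$, the first step is to realize each $f_i$ as $f_{P_i}:T_{Z_i}\dashedrightarrow P_i$ for suitable $(Z_i,P_i)\in\h_2\times\mathcal{L}$. By Lemma~\ref{L:holLP_fromlinebdl}, $f_i=[s_0^i:s_1^i]$ for an ample line bundle $L^i\to T^4$ with independent sections $s_0^i,s_1^i$. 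Using Remark~\ref{R:independence_choice_linebundle}, we may assume $L^i=L_{Z_i}$ for some $Z_i\in\h_2$ once we have fixed the polarization type, and then the two sections $s_0^i,s_1^i$ can be expanded in the basis $\{\vartheta_{Z_i}^{kl}\}$ of $\Gamma(L_{Z_i})$ from Theorem~\ref{T:basis_sections}, giving a line $P_i\in\mathcal{L}$ with $f_i\cong f_{P_i}$.

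The next step is to identify the polarization type from the topological invariants. Writing $H=c_1(L^i)$ as an alternating form on $H_1(T^4;\Z)$ in its canonical symplectic basis, we get a type $(d_1',d_2')$ with $d_1'|d_2'$. A smooth divisor in $|L^i|$ realizes the Poincar\'e dual of $H$, so by adjunction its genus equals $1+\tfrac{1}{2}H^2=1+d_1'd_2'$; hence $d_1'd_2'=d_1d_2$. On the other hand, $H$ is $d_1'$-divisible in $H^2(T^4;\Z)$ (the off-diagonal block of the representation matrix is $\mathrm{diag}(d_1',d_2')$), so the divisibility of $f_i$ forces $d_1'=d_1$, and hence $d_2'=d_2$. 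Thus both $(Z_i,P_i)$ lie in the set $\mathcal{W}$ of Corollary~\ref{C:isomLP_component} corresponding to type $(d_1,d_2)$.

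Finally, invoking Lemma~\ref{L:existence_pathLP} under the standing hypothesis ($\ast$), $\mathcal{W}$ is path-connected, so $(Z_0,P_0)$ and $(Z_1,P_1)$ lie in the same connected component, and Corollary~\ref{C:isomLP_component} then yields $f_{P_0}\cong f_{P_1}$, hence $f_0\cong f_1$.

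The main obstacle here is conceptually minor but needs care: matching the topological invariants (genus and divisibility) of a Lefschetz pencil to the algebraic invariant (type of polarization) of the line bundle it defines, so that the two pencils $f_0,f_1$ genuinely land in the \emph{same} moduli-theoretic set $\mathcal{W}$. Once this identification is pinned down, the theorem reduces to the deformation argument already packaged in Lemma~\ref{L:existence_pathLP} and the monodromy-stability statement in Corollary~\ref{C:isomLP_component}, which themselves rest on the genericity inputs from Subsection~\ref{S:moduli_abelian_surface} and the openness result Lemma~\ref{L:openness_SpaceOfLP}.
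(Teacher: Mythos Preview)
Your argument is correct and follows the same route as the paper, which simply records the theorem as a direct consequence of Corollary~\ref{C:isomLP_component} and Lemma~\ref{L:existence_pathLP}. The only addition you make is spelling out the identification of $(g(f),d(f))$ with the polarization type $(d_1,d_2)$ via adjunction and the divisibility of $H$ in $H^2(T^4;\Z)$; the paper treats this as already established at the start of Subsection~\ref{S:condition_pencil_Lefschetz}.
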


\subsection{The condition ($\ast$) for a pair $(d_1,d_2)$}

As we proved in the last subsection, any two holomorphic Lefschetz pencils with genus-$(d_1d_2+1)$ and divisibility $d_1$ are isomorphic \textit{provided that the condition \textnormal{($\ast$)} in Lemma~\ref{L:existence_pathLP} is satisfied}. 
In this subsection we discuss for which pair $(d_1,d_2)$ satisfies this condition. 

\begin{lemma}\label{L:conditionast_d1d2geq5}

The condition \textnormal{($\ast$)} holds if $d_1d_2 \geq 5$. 

\end{lemma}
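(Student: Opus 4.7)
The plan is to identify the bad set appearing in condition ($\ast$) with the degeneracy locus of a relative dimension count over $\h_2$, and to control it by exhibiting a single genuine Lefschetz-type critical point, using the very ampleness of $L_Z$ when $d_1d_2 \geq 5$.

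First, I set up the universal analogue of $W_Z$. Let
$$\widetilde{\mathcal{W}} \subset \h_2 \times \C^2 \times (\CP^N)^\ast$$
be the analytic subset cut out by the three defining equations of $W_Z$, and let $\widetilde{\mathcal{W}}^{\textnormal{bad}} \subset \widetilde{\mathcal{W}}$ be obtained by additionally imposing vanishing of the complex Hessian determinant from the definition of $W_Z^0$. By Lemmas~\ref{L:dimension_W_Z} and~\ref{L:dimension_D_Z}, the fibers of the projection $\widetilde{\mathcal{W}} \to \h_2$ are (modulo the lattice action of $\overline{\Lambda_Z}$) exactly the sets $W_Z$, each of dimension $N-1$, so $\dim \widetilde{\mathcal{W}} = N + 2$. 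If the Hessian equation does not vanish identically on $\widetilde{\mathcal{W}}$, then $\dim \widetilde{\mathcal{W}}^{\textnormal{bad}} \leq N + 1$.

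The decisive step is thus to produce one triple $(Z_0, x_0, H_0) \in \widetilde{\mathcal{W}} \setminus \widetilde{\mathcal{W}}^{\textnormal{bad}}$, or equivalently a Lefschetz-type critical point of some pencil $f_P : T_{Z_0} \dashedrightarrow P$. Here I appeal to the classical fact that for $d_1 d_2 \geq 5$ the line bundle $L_Z$ is very ample on a generic polarized abelian surface $T_Z$ (see e.g.~\cite[Section~10]{LangeBirkenhake_1992}), so that $\varphi_{Z_0}$ is an embedding of a smooth projective surface into $\CP^N$. The dual variety of $\varphi_{Z_0}(T_{Z_0})$ is then an irreducible hypersurface in $(\CP^N)^\ast$ whose generic point corresponds to a hyperplane tangent at a unique point and transverse elsewhere; the corresponding hyperplane section has a single ordinary double point, which is of Lefschetz type by Remark~\ref{R:complexity_Lefsing}. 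Any line $P\subset (\CP^N)^\ast$ through such a generic tangent hyperplane thus furnishes the desired $(Z_0, x_0, H_0)$.

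Once $\dim \widetilde{\mathcal{W}}^{\textnormal{bad}} \leq N + 1$ is established, I descend to the relative torus over $\h_2$ (identifying points in each $\C^2$-fiber modulo $\overline{\Lambda_Z}$) so that the projection $p : \widetilde{\mathcal{W}}^{\textnormal{bad}} \to \h_2$ becomes proper. Upper semicontinuity of fiber dimension for proper analytic maps then forces
$$S = \{Z \in \h_2 \mid \dim p^{-1}(Z) \geq N - 1\}$$
to be a closed analytic subset of $\h_2$ of dimension at most $2$, hence of positive codimension in $\h_2$. For every $Z \notin S$ we have $\dim \mathcal{D}_Z' \leq \dim(W_Z \setminus W_Z^0) \leq N - 2$, verifying condition~($\ast$). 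The main obstacle is the production of the Lefschetz critical point, since it is precisely the failure of generic very ampleness in low degrees that rules out this method for $d_1d_2 = 3,4$ and forces the hypothesis $d_1d_2 \geq 5$.
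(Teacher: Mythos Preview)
Your argument has a genuine gap at the step ``if the Hessian equation does not vanish identically on $\widetilde{\mathcal{W}}$, then $\dim\widetilde{\mathcal{W}}^{\textnormal{bad}}\leq N+1$.'' This inference requires $\widetilde{\mathcal{W}}$ to be irreducible, which you never address: exhibiting a single point $(Z_0,x_0,H_0)$ with nonvanishing Hessian shows only that the Hessian does not vanish on \emph{one} irreducible component of $\widetilde{\mathcal{W}}$, leaving open the possibility of other $(N+2)$-dimensional components on which it vanishes identically. (Irreducibility can in fact be proved---every component has dimension $\geq N+2$ since $\widetilde{\mathcal{W}}$ is locally cut out by three equations in an $(N+5)$-dimensional space, while any component failing to dominate $\h_2$ would lie over a proper analytic subset and hence, each $W_Z$ having dimension $\leq N-1$, would have dimension $\leq N+1$; over the very ample locus the fibers are irreducible $\CP^{N-3}$-bundles over $T_Z$---but none of this is in your write-up.)

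The paper proceeds differently and avoids the universal family and semicontinuity entirely. It first observes that for $d_1d_2\geq 5$ the set of $Z$ for which $L_Z$ is \emph{not} very ample is already a proper analytic subset, so one may restrict to very ample $Z$. For such $Z$ the paper shows by a direct linear-algebra computation that the differential $(d\Phi)$ from Lemma~\ref{L:criterion_Lefsing_W_Z} is surjective at \emph{every} point of $W_Z$: a failure of surjectivity would produce a relation $k\,\vartheta_Z^{ij}(x)=c_1\partial_1\vartheta_Z^{ij}(x)+c_2\partial_2\vartheta_Z^{ij}(x)$ with $(c_1,c_2)\neq 0$, hence a nonzero tangent vector annihilated by $d\varphi_Z$, contradicting that $\varphi_Z$ is an embedding. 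Thus $W_Z$ is everywhere smooth of dimension $N-1$, and being a $\CP^{N-3}$-bundle over the connected torus $T_Z$ it is irreducible; since $W_Z^0\neq\emptyset$ (from $\dim\mathcal{D}_Z=N-1$), the complement $W_Z\setminus W_Z^0$ has dimension $\leq N-2$, and therefore $\dim\mathcal{D}_Z'\leq N-2$. This fiberwise argument is shorter and yields the sharper conclusion that the exceptional set in condition~($\ast$) is contained in the non-very-ample locus itself, rather than merely in some unspecified analytic subset coming from a semicontinuity argument.
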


\begin{proof}
We first observe that if $d_1d_2\geq 5$, the set of $Z\in \h_2$ such that $L_Z$ is not very ample is contained in an algebraic set with positive codimension (see \cite[Theorem 4.5.1, \S.10.1, Theorem 10.4.1]{LangeBirkenhake_1992}).
Thus, it suffices to show that $\mathcal{D}_Z'$ has dimension at most $N-2$ when $L_Z$ is very ample. 

In what follows we assume that $L_Z$ is very ample. 
We will prove that the differential $(d\Phi)_{(x,(\ldots,l_{ij}',\ldots))}$ is surjective for any $(\overline{x},H_{\sum l_{ij}X_{ij}})\in W_Z$, where $\Phi:\C^2\times \C^N\to \C^3$ is defined in the proof of Lemma~\ref{L:criterion_Lefsing_W_Z}. 
Suppose contrary that $(d\Phi)_{(x,(\ldots,l_{ij}',\ldots))}$ is not surjective. 
The rank of the following matrix is at most $2$: 
\[
\begin{pmatrix}
\vartheta_Z^{01}(x) & \cdots & \vartheta_Z^{ij}(x)&\cdots \\
\frac{\Pa \vartheta^{01}_Z}{\Pa z_1}(x) & \cdots &\frac{\Pa \vartheta^{ij}_Z}{\Pa z_1}(x)& \cdots \\
\frac{\Pa \vartheta^{01}_Z}{\Pa z_2}(x) & \cdots &\frac{\Pa \vartheta^{ij}_Z}{\Pa z_2}(x)& \cdots 
\end{pmatrix}
\]
Thus there exist complex numbers $k,c_1,c_2\in \C$ such that $(c_1,c_2) \neq (0,0)$ and the following equality holds for any $(i,j)\neq (0,0)$: 
\[
k\vartheta_Z^{ij}(x) = c_1\frac{\Pa \vartheta^{ij}_Z}{\Pa z_1}(x)+c_2\frac{\Pa \vartheta^{ij}_Z}{\Pa z_2}(x). 
\]
Since $(\overline{x},H_{\sum l_{ij}X_{ij}})$ is contained in $W_Z$, $\vartheta^{00}_Z(x)+\sum_{(i,j)\neq (0,0)} l_{ij}'\vartheta^{ij}_Z(x)$ is equal to $0$. 
Since all the $\vartheta^{ij}_Z(x)$'s cannot be equal to $0$ simultaneously, there exists a pair $(i,j)\neq (0,0)$ such that $\vartheta_Z^{ij}(x)\neq 0$. 
Suppose that $\vartheta_Z^{01}(x)\neq 0$ for simplicity (we can deal with the other cases similarly). 
In some coordinates, the map $\varphi_Z:T_Z\to \CP^N$ is described as follows: 
\[
x \mapsto \left(\frac{\vartheta_{Z}^{00}(x)}{\vartheta_{Z}^{01}(x)},\ldots, \frac{\vartheta_{Z}^{ij}(x)}{\vartheta_{Z}^{01}(x)},\ldots\right).
\]
It is easy to see that the differential of this map sends the vector $c_1 \left(\frac{\Pa}{\Pa z_1}\right)+c_2 \left(\frac{\Pa}{\Pa z_2}\right)$ to $0$. 
Thus, $\varphi_Z$ is not an embedding, contradicting the assumption. 
We can eventually conclude that $(d\Phi)_{(x,(\ldots,l_{ij}',\ldots))}$ is surjective. 
In particular, $W_Z$ is a submanifold of dimension $N-1$ and the tangent space $T_{(\overline{x},H)}W_Z$ can be identified with $\Ker((d\Phi)_{(x,(\ldots,l_{ij}',\ldots))})$.  
In the same way as in the proof of Lemma~\ref{L:criterion_Lefsing_W_Z}, we can verify that $p_2:W_Z\to (\CP^N)^\ast$ is an immersion at $(\overline{x},H)$ if and only if $(\overline{x},H)$ is contained in $W_Z^0$. 
Since the dimension of $\mathcal{D}_Z$ is $N-1$ by Lemma~\ref{L:dimension_D_Z}, $W_Z^0$ is not empty and $W_Z\setminus W_Z^0$ is proper sub-analytic set of $W_Z$. 
Since $W_Z$ is irreducible, the dimension of $W_Z\setminus W_Z^0$ is at most $N-2$. 
Since $p_2$ is a proper map, the dimension of $\mathcal{D}_Z'$ is also at most $N-2$ by \cite[Theorem 5.8]{Chirka_1989}.
\end{proof}

\begin{lemma}\label{L:conditionast_22}

The condition \textnormal{($\ast$)} holds for $(d_1,d_2)=(2,2)$. 

\end{lemma}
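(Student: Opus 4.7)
The main new difficulty compared with Lemma~\ref{L:conditionast_d1d2geq5} is that for $(d_1,d_2)=(2,2)$ we have $N=3$ but $L_Z$ is not very ample: classically, the morphism $\varphi_Z:T_Z\to\CP^3$ factors through the involution $[-1]$ and, for generic $Z$, realizes $T_Z$ as a $2:1$ cover of a Kummer quartic surface $K_Z\subset\CP^3$, ramified at exactly the $16$ two-torsion points of $T_Z$. By equivariance $(d\varphi_Z)_{x_0}=0$ at each such $x_0$, so these $16$ points constitute the set $S_0\subset T_Z$ from Subsection~\ref{S:condition_pencil_Lefschetz}, while $\varphi_Z$ is an immersion on the complement. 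The plan is to split $W_Z$ according to whether its first projection lies in $S_0$ and to control the two pieces separately.

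On the closure $W_Z^{\mathrm{out}}$ of $W_Z\cap p_1^{-1}(T_Z\setminus S_0)$, the linear-algebra argument from the proof of Lemma~\ref{L:conditionast_d1d2geq5} adapts verbatim, since it only requires injectivity of $d\varphi_Z$ at the base point, which holds throughout $T_Z\setminus S_0$. Thus the map $\Phi$ from the proof of Lemma~\ref{L:criterion_Lefsing_W_Z} is submersive, $W_Z^{\mathrm{out}}$ is smooth of pure dimension $N-1=2$, and, together with the fact that $W_Z^{0}\cap W_Z^{\mathrm{out}}$ is nonempty for generic $Z$, irreducibility forces $W_Z^{\mathrm{out}}\setminus W_Z^0$ to have dimension at most $N-2=1$. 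Hence $p_2(W_Z^{\mathrm{out}}\setminus W_Z^0)$ does as well.

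The remaining contribution comes from the $16$ fibers $p_1^{-1}(x_0)$ with $x_0\in S_0$. Each such fiber is a full $\CP^{N-1}=\CP^2$ of hyperplanes through $\varphi_Z(x_0)$, and $p_2$ embeds it as a $\CP^2\subset(\CP^3)^{\ast}$. On it, the non-Lefschetz locus is cut out by the vanishing of
\[
\det\Bigl(\bigl(\textstyle\sum_{i,j}l_{ij}\tfrac{\partial^{2}\vartheta_Z^{ij}}{\partial z_k\partial z_l}(x_0)\bigr)_{1\le k,l\le 2}\Bigr),
\]
a quadratic form in $(l_{ij})$. For $Z$ outside a proper analytic subset of $\h_2$ this quadric is not identically zero: it suffices to exhibit one $Z_0\in\h_2$ at which the Hessian quadric at some $2$-torsion point has positive rank, since non-degeneracy is an open condition on $\h_2$. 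Granting this, $p_2(p_1^{-1}(x_0)\setminus W_Z^0)$ is a plane conic, hence of dimension $1=N-2$.

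Combining the two cases gives $\dim\mathcal{D}_Z'\le N-2$ for every $Z$ outside a countable union of proper analytic subsets of $\h_2$, verifying condition~$(\ast)$. I expect the main technical obstacle to be the non-vanishing check for the Hessian quadric at the $2$-torsion points, as it requires a careful handling of second derivatives of the theta functions $\vartheta_Z^{ij}$; the classical heat equation relating $z$-second derivatives of theta functions to first derivatives in the period matrix $Z$ should provide an efficient route, reducing the nondegeneracy to an open nonvanishing condition on certain theta-constants on $\h_2$.
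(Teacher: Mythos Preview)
Your overall architecture matches the paper's: split $W_Z$ according to whether $p_1$ lands in the locus where $\varphi_Z$ is an immersion or at one of the sixteen $2$-torsion points, handle the first piece exactly as in Lemma~\ref{L:conditionast_d1d2geq5}, and for each remaining fiber $p_1^{-1}(x_0)\cong\CP^{N-1}$ show that the Hessian-determinant quadric does not vanish identically. The paper's treatment of the first piece is essentially what you wrote.

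The point you single out as the ``main technical obstacle'' is precisely where your argument is incomplete and where the paper has a clean, noncomputational solution. Rather than exhibiting a special $Z_0$ and invoking openness (which would still require you to verify the claim at \emph{all} sixteen points, or to justify a symmetry argument relating them), the paper appeals to the structure of the Kummer map. From the proof of \cite[Theorem~4.8.1]{LangeBirkenhake_1992}, for any $Z$ with $T_Z$ not a product of elliptic curves and any $2$-torsion point $x_0$, the second-order Taylor map
\[
S^2(T_{x_0}\C^2)\;\longrightarrow\;\Hom(\varphi_Z(\overline{x_0}),\C),\qquad \sum a_{ij}\tfrac{\partial^2}{\partial z_i\partial z_j}\longmapsto\Bigl(\vartheta\mapsto\sum a_{ij}\tfrac{\partial^2\vartheta}{\partial z_i\partial z_j}(x_0)\Bigr),
\]
is an \emph{isomorphism}. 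Taking the dual basis, one immediately produces a section $\vartheta_0\in\varphi_Z(\overline{x_0})$ with $\tfrac{\partial^2\vartheta_0}{\partial z_i\partial z_j}(x_0)=\delta_{ij}$, so $(\overline{x_0},\vartheta_0)\in W_Z^0\cap p_1^{-1}(\overline{x_0})$ and the quadric cannot vanish identically on that $\CP^{N-1}$. This yields $\dim\bigl(p_1^{-1}(\overline{x_0})\setminus W_Z^0\bigr)\le N-2$ for \emph{every} non-product $Z$, not merely a generic one, and requires no theta-constant or heat-equation computations. Your proposed route via explicit second derivatives would work in principle, but it is heavier and you have not carried it out; the isomorphism above is the missing idea that closes the gap.
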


\begin{proof}
For $Z\in \h_2$, the $(2,2)$-polarized abelian surface $(T_Z,H_Z)$ is isomorphic to $(T_{Z'},H_{Z'}^2)$, where $Z' = Z/2\in \h_2$ and $(T_{Z'},H_{Z'})$ is the $(1,1)$-polarized abelian surface corresponding $Z'$ (the isomorphism sends $\overline{x}\in T_Z$ to $\overline{x/2}\in T_{Z'}$). 
Suppose that $T_Z$ is not a product of elliptic curves (this condition holds for generic $Z$ by Lemma~\ref{L:nongeneric_subset_moduli}). 
The abelian surface $T_{Z'}$ is not also a product of elliptic curves. 
We denote the Kummer surface $T_Z/ \left<-1\right>$ associated with $T_Z$ by $K_Z$. 
Since $L_{Z'}$ is symmetric in the sense of \cite[\S.4.6]{LangeBirkenhake_1992}, there exists an embedding $\psi_Z:K_Z\to (\CP^3)^\ast$ such that the following diagram commutes (see \cite[\S.4.8]{LangeBirkenhake_1992}): 
\[
\xymatrix{
T_Z \ar[r]^{\varphi_Z}\ar[d]_{\pi}  & (\CP^3)^\ast \\
K_Z \ar[ur]_{\psi_Z} & 
},
\]
where $\pi:T_Z\to K_Z$ is the quotient map. 
Thus, the set $R_0\subset T_Z$ defined at \eqref{E:definitionR_i} consists of sixteen points which are preimages of singular points of $K_Z$ under $\pi$, and $R_2=T_Z\setminus R_0$. 
The preimage $p_1^{-1}(R_2)\subset W_Z$ is a manifold with dimension $N-1$.
If the dimension of $p_2(p_1^{-1}(R_2))$ is less than $N-1$, that of $\mathcal{D}_Z\cap p_2(p_1^{-1}(R_2))$ is also less than $N-1$. 
Suppose that the dimension of $p_2(p_1^{-1}(R_2))$ is $N-1$. 
In the same way as that in the proof of Lemma~\ref{L:conditionast_d1d2geq5}, we can verify that $p_1^{-1}(R_2)\cap (W_Z\setminus W_Z^0)$ is the set of points at which the restriction $p_2:p_1^{-1}(R_2)\to \mathcal{D}_Z$ is not an immersion, especially $p_1^{-1}(R_2)\cap W_Z^0$ is not empty. 
Since $p_1^{-1}(R_2)$ is irreducible and $p_1^{-1}(R_2)\cap (W_Z\setminus W_Z^0)$ is locally analytic, the dimension of $p_1^{-1}(R_2)\cap (W_Z\setminus W_Z^0)$ is at most $N-2$. 

For $\overline{x}\in R_0$, the preimage $p_1^{-1}(\overline{x})$ is a hyperplane in $\{\overline{x}\}\times (\CP^N)^\ast$. 
The proof of \cite[Theorem 4.8.1]{LangeBirkenhake_1992} implies that the following map is an isomorphism for any $x\in \C^2$ representing $\overline{x}\in R_0$: 
\[
S^2(T_x\C^2) \to \Hom(\varphi_Z(\overline{x}),\C),\hspace{.4em} \sum a_{ij}\frac{\Pa^2}{\Pa z_i\Pa z_j} \mapsto \left(\varphi_Z(\overline{x})\ni \vartheta \mapsto a_{ij}\frac{\Pa^2\vartheta}{\Pa z_i\Pa z_j}(x)\in \C\right), 
\]
where $S^2(T_x\C^2)$ is the symmetric product of $T_x\C^2$ and we identify $\varphi_Z(\overline{x})\in (\CP^N)^\ast$ with a hyperplane in $\Gamma(L_Z)$ using the basis $\{\vartheta_Z^{ij}\}$ of $\Gamma(L_Z)$. 
We denote the image of $\frac{\Pa^2}{\Pa z_i\Pa z_j}$ under the map above by $s_{ij}\in \Hom(\varphi_Z(\overline{x}),\C)$. 
The set $\{s_{11},s_{12},s_{22}\}$ is a basis of $\Hom(\varphi_Z(\overline{x}),\C)$. 
Thus we can take a dual basis $\{s_{11}^\ast,s_{12}^\ast,s_{22}^\ast\}$ of $\Hom(\varphi_Z(\overline{x}),\C)^\ast\cong \varphi_Z(\overline{x})$. 
Let $\vartheta_0 = s_{11}^\ast+s_{22}^\ast \in \varphi_Z(\overline{x})$. 
This Theta function satisfies the condition $\frac{\Pa^2\vartheta_0}{\Pa z_i\Pa z_j}(x)=\delta_{ij}$. 
In particular, $(\overline{x},\vartheta_0)$ is contained in $W_Z^0\cap p_1^{-1}(\overline{x})$. 
Since $p_1^{-1}(\overline{x})$ is irreducible, the dimension of $(W_Z\setminus W_Z^0)\cap p_1^{-1}(\overline{x})$ is at most $N-2$. 
We can eventually conclude that the dimension of 
$W_Z\setminus W_Z^0 = \left(p_1^{-1}(R_2)\cap (W_Z\setminus W_Z^0)\right) \cup \left((p_1^{-1}(R_0)\cap (W_Z\setminus W_Z^0)\right)$ is at most $N-2$. 
Thus the dimension of the image $\mathcal{D}_Z'=p_2(W_Z\setminus W_Z^0)$ is also at most $N-2$. 
\end{proof}

\noindent
Theorem~\ref{T:main_uniquenessLP} immediately follows from Theorem~\ref{T:uniqueness_isomLP}, Lemma~\ref{L:conditionast_d1d2geq5} and Lemma~\ref{L:conditionast_22}. 

We thus far cannot guarantee that the assumption ($\ast$) holds when $(d_1,d_2)=(1,3),(1,4)$.   
Furthermore, the arguments in this section do not work when $(d_1,d_2)=(1,2)$ (note that we assumed that $d_1d_2$ is at least $3$ in the paragraph followed by Lemma~\ref{L:dimension_S_i}). 
Still, we believe the following conjecture holds: 

\begin{conjecture}\label{C:uniqueness_holLP}

Two holomorphic Lefschetz pencils on the four-torus (with any genera and divisibilities) are holomorphic if and only if their genera and divisibilities coincide. 

\end{conjecture}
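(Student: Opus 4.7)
The plan is to extend the approach used to prove Theorem~\ref{T:main_uniquenessLP} to the three remaining polarization types $(d_1,d_2)=(1,2),(1,3),(1,4)$. By Remark~\ref{R:independence_choice_linebundle} every holomorphic Lefschetz pencil on $T^4$ still arises from a pair $(Z,P)\in \h_2\times \mathcal{L}$, so the task reduces to showing that the locus $\mathcal{W}\subset \h_2\times \mathcal{L}$ of pairs yielding Lefschetz pencils is path-connected; when $(d_1,d_2)=(1,2)$ we have $N=1$ and $\mathcal{L}$ is a single point, in which case this reduces to path-connectedness of $\h_2^{\mathrm{LP}}=\{Z\in \h_2~|~\varphi_Z\text{ is a Lefschetz pencil}\}$.

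For the cases $(1,3)$ and $(1,4)$ I would refine the proof of Lemma~\ref{L:conditionast_d1d2geq5}, replacing its use of very-ampleness by weaker generic conditions. Although $L_Z$ need not be very ample for these types, it is base-point-free, and for $Z$ outside a countable union of proper analytic subsets of $\h_2$ (cf.\ Lemma~\ref{L:nongeneric_subset_moduli}) the map $\varphi_Z$ is birational onto its image. On this generic locus, the surjectivity argument for $(d\Phi)_{(x,(l'_{ij}))}$ in Lemma~\ref{L:conditionast_d1d2geq5} goes through everywhere except on the ramification locus $R\subset T_Z$ of $\varphi_Z$, which has positive codimension. A dimension count then shows that $p_1^{-1}(R)$ contributes a subset of $\mathcal{D}_Z'$ of dimension at most $N-2$, so $\dim \mathcal{D}_Z' \leq N-2$ and condition $(\ast)$ of Lemma~\ref{L:existence_pathLP} holds.

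For the case $(1,2)$, path-connectedness of $\h_2^{\mathrm{LP}}$ follows once one shows that $\h_2 \setminus \h_2^{\mathrm{LP}}$ is contained in a countable union of proper analytic subsets of the connected $3$-dimensional complex manifold $\h_2$. Openness of $\h_2^{\mathrm{LP}}$ comes from Lemma~\ref{L:openness_SpaceOfLP}, and non-emptiness from Smith's pencil (Theorem~\ref{T:VC_Smith}). The complement is cut out by the simultaneous vanishing of certain polynomials in the values and in the first and second partial derivatives of $\vartheta^{00}_Z$ and $\vartheta^{01}_Z$ at points of $\C^2$, encoding respectively the failure of condition (3) at a base point and the failure of a critical point to be of Lefschetz type. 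Since these polynomials are holomorphic in $Z$ and do not vanish identically (Smith's pencil being a witness where they are nonzero), their common zero locus is a countable union of proper analytic subsets of $\h_2$, as required.

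The main obstacle is the $(1,2)$ case. The pencil is rigid in the projective direction, so the flexibility of varying $P\in\mathcal{L}$ that is used crucially in Lemma~\ref{L:existence_pathLP} is lost, and one must argue directly with the two level-$(1,2)$ Theta functions and their second-order data. A complementary route, should the Hessian analysis become unwieldy, would be to use the correspondence of Lemma~\ref{L:relation_covering_polarization} to pull an alleged exotic $(1,2)$-pencil back along an \'etale cover of $T^4$, producing a pencil of higher polarization type to which the already-established uniqueness applies, and then to push the isomorphism back down.
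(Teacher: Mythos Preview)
This statement is labeled a \emph{Conjecture} in the paper, not a theorem; the paper explicitly says it cannot verify condition~$(\ast)$ for $(d_1,d_2)=(1,3),(1,4)$ and that the arguments of Section~\ref{S:uniqueness_holLP} do not apply when $(d_1,d_2)=(1,2)$. So there is no proof in the paper to compare your proposal against, and your proposal should be read as an attempted resolution of an open problem.

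Your sketch has genuine gaps. For $(d_1,d_2)=(1,3)$ your key hypothesis is false: here $h^0(L_Z)=3$ and $L_Z^2=6$, so for generic $Z$ the map $\varphi_Z:T_Z\to\CP^2$ is a degree-$6$ finite cover of $\CP^2$, not a birational map. Hence the ramification locus is a curve, and the surjectivity argument for $(d\Phi)$ borrowed from Lemma~\ref{L:conditionast_d1d2geq5} fails on a set of the wrong dimension; your dimension count for $\mathcal{D}_Z'$ does not go through. The $(1,4)$ case is more delicate than you suggest as well: even when $\varphi_Z$ is birational onto its image, the image is a singular octic in $\CP^3$ and one must control the contribution of the non-immersive locus carefully rather than assert it has the right codimension.

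For $(1,2)$ your argument that $\h_2\setminus \h_2^{\mathrm{LP}}$ is contained in a countable union of proper analytic subsets is not justified. The bad locus is defined by an \emph{existential} condition (``there exists $x\in T_Z$ where\ldots''), so it is the image under projection $\h_2\times\C^2\to\h_2$ of a set cut out by holomorphic equations in $(Z,x)$; to conclude analyticity you need a properness argument, and since the relevant equations involve $\vartheta_Z^{ij}$ and their derivatives on all of $\C^2$ rather than on a compact fundamental domain, this is not automatic. Nor does ``not vanishing identically'' for a family of conditions indexed by points of $T_Z$ give a countable union without further work. Finally, your complementary route via pullback to a higher cover does not obviously descend: an isomorphism of the covering pencils is a diffeomorphism of the total spaces commuting with the pencil maps, and there is no reason it should be equivariant for the deck group, which is what you would need to push it down.
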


\section{Examples of Lefschetz pencils on the four-torus}\label{S:example_holLP}

As was shown in the previous section, for any pair of positive integers $d_1,d_2$ with $d_1|d_2$ and $d_1d_2\geq 5$, there exists a holomorphic Lefschetz pencils on $T^4$ with genus-$(d_1d_2+1)$ and divisibility $d_1$ (see Lemmas~\ref{L:connectedness_lines} and \ref{L:conditionast_d1d2geq5}). 
In this section we will construct some of them explicitly and determine their monodromy factorizations. 

We begin with observing the relation between (possibly branched) coverings and monodromies of mappings. 
Let $X$ be a closed four-manifold, $\Sigma$ a closed surface and $f:X\to \Sigma$ a smooth map with discrete critical value set. 
As we defined in Subsection~\ref{S:monodromy_LFLP}, we can define a local monodromy by taking a loop around a critical value of $f$ and a parallel transport along this loop with respect to a horizontal distribution of the submersion $f|_{X\setminus \Crit(f)}$.  
Let $q:\tilde{X}\to X$ be a covering branched at (possibly disconnected and empty) immersed surface $S$ with transverse self-intersections (the reader can refer to \cite[Chap.~7]{GS}, for example, for covering branched at such surfaces). 
We denote the set of self-intersections of $S$ by $D(S)\subset X$ and the critical point set of the restriction $f|_{S\setminus D(S)}$ by $T_f(S)$. 
In what follows we assume that the image $f(T_f(S))$ is a discrete set. 
It is easy to see that the critical value set of the composition $f\circ q:\tilde{X}\to \Sigma$ is contained in the image $f(\Crit(f)\cup D(S)\cup T_f(S))$, which is a discrete set by assumption. 
In particular, we can define a local monodromy of each critical value of $f\circ q$. 
We will discuss the relation of monodromies of $f$ and $f \circ q$ below. 

Let $a_0\in \Sigma$ be a point away from $f(\Crit(f)\cup D(S)\cup T_f(S))$. 
By the assumption the fiber $f^{-1}(a_0)$ is a submanifold of $X$ and intersects $S$ transversely. 
In particular the intersection $f^{-1}(a_0)\cap S$ is a finite set. 
Using this we can identify $f^{-1}(a_0)$ with a genus-$g$ closed surface $\Sigma_g$ with $p=\sharp(f^{-1}(a_0)\cap S)$ marked points, which we denote by $\Sigma_{g,p}$. 
For a point $a \in f(\Crit(f)\cup D(S)\cup T_f(S))$ we take a path $\alpha$ from $a_0$ to $a$. 
We further take a loop $\widetilde{\alpha}$ by connecting $\alpha$ with a small circle around $a$. 
Let $\mathcal{H}$ be a horizontal distribution of the restriction $f|_{X\setminus (\Crit(f)\cup D(S)\cup T_f(S))}$ such that $\mathcal{H}_x$ coincides with $T_xS$ for any $x\in S\setminus (\Crit(f)\cup D(S)\cup T_f(S))$.  
Using the identification $f^{-1}(a_0)\cong \Sigma_{g,p}$, we can regard the parallel transport $T_{\widetilde{\alpha},\mathcal{H}}$ along $\widetilde{\alpha}$ with respect to $\mathcal{H}$ as self-diffeomorphism of $\Sigma_{g,p}$ preserving the marked points setwise. 

By the assumption the restriction $q|_{f^{-1}(a_0)}$ is a covering branched at the finite set $f^{-1}(a_0)\cap S$. 
In particular we can take an identification of a fiber $(f\circ q)^{-1}(a_0)$ with a marked surface $\Sigma_{\tilde{g},p}$, which is a covering of $\Sigma_{g,p}$ branched at $p$ marked points. 
Since $\mathcal{H}$ is tangent to $S$ at any point in $S\setminus (\Crit(f)\cup D(S)\cup T_f(S))$, we can take a lift $\widetilde{\mathcal{H}}$ of $\mathcal{H}$ by the branched covering $q$, which is a horizontal distribution of $f\circ q$. 
It is easy to verify that the parallel transport $T_{\widetilde{\alpha},\widetilde{\mathcal{H}}}$ is a lift of $T_{\widetilde{\alpha},\mathcal{H}}$ by $q$, that is, the following diagram commutes: 
\[
\begin{CD}
\Sigma_{\tilde{g},p} @> T_{\widetilde{\alpha},\widetilde{\mathcal{H}}}>> \Sigma_{\tilde{g},p} \\
@Vq VV @VV qV \\
\Sigma_{g,p} @> T_{\widetilde{\alpha},\mathcal{H}}>> \Sigma_{g,p}. 
\end{CD}
\]
We eventually obtain the following lemma: 

\begin{lemma}\label{L:relation_monodromy_covering}

Let $\Mod(\Sigma_{g,p})$ be the group of isotopy classes of self-diffeomophisms of $\Sigma_{g,p}$ preserving the marked points setwise, and $\varphi_{\widetilde{\alpha}}\in \Mod(\Sigma_{g,p})$ the monodromy of $f$ along $\widetilde{\alpha}$. 
Then the monodromy of $f\circ q$ along $\widetilde{\alpha}$ is represented by a lift of a representative of $\varphi_{\widetilde{\alpha}}$ by $q$. 

\end{lemma}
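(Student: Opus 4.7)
The plan is to unpack the definitions carefully and verify that the commutative diagram displayed just before the statement really does identify the two monodromies.

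First, I would establish that the lifted distribution $\widetilde{\mathcal{H}}$ is a well-defined smooth horizontal distribution of $f\circ q$ off the critical-value set in the base. Away from the branch surface $S$ the branched covering $q$ is a local diffeomorphism, so $dq$ is invertible on tangent spaces and one can simply set $\widetilde{\mathcal{H}}_y = (dq_y)^{-1}(\mathcal{H}_{q(y)})$ on $T_y\tilde{X}$. The tangency condition $\mathcal{H}_x = T_xS$ at smooth points $x\in S\setminus (\Crit(f)\cup D(S)\cup T_f(S))$ is precisely what guarantees a smooth extension of $\widetilde{\mathcal{H}}$ across the ramification locus $q^{-1}(S)$: in a standard local branched-covering model $(z,w)\mapsto (z^k,w)$ in which $S=\{z=0\}$ and $\mathcal{H}|_{S}$ is spanned by $\Pa_w$, the lift $\Pa_w$ extends smoothly across $\{z=0\}$.

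Second, I would compare the two parallel transports via compatibility of flows. Let $\xi$ denote the $\mathcal{H}$-horizontal lift of the direction field of $\widetilde{\alpha}$, and let $\widetilde{\xi}$ denote its $\widetilde{\mathcal{H}}$-horizontal lift. By construction $dq(\widetilde{\xi}) = \xi\circ q$, so if $\widetilde{\Phi}_t$ and $\Phi_t$ denote the corresponding flows, then $q\circ \widetilde{\Phi}_t = \Phi_t\circ q$ wherever both sides are defined. Evaluating at $t=1$ and restricting to the fibers over $a_0$ gives exactly the commutative square displayed above the lemma, so that $T_{\widetilde{\alpha},\widetilde{\mathcal{H}}}$ is a lift of $T_{\widetilde{\alpha},\mathcal{H}}$ through $q$.

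Third, I would check that $T_{\widetilde{\alpha},\widetilde{\mathcal{H}}}$ represents the monodromy of $f\circ q$ along $\widetilde{\alpha}$ as an element of $\Mod(\Sigma_{\tilde g,p})$. Since $\mathcal{H}$ is tangent to $S$ at smooth points, the flow of $\xi$ preserves $S$ setwise, hence the flow of $\widetilde{\xi}$ preserves $q^{-1}(S)$ and in particular permutes the $p$ marked points of $(f\circ q)^{-1}(a_0)$ that lie over $f^{-1}(a_0)\cap S$; thus $T_{\widetilde{\alpha},\widetilde{\mathcal{H}}}\in \Diff(\Sigma_{\tilde g,p})$. Independence of the resulting isotopy class from the choice of $\widetilde{\mathcal{H}}$ is the standard argument for local monodromies.

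The main technical obstacle is the smooth behavior of $\widetilde{\mathcal{H}}$ across the ramification locus $q^{-1}(S)$, where $q$ itself fails to be a local diffeomorphism; one must argue in local branched-covering coordinates to see that the naive pullback really does extend smoothly, and this is where the hypothesis $\mathcal{H}_x=T_xS$ is used in an essential way. Once this local model is handled, the rest of the argument is a formal consequence of the functoriality of flows under $q$-equivariant vector fields.
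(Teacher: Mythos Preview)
Your proposal is correct and follows essentially the same approach as the paper: the paper's argument is contained in the discussion immediately preceding the lemma, where it simply asserts that the tangency condition $\mathcal{H}_x=T_xS$ allows one to lift $\mathcal{H}$ through the branched covering $q$ to a horizontal distribution $\widetilde{\mathcal{H}}$ of $f\circ q$, and that the resulting parallel transport $T_{\widetilde{\alpha},\widetilde{\mathcal{H}}}$ is then a $q$-lift of $T_{\widetilde{\alpha},\mathcal{H}}$. Your write-up supplies the details the paper leaves implicit---the local branched-covering model for the smooth extension of $\widetilde{\mathcal{H}}$ across the ramification locus, the $q$-relatedness of the horizontal vector fields giving $q\circ\widetilde{\Phi}_t=\Phi_t\circ q$, and the preservation of the marked points---but the underlying strategy is identical.
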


\begin{remark}\label{R:ambiguity_lift}

Lemma~\ref{L:relation_monodromy_covering} \textit{does not} uniquely determine the monodromy of $f\circ q$ along $\widetilde{\alpha}$: a lift of a representative of $\varphi_{\widetilde{\alpha}}$ by $q$ is unique \textit{up to covering transformations of $q$}. 
Still, such an ambiguity would not cause any problems in the following subsections. 
Indeed, monodromies we will deal with must satisfy some additional conditions, which determine them uniquely. 

\end{remark}

\subsection{Genus-$3$ holomorphic pencils due to Smith} \label{Subsection:Smith's_Pencil}

In \cite{Smith_2001_torus} Smith gave a way to construct a genus-$3$ holomorphic Lefschetz pencil on $T^4$ by taking a branched covering of a singular projective variety. 
%In this construction, we have to take conics in a variety which satisfy certain conditions. 
Although Smith showed that we can obtain a holomorphic pencil by his construction, he neither carried out the construction in practice nor obtained vanishing cycles of the resulting pencil (but mentioned the symplectic representation of the monodromy). 
In this subsection, we will construct a genus-$3$ holomorphic pencil of $T^4$ following the construction in \cite{Smith_2001_torus} and determine the vanishing cycles of the pencil. 

We begin with a brief review on Smith's construction. 
For homogeneous polynomials $q_1,\ldots,q_n$, we denote the zero-set of them by $V(q_1,\ldots,q_n)\subset \CP^m$. 
We put $Q= V(x^2+y^2+z^2) \subset \CP^3$ and $S= V(x^2+y^2+z^2) \subset \CP^2$. 
The set $Q$ is a singular variety with an $A_1$-singularity $[1:0:0:0]\in Q$ and $S$ is a sphere in $\CP^2$.   
Let $\pi:Q \setminus \{[1:0:0:0]\} \to S$ be the restriction of the projection $[t:x:y:z]\mapsto [x:y:z]$. 
We take six conics $C_1,\ldots,C_6$ with the following properties: 

\begin{itemize}

\item
for each $i$, $C_i$ is away from the singularity $[1:0:0:0]$,  

\item
two spheres $C_i$ and $C_j$ intersect on two points transversely for $i\neq j$, $i,j \leq 4$ or $(i,j)=(5,6)$,

\item
two spheres $C_5$ and $C_6$ are tangent to $C_i$ on one point for $i \leq 4$, 

\end{itemize}

\noindent
Let $q_1:Z\to Q$ be a double covering branched at $C_1\cup \cdots \cup C_4$.
The space $Z$ has two $A_1$-singularities in the preimage $q_1^{-1}([1:0:0:0])$. 
Let $r:Z_{\sm}\to Z$ be the resolution of these singularities. 
The space $Z_{\sm}$ is a manifold obtained by replacing neighborhoods of the two singularities of $Z$ with two disk bundles over the sphere with degree $-2$. 
In particular, $Z_{\sm}$ has two spheres $S_1, S_2$ with self-intersection $-2$. 
Since $C_1\cup \cdots\cup C_4$ has twelve transverse double points, $Z_{\sm}$ has other twelve spheres $S_3,\ldots,S_{14}$ with self-intersection $-2$. 
Furthermore, the preimage $q_1^{-1}(C_5\cup C_6)$ contains two disjoint sphere $S_{15}$ and $S_{16}$ with self-intersection $-2$. 
We can take a double covering $q_2:\widetilde{T}\to Z_{\sm}$ branched at the disjoint union $\sqcup_{i=1}^{16}S_i$. 
The space $\widetilde{T}$ has $16$ exceptional spheres in the preimage $q_2^{-1}(\sqcup_i S_i)$. 
We denote the blow-down of $\widetilde{T}$ along these spheres by $T$. 
The composition $\pi\circ q_1\circ r\circ q_2$ factors through $T$ and defines a pencil $f:T\dashedrightarrow S$ with four base points which satisfies the conditions (2) and (3) in p.\pageref{P:definition_LP}. 
If we take conics $C_1,\ldots,C_6$ so that the restriction of $\pi$ on the set of double points of $C_1\cup \cdots \cup C_4$ is injective, the resulting pencil $f$ becomes Lefschetz. 

In what follows, we consider the following conics in $Q$: 
{\allowdisplaybreaks
\begin{align*}
& C_1=V(x^2+y^2+z^2,t), \hspace{.5em} C_2=V(x^2+y^2+z^2,t+x),\\
&C_3=V\left(x^2+y^2+z^2,t+\frac{x}{2}+\frac{y}{2}\right),\hspace{.5em}C_4=V\left( x^2+y^2+z^2,t+\frac{x}{2}-\frac{y}{2}\right), \\
&C_5=V\left(x^2+y^2+z^2, t+\frac{x}{2}+\frac{iz}{2}\right), \hspace{.5em}C_6=V\left(x^2+y^2+z^2, t+\frac{x}{2}-\frac{iz}{2}\right). 
\end{align*}
}
It is easy to verify that these conics satisfy the three conditions in the previous paragraph. 
We denote the set of double points of $C_1\cup \cdots \cup C_6$ by $D$. 
Using Lemma~\ref{L:relation_monodromy_covering} we can obtain vanishing cycles of $f$ once we can calculate the monodromies of $\pi$ around the image $\pi(D)$.  
Let $S_0=\{[x:y:z]\in S~|~ z\neq 0\}$. 
We define a holomorphic map $\varphi:S_0\to \C^2$ as $\varphi([x:y:z]) = \left(\frac{x}{z},\frac{y}{z}\right)$. 
Since the image $\varphi (S_0)$ is equal to $\{(X,Y)\in \C^2~|~ (X+iY)(X-iY)=-1\}$, the composition $\psi\circ \varphi:S_0\to \C^\times$ is biholomorphic, where $\psi:\C^2\to \C^\times$ is defined as $\psi(X,Y)=X-iY$. 
Furthermore, this map can be extended to a biholomorphic map $S\to \overline{\C}=\C\cup \{\infty\}$ which sends $[1:i:0]$ and $[1:-i:0]$ to $\infty$ and $0$, respectively. 
Using this map, we will identify $S$ with $\overline{\C}$ throughout this subsection. 
The following lemma can be deduced easily by direct calculation.

\begin{lemma}\label{L:configuration_image_intersection}

The image $\pi(D)$ is contained in $\{\xi^n\in \overline{\C}~|~ n=0,\ldots,7\}\cup \{0,\infty\}$, where $\xi =\exp\left(\frac{\pi i}{4}\right)$. 
Furthermore, the intersection $C_5\cap C_6$ is contained in $\pi^{-1}(\{0,\infty\})$. 

\end{lemma}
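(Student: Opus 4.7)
The plan is a direct case-by-case calculation over the $\binom{6}{2}=15$ pairs of conics. Each $C_i$ is cut out from $Q$ by a hyperplane of the form $t=\ell_i(x,y,z)$ for an explicit linear form $\ell_i$ read off from the definitions, so for any $i\ne j$ the intersection $C_i\cap C_j$ is defined by the single linear equation $\ell_i-\ell_j=0$ in $(x,y,z)$ together with the quadric $x^2+y^2+z^2=0$. Since $\pi$ retains only the $[x:y:z]$-part, the image $\pi(C_i\cap C_j)$ is exactly the zero-locus in $S\subset\CP^2$ of $\ell_i-\ell_j$. I will evaluate this zero-locus in each case and then apply the identification $\psi\circ\varphi:S\to\overline{\C}$, $[x:y:z]\mapsto(x-iy)/z$, checking that the outcome always lies in $\{\xi^n\}_{n=0}^{7}\cup\{0,\infty\}$.

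The computation organizes into three shapes. First, for the six pairs $\{i,j\}\subset\{1,2,3,4\}$, inspection of the given $\ell_i$'s shows that $\ell_i-\ell_j$ is, up to a nonzero scalar, one of $x$, $y$, $y+x$, or $y-x$; substituting into the quadric produces pairs of points of the form $[0:1:\pm i]$, $[1:0:\pm i]$, or $[1:\pm 1:\pm i\sqrt{2}]$, whose images under $(x-iy)/z$ turn out to be eighth roots of unity using the basic identities $(1\pm i)/\sqrt{2}=\xi^{\pm 1}$ and $1/i=\xi^{6}$. Second, for the eight tangency pairs $\{i,j\}$ with $i\le 4$ and $j\in\{5,6\}$, the difference $\ell_i-\ell_j$ has the form $x\pm iz$ or $y\pm iz$, and combined with $x^2+y^2+z^2=0$ yields a single point in $S$ of the form $[\pm i:0:1]$ or $[0:\pm i:1]$, projecting to one of $\xi^0,\xi^2,\xi^4,\xi^6$. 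Third, the difference $\ell_5-\ell_6$ is proportional to $z$, so $C_5\cap C_6$ lies on $\{z=0\}\cap S=\{[1:\pm i:0]\}$, and these two points are by definition of $\psi\circ\varphi$ sent to $\infty$ and $0$; this simultaneously supplies the second assertion of the lemma.

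The only real obstacle is organizational: making sure all fifteen pairs are covered and each evaluation of $(x-iy)/z$ is carried out without arithmetic slip. No conceptual difficulty arises, and the explicit table of projections produced along the way will feed directly into the subsequent monodromy computation.
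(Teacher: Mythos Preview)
Your proposal is correct and matches the paper's approach: the paper simply states that the lemma ``can be deduced easily by direct calculation'' and gives no further details, so your explicit case-by-case computation over the fifteen pairs of conics is exactly what is intended. Your organization into the three shapes (pairs within $\{1,2,3,4\}$, tangency pairs, and $C_5\cap C_6$) is clean, and the arithmetic checks out.
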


For any $w \in \C = \overline{\C}\setminus \{\infty\}$, the map $\pi^{-1}(w) \to \C$ defined as $[t:x:y:z]\to \frac{t}{x-iy}$ is biholomorphic. 
Using this, we will identify the fiber $\pi^{-1}(w)$ with $\C$ for any $w\in \C$. 
With this identification in hand, we can define a path $\gamma^{(i)}:J\to \C$ ($i=1,\ldots,6$) for any path $\gamma:J\to \C$ ($J\subset \R$) as follows: 
\[
\gamma^{(i)}(t) = z \in \pi^{-1}(\gamma(t))\cap C_i\subset \C. 
\]
The value of this path is indeed determined uniquely since $C_i$ is a section of $\pi$. 
Let $\alpha$ be an oriented path in $\C$ which intersects $\pi(D)$ only at its terminal point. 
The corresponding paths $\alpha^{(1)},\ldots,\alpha^{(6)}$ are also oriented paths in $\C$ two of which, say $\alpha^{(i)}$ and $\alpha^{(j)}$, intersect at their common terminal point. 
We denote by $\widetilde{\alpha}$ the oriented loop based at the initial point of $\alpha$ obtained by connecting $\alpha$ with a small counterclockwise circle around the terminal point of $\alpha$. 
We can easily verify that the parallel transport along $\widetilde{\alpha}$ is isotopic to a composition of the point pushing self-diffeomorphism of $\C$ along the paths $\alpha^{(1)},\ldots,\alpha^{(6)}$, the $m_{ij}$-th power of the local full-twist around the common terminal point of $\alpha^{(i)}$ and $\alpha^{(j)}$, and the inverse of the point pushing self-diffeomorphism, where $m_{ij}$ is the multiplicity of the intersection between $C_i$ and $C_j$ in the fiber on the terminal point of $\alpha$, which is $1$ if $i,j\leq 4$ or $i,j \geq 5$ and $2$ otherwise. 

Let $\alpha_k$ ($k=1,2,3,4$) and $\beta$ be paths in $\C$ defined as follows: 
{\allowdisplaybreaks
\begin{align*}
\alpha_k:[-1,1] \to \C,\hspace{.5em} & \alpha_k(s) = s\xi^{k-1}, \\
\beta:[-\pi,\pi] \to \C,\hspace{.5em} & \beta(\theta) = \varepsilon\exp \left(i\theta\right), 
\end{align*}
}
where $\varepsilon >0$ is sufficiently small real number. 
In order to determine monodromies of $\pi$, we first calculate the paths $\alpha_k^{(j)}$ and $\beta^{(j)}$. 
Under the identification $S\cong \overline{\C}$, $\alpha_1(s)=s$ corresponds with $[s^2-1:-i(s^2+1):2s]$. 
Thus, $\alpha_1^{(j)}(s) = \frac{t}{x-iy}$ can be calculated as follows: 
{\allowdisplaybreaks
\begin{align*}
\alpha_1^{(1)}(s) = \frac{0}{x-iy} = 0, \hspace{.5em}& \alpha_1^{(2)}(s)= \frac{-x}{x-iy} = \frac{s^2-1}{2}, \\
\alpha_1^{(3)}(s) = \frac{-x/2-y/2}{x-iy} = \frac{s^2-1}{4}-i\frac{s^2+1}{4}, \hspace{.5em}& \alpha_1^{(4)}(s)= \frac{-x/2+y/2}{x-iy} = \frac{s^2-1}{4}+i\frac{s^2+1}{4}, \\
\alpha_1^{(5)}(s) = \frac{-x/2-iz/2}{x-iy} = \frac{s^2-1}{4}+i\frac{s}{2}, \hspace{.5em}& \alpha_1^{(6)}(s)= \frac{-x/2+iz/2}{x-iy} =  \frac{s^2-1}{4}-i\frac{s}{2}.
\end{align*}
}
In the same way above, we can also calculate the other paths as follows: 
{\allowdisplaybreaks
\begin{align*}
\alpha_2^{(1)}(s) = 0, \hspace{.5em}& \alpha_2^{(2)}(s)= \frac{\xi}{2\sqrt{2}}\left((s^2-1)+i(s^2+1)\right), \\
\alpha_2^{(3)}(s) =\frac{\xi}{2\sqrt{2}}\left(s^2-1\right) , \hspace{.5em}& \alpha_2^{(4)}(s)=\frac{\xi^3}{2\sqrt{2}}\left(s^2+1\right), \\
\alpha_2^{(5)}(s) = \frac{\xi}{4\sqrt{2}}\left((s^2-1)+i(s^2+2\sqrt{2}s+1)\right), \hspace{.5em}& \alpha_2^{(6)}(s)=\frac{\xi}{4\sqrt{2}}\left((s^2-1)+i(s^2-2\sqrt{2}s+1)\right),\\
\alpha_3^{(1)}(s) = 0, \hspace{.5em}& \alpha_3^{(2)}(s)= -\frac{1}{2}\left(s^2+1\right), \\
\alpha_3^{(3)}(s) =\frac{1}{4}\left(-(s^2+1)+i(s^2-1)\right) , \hspace{.5em}& \alpha_3^{(4)}(s)=\frac{1}{4}\left(-(s^2+1)-i(s^2-1)\right), \\
\alpha_3^{(5)}(s) = -\frac{1}{4}\left(s^2+1\right)^2, \hspace{.5em}& \alpha_3^{(6)}(s)=-\frac{1}{4}\left(s^2-1\right)^2, \\
\alpha_4^{(1)}(s) = 0, \hspace{.5em}& \alpha_4^{(2)}(s)= -\frac{1}{2}\left(1+is^2\right), \\
\alpha_4^{(3)}(s) =-\frac{\xi}{2\sqrt{2}}\left(1+s^2\right) , \hspace{.5em}& \alpha_4^{(4)}(s)=-\frac{\xi^{-1}}{2\sqrt{2}}\left(1-s^2\right), \\
\alpha_4^{(5)}(s) = -\frac{1}{4}\left((1+\sqrt{2}s)+i(s^2+\sqrt{2}s)\right)^2, \hspace{.5em}& \alpha_4^{(6)}(s)=-\frac{1}{4}\left((1-\sqrt{2}s)+i(s^2-\sqrt{2}s)\right)^2, \\
\beta^{(1)}(\theta) = 0, \hspace{.5em}& \beta^{(2)}(\theta)= \frac{1}{2}\left(\varepsilon^2e^{2i\theta}-1\right), \\
\beta^{(3)}(\theta) =\frac{1}{4}\left((\varepsilon^2e^{2i\theta}-1)-i(\varepsilon^2e^{2i\theta}+1)\right) , \hspace{.5em}& \beta^{(4)}(\theta)=\frac{1}{4}\left((\varepsilon^2e^{2i\theta}-1)+i(\varepsilon^2e^{2i\theta}+1)\right), \\
\beta^{(5)}(\theta) = \left(\frac{\varepsilon e^{i\theta}-i}{2}\right)^2, \hspace{.5em}& \beta^{(6)}(\theta)=\left(\frac{\varepsilon e^{i\theta}+i}{2}\right)^2.
\end{align*}
}
We can draw the paths $\alpha_i^{(j)}$ in the plane $\C$ as shown in Figure~\ref{F:PathsInFiber}. 
\begin{figure}[htbp]
\centering
\subfigure[$\alpha_1^{(j)}(s)$ with $s\geq 0$.]{\includegraphics[height=46mm]{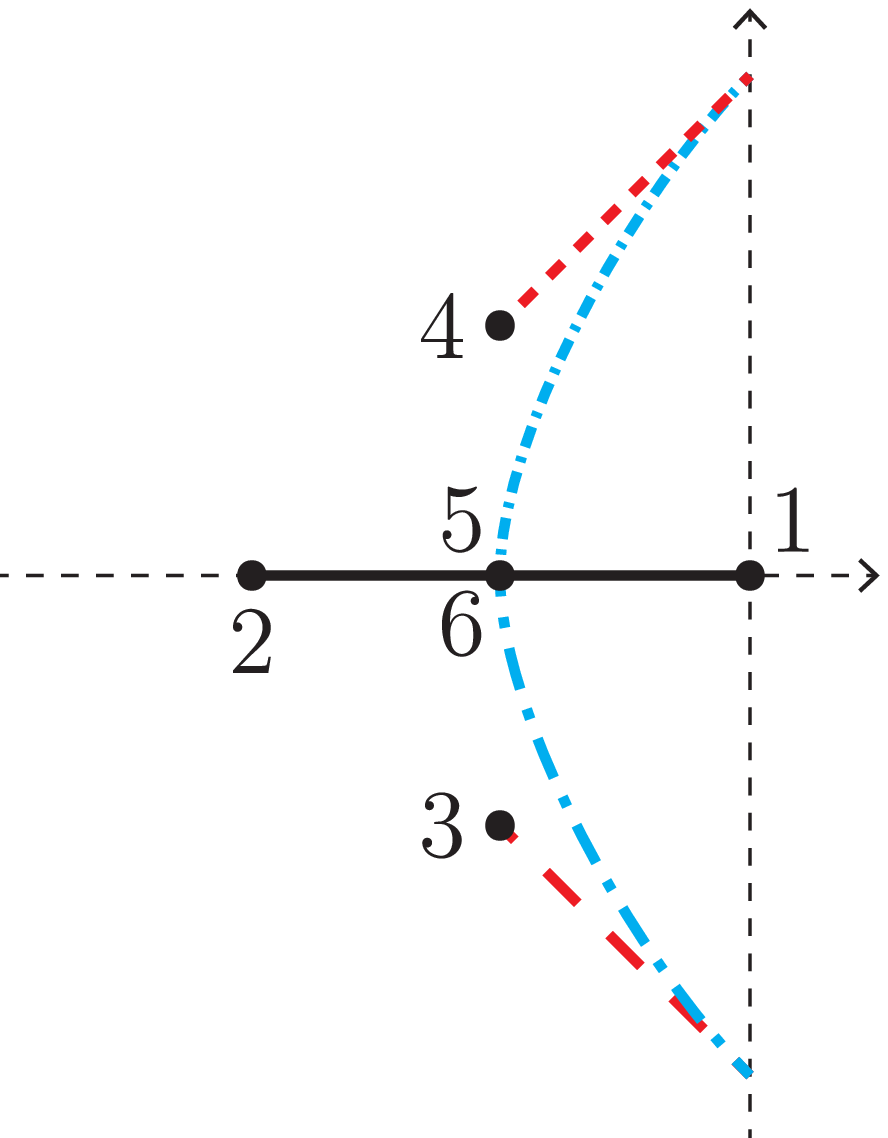}
\label{F:PathsInFiber1}}
\hspace{.2em}
\subfigure[$\alpha_1^{(j)}(s)$ with $s\leq 0$.]{\includegraphics[height=46mm]{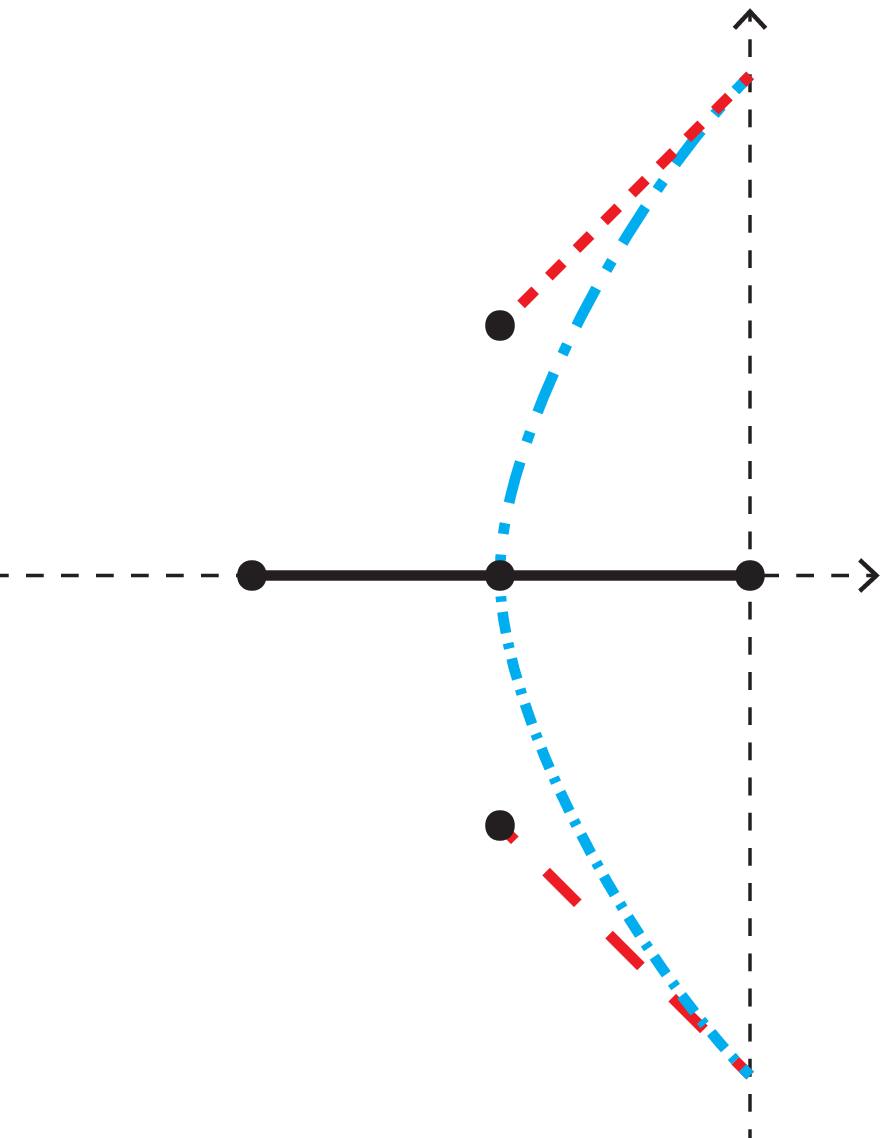}
\label{F:PathsInFiber2}}
\hspace{.2em}
\subfigure[$\alpha_2^{(j)}(s)$ with $s\geq 0$.]{\includegraphics[height=46mm]{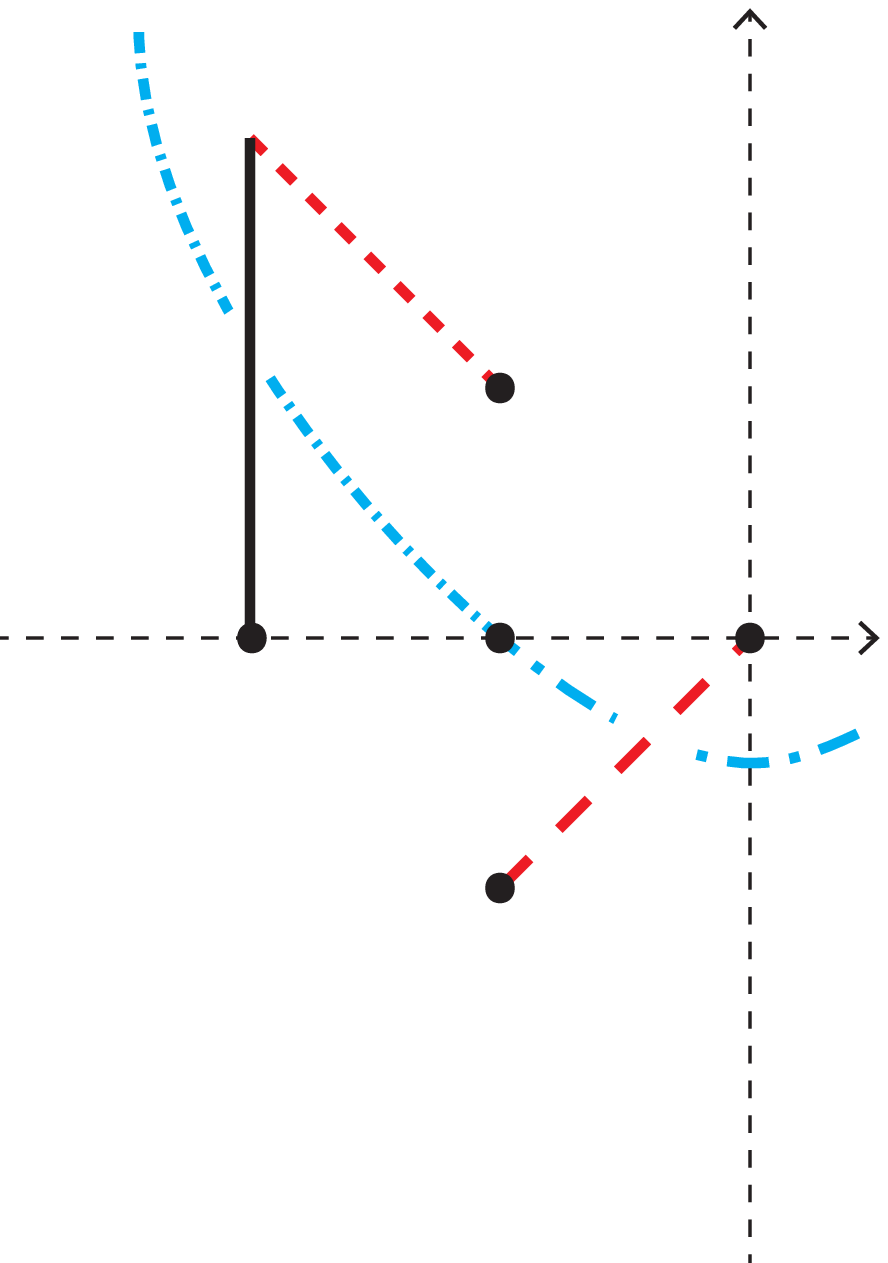}
\label{F:PathsInFiber3}}
\hspace{.2em}
\subfigure[$\alpha_2^{(j)}(s)$ with $s\leq 0$.]{\includegraphics[height=46mm]{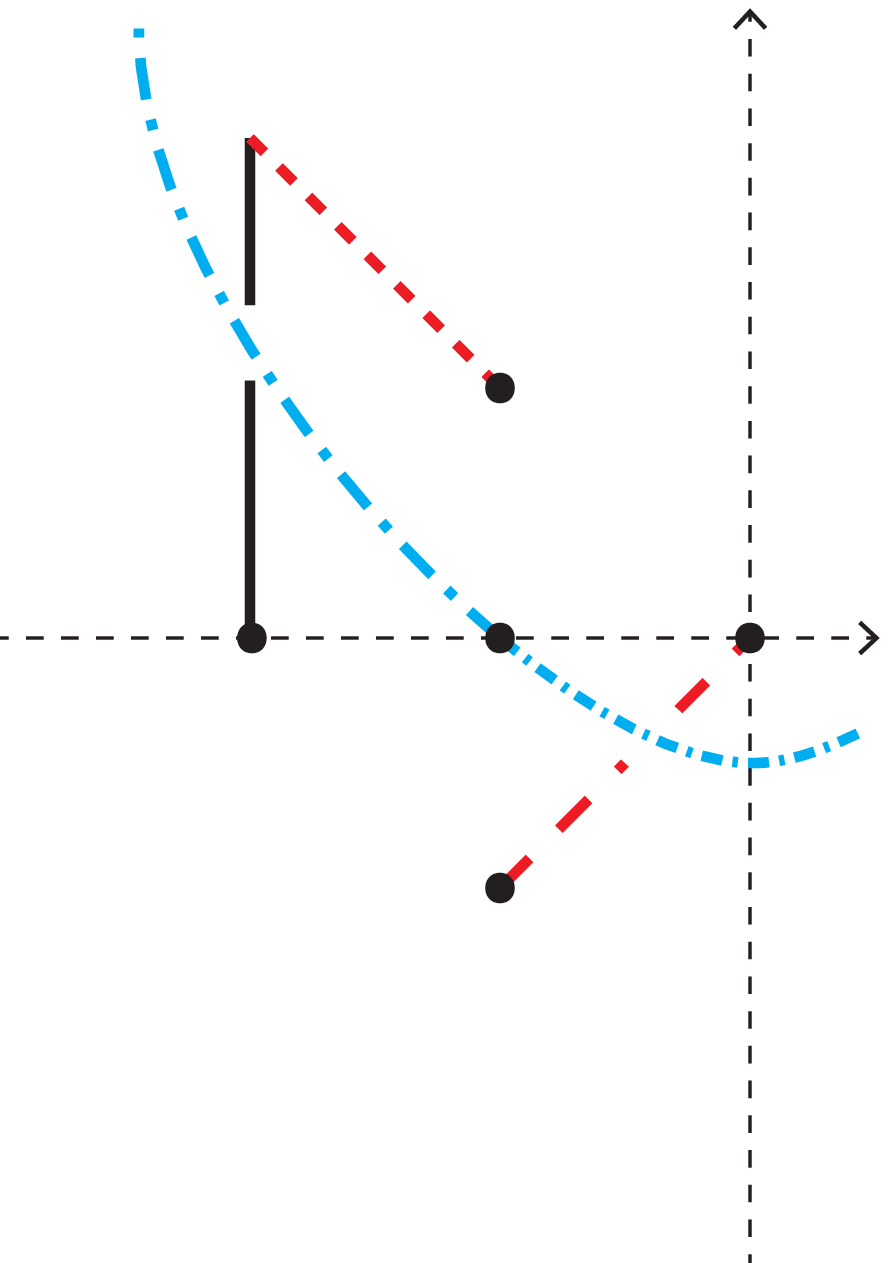}
\label{F:PathsInFiber4}}

\subfigure[$\alpha_3^{(j)}(s)$ with $s\geq 0$.]{\includegraphics[height=46mm]{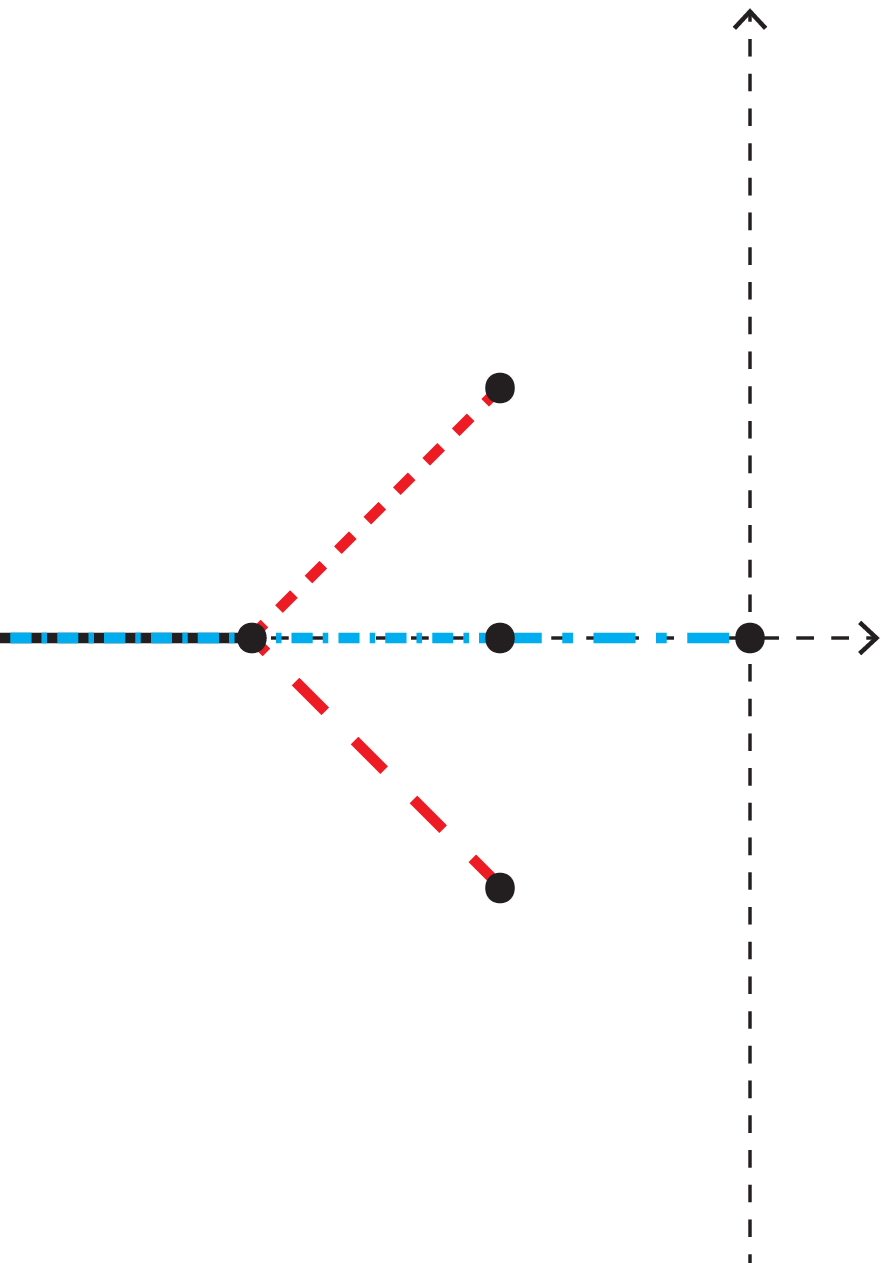}
\label{F:PathsInFiber5}}
\hspace{.2em}
\subfigure[$\alpha_3^{(j)}(s)$ with $s\leq 0$.]{\includegraphics[height=46mm]{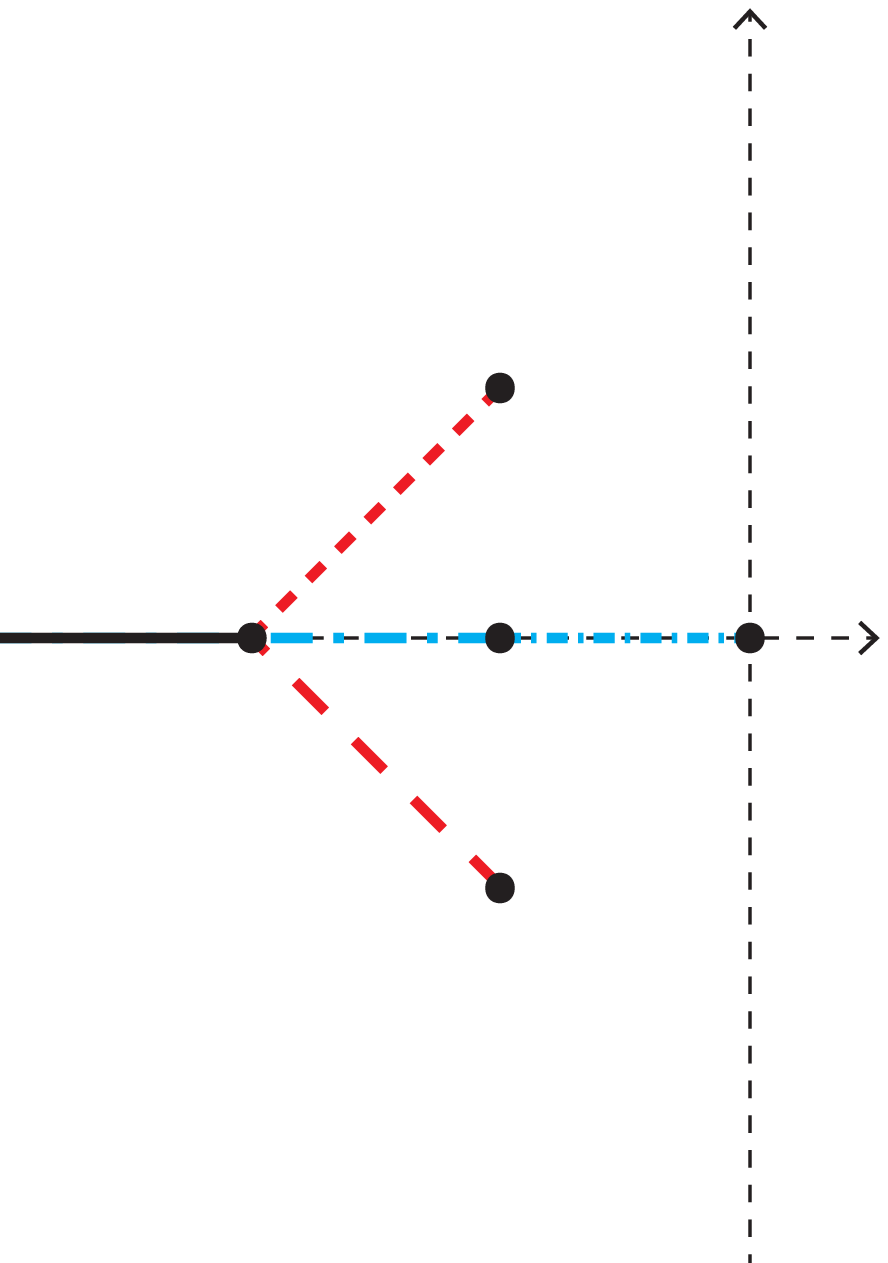}
\label{F:PathsInFiber6}}
\hspace{.2em}
\subfigure[$\alpha_4^{(j)}(s)$ with $s\geq 0$.]{\includegraphics[height=46mm]{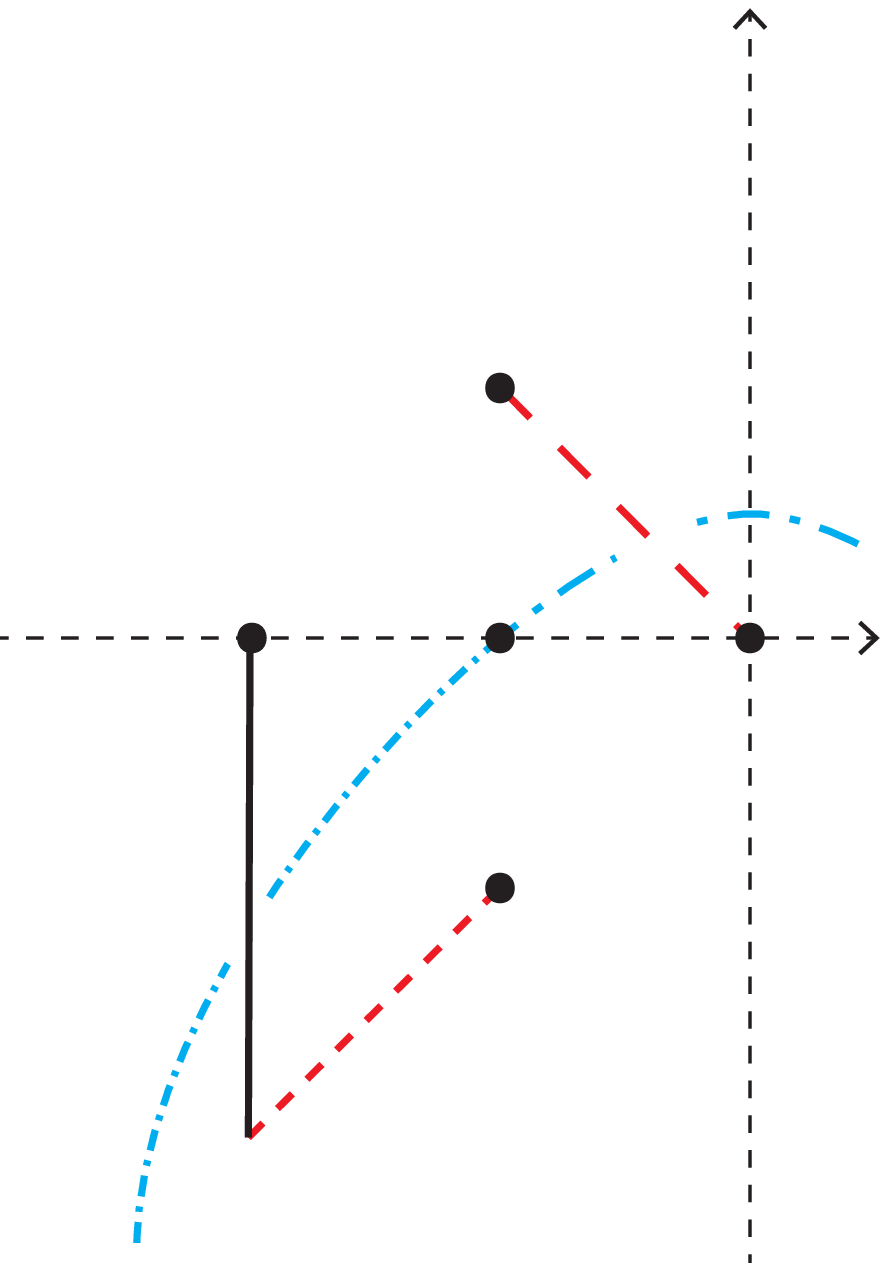}
\label{F:PathsInFiber7}}
\hspace{.2em}
\subfigure[$\alpha_4^{(j)}(s)$ with $s\leq 0$.]{\includegraphics[height=46mm]{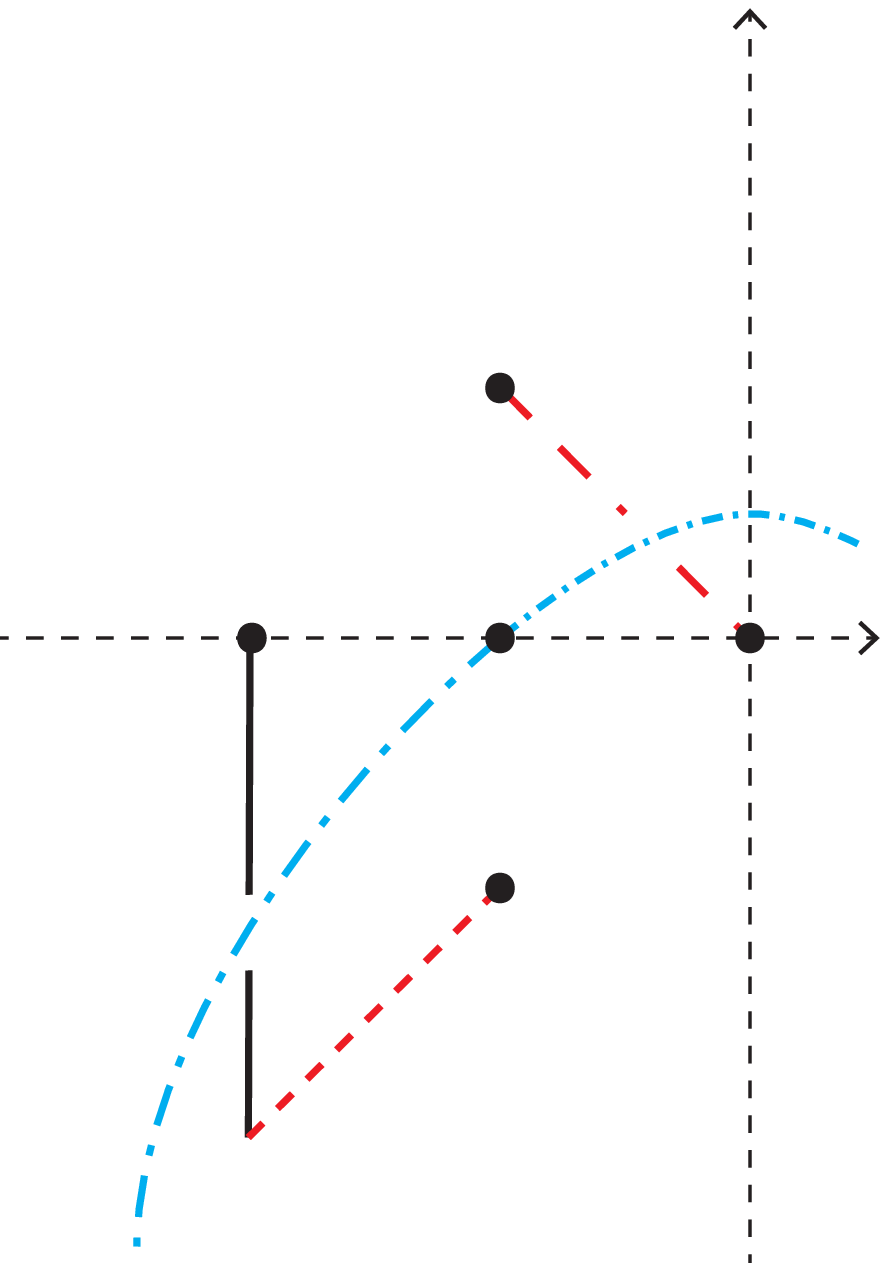}
\label{F:PathsInFiber8}}
\caption{The paths $\alpha_i^{(j)}$ in $\C\cong \R^2$. }
\label{F:PathsInFiber}
\end{figure}
In each of the figures, the five dots are the points $\alpha_k^{(1)}(0), \cdots, \alpha_k^{(6)}(0)$ (note that $\alpha_k^{(5)}(0)=\alpha_k^{(6)}(0)$), $\alpha_k^{(1)}$ is the constant path, the bold line describes the path $\alpha_k^{(2)}$, the dotted lines (which are colored in red) describe the paths $\alpha_k^{(3)}$ and $\alpha_k^{(4)}$ (the denser one is $\alpha_k^{(4)}$, while the other one is $\alpha_k^{(3)}$) and the semi-dotted lines (which are colored in light blue) describe the paths $\alpha_k^{(5)}$ and $\alpha_k^{(6)}$ (the denser one is $\alpha_k^{(5)}$, while the other one is $\alpha_k^{(6)}$). 
Moreover, at each of the transverse crossings except for the terminal points, the path going over the other path goes through the crossing point after the other path comes to the point when the parameter $s$ increases.

We define a path $\gamma_k$ ($k=1,\ldots,8$) in $\C$ as follows: 
\begin{itemize}

\item
for $k\leq 4$, $\gamma_k$ is defined to be the concatenation of $\beta|_{[0,(k-1)\pi i/4]}$ and $\alpha_k|_{[\varepsilon,1]}$, 

\item
for $k\geq 5$, $\gamma_k$ is defined to be the concatenation of $\alpha_{k-4}|_{[-1,-\varepsilon]}$ and $\beta|_{[-(9-k)\pi i/4,0]}$ with the opposite orientation. 

\end{itemize}
\noindent
According to the arguments above, the monodromy of $\pi$ along the path $\widetilde{\gamma_k}$ is the product of the full-twists along the paths shown in Figure~\ref{F:PathsForTwist} and the squares of the full-twists along other paths, which have either $\alpha_k^{(5)}(0)$ or $\alpha_k^{(6)}(0)$ as end points and are not described in Figure~\ref{F:PathsForTwist}. 
(Note that we only need local monodromies derived from the double points in $C_1\cup \cdots \cup C_4$ in order to obtain vanishing cycles of $f:T\to S$.)
\begin{figure}[htbp]
\centering
\subfigure[A path for $\widetilde{\gamma_1}$.]{\includegraphics[height=30mm]{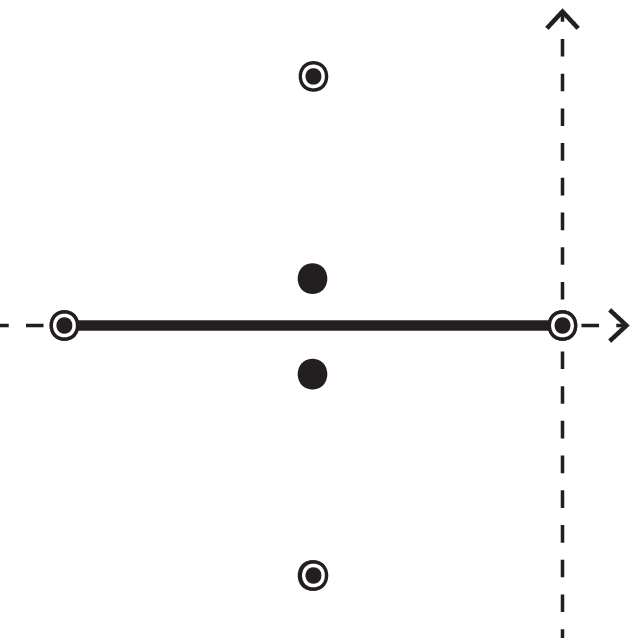}
\label{F:PathsForTwist1}}
\hspace{.2em}
\subfigure[Paths for $\widetilde{\gamma_2}$.]{\includegraphics[height=30mm]{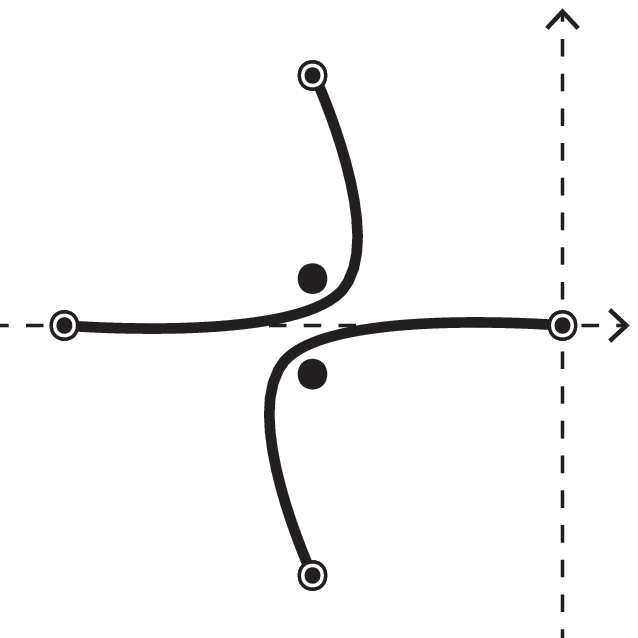}
\label{F:PathsForTwist2}}
\hspace{.2em}
\subfigure[A path for $\widetilde{\gamma_3}$.]{\includegraphics[height=30mm]{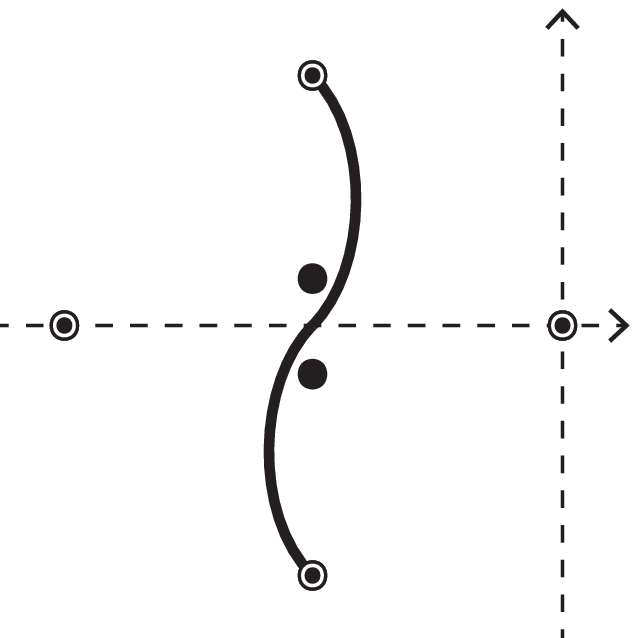}
\label{F:PathsForTwist3}}
\hspace{.2em}
\subfigure[Paths for $\widetilde{\gamma_4}$.]{\includegraphics[height=30mm]{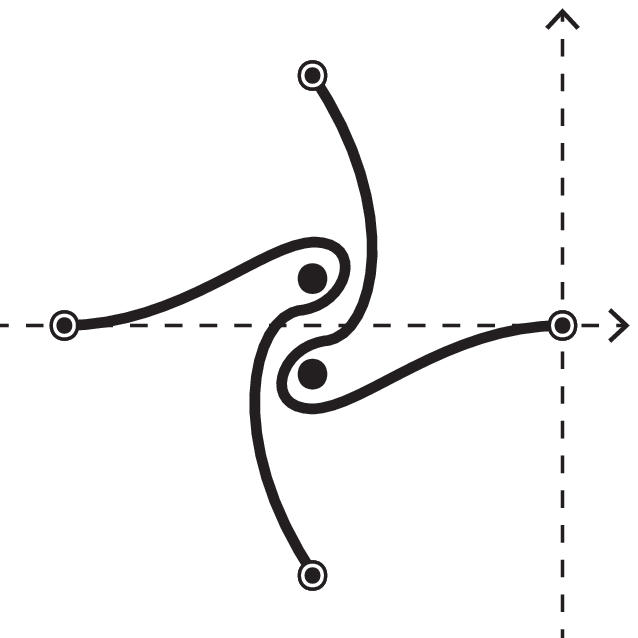}
\label{F:PathsForTwist4}}

\subfigure[A path for $\widetilde{\gamma_5}$.]{\includegraphics[height=30mm]{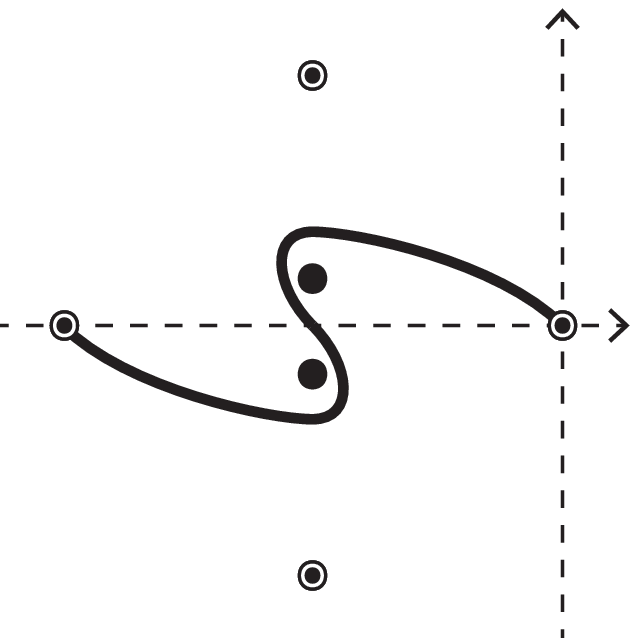}
\label{F:PathsForTwist5}}
\hspace{.2em}
\subfigure[Paths for $\widetilde{\gamma_6}$.]{\includegraphics[height=30mm]{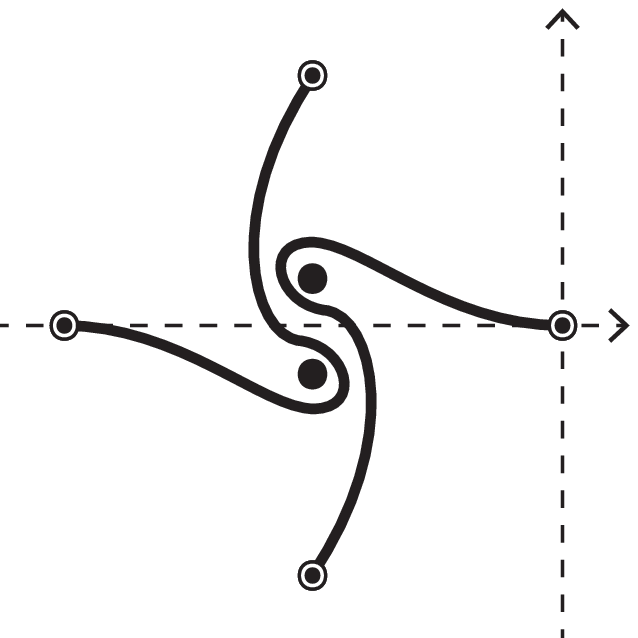}
\label{F:PathsForTwist6}}
\hspace{.2em}
\subfigure[A path for $\widetilde{\gamma_7}$.]{\includegraphics[height=30mm]{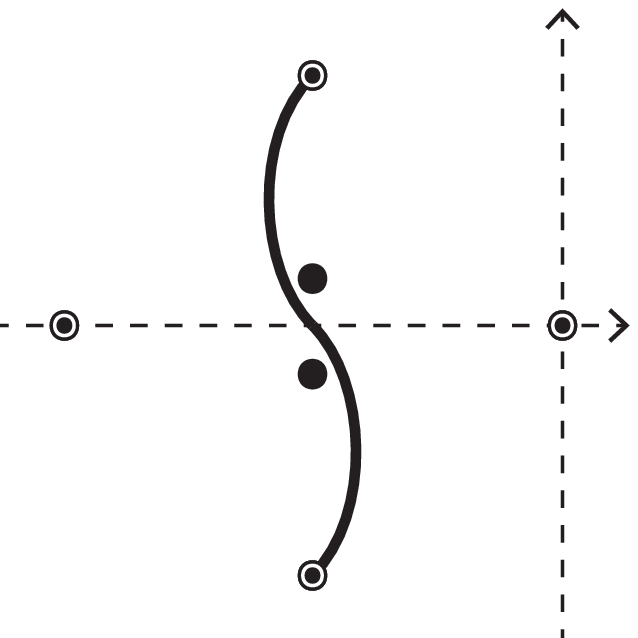}
\label{F:PathsForTwist7}}
\hspace{.2em}
\subfigure[Paths for $\widetilde{\gamma_8}$.]{\includegraphics[height=30mm]{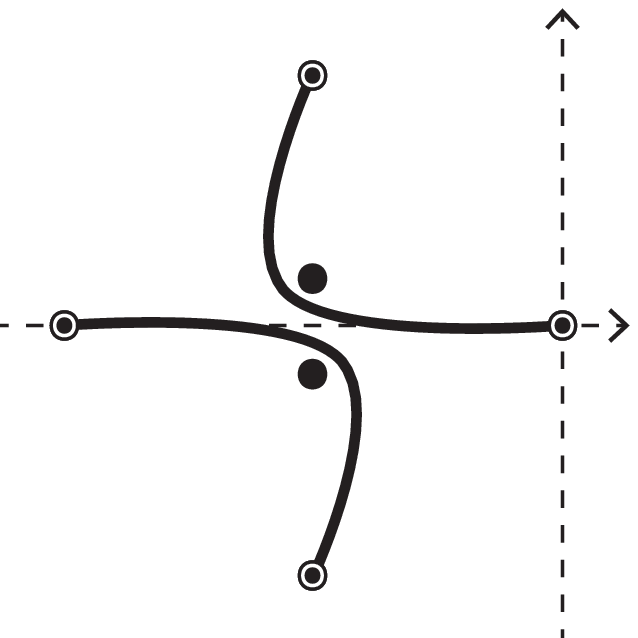}
\label{F:PathsForTwist8}}
\caption{The monodromy along $\widetilde{\gamma_i}$ is the product of the full-twists along the paths above and other twists which are less important.}
\label{F:PathsForTwist}
\end{figure} 

By Lemma~\ref{L:relation_monodromy_covering} we can obtain local monodromies of the genus-$1$ Lefschetz fibration $\pi\circ q_1\circ r:Z_{\sm}\to S$ by taking a lift of the full-twists along the paths in Figure~\ref{F:PathsForTwist} under the double covering of $\overline{\C}\cong \overline{\pi^{-1}(\varepsilon)}$ branched at the set of four points $\overline{\pi^{-1}(\varepsilon)}\cap (C_1\cup\cdots\cup C_4)$ (i.e.~the four nested dots in Figure~\ref{F:PathsForTwist}). 
The resulting local monodromies are the squares of Dehn twists along the curves in Figure~\ref{F:SCCForTwist}. 
Here, two of the four marked points in Figure~\ref{F:SCCForTwist} are the points in the preimage of $\infty \in \overline{\C}\cong \overline{\pi^{-1}(\varepsilon)}$, while the other two marked points are two points in the preimage of the intersection $\pi^{-1}(\varepsilon)\cap (C_5\cup C_6)$. 
All the marked points describe sections of the Lefschetz fibration $\pi\circ q_1\circ r$ with self-intersection $(-2)$. 
\begin{figure}[htbp]
\centering
\subfigure[A vanishing cycle associated with $\gamma_1$.]{\includegraphics[height=17mm]{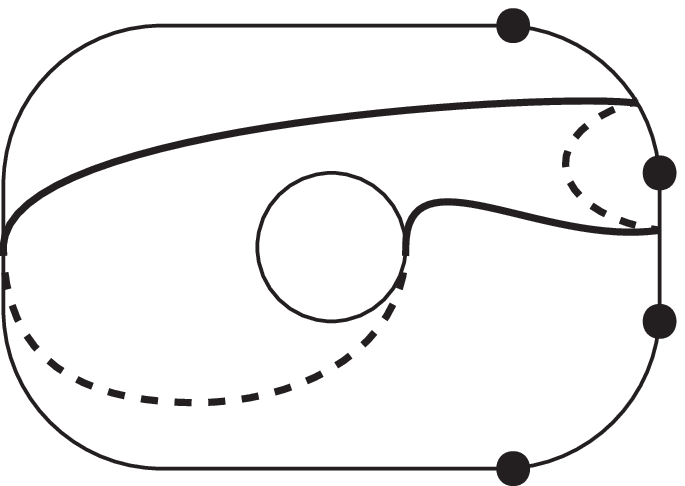}
\label{F:SCCForTwist1}}
\hspace{.2em}
\subfigure[Vanishing cycles associated with $\gamma_2$.]{\includegraphics[height=17mm]{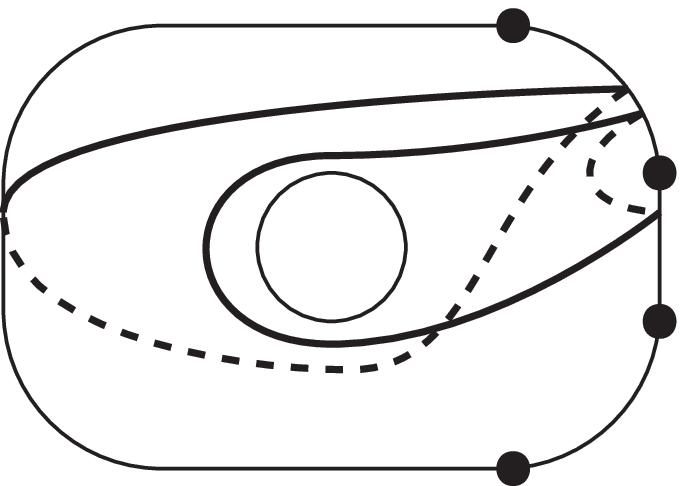}
\label{F:SCCForTwist2}}
\hspace{.2em}
\subfigure[A vanishing cycle associated with $\gamma_3$.]{\includegraphics[height=17mm]{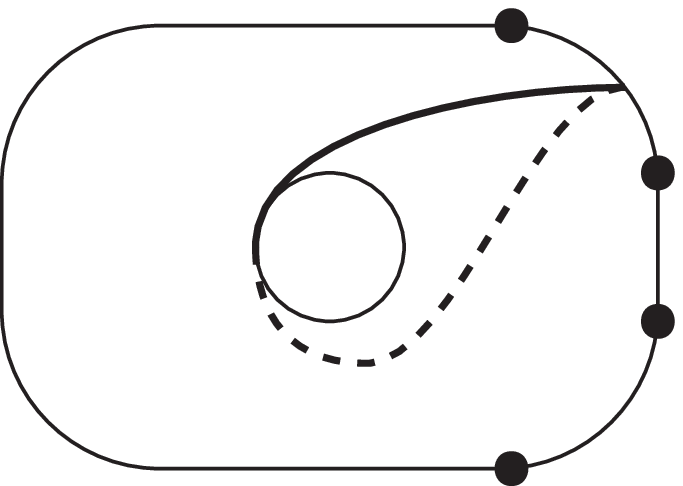}
\label{F:SCCForTwist3}}
\hspace{.2em}
\subfigure[Vanishing cycles associated with $\gamma_4$.]{\includegraphics[height=17mm]{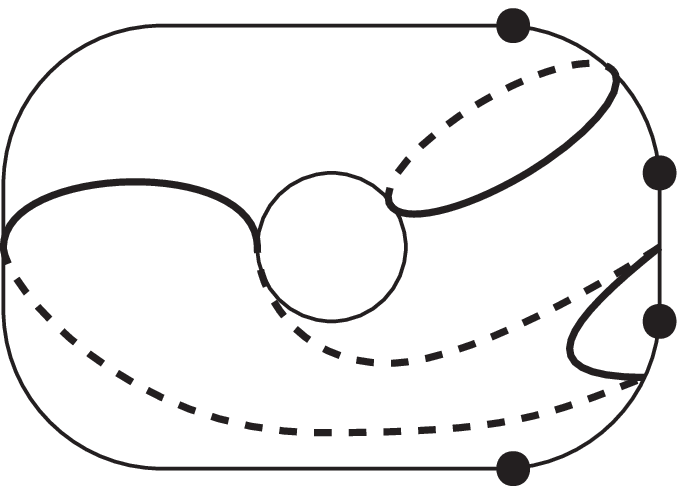}
\label{F:SCCForTwist4}}

\subfigure[A vanishing cycle associated with $\gamma_5$.]{\includegraphics[height=17mm]{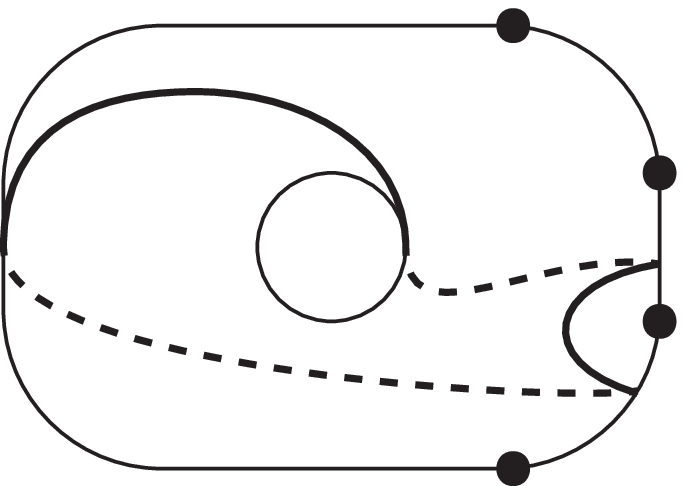}
\label{F:SCCForTwist5}}
\hspace{.2em}
\subfigure[Vanishing cycle associated with $\gamma_6$.]{\includegraphics[height=17mm]{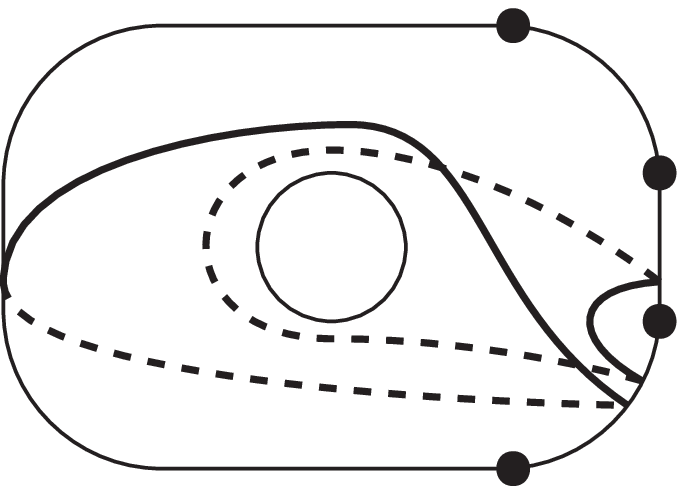}
\label{F:SCCForTwist6}}
\hspace{.2em}
\subfigure[A vanishing cycle associated with $\gamma_7$.]{\includegraphics[height=17mm]{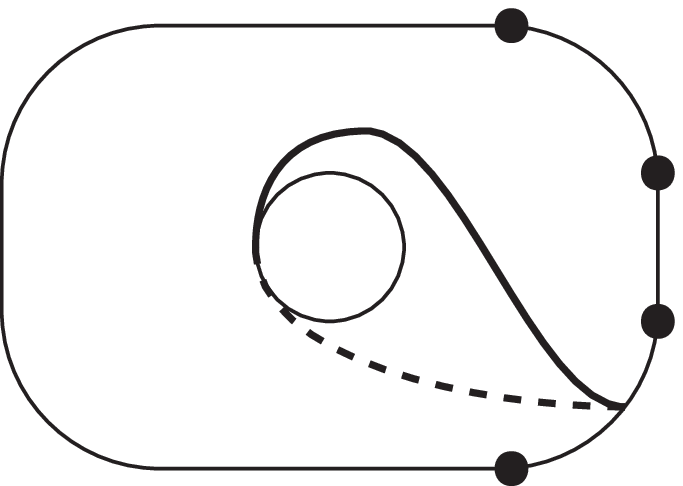}
\label{F:SCCForTwist7}}
\hspace{.2em}
\subfigure[Vanishing cycle associated with $\gamma_8$.]{\includegraphics[height=17mm]{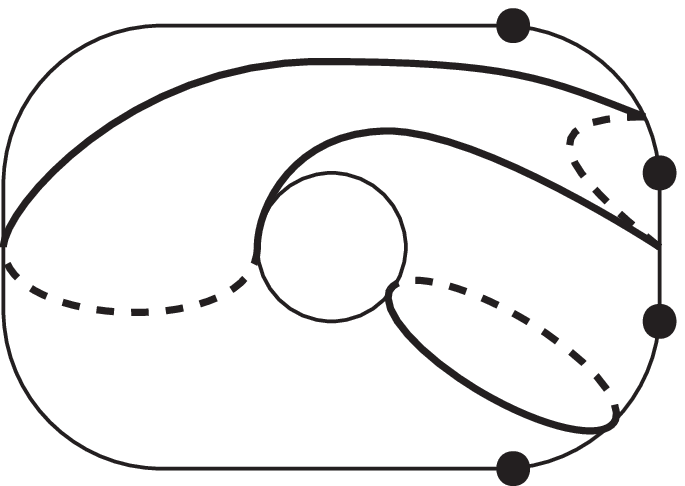}
\label{F:SCCForTwist8}}
\caption{Vanishing cycles of the Lefschetz fibration $\pi\circ q_1\circ r:Z_{\sm}\to S$. }
\label{F:SCCForTwist}
\end{figure}

In order to obtain the pencil $f:T\dashedrightarrow S$, we further take a double covering $q_2:\widetilde{T}\to Z_{\sm}$ branched at $16$ spheres with self-intersection $(-2)$. 
Four of these spheres are the sections of $\pi\circ q_1\circ r$, and the other twelve spheres are contained in singular fibers of $\pi\circ q_1\circ r$, each of which is a irreducible component of a fiber containing two Lefschetz singularities with parallel vanishing cycles. 

\begin{lemma}\label{L:DBCalongSphinFiber_VanCyc}

We denote the fiber $(\pi\circ q_1\circ r)^{-1}(\varepsilon)$ by $F\subset Z_{\sm}$.
The preimage of each vanishing cycle in Figure~\ref{F:SCCForTwist} under the restriction $q_2|_{q_2^{-1}(F)}$ is connected. 

\end{lemma}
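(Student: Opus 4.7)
The plan is to derive the lemma from the Lefschetz property of the pencil $f: T \dashedrightarrow S$ by analyzing how local monodromies lift under the branched cover $q_2$. First, the twelve exceptional spheres $S_3, \ldots, S_{14}$ are contained in singular fibers of $\pi \circ q_1 \circ r$ and therefore do not meet a regular fiber $F$, while each of the four sections $S_1, S_2, S_{15}, S_{16}$ meets $F$ transversely in a single point. Hence $q_2|_{q_2^{-1}(F)}: q_2^{-1}(F) \to F$ is a topological double cover of the torus $F$ branched at exactly four points, and for any simple closed curve $c \subset F$ avoiding the branch locus, the preimage $q_2^{-1}(c)$ is either a connected double cover $\tilde c$ of $c$, or a disjoint union $\tilde c_1 \sqcup \tilde c_2$ of two parallel copies of $c$.

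Each critical value of $\pi \circ q_1 \circ r$ corresponds to a Kodaira $I_2$ fiber, around which the local monodromy is the square $t_c^2$ of a Dehn twist along the vanishing cycle $c$ in Figure~\ref{F:SCCForTwist}. By Lemma~\ref{L:relation_monodromy_covering}, the local monodromy of $\pi \circ q_1 \circ r \circ q_2$ around the same critical value is obtained by lifting $t_c^2$ through $q_2|_{q_2^{-1}(F)}$. A direct computation in the standard local model of a Dehn twist on an annular neighborhood of $c$ shows that when $q_2^{-1}(c)$ is connected, $t_c^2$ admits a lift equal to the single Dehn twist $t_{\tilde c}$, whereas when $q_2^{-1}(c) = \tilde c_1 \sqcup \tilde c_2$ is disconnected the natural lift of $t_c^2$ is $t_{\tilde c_1}^2 \, t_{\tilde c_2}^2$, which is not a single Dehn twist. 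Since the blow-down $\widetilde T \to T$ leaves regular fibers (and therefore local monodromies around critical values) unchanged, these formulas also compute the local monodromies of $f$.

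Finally, because the conics $C_1, \ldots, C_6$ have been chosen so that the restriction of $\pi$ to the twelve double points of $C_1 \cup \cdots \cup C_4$ is injective, Smith's construction guarantees that $f: T \dashedrightarrow S$ is a Lefschetz pencil. In particular, the local monodromy of $f$ around each critical value must be a single Dehn twist, which by the previous step forces $q_2^{-1}(c)$ to be connected for every vanishing cycle $c$ in Figure~\ref{F:SCCForTwist}.

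The main obstacle I expect is verifying the two lift formulas for $t_c^2$ in the local model; this requires choosing explicit coordinates on annular neighborhoods and their branched double covers and confirming that the resulting diffeomorphism on $q_2^{-1}(F)$ restricts to the claimed Dehn twists. A secondary, more conceptual route that avoids these local computations would be to count Lefschetz singularities of $f$ via the Euler characteristic $\chi(T^4) = 0$, noting that any disconnected preimage would produce too many Lefschetz critical points in the blow-down $T$ and thereby contradict the count $n = 12$ imposed by Smith's construction.
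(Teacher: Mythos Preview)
Your argument contains a factual error that undermines the main line of reasoning. You assert that ``the conics $C_1,\ldots,C_6$ have been chosen so that the restriction of $\pi$ to the twelve double points of $C_1\cup\cdots\cup C_4$ is injective,'' but for the explicit conics used in the paper this is \emph{false}: Lemma~\ref{L:configuration_image_intersection} shows the images of the double points lie in only eight values $\{\xi^0,\ldots,\xi^7\}$, and the Remark following Figure~\ref{F:VC_Smith} explicitly states that the resulting pencil does \emph{not} satisfy condition~(1). Consequently several critical values carry two critical points, and the local monodromy there is a product of two Dehn twists rather than one, so the contradiction you seek does not materialise directly.

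Your secondary route via the Euler characteristic is closer in spirit, but the assertion ``any disconnected preimage would produce too many Lefschetz critical points'' is not justified. The number of Lefschetz singularities over a given critical value is a local invariant of the branched cover near that singular fiber, and computing it requires exactly the information the paper extracts: each $I_2$ fiber $A\cup B$ has the $(-2)$-sphere $A=S_j$ in the branch locus, so a Lefschetz thimble $D$ with $\partial D=c$ meets the branch locus transversely in the single point $p_1\in S_j$. This forces $q_2^{-1}(D)$ to be a disk (double cover of a disk branched at one interior point) with connected boundary $q_2^{-1}(c)$. In other words, once you make your counting argument precise you recover the paper's homological statement $[c_i]=[e_i]$ in $H_1(Z_{\sm}\setminus\sqcup S_k;\Z/2\Z)$---the thimble exhibits $c_i$ as a meridian of $S_j$. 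The paper carries out this identification directly via a handlebody picture of the complement, which both avoids the injectivity issue and simultaneously yields the uniqueness of the branched cover used immediately afterward.
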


\begin{proof}
We first observe that there is a one-to-one correspondence between the set of isomorphism classes of double coverings of a four-manifold $X$ branched at $B\subset X$ and the set of homomorphisms $\varphi:H_1(X\setminus B;\Z/2\Z)\to\Z/2\Z$ sending a meridian of each component of $B$ to $1$. 
Furthermore, for a given double covering $q:\tilde{X}\to X$ branched at $B$, the corresponding homomorphism $\varphi_q:H_1(X\setminus B;\Z/2\Z)\to\Z/2\Z$ can be obtained as follows: for a simple closed curve $c$, the value $\varphi_q([c])$ is $1$ (resp.~$0$) if the preimage $q^{-1}(c)$ is connected (resp.~disconnected). 

Let $S\subset Z_{\sm}$ be one of the twelve spheres in singular fibers of $\pi\circ q_1\circ r$ and $N_1\subset Z_{\sm}$ a tubular neighborhood of $S$. 
The restriction of $\pi\circ q_1\circ r$ on $N_1$ has two Lefschetz singularities and a regular fiber of this restriction is an annulus. 
According to \cite[\S.8.2]{GS} we can draw a handlebody picture of the closure $\overline{N_1}$ which reflects configuration of the two singularities as shown in Figure~\ref{F:NeighborhoodSphere1} (two $(-1)$-framed knots correspond with the two Lefschetz singularities). 
\begin{figure}[htbp]
\centering
\subfigure[The closure $\overline{N_1}$]{\includegraphics[height=30mm]{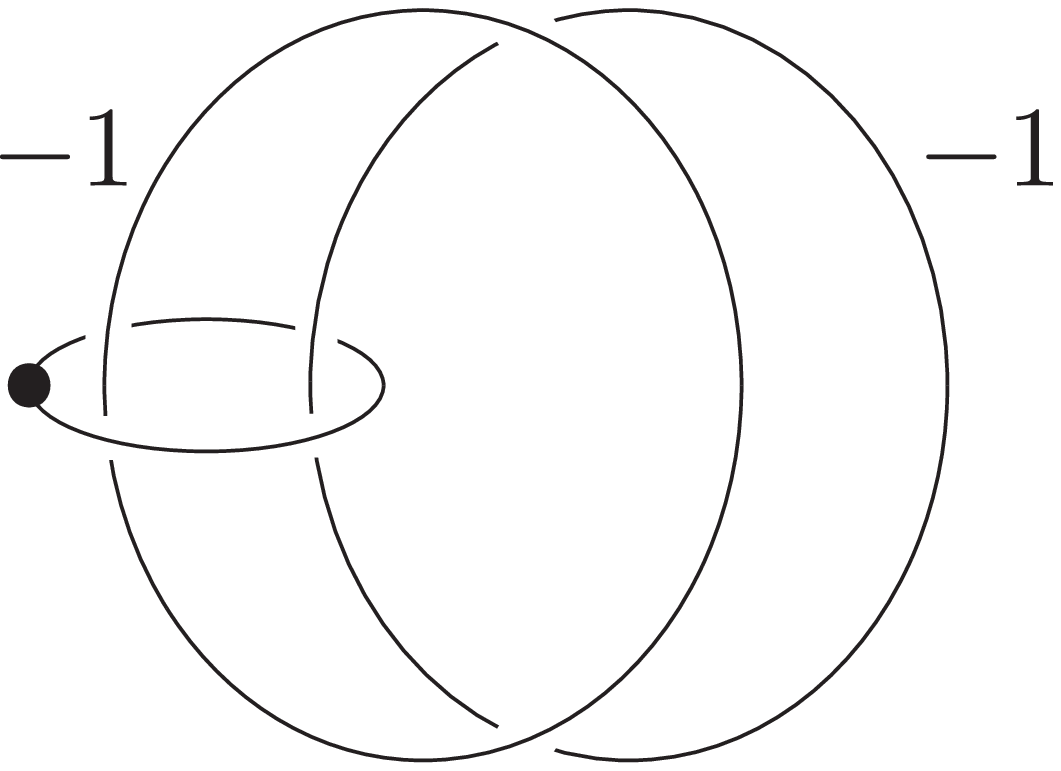}
\label{F:NeighborhoodSphere1}}
\hspace{.2em}
\subfigure[The complement $\overline{N_1}\setminus N_2$.]{\includegraphics[height=30mm]{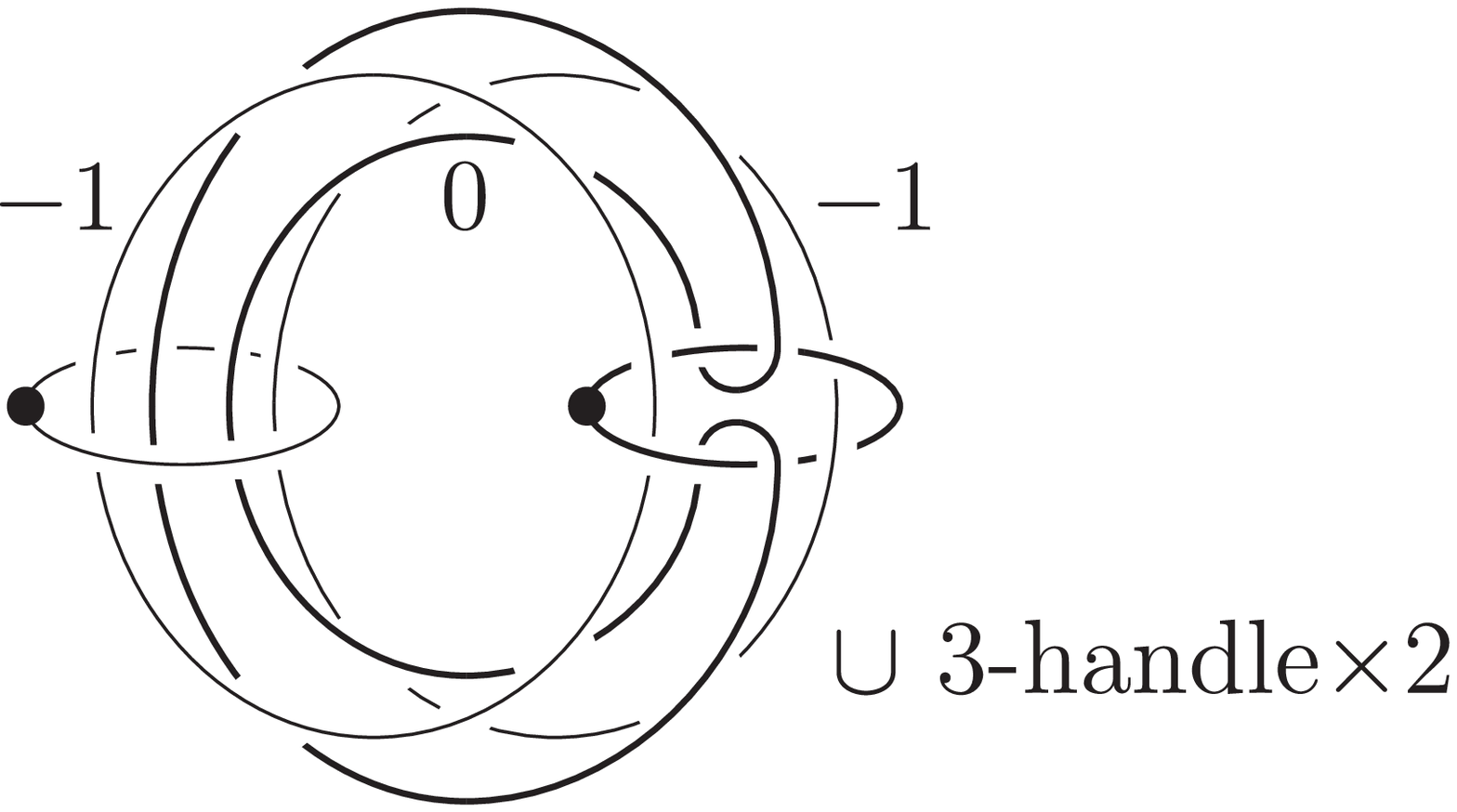}
\label{F:NeighborhoodSphere2}}
\caption{Handlebody pictures of a neighborhood $\overline{N_1}$ of $S$ and the complement of a smaller neighborhood of $S$ in $\overline{N_1}$.}
\label{F:NeighborhoodSphere}
\end{figure}
Moreover, applying the algorithm in \cite[\S.6.2]{GS} to our situation, we can obtain a diagram of the complement $\overline{N_1}\setminus N_2$ of a smaller tubular neighborhood $N_2$ of $S$ as shown in Figure~\ref{F:NeighborhoodSphere2} (the bold handles and the two $3$-handles in the figure correspond with handles of $S$). 

Let $c_1,\ldots,c_{12}\subset \Sigma_{1,4}$ be the vanishing cycles in the fourth-punctured torus $\Sigma_{1,4}$ described in Figure~\ref{F:SCCForTwist}. 
It is easy to verify (by drawing a handlebody picture using the observation above) that the first homology of the complement of the sixteen spheres in $Z_{\sm}$ is isomorphic to the following group:
\[
\left(H_1(\Sigma_1^4;\Z/2\Z) \oplus_{i=1}^{12}(\Z/2\Z e_i)\right) / \left<\left\{c_i+e_i~|~i=1,\ldots,12\right\}\right>,
\]
where $e_i$'s coincide with the meridians of the spheres in singular fibers.  
As we observed above, the homomorphism $\varphi_{q_2}$ associated with the branched covering $q_2$ must send each $e_i$ to $1$. 
Since $c_i$ is equal to $e_i$ in the group above, the preimage $q_2^{-1}(c_i)$ is connected. 
\end{proof}

\noindent
Since the vanishing cycles $c_1,\ldots,c_{12}$ span the homology group $H_1(T^2;\Z/2\Z)$, the argument in the proof of Lemma~\ref{L:DBCalongSphinFiber_VanCyc} also shows that a double covering of $T^2$ branched at the marked points by which each loop $c_i$ cannot be lifted is unique up to isomorphism. 
In particular, we can obtain vanishing cycles of the pencil $f:T\dashedrightarrow S$ once we can find such a branched covering, which is obtained by dividing $\Sigma_3$ by the involution $\eta$ shown in Figure~\ref{F:involution_genus3}. 
\begin{figure}[htbp]
\centering
\includegraphics[height=23mm]{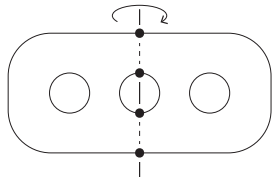}
\caption{The involution $\eta$, which is the $\pi$-degree rotation around the axis.}
\label{F:involution_genus3}
\end{figure}
Taking the preimage of the vanishing cycles in Figure~\ref{F:SCCForTwist} by the branched covering induced by $\eta$, we can eventually obtain vanishing cycles $\tilde{c}_1,\ldots,\tilde{c}_{12}$ of $f:T\dashedrightarrow S$ as shown in Figure~\ref{F:VC_Smith}, and thus the monodromy factorization associated with $f$: 
%\[
\begin{equation}
t_{\tilde{c}_{12}}\cdots t_{\tilde{c}_1}=t_{\delta_1}\cdots t_{\delta_4}. \label{eq:Smith'sLP}
\end{equation}
%\]
%
\begin{figure}[htbp]
\centering
\subfigure[The curve $\tilde{c}_{12}$.]{\includegraphics[height=15mm]{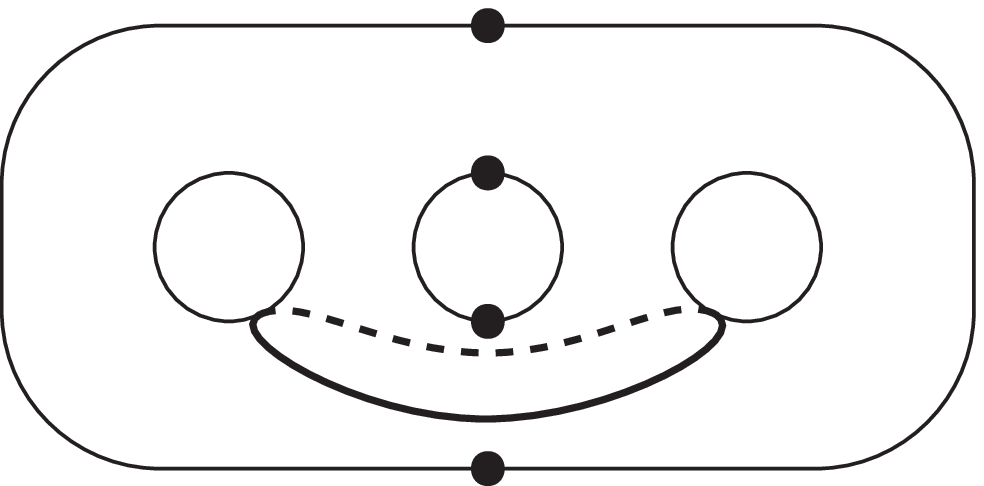}
\label{F:VC_Smith12}}
\subfigure[The curve $\tilde{c}_{11}$.]{\includegraphics[height=15mm]{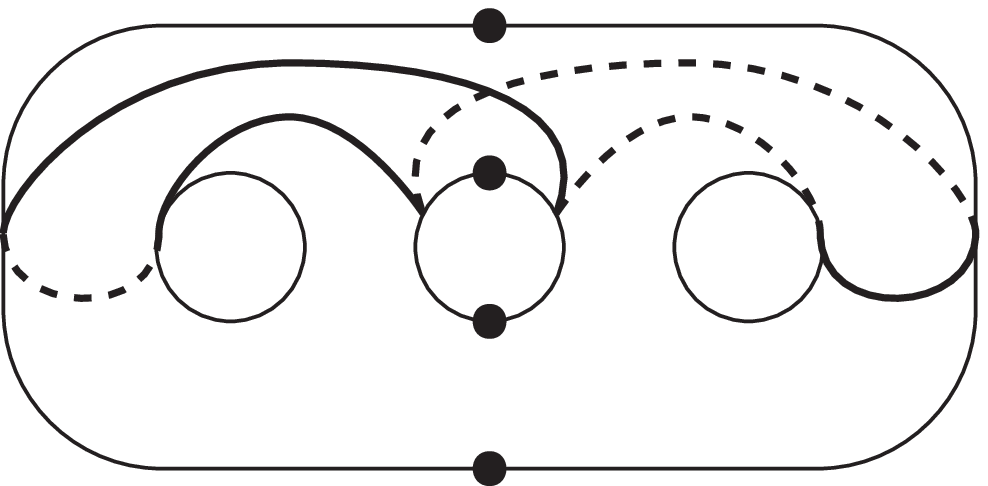}
\label{F:VC_Smith11}}
\subfigure[The curve $\tilde{c}_{10}$.]{\includegraphics[height=15mm]{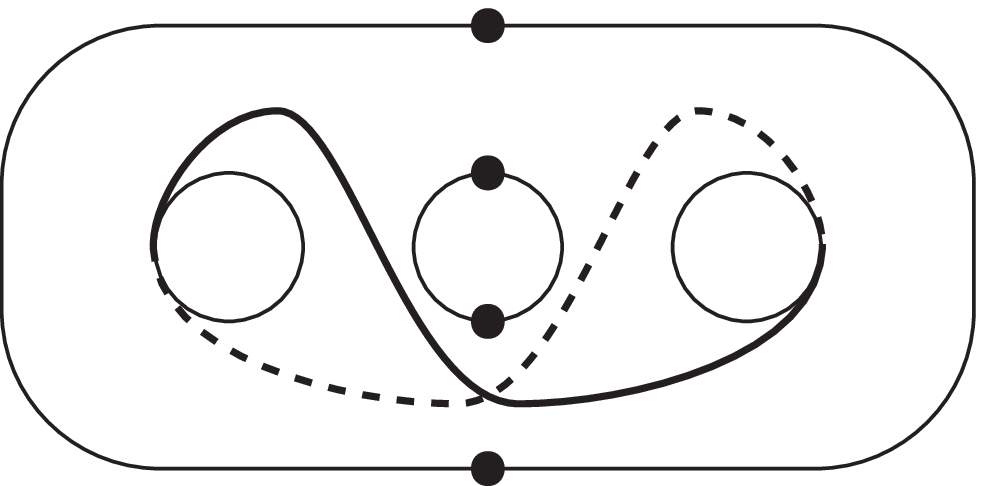}
\label{F:VC_Smith10}}
\subfigure[The curve $\tilde{c}_{9}$.]{\includegraphics[height=15mm]{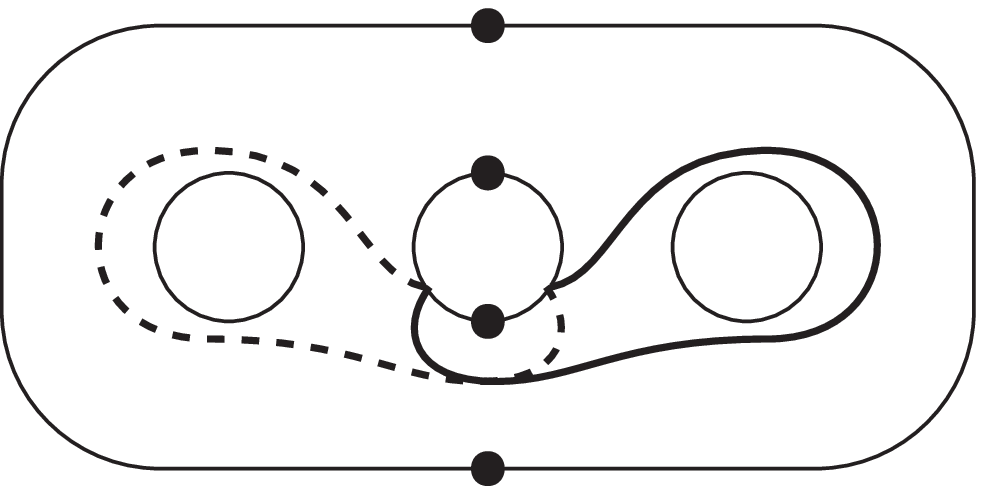}
\label{F:VC_Smith9}}
\subfigure[The curve $\tilde{c}_{8}$.]{\includegraphics[height=15mm]{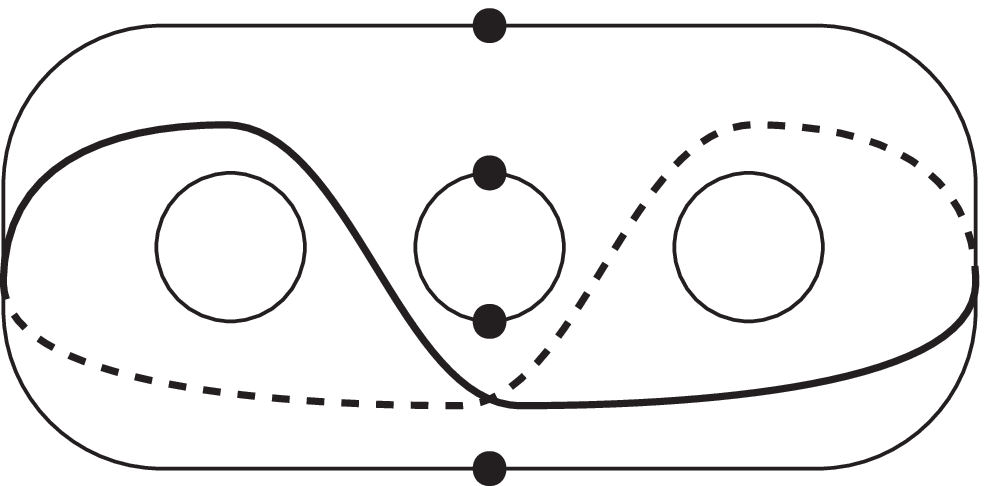}
\label{F:VC_Smith8}}
\subfigure[The curve $\tilde{c}_{7}$.]{\includegraphics[height=15mm]{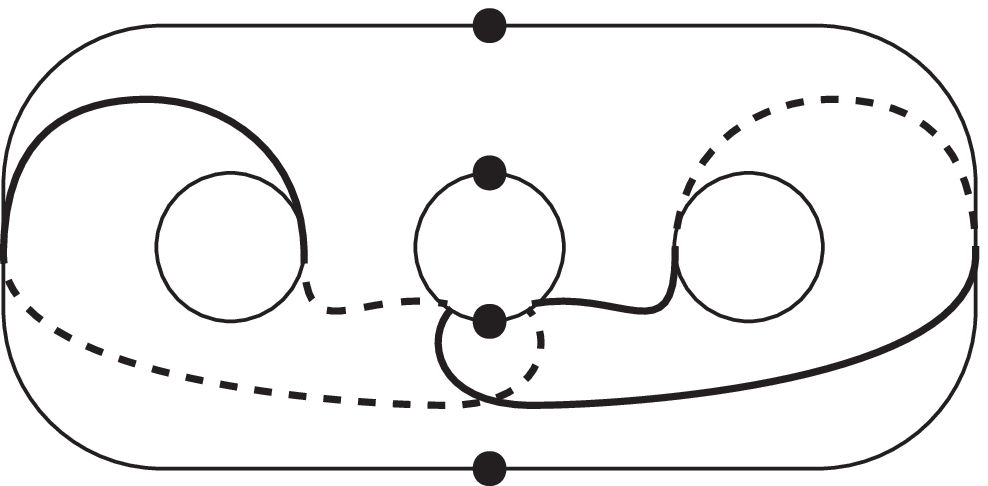}
\label{F:VC_Smith7}}
\subfigure[The curve $\tilde{c}_{6}$.]{\includegraphics[height=15mm]{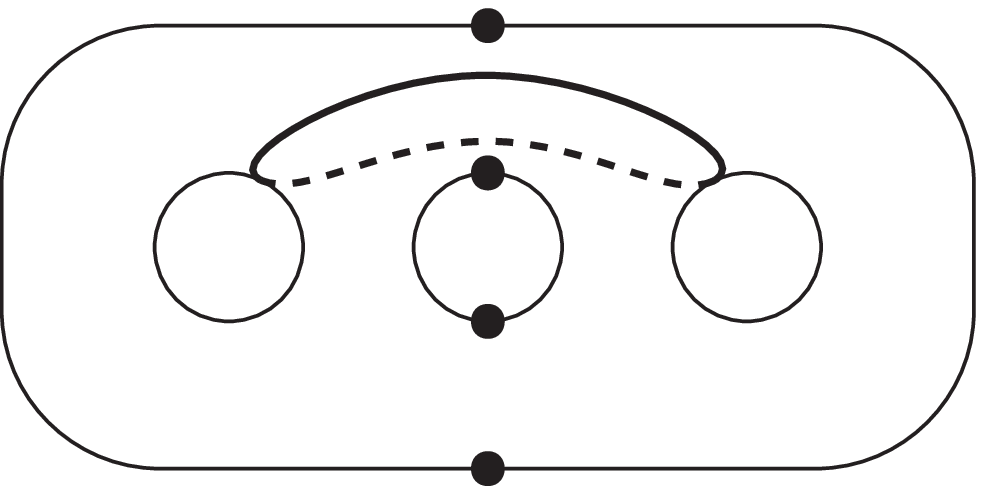}
\label{F:VC_Smith6}}
\subfigure[The curve $\tilde{c}_{5}$.]{\includegraphics[height=15mm]{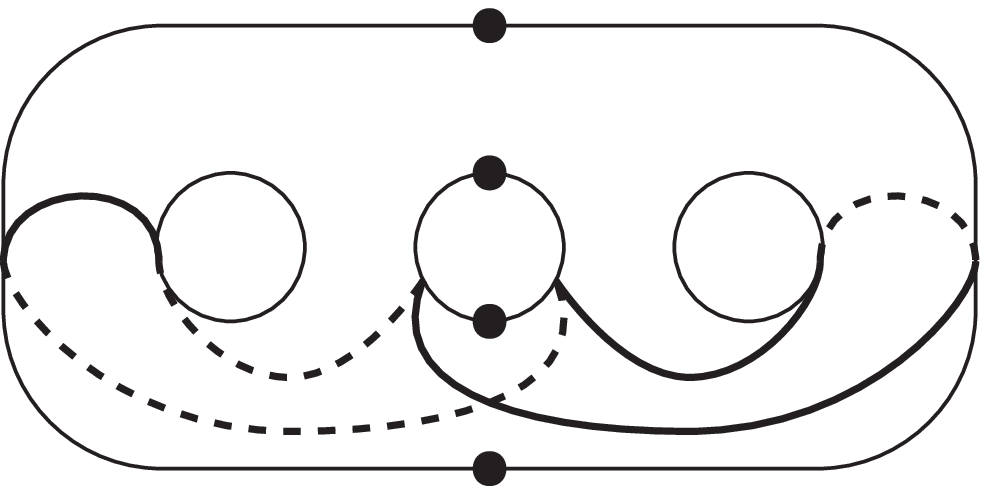}
\label{F:VC_Smith5}}
\subfigure[The curve $\tilde{c}_{4}$.]{\includegraphics[height=15mm]{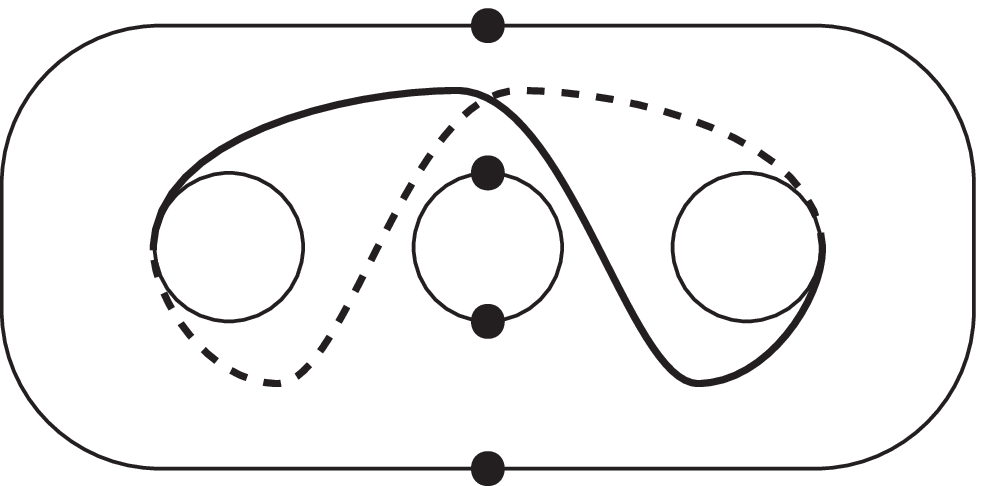}
\label{F:VC_Smith4}}
\subfigure[The curve $\tilde{c}_{3}$.]{\includegraphics[height=15mm]{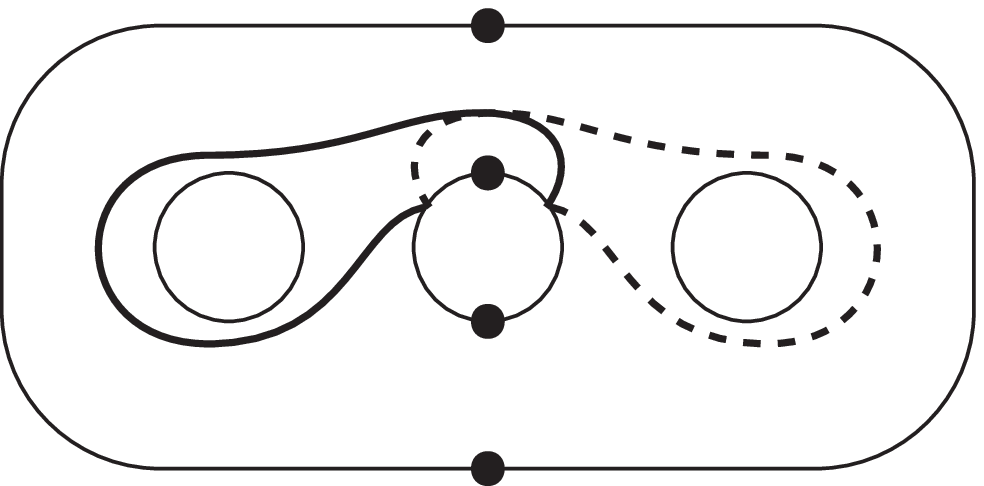}
\label{F:VC_Smith3}}
\subfigure[The curve $\tilde{c}_{2}$.]{\includegraphics[height=15mm]{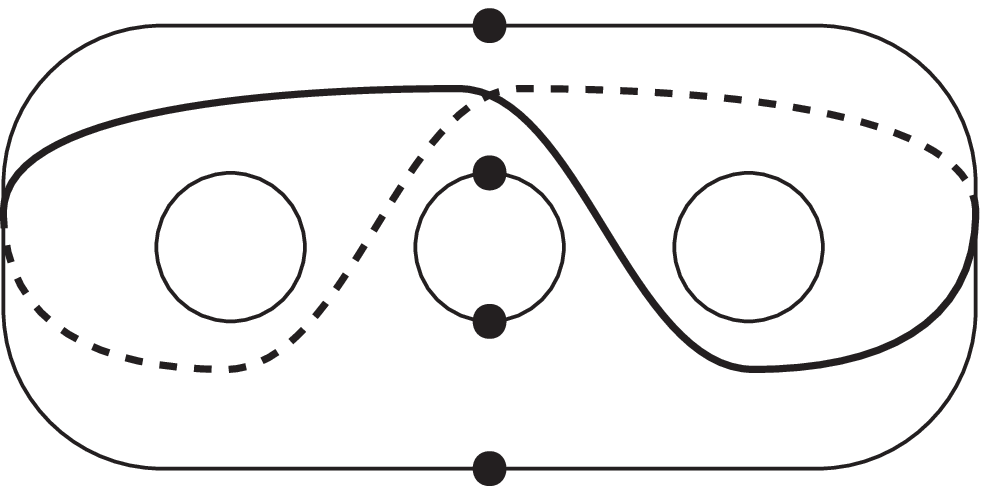}
\label{F:VC_Smith2}}
\subfigure[The curve $\tilde{c}_{1}$.]{\includegraphics[height=15mm]{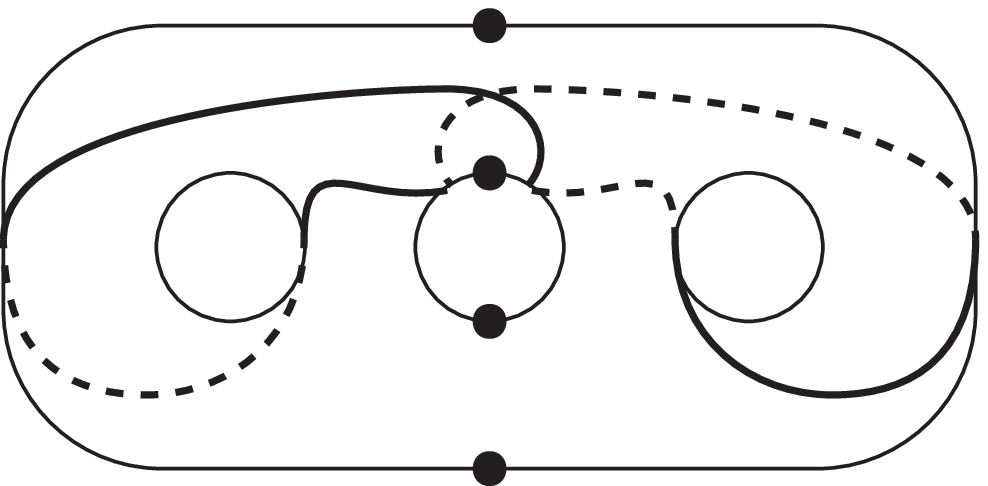}
\label{F:VC_Smith1}}
\caption{Vanishing cycles of a holomorphic genus-$3$ pencil on $T^4$ due to Smith. }
\label{F:VC_Smith}
\end{figure}

\begin{remark}

To be precise, the pencil we have constructed here is not a Lefschetz pencil yet since it does not satisfy the condition (1) in p.\pageref{P:definition_LP}. 
However, we can obtain a holomorphic genus-$3$ Lefschetz pencil on $T^4$ by perturbing the conics $C_1,\ldots,C_6$ so that the restriction of $\pi$ on the set of double points of $C_1\cup \cdots \cup C_4$ becomes injective. 
We can further check (using Mathematica) that the monodoromy factorization of the Lefschetz pencil obtained in this way is Hurwitz equivalent to that of our pencil. 

\end{remark}

\begin{remark}

Recently, Baykur~\cite{Baykur_preprint_genus3LP} has also constructed a genus-$3$ symplectic Calabi-Yau Lefschetz pencil whose total space is \emph{homeomorphic} to the standard four-torus $T^4$, but the diffeomorphism type was unknown.
In addition, the geometric structure of the pencil is not clear since his construction is based on a purely combinatorial method in terms of relations among Dehn twists.
In Section~\ref{Section:CombinatorialApproach}, we will see that his pencil is in fact isomorphic to the pencil corresponding to~(\ref{eq:Smith'sLP}) (see Remark~\ref{R:HurwitzEquivalenceBaykur}) after observing some arguments on combinatorial structures of the factorization~(\ref{eq:Smith'sLP}).
Thus, we now understand the detail of geometric structure of Baykur's pencil, in particular, his pencil is not only homeomorphic but also \emph{diffeomorphic} to the standard $T^4$, and the pencil may be considered \emph{holomorphic}.
\end{remark}

\subsection{Holomorphic Lefschetz pencils with higher genera}

According to Theorem~\ref{T:main_uniquenessLP} and Lemma~\ref{L:connectedness_lines}, for any integers $d_1,d_2>0$ with $d_1|d_2$ and $d_1d_2\geq 5$ there exists a genus-$(d_1d_2+1)$ holomorphic Lefschetz pencil on $T^4$ with divisibility $d_1$ and such a Lefschetz pencil is unique up to isomorphism. 
In this subsection we will explain how to obtain monodromy factorizations of some of these Lefschetz pencils. 

Let $c\in H^2(T^4;\Z)$ be a $(d_1,d_2)$-polarization of $T^4$. 
The cohomology class $c$ is equal to $d_1 \alpha_1\cup \beta_1 + d_2\alpha_2\cup \beta_2$ for some generating system $\alpha_1,\beta_1,\alpha_2,\beta_2$ of $H^1(T^4;\Z)$. 
Let $a_i,b_j\in H_1(T^4;\Z)$ be respectively duals of $\alpha_i,\beta_j$ with respect to the Kronecker product. 
We take an unbranched covering map $q:\tilde{T} \to T^4$ corresponding to the subgroup of $H_1(T^4;\Z)$ generated by $n_1a_1,b_1,n_2a_2,b_2$ for some $n_1,n_2\in\Z$.
It is easy to see that $\tilde{T}$ is again a $4$-torus and $\left\{\frac{q^\ast(\alpha_1)}{n_1}, q^\ast(\beta_1),\frac{q^\ast(\alpha_2)}{n_2}, q^\ast(\beta_2)\right\}$ generates $H^1(\tilde{T};\Z)$. 
In particular the pull-back $q^\ast(c)\in H^2(\tilde{T};\Z)$ is a $(\tilde{d}_1,\tilde{d}_2)$-polarization of $\tilde{T}$, where $\tilde{d}_1 = \gcd(n_1d_1,n_2d_2)$ and $\tilde{d}_2=\frac{n_1n_2d_1d_2}{\tilde{d}_1}$. 
Since $d_2$ is divisible by $d_1$, $\tilde{d}_1$ is also divisible by $d_1$. 
Furthermore, $\tilde{d}_2$ is divisible by $d_2$ since $\frac{n_1d_1}{\tilde{d}_1}$ must be an integer. 
Conversely, for any positive integers $l_1,l_2$, the pull-back $\overline{q}^{\ast}(c)$ by an unbranched covering map $\overline{q}$ corresponding to the subgroup $\left<l_1a_1,b_1,l_2a_2,b_2\right>\subset H_1(T^4;\Z)$ is an $(l_1d_1,l_2d_2)$-polarization of $\tilde{T}$.  
We can thus obtain the following: 

\begin{lemma}\label{L:relation_covering_polarization}

Let $c\in H^2(T^4;\Z)$ be a $(d_1,d_2)$-polarization of $T^4$. 
For any integers $\tilde{d}_1,\tilde{d}_2$ with $\tilde{d}_1|\tilde{d}_2$ and $d_i|\tilde{d}_i$ ($i=1,2$), there exists an unbranched covering map $q:\tilde{T}\to T^4$ such that the pull-back $q^\ast(c)$ is a $(\tilde{d}_1,\tilde{d}_2)$-polarization of $\tilde{T}$. 
If $q$ is an $n$-fold unbranched covering, $\tilde{d}_1\tilde{d}_2$ is equal to $nd_1d_2$. 

\end{lemma}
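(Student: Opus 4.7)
The plan is to make explicit the construction already sketched in the paragraph preceding the lemma. First, I would fix a symplectic basis $\alpha_1,\beta_1,\alpha_2,\beta_2$ of $H^1(T^4;\Z)$ such that
\[
c=d_1\,\alpha_1\cup\beta_1+d_2\,\alpha_2\cup\beta_2,
\]
together with the Kronecker-dual homology basis $a_1,b_1,a_2,b_2\in H_1(T^4;\Z)$. Given target divisors $\tilde d_1,\tilde d_2$ satisfying $d_i\mid\tilde d_i$ and $\tilde d_1\mid\tilde d_2$, I set $n_i:=\tilde d_i/d_i\in\Z_{>0}$ and define $q:\tilde T\to T^4$ to be the unbranched covering corresponding to the finite-index subgroup $\langle n_1 a_1,b_1,n_2 a_2,b_2\rangle\subset H_1(T^4;\Z)\cong\pi_1(T^4)$. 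Since this subgroup is a lattice of the same rank, $\tilde T$ is again a four-torus, and the sheet number is $n=n_1 n_2$.

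Second, I would apply the formula derived (by direct computation in terms of the symplectic basis) in the paragraph immediately preceding the lemma: the pullback $q^*(c)$ is a $(\tilde e_1,\tilde e_2)$-polarization of $\tilde T$, where
\[
\tilde e_1=\gcd(n_1 d_1,n_2 d_2),\qquad \tilde e_2=\frac{n_1 n_2 d_1 d_2}{\tilde e_1}.
\]
Substituting $n_i d_i=\tilde d_i$ and invoking the hypothesis $\tilde d_1\mid\tilde d_2$ gives $\tilde e_1=\gcd(\tilde d_1,\tilde d_2)=\tilde d_1$ and hence $\tilde e_2=\tilde d_1\tilde d_2/\tilde d_1=\tilde d_2$, so $q^*(c)$ has type exactly $(\tilde d_1,\tilde d_2)$ as required.

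Finally, the degree assertion is immediate: $nd_1 d_2=n_1 n_2 d_1 d_2=\tilde d_1\tilde d_2$. The only nontrivial ingredient is the pullback-type formula, and that is precisely the elementary linear-algebra computation on the lattice $q_*\pi_1(\tilde T)\subset\pi_1(T^4)$ carried out just before the lemma statement; the main (and only) obstacle is to verify this formula carefully, after which the proof reduces to the elementary number-theoretic observation above.
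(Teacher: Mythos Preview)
Your proposal is correct and follows essentially the same approach as the paper: the argument is exactly the construction carried out in the paragraph immediately preceding the lemma, specialized to $n_i=\tilde d_i/d_i$ and combined with the observation $\gcd(\tilde d_1,\tilde d_2)=\tilde d_1$. The only minor point is that the second sentence of the lemma (``If $q$ is an $n$-fold unbranched covering, $\tilde d_1\tilde d_2=nd_1d_2$'') is arguably a general statement about \emph{any} covering with $q^*(c)$ of type $(\tilde d_1,\tilde d_2)$, not just the one you construct; but this follows immediately from $q^*(c)\cdot q^*(c)=n\,(c\cdot c)$ together with $c^2=2d_1d_2$, so no real gap.
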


Let $f:T^4 \dashedrightarrow \CP^1$ be a holomorphic pencil associated with a $(d_1,d_2)$-polarization $c \in H^2(T^4;\Z)$ and $q:\tilde{T}\to T^4$ a finite unbranched covering map. 
It is easily verify that the composition $f\circ q:\tilde{T}\dashedrightarrow \CP^1$ is also a holomorphic pencil associated with the polarization $q^\ast (c)$. 
By Lemma~\ref{L:relation_covering_polarization} for any $d_1,d_2$ with $d_1d_2$ even, we can obtain a holomorphic pencil on $T^4$ associated with a $(d_1,d_2)$-polarization by composing a finite unbranched covering map with the genus-$3$ pencil in the preceding subsection. 
We can perturb this pencil so that it becomes a Lefschetz pencil (see Lemma~\ref{L:connectedness_lines}). 
We can further obtain the vanishing cycles of the pencil $f\circ q$ using Lemma~\ref{L:relation_monodromy_covering} once we can find out how the deck transformations of $q$ act on a reference fiber of $f\circ q$. 
In what follows we will apply the above procedure to obtain two holomorphic pencils on $T^4$ with the same genus but distinct divisibilities. 

\begin{example}

Let $f:T^4\dashedrightarrow \CP^1$ be the holomorphic pencil obtained in the preceding subsection. 
According to Lemma~\ref{L:relation_covering_polarization}, for a double unbranched covering $q:\tilde{T}\to T^4$ the type of a polarization associated with the composition $f\circ q$ is either $(1,4)$ or $(2,2)$. 
We will give two double unbranched coverings which yield both of the types of polarizations below. 

It is easy to see that $H_1(T^4;\Z)$ is generated by the elements represented by the curves $a_1,b_1,a_2,b_2$ shown in Figure~\ref{F:generator_homology} (which are contained in a reference fiber of $f$). 
\begin{figure}[htbp]
\centering
\includegraphics[width=35mm]{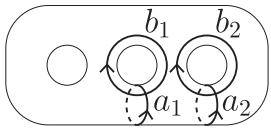}
\caption{The curves generating $H_1(T^4;\Z)$. }
\label{F:generator_homology}
\end{figure}
Let $q_1:\tilde{T}_1\to T^4$ be a double unbranched covering corresponding to the subgroup $\left<a_1,2b_1,a_2,b_2\right>\subset H_1(T^4;\Z)$.
The restriction of $q_1$ on the preimage of a reference fiber of $f$ is the quotient map by the involution $\eta_1$ shown in Figure~\ref{F:involution_eta1}, in particular the restriction of the deck transformation of $q_1$ is equal to $\eta_1$. 
\begin{figure}[htbp]
\centering
\includegraphics[width=80mm]{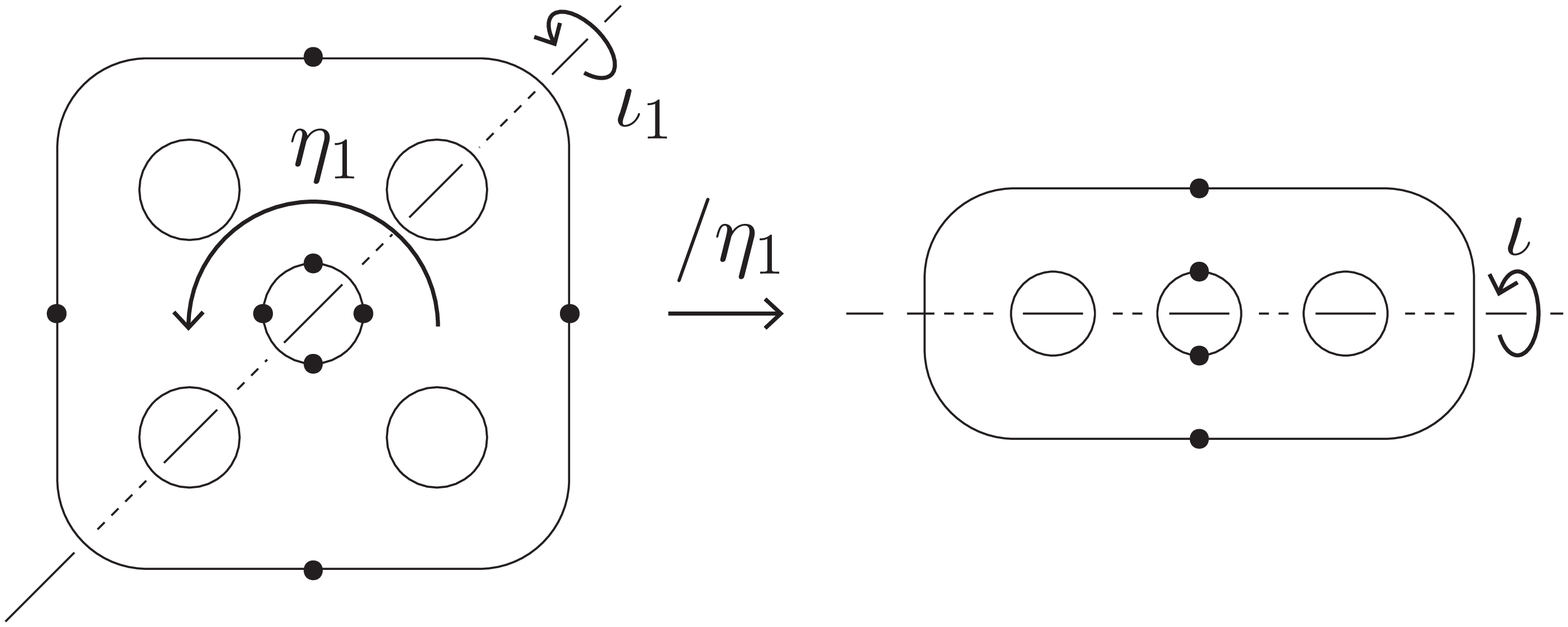}
\caption{The involutions $\eta_1$ and $\iota_1$.
Both of them are $\pi$-degree rotations.}
\label{F:involution_eta1}
\end{figure}
Let $\iota_1$ be the involution of a fiber of $f\circ q$ shown in Figure~\ref{F:involution_eta1}, which is a lift of the hyperelliptic involution $\iota$ of the genus-$3$ fiber. 
For any $i =1,\ldots,6$ we take a lift $d_i$ of $\tilde{c}_i$ in Figure~\ref{F:VC_Smith} under the unbranched covering map $q_1$ as shown in Figure~\ref{F:liftVCSmith1}.
\begin{figure}[htbp]
\centering
\subfigure[The curves $d_1$ and $\eta_1(d_1)$. ]{\includegraphics[height=32mm]{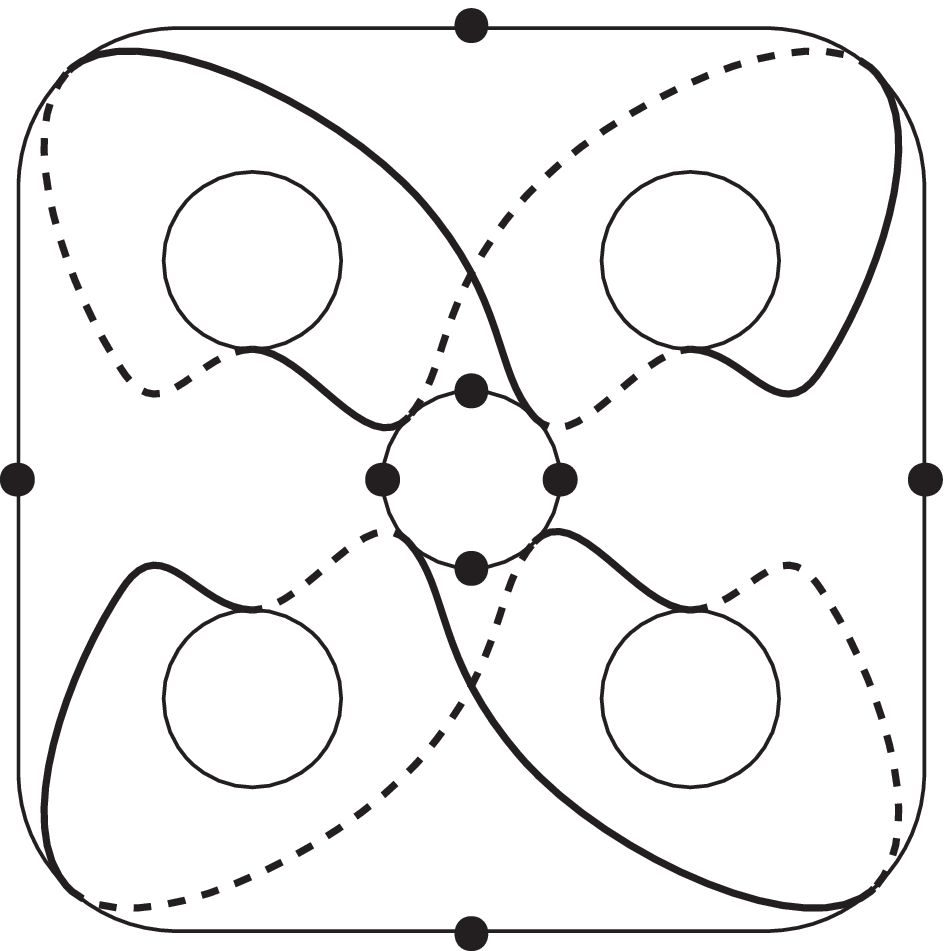}
\label{F:lift_VCSmith1_1}}
\hspace{.8em}
\subfigure[The curves $d_2$ and $\eta_1(d_2)$.]{\includegraphics[height=32mm]{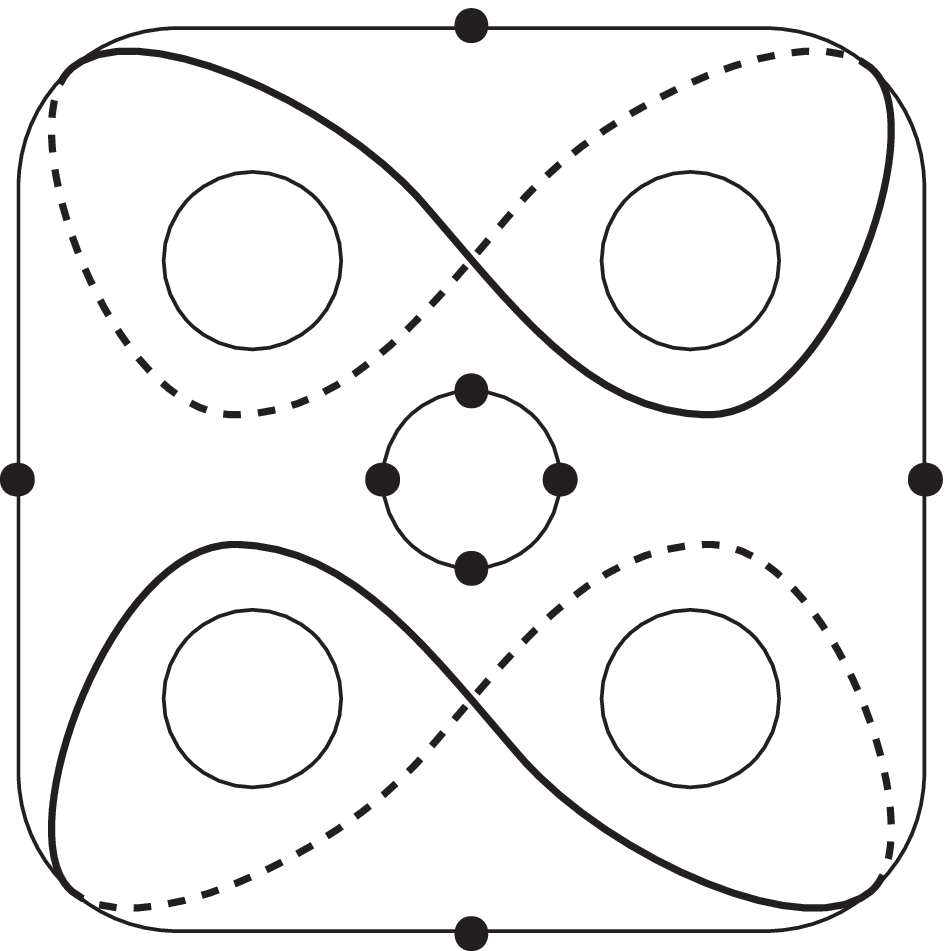}
\label{F:lift_VCSmith1_2}}
\hspace{.8em}
\subfigure[The curves $d_3$ and $\eta_1(d_3)$.]{\includegraphics[height=32mm]{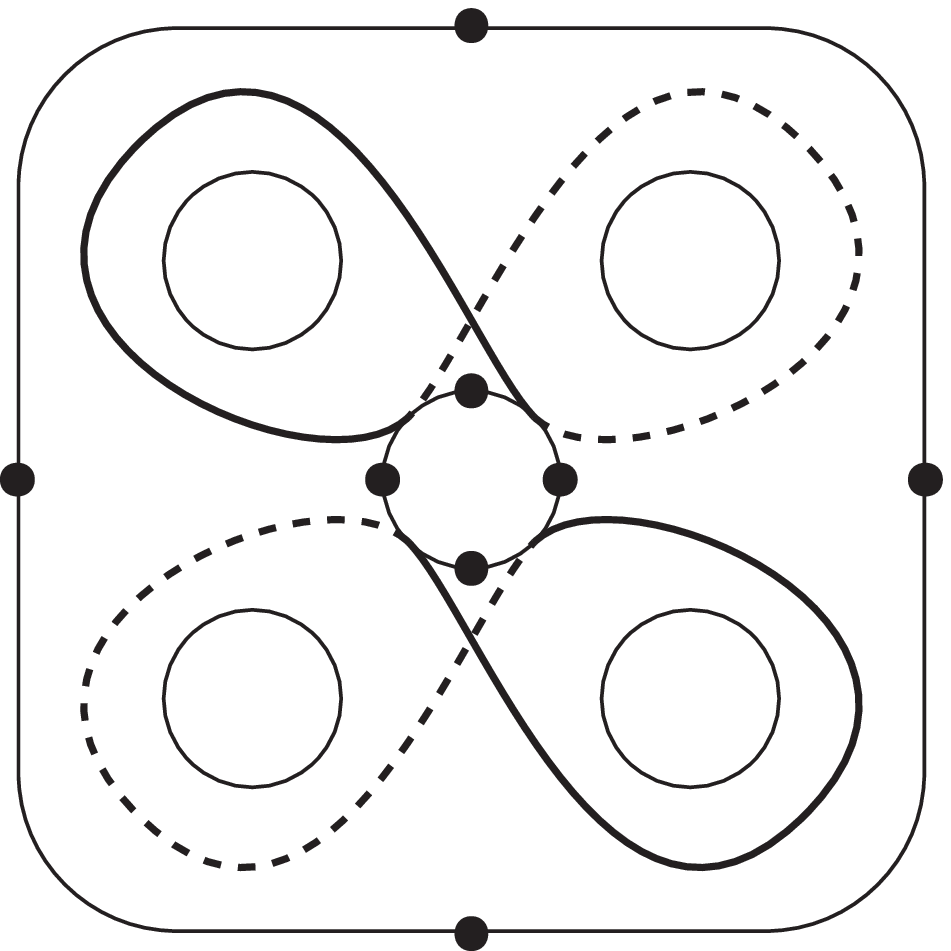}
\label{F:lift_VCSmith1_3}}

\subfigure[The curves $d_4$ and $\eta_1(d_4)$.]{\includegraphics[height=32mm]{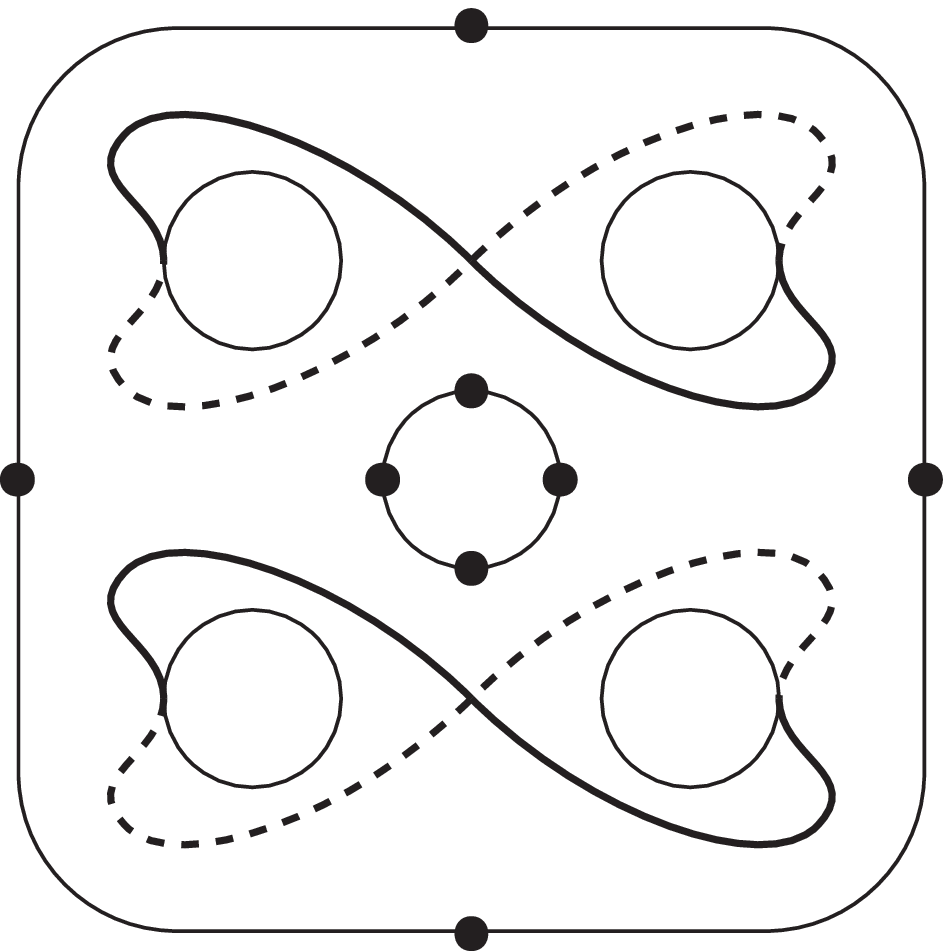}
\label{F:lift_VCSmith1_4}}
\hspace{.8em}
\subfigure[The curves $d_5$ and $\eta_1(d_5)$.]{\includegraphics[height=32mm]{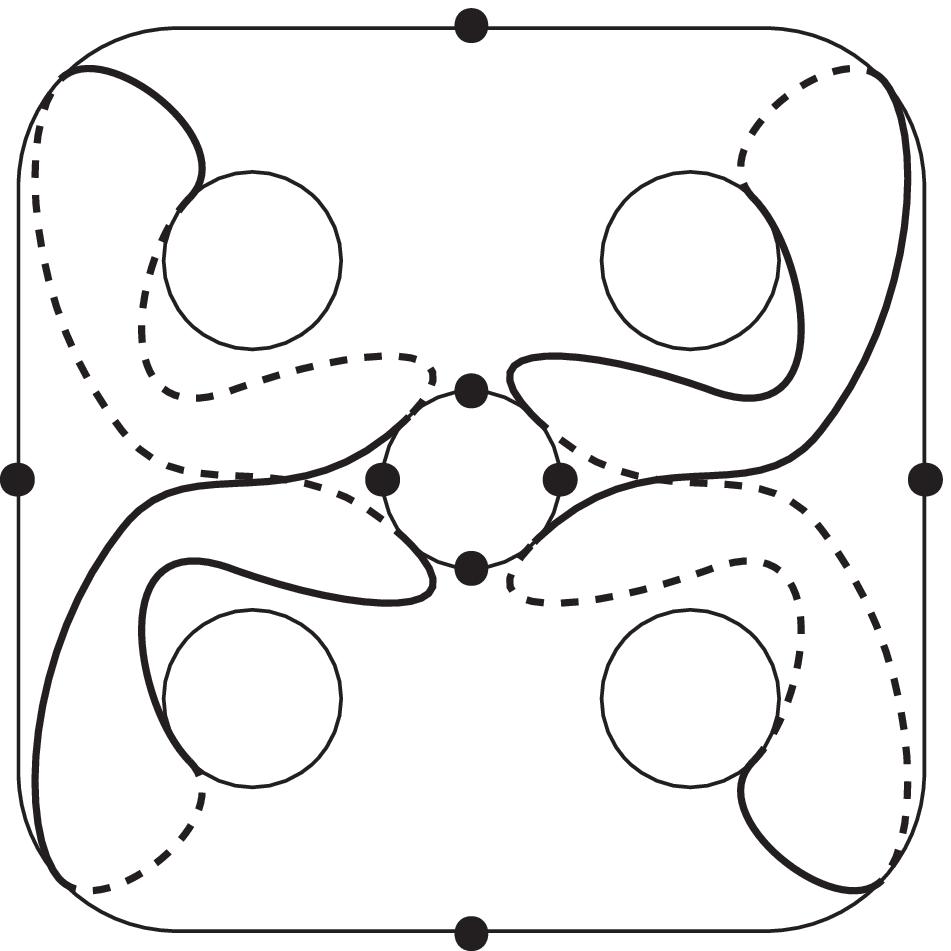}
\label{F:lift_VCSmith1_5}}
\hspace{.8em}
\subfigure[The curves $d_6$ and $\eta_1(d_6)$.]{\includegraphics[height=32mm]{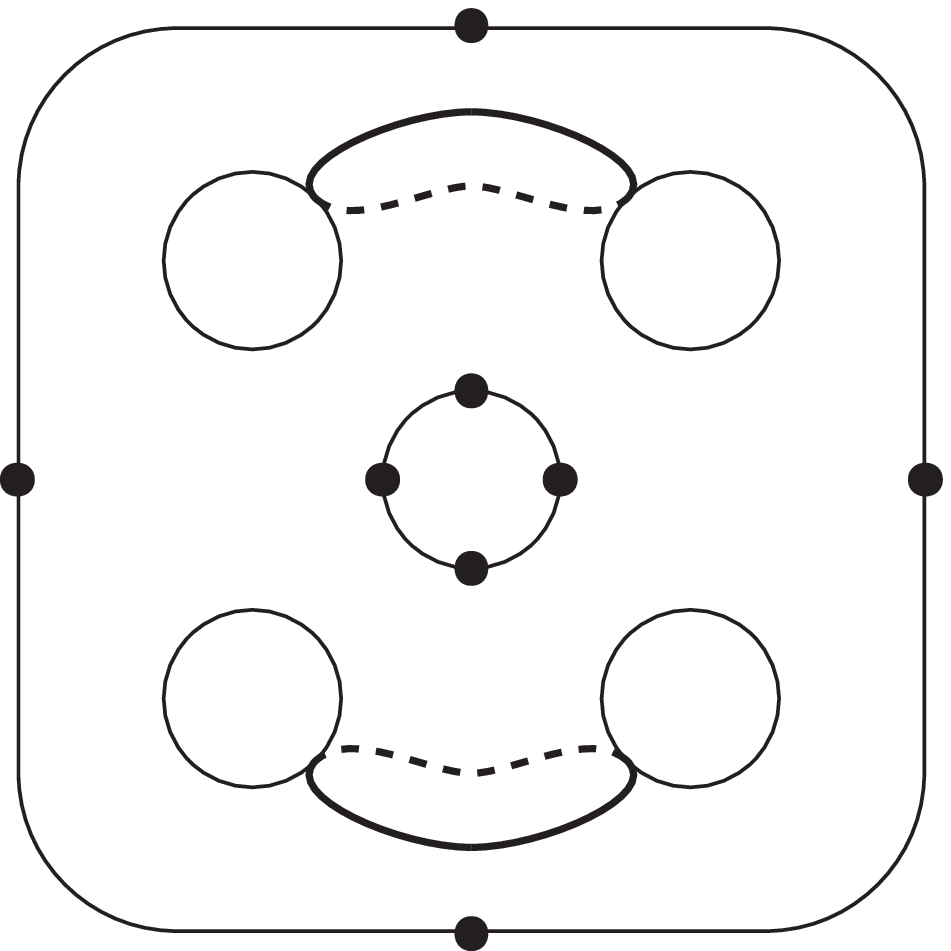}
\label{F:lift_VCSmith1_6}}
\caption{Vanishing cycles of the Lefschetz pencil $f\circ q_1$. }
\label{F:liftVCSmith1}
\end{figure}
The other lift of $\tilde{c}_i$ is $\eta_1(d_i)$, which is also given in Figure~\ref{F:liftVCSmith1}. 
Since $\tilde{c}_{i+6}$ is equal to $\iota(\tilde{c}_i)$, the lifts of the curve $\tilde{c}_{i+6}$ is $\iota_1(d_i)$ and $\eta_1(\iota_1(d_i))$. 
Thus a monodromy factorization of the pencil $f\circ q_1$ is as follows: 
\[
t_{\eta_1(\iota_1(d_6))}t_{\iota_1(d_6)}\cdots t_{\eta_1(\iota_1(d_1))}t_{\iota_1(d_1)}\cdot t_{\eta_1(d_6)}t_{d_6}\cdots t_{\eta_1(d_1)}t_{d_1} = t_{\delta_1}\cdots t_{\delta_8}. 
\]
Applying the algorithm given in Appendix~\ref{A:2ndhomology}, we can calculate the divisibility of $f\circ q_1$ (using Mathematica), which is equal to $1$. 
Thus the type of a polarization associated with $f\circ q_1$ is $(1,4)$. 

Let $q_2:\tilde{T}_2\to T^4$ be a double unbranched covering corresponding to the subgroup $\left<a_1,b_1,a_2,2b_2\right>\subset H_1(T^4;\Z)$. 
We take involutions $\eta_2$ and $\iota_2$ of a genus-$5$ surface as shown in Figure~\ref{F:involution_eta2}. 
\begin{figure}[htbp]
\centering
\subfigure[The punctured dots are in the opposite side of the surface. ]{\includegraphics[width=45mm]{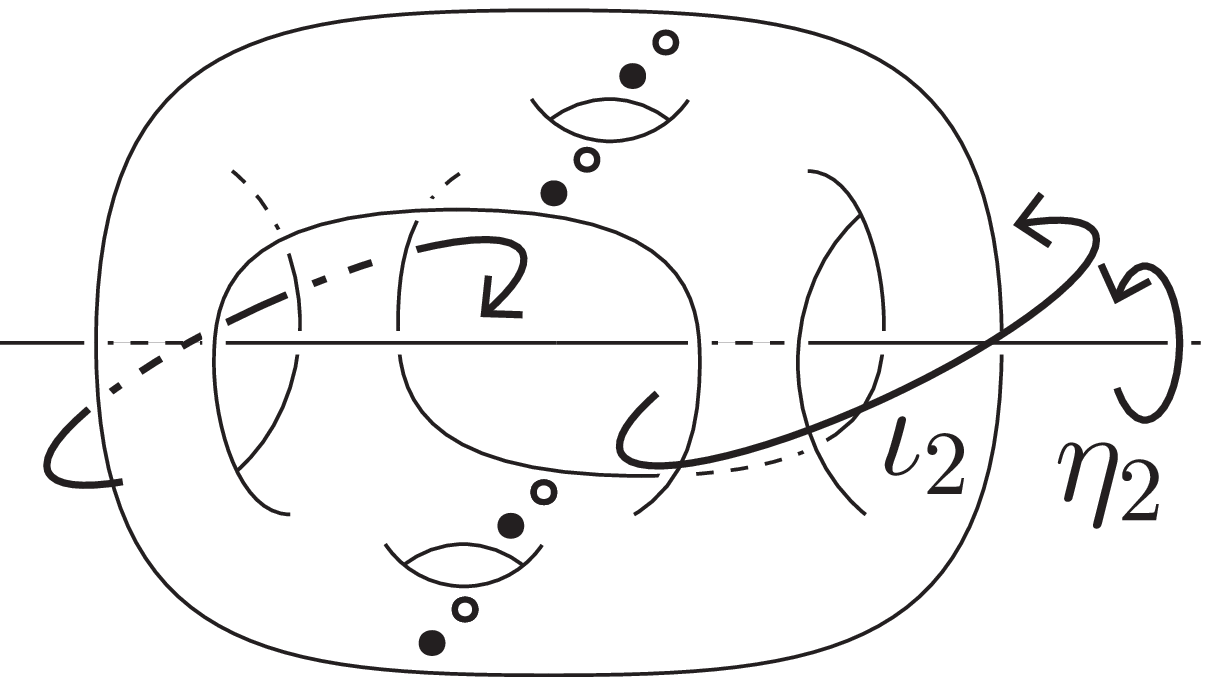}
\label{F:involution_eta2_1}}
\hspace{.8em}\raisebox{4em}{\huge$\cong$}\hspace{.8em}
\subfigure[Another description of the surface. 
The involution $\iota_2$ becomes the $\pi$-degree rotation along the dotted axis.]{\includegraphics[width=40mm]{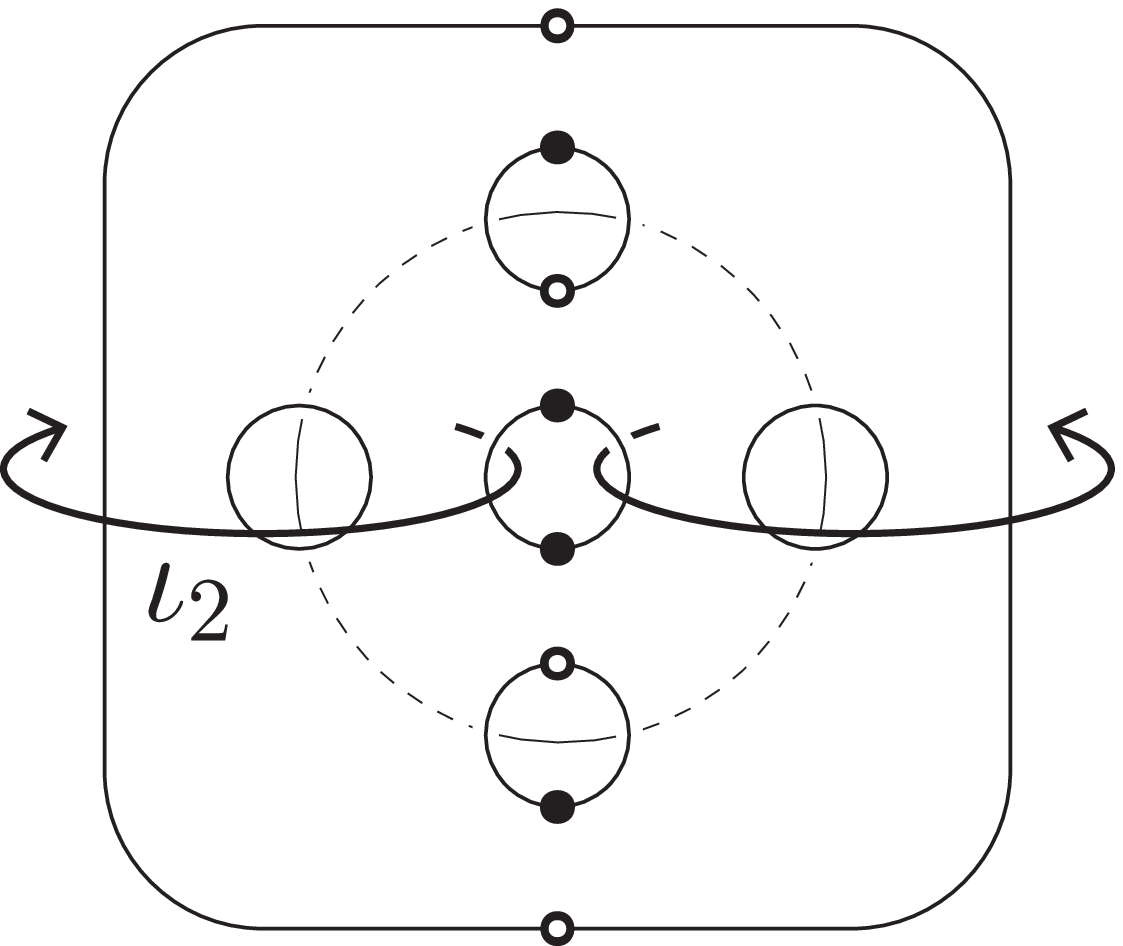}
\label{F:involution_eta2_2}}
\caption{The involutions $\eta_2$ and $\iota_2$.}
\label{F:involution_eta2}
\end{figure}
It is easily verified that the restriction of $q_2$ on the preimage of a reference fiber of $f$ is the quotient map by $\eta_2$, and $\iota_2$ is a lift of $\iota$ under this map. 
By Lemma~\ref{L:relation_monodromy_covering}, we can obtain vanishing cycles of $f\circ q_2$ by taking lifts of $\tilde{c}_i$'s, which are denoted by $e_i$'s and given in Figure~\ref{F:liftVCSmith2}. 
\begin{figure}[htbp]
\centering
\subfigure[The curve $e_1$. ]{\includegraphics[height=32mm]{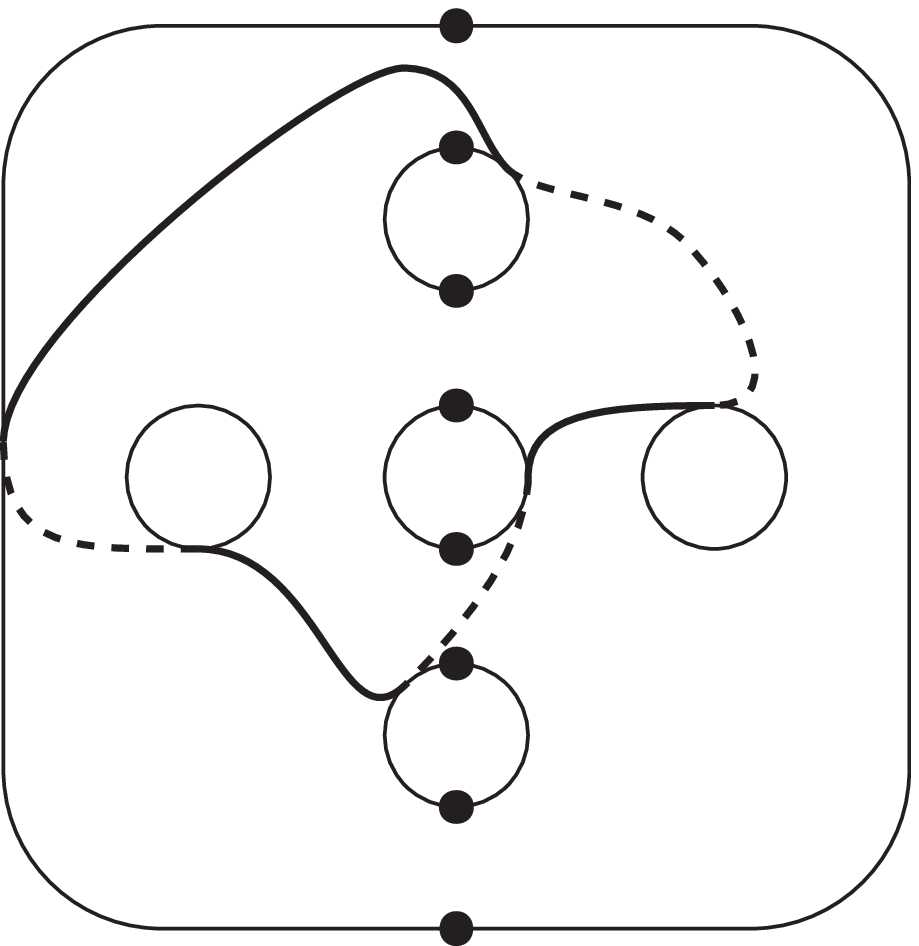}
\label{F:liftVCSmith2_1}}
\hspace{.8em}
\subfigure[The curve $e_2$.]{\includegraphics[height=32mm]{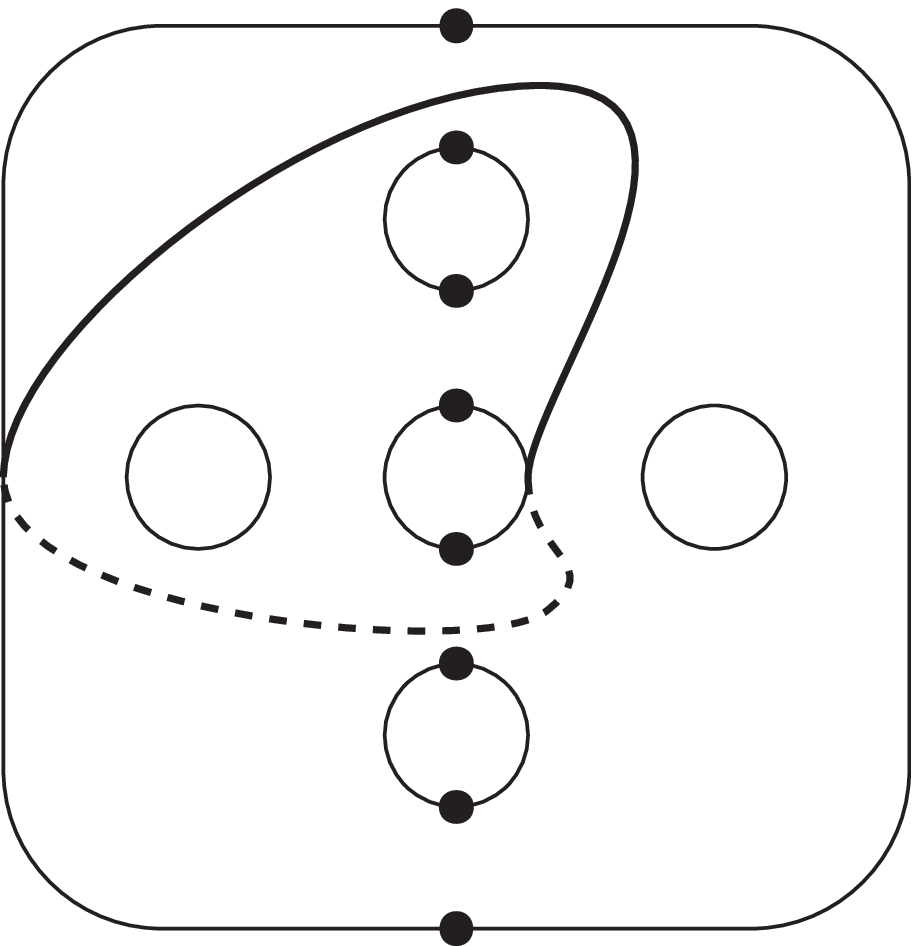}
\label{F:liftVCSmith2_2}}
\hspace{.8em}
\subfigure[The curve $e_3$.]{\includegraphics[height=32mm]{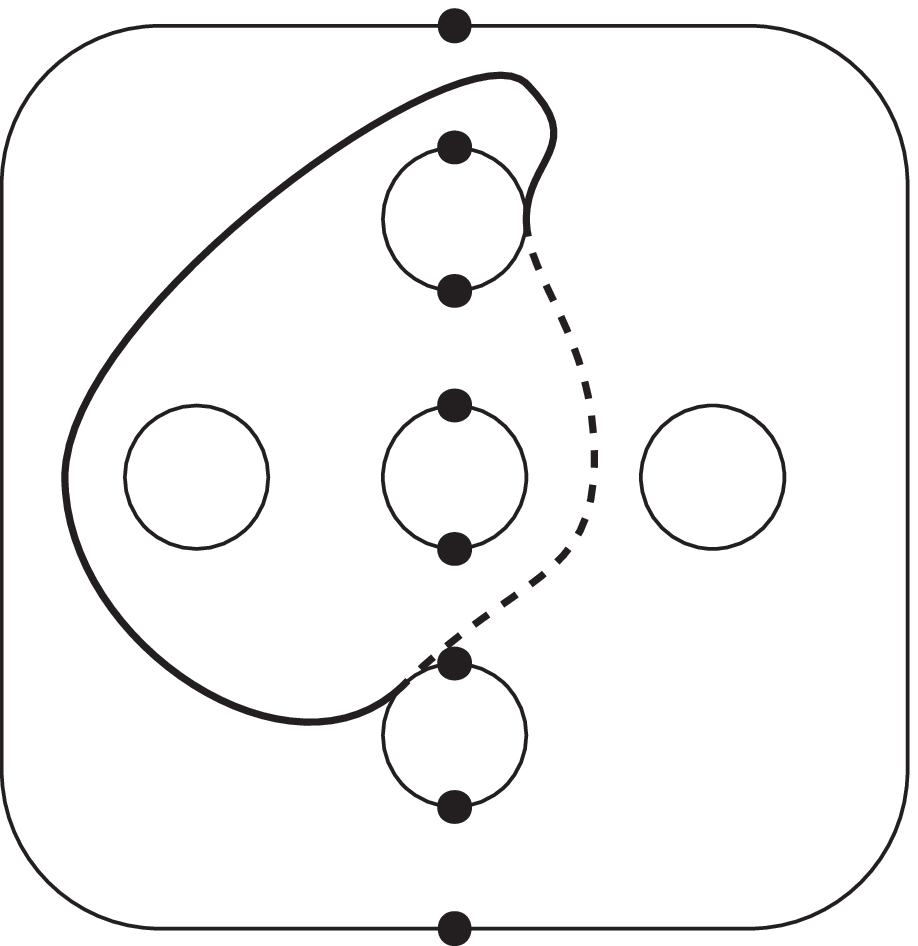}
\label{F:liftVCSmith2_3}}

\subfigure[The curve $e_4$.]{\includegraphics[height=32mm]{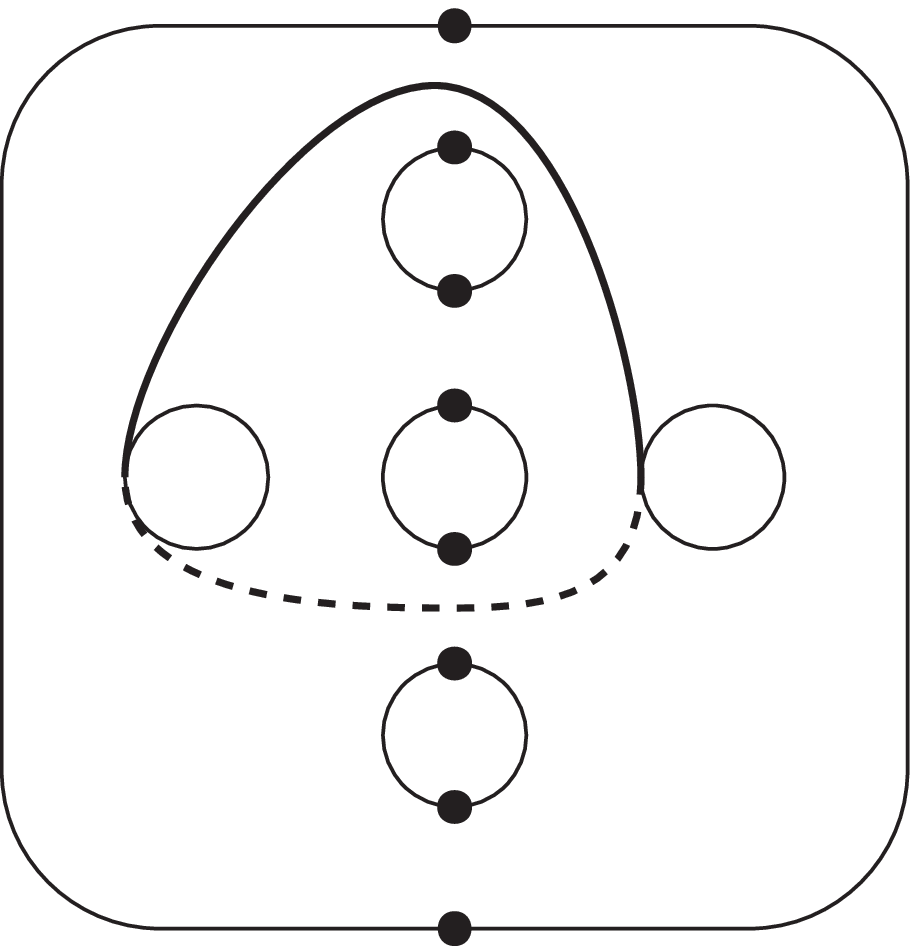}
\label{F:liftVCSmith2_4}}
\hspace{.8em}
\subfigure[The curve $e_5$.]{\includegraphics[height=32mm]{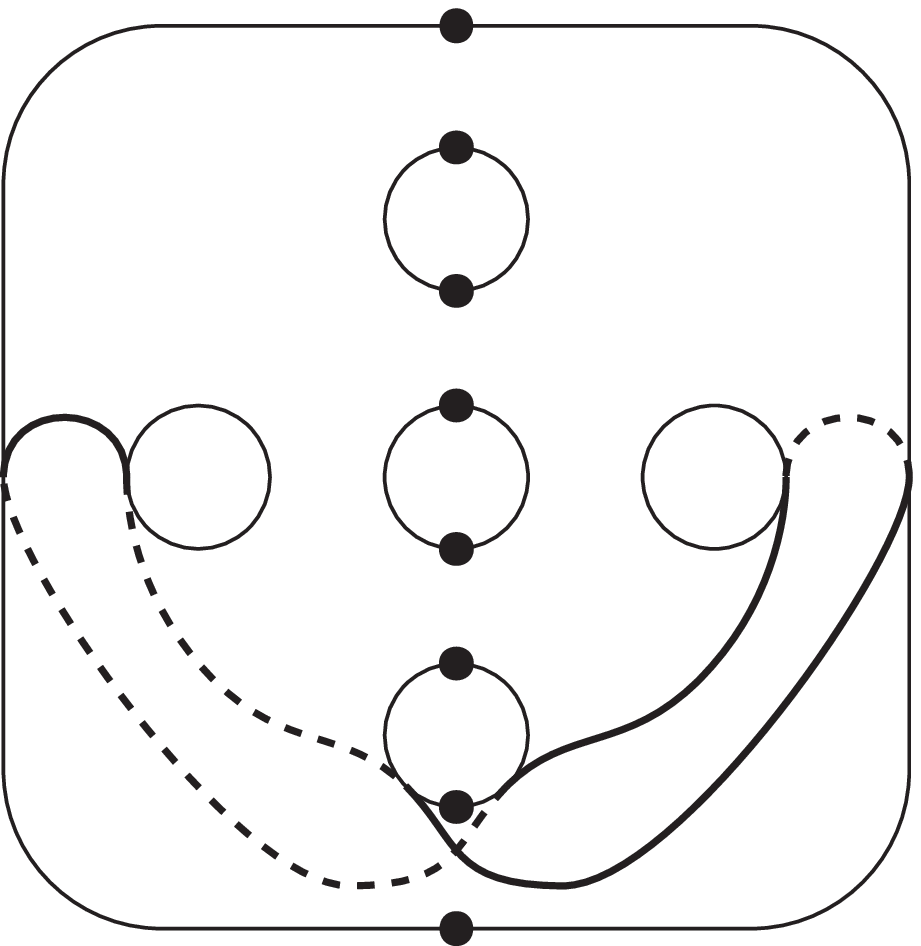}
\label{F:liftVCSmith2_5}}
\hspace{.8em}
\subfigure[The curve $e_6$.]{\includegraphics[height=32mm]{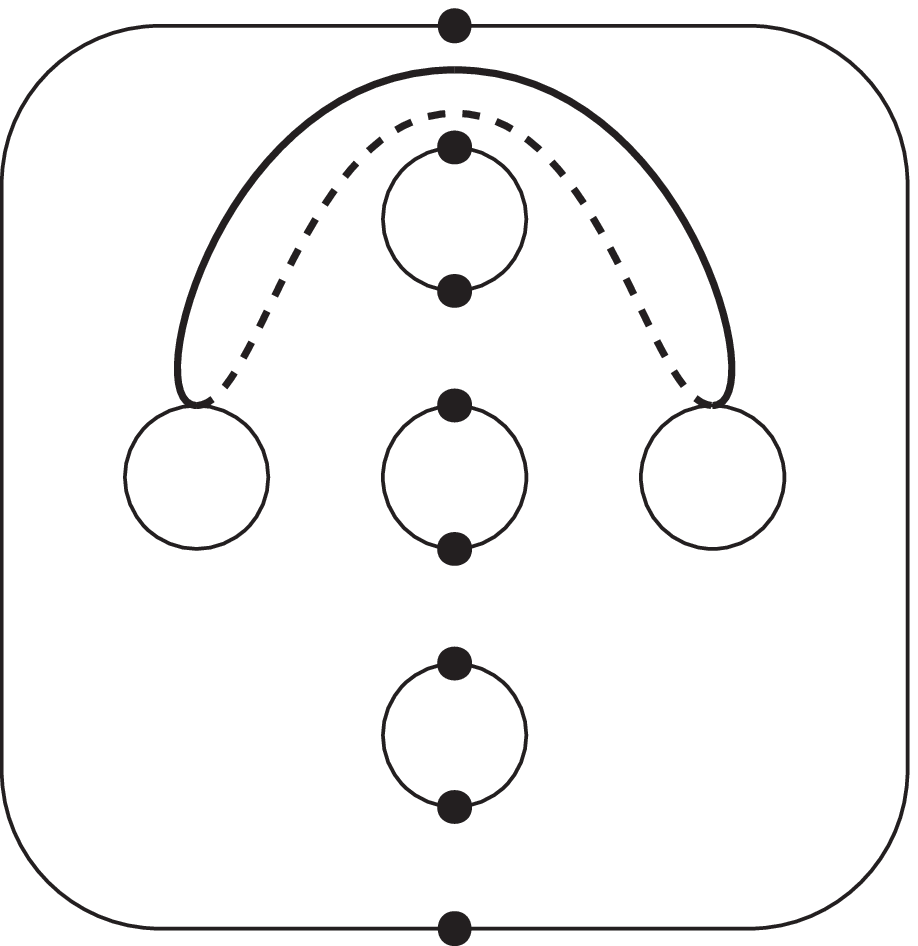}
\label{F:liftVCSmith2_6}}
\hspace{.8em}
\caption{Vanishing cycles of $f\circ q_2$. }
\label{F:liftVCSmith2}
\end{figure}
We eventually obtain a monodromy factorization of $f\circ q_2$ as follows: 
\[
t_{\eta_2(\iota_2(e_6))}t_{\iota_2(e_6)}\cdots t_{\eta_2(\iota_2(e_1))}t_{\iota_2(e_1)}\cdot  t_{\eta_2(e_6)}t_{e_6}\cdots t_{\eta_2(e_1)}t_{e_1}=t_{\delta_1}\cdots t_{\delta_8}. 
\]
Applying the algorithm in Appendix~\ref{A:2ndhomology} (using Mathematica), we can verify that the divisibility of $f\circ q_2$ is equal to $2$. 
Thus, the type of a polarization associated with $f\circ q_2$ is $(2,2)$. 

\end{example}

\begin{remark}

By Corollary~\ref{C:genericness_injectivity} we can obtain two holomorphic Lefschetz pencils by perturbing $f\circ q_1$ and $f\circ q_2$.
These Lefschetz pencils are not isomorphic since they have distinct divisibilities. 
As far as the authors know, this pair is the first example of a pair of non-isomorphic \emph{holomorphic} Lefschetz pencils on the same four-manifold with the same genus, the same number of base points and explicit monodromy factorizations. 

\end{remark}

\section{Combinatorial approach and its applications} \label{Section:CombinatorialApproach}

In this section we will observe a combinatorial aspect of our pencils.
We can reconstuct the factorization (\ref{eq:Smith'sLP}) in a combinatorial way by utilizing a lift to $\Mod(\Sigma_2^4;U)$ of Matsumoto's factorization in $\Mod(\Sigma_2)$ which was given in \cite{Hamada_preprint_sectionMCKLF}.
In \cite{Baykur_preprint_genus3LP}, Baykur independently gave a very similar construction to obtain a genus-$3$ Lefschetz pencil whose total space is homeomorphic to $T^4$. 
In fact, it turns out that his factorization is Hurwitz equivalent to the factorization (\ref{eq:Smith'sLP}) (Remark~\ref{R:HurwitzEquivalenceBaykur}).
Although the combinatorial construction has been already presented in \cite{Baykur_preprint_genus3LP}, we repeat it here in a slightly different way (more symmetrical way) for completeness of the rest of the paper.
Our combinatorial construction of Smith's pencil is pretty useful so that we can obtain two new families of symplectic Calabi-Yau Lefschetz pencils: one is a generalization of Smith's pencil to higher genera, the other consists of pencils on four-manifolds homeomorphic to the total spaces of torus-bundles over the torus admitting a section.

In this section we will freely use elementary transformations, especially commutativity relations, and permutations in the calculations.
For a Dehn twist factorization $W = t_{a_1} \cdots t_{a_k}$ (which is not necessarily eqaul to the identity or the boundary twist) and a mapping class $\phi$ we denote the simultaneous conjugation $\phi W \phi^{-1} = t_{\phi(a_1)} \cdots t_{\phi(a_k)}$ by $_{\phi}(W)$ throughout the section.

\subsection{Smith's pencil and its generalization} \label{Subsection:Combinatorial_Smith's_Pencil}

As we mentioned, in order to combinatorially construct the factorization (\ref{eq:Smith'sLP}) we make use of a lift of Matsumoto's factorization. 
Matsumoto's factorization has been well-known as a factorization of a genus-$2$ Lefschetz fibration on $T^2 \times S^2 \# 4\overline{\mathbb{C}P^2}$~\cite{Matsumoto_1996}.
In \cite{Hamada_preprint_sectionMCKLF}, the first author found several lifts of the factorization to $\Mod(\Sigma_2^4)$, each of which gives four $(-1)$-sections of Matsumoto's Lefschetz fibration. One of them is the following:
\begin{equation}
	t_{{B}_{0,1}}t_{{B}_{1,1}}t_{{B}_{2,1}}t_{{C}_{1}}t_{{B}_{0,2}}t_{{B}_{1,2}}t_{{B}_{2,2}}t_{{C}_{2}} = t_{\delta_1}\cdots t_{\delta_4}, \label{eq:Matsumoto'sLP}
\end{equation}
where the curves are as shown in Figure~\ref{F:MatsumotoSectionT1}.
\begin{figure}[htbp]
	\centering
	\subfigure[The curve $B_{0,1}$. ]{\includegraphics[height=20mm]{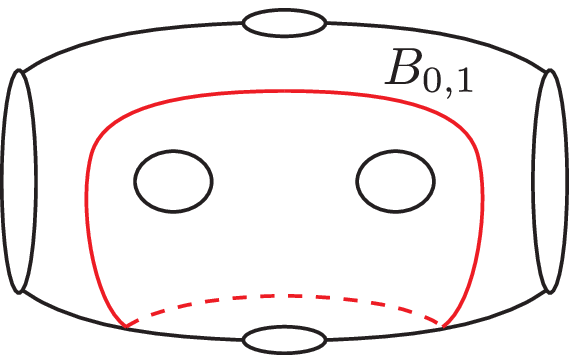}
		\label{F:MatsumotoSectionT1_01}}
	\hspace{.8em}	
	\subfigure[The curve $B_{1,1}$. ]{\includegraphics[height=20mm]{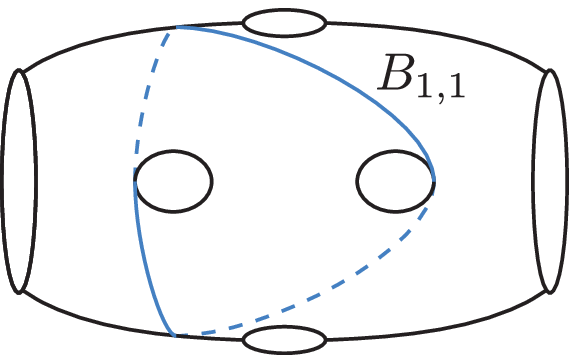}
		\label{F:MatsumotoSectionT1_11}}
	\hspace{.8em}	
	\subfigure[The curve $B_{2,1}$. ]{\includegraphics[height=20mm]{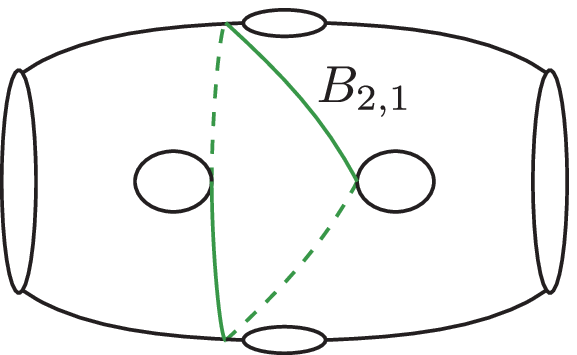}
		\label{F:MatsumotoSectionT1_21}}
	\hspace{.8em}	
	\subfigure[The curve $C_1$ and a $3$-chain configuration. ]{\includegraphics[height=20mm]{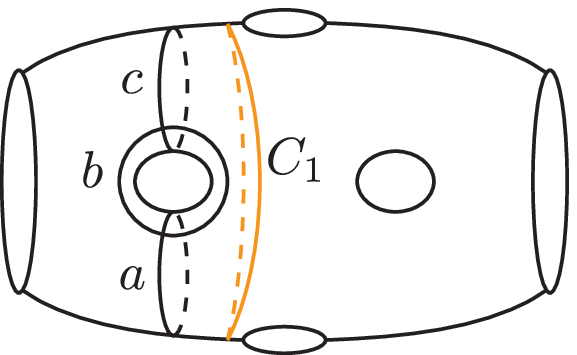}
		\label{F:MatsumotoSectionT1_C1}}
	
	\subfigure[The curve $B_{0,2}$. ]{\includegraphics[height=20mm]{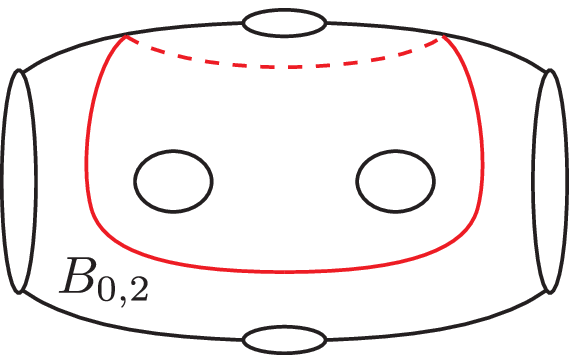}
		\label{F:MatsumotoSectionT1_02}}
	\hspace{.8em}	
	\subfigure[The curve $B_{1,2}$. ]{\includegraphics[height=20mm]{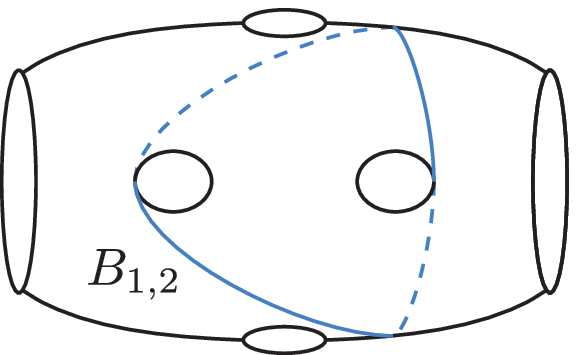}
		\label{F:MatsumotoSectionT1_12}}
	\hspace{.8em}	
	\subfigure[The curve $B_{2,2}$. ]{\includegraphics[height=20mm]{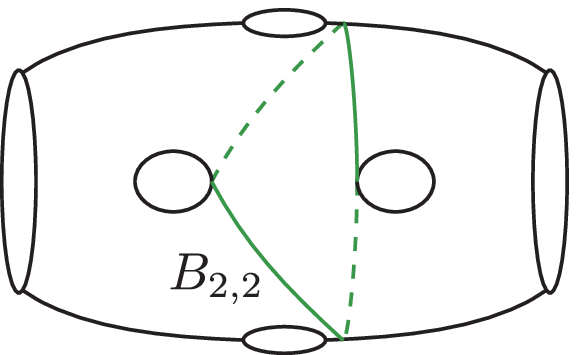}
		\label{F:MatsumotoSectionT1_22}}
	\hspace{.8em}	
	\subfigure[The curve $C_2$ and the boundary curves. ]{\includegraphics[height=20mm]{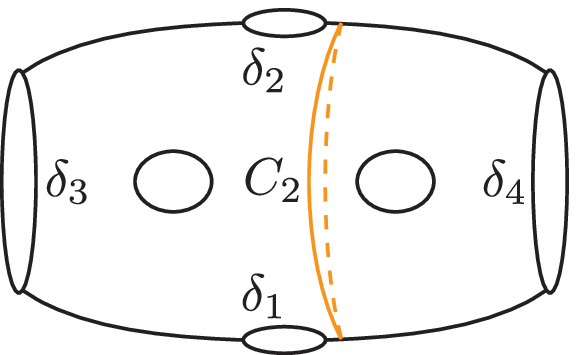}
		\label{F:MatsumotoSectionT1_C2}}	
	\caption{A lift of Matsumoto's factorization. } \label{F:MatsumotoSectionT1}
\end{figure}
We first modify the relation to make it match our scheme.
Consider a $3$-chain relation $(t_ct_at_b)^4 = t_{\delta_3}t_{C_1}$ as in Figure~\ref{F:MatsumotoSectionT1_C1}.
By substituting it to $t_{C_1}$ in \eqref{eq:Matsumoto'sLP} we have
{\allowdisplaybreaks
\begin{align*}
	t_{\delta_1}\cdots t_{\delta_4} &= t_{{B}_{0,1}}t_{{B}_{1,1}}t_{{B}_{2,1}} \cdot (t_ct_at_b)^4 t_{\delta_3}^{-1} \cdot t_{{B}_{0,2}}t_{{B}_{1,2}}t_{{B}_{2,2}}t_{{C}_{2}} \\ 
	&= t_{{B}_{0,1}}t_{{B}_{1,1}}t_{{B}_{2,1}} \cdot (t_ct_at_b)^2 \underline{(t_ct_at_b)^2  \cdot t_{{B}_{0,2}}t_{{B}_{1,2}}t_{{B}_{2,2}}} t_{\delta_3}^{-1}t_{{C}_{2}} \\ 
	&= \underline{t_{{B}_{0,1}}t_{{B}_{1,1}}t_{{B}_{2,1}} \cdot (t_ct_at_b)^2}  \cdot t_{{B}_{0,2}^{\prime}}t_{{B}_{1,2}^{\prime}}t_{{B}_{2,2}^{\prime}} (t_ct_at_b)^2 t_{\delta_3}^{-1}t_{{C}_{2}} \\ 
	&=  (t_ct_at_b)^2 t_{{B}_{0,1}^{\prime}}t_{{B}_{1,1}^{\prime}}t_{{B}_{2,1}^{\prime}} \cdot t_{{B}_{0,2}^{\prime}}t_{{B}_{1,2}^{\prime}}t_{{B}_{2,2}^{\prime}} (t_ct_at_b)^2 t_{\delta_3}^{-1}t_{{C}_{2}} \\ 
	&=   t_{{B}_{0,1}^{\prime}}t_{{B}_{1,1}^{\prime}}t_{{B}_{2,1}^{\prime}} t_{{B}_{0,2}^{\prime}}t_{{B}_{1,2}^{\prime}}t_{{B}_{2,2}^{\prime}} (t_ct_at_b)^2 t_{\delta_3}^{-1}t_{{C}_{2}} (t_ct_at_b)^2 \\ 
	&=   t_{{B}_{0,1}^{\prime}}t_{{B}_{1,1}^{\prime}}t_{{B}_{2,1}^{\prime}}  t_{{B}_{0,2}^{\prime}}t_{{B}_{1,2}^{\prime}}t_{{B}_{2,2}^{\prime}} \cdot (t_ct_at_b)^4 t_{\delta_3}^{-1} \cdot t_{{C}_{2}},
\end{align*}
}
where ${B}_{0,1}^{\prime} = (t_ct_at_b)^{-2}({B}_{0,1})$, ${B}_{1,1}^{\prime} = (t_ct_at_b)^{-2}({B}_{1,1})$, ${B}_{2,1}^{\prime} = (t_ct_at_b)^{-2}({B}_{2,1})$, ${B}_{0,2}^{\prime} = (t_ct_at_b)^{2}({B}_{0,2})$, ${B}_{1,2}^{\prime} = (t_ct_at_b)^{2}({B}_{1,2})$, and ${B}_{2,2}^{\prime} = (t_ct_at_b)^{2}({B}_{2,2})$, which are as depicted in Figure~\ref{F:ModifiedMatsumotoSection}.
(Note that the geometric action of the mapping class $(t_ct_at_b)^{\pm2}t_{\delta_3}^{\mp1}$ to the surface rotates the subsurface between $\delta_3$ and  $C_1$ by $\pm 180$ degrees with respect to the horizontal axis while holding $\delta_3$ and  $C_1$.)  
\begin{figure}[htbp]
	\centering
	\subfigure[The curves $B_{0,1}^{\prime}$, $B_{1,1}^{\prime}$, $B_{2,1}^{\prime}$. ]{\includegraphics[height=30mm]{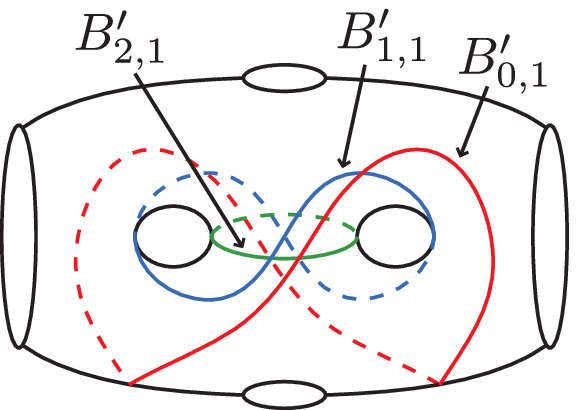}
		\label{F:MatsumotoSectionT1Sym_1}}
	\hspace{.8em}
	\subfigure[The curves $B_{0,2}^{\prime}$, $B_{1,2}^{\prime}$, $B_{2,2}^{\prime}$.]{\includegraphics[height=30mm]{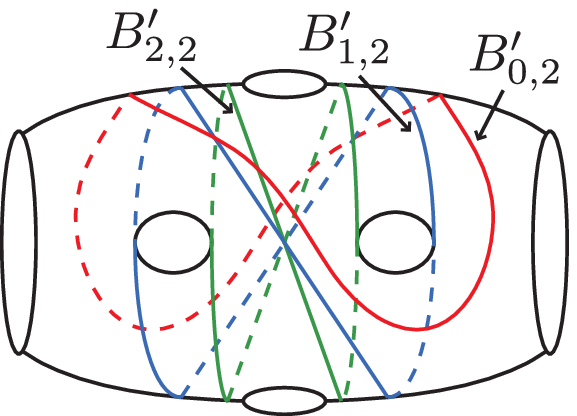}
		\label{F:MatsumotoSectionT1Sym_2}}
	\hspace{.8em}
	\subfigure[The curves $C_1$, $C_2$.]{\includegraphics[height=30mm]{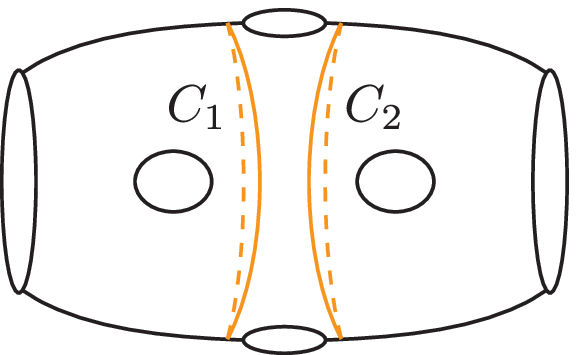}
		\label{F:MatsumotoSectionT1Sym_3}}
	\caption{Modified lift of Matsumoto's factorization. } \label{F:ModifiedMatsumotoSection}
\end{figure}
By resubstituting the $3$-chain relation in the reverse way, we obtain
\begin{equation}
	t_{{B}_{0,1}^{\prime}}t_{{B}_{1,1}^{\prime}}t_{{B}_{2,1}^{\prime}}  t_{{B}_{0,2}^{\prime}}t_{{B}_{1,2}^{\prime}}t_{{B}_{2,2}^{\prime}} t_{{C}_1} t_{{C}_{2}} = t_{\delta_1}\cdots t_{\delta_4}. \label{eq:SymMatsumoto'sLP}
\end{equation}
This expression has a nice symmetry, namely, each $B_{i,j}^{\prime}$ is preserved by the $180$ degree rotation with respect to the vertical axis while $C_1$ and $C_2$ switch.

To construct Smith's pencil, we now consider a $4$-holed genus-$3$ surface and two configurations for the relation~\eqref{eq:SymMatsumoto'sLP} as in Figure~\ref{F:SmithsLPConfiguration} that give
\begin{align*}
	&t_{d_1}t_{d_2}t_{d_3}t_{d_4}t_{d_5}t_{d_6} \cdot t_{s_1} t_{s_2} = t_{\delta_1} t_{\delta_2} \cdot t_{s_3} t_{s_4}, \\
	&t_{d_7}t_{d_8}t_{d_9}t_{d_{10}}t_{d_{11}}t_{d_{12}} \cdot t_{s_3} t_{s_{4}} = t_{\delta_3} t_{\delta_4} \cdot t_{s_1} t_{s_2}. 
\end{align*}
\begin{figure}[htbp]
	\centering
	\subfigure[The curves $d_1$, $d_2$, $d_3$. ]{\includegraphics[height=25mm]{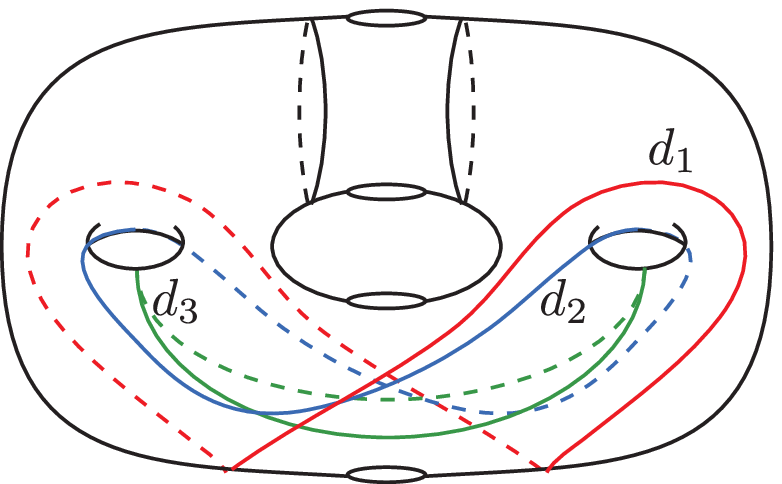}
		\label{F:SmithsLPConfiguration_1_1}}
	\hspace{.8em}
	\subfigure[The curves $d_4$, $d_5$, $d_6$. ]{\includegraphics[height=25mm]{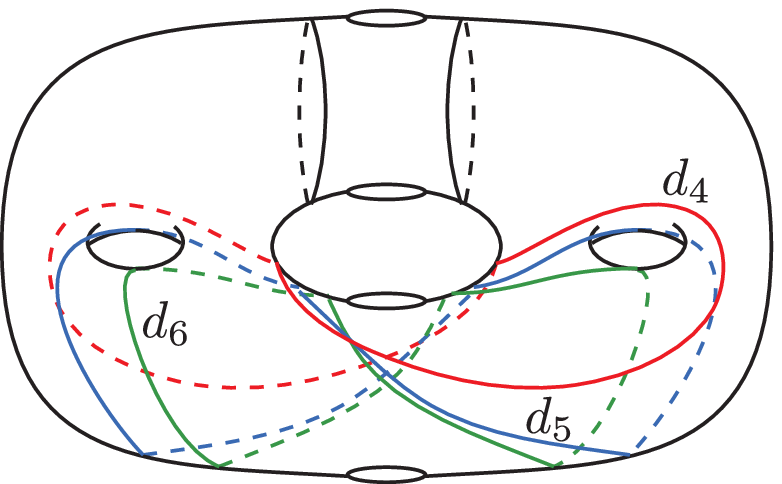}
		\label{F:SmithsLPConfiguration_1_2}}
	\hspace{.8em}
	\subfigure[The curves $s_1$, $s_2$, $s_3$, $s_4$, $\delta_1$, $\delta_2$ and $a$, $b$, $c$. ]{\includegraphics[height=25mm]{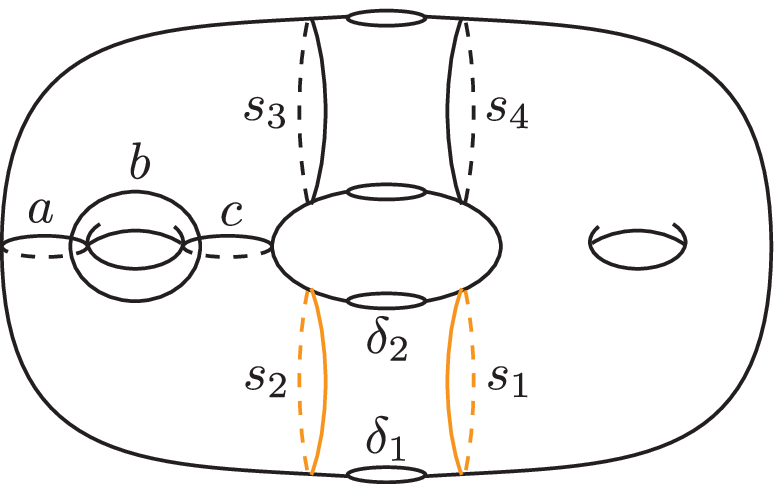}
		\label{F:SmithsLPConfiguration_1_3}}
	
	\subfigure[The curves $d_7$, $d_8$, $d_9$. ]{\includegraphics[height=25mm]{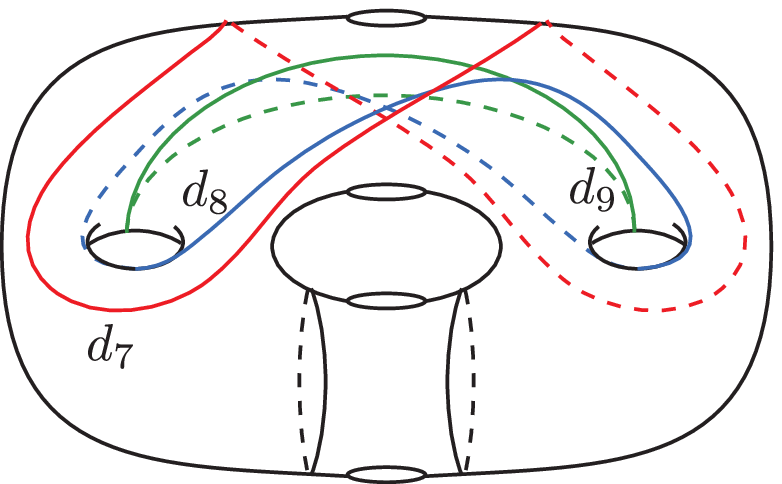}
		\label{F:SmithsLPConfiguration_2_1}}
	\hspace{.8em}
	\subfigure[The curves $d_{10}$, $d_{11}$, $d_{12}$. ]{\includegraphics[height=25mm]{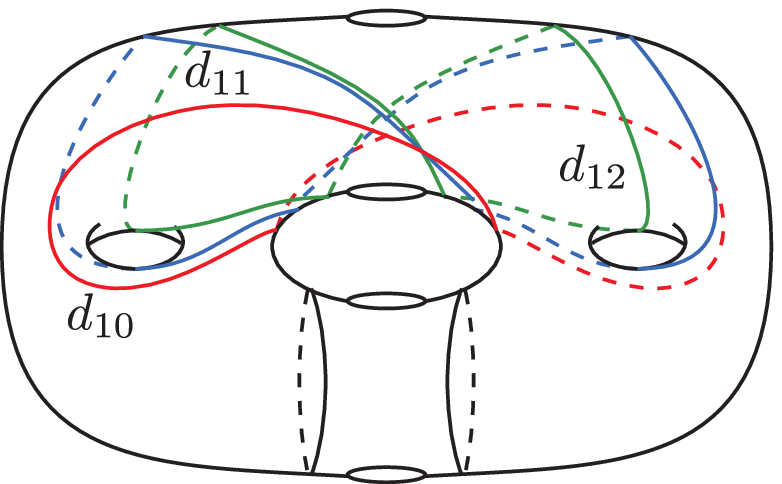}
		\label{F:SmithsLPConfiguration_2_2}}
	\hspace{.8em}
	\subfigure[The curves $s_3$, $s_4$, $s_1$, $s_2$, $\delta_3$, $\delta_4$. ]{\includegraphics[height=25mm]{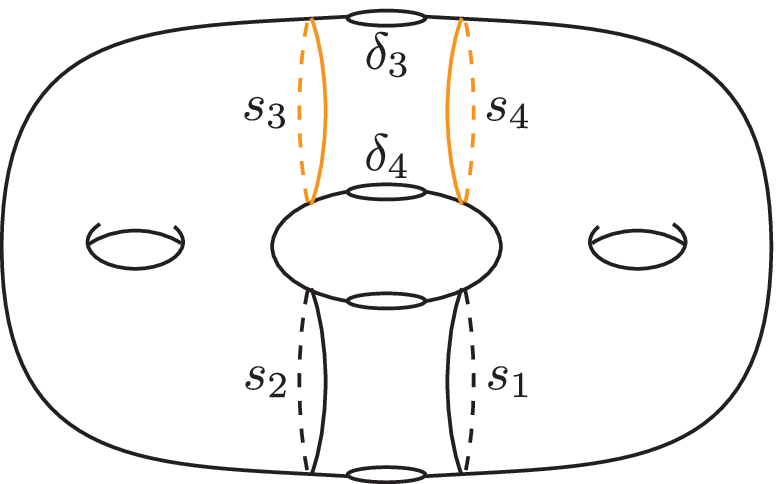}
		\label{F:SmithsLPConfiguration_2_3}}
	\caption{Combinatorial construction of Smith's Lefchetz pencil. } \label{F:SmithsLPConfiguration}
\end{figure}

\noindent
We rewrite them as follows:
{\allowdisplaybreaks
\begin{align*}
	&t_{d_1}t_{d_2}t_{d_3}t_{d_4}t_{d_5}t_{d_6}  = t_{\delta_1} t_{\delta_2} \cdot t_{s_3} t_{s_4} t_{s_1}^{-1} t_{s_2}^{-1}, \\
	&t_{d_7}t_{d_8}t_{d_9}t_{d_{10}}t_{d_{11}}t_{d_{12}}  = t_{\delta_3} t_{\delta_4} \cdot t_{s_1} t_{s_2}  t_{s_3}^{-1} t_{s_{4}}^{-1}. 
\end{align*}
}
Combining them and canceling out $t_{s_3} t_{s_4} t_{s_1}^{-1} t_{s_2}^{-1}$ and $t_{s_1} t_{s_2}  t_{s_3}^{-1} t_{s_{4}}^{-1}$, we then obtain
\begin{equation}
	t_{d_1}t_{d_2}t_{d_3}t_{d_4}t_{d_5}t_{d_6}t_{d_7}t_{d_8}t_{d_9}t_{d_{10}}t_{d_{11}}t_{d_{12}} = t_{\delta_1}t_{\delta_2}t_{\delta_3}t_{\delta_4}. \label{eq:CombinatorialSmith'sLP}
\end{equation}

\begin{lemma}\label{L:HurwitzEquivalence}
	The factorization \eqref{eq:Smith'sLP} is Hurwitz equivalent to the factorization \eqref{eq:CombinatorialSmith'sLP}.
\end{lemma}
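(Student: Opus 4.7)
The plan is to exhibit explicitly a sequence of Hurwitz moves carrying the combinatorial factorization \eqref{eq:CombinatorialSmith'sLP} into Smith's factorization \eqref{eq:Smith'sLP}. Both are positive factorizations of $t_{\delta_1}t_{\delta_2}t_{\delta_3}t_{\delta_4}$ in $\Mod(\Sigma_3^4;U)$ consisting of twelve Dehn twists, so they are candidates for Hurwitz equivalence; by Theorem~\ref{T:isom_Hurwitzeq} this is the same as proving the associated pencils are isomorphic.

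First I would exploit the structural parallel between the two constructions. Smith's pencil is built as a double branched cover of a genus-$1$ Lefschetz fibration, so its twelve vanishing cycles split as $\{\tilde{c}_1,\ldots,\tilde{c}_6\}$ together with its image $\{\tilde{c}_7,\ldots,\tilde{c}_{12}\}=\{\iota(\tilde{c}_1),\ldots,\iota(\tilde{c}_6)\}$ under the hyperelliptic involution $\iota$ of $\Sigma_3$. The combinatorial factorization is likewise built from two copies of the modified Matsumoto factorization \eqref{eq:SymMatsumoto'sLP}, one placed in the upper half of $\Sigma_3^4$ giving $d_1,\ldots,d_6$ and one placed in the lower half giving $d_7,\ldots,d_{12}$, and the two halves are related by the analogous involution. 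Applying one simultaneous conjugation by the diffeomorphism identifying the two reference surfaces, it therefore suffices to compare only the first six positions.

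Next, within the first block I would identify each $d_i$ with the corresponding $\tilde{c}_i$ up to a sequence of elementary transformations. The curves $d_1,\ldots,d_6$ are the six vanishing cycles arising from one instance of \eqref{eq:SymMatsumoto'sLP}, hence sit in a standard $\Sigma_2^4$-subsurface bounded by $s_1,s_2,s_3,s_4$; the curves $\tilde{c}_1,\ldots,\tilde{c}_6$ from Figure~\ref{F:VC_Smith} sit in an analogous subsurface. I would fix an ambient diffeomorphism carrying one of these subsurface configurations onto the other and then, position by position, verify that the ordered tuples of curves differ only by the action of Dehn twists along later curves in the tuple — precisely the data needed to realize the transition by elementary transformations. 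The same procedure in the second block, using the $\iota$-image, completes the argument.

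The main obstacle is the bookkeeping: each elementary transformation replaces $t_{c_i}$ by $t_{t_{c_{i+1}}(c_i)}$, and tracking the iterated images of twelve complicated curves on $\Sigma_3^4$ is error-prone by hand. In practice I would verify equality by computing the actions on a fixed basis of arcs (or equivalently on $H_1$ with the marked-point data), which gives a finite, mechanical check that the sequence of moves does what is claimed; the paper already uses \textsc{Mathematica} in comparable situations, so a machine-assisted verification is reasonable. Once the symmetry between the two halves is exploited, the required move sequence is short, and the heart of the proof is the geometric observation that the two recipes — gluing two copies of \eqref{eq:SymMatsumoto'sLP} along $s_1,\ldots,s_4$ versus gluing two sheets of Smith's double cover along the four base-point sections — produce identical configurations of vanishing cycles up to the ambient mapping class.
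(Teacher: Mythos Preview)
Your proposal has a genuine gap: the block-by-block reduction you propose does not match how the two sets of vanishing cycles actually correspond. You assume that $\{\tilde c_1,\ldots,\tilde c_6\}$ should be compared with $\{d_1,\ldots,d_6\}$ (and likewise for indices $7$--$12$), using that each six-tuple is the image of the other under an involution. But the paper's explicit identification shows the matching is interleaved: for instance $\tilde c_1=d_{11}$, $\tilde c_3=d_{10}$, $\tilde c_6=d_9$ come from the \emph{second} $d$-block, while $\tilde c_7=d_5$, $\tilde c_9=d_4$, $\tilde c_{12}=d_3$ come from the \emph{first}. So even if both factorizations enjoy a two-block symmetry, the symmetries are not the same involution on $\Sigma_3^4$, and your claim ``it suffices to compare only the first six positions'' fails. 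Trying to carry out your plan as stated would not terminate in a short sequence of moves within one block.

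The paper's argument is much more direct and avoids any structural reduction: one simply observes that eight of the twelve curves are literally equal (the list above together with $\tilde c_5=d_6$, $\tilde c_{11}=d_{12}$), then performs a handful of elementary transformations and a cyclic shift to bring the factorization into the form $t_{d_1}\cdots t_{d_{12}}$. Only four curve computations are needed at the end, namely checking $t_{d_3}t_{\tilde c_{10}}(\tilde c_8)=d_1$, $t_{d_3}(\tilde c_{10})=d_2$, $t_{d_9}t_{\tilde c_4}(\tilde c_2)=d_7$, $t_{d_9}(\tilde c_4)=d_8$, all of which are routine by hand. No computer assistance is required; your appeal to \textsc{Mathematica} is a symptom of having chosen a harder route than necessary.
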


\begin{proof}
	Noticing that we already have $\tilde{c}_1=d_{11}$, $\tilde{c}_3=d_{10}$, $\tilde{c}_5=d_{6}$, $\tilde{c}_6=d_{9}$, $\tilde{c}_7=d_{5}$, $\tilde{c}_9=d_{4}$, $\tilde{c}_{11}=d_{12}$ and $\tilde{c}_{12}=d_{3}$, 
	{\allowdisplaybreaks
	\begin{align*}
		t_{\delta_1}\cdots t_{\delta_4} &= t_{\tilde{c}_{12}} t_{\tilde{c}_{11}} t_{\tilde{c}_{10}} t_{\tilde{c}_9} t_{\tilde{c}_8} t_{\tilde{c}_7} t_{\tilde{c}_6} t_{\tilde{c}_5} t_{\tilde{c}_4} t_{\tilde{c}_3} t_{\tilde{c}_2} t_{\tilde{c}_1} \\
		&= \underline{t_{d_3} t_{d_{12}}} t_{\tilde{c}_{10}} t_{d_4} t_{\tilde{c}_8} t_{d_5} \underline{t_{d_9} t_{d_6}} t_{\tilde{c}_4} t_{d_{10}} t_{\tilde{c}_2} t_{d_{11}} \\ 
		&\sim t_{d_3} t_{\tilde{c}_{10}} \underline{t_{d_4} t_{\tilde{c}_8}} t_{d_5} t_{d_6} \cdot t_{d_9} t_{\tilde{c}_4} \underline{t_{d_{10}} t_{\tilde{c}_2}} t_{d_{11}} t_{d_{12}}\\ 
		&\sim t_{d_3} \underline{t_{\tilde{c}_{10}} t_{\tilde{c}_8}} t_{d_4} t_{d_5} t_{d_6} \cdot t_{d_9} \underline{t_{\tilde{c}_4} t_{\tilde{c}_2}} t_{d_{10}} t_{d_{11}} t_{d_{12}}\\ 
		&\sim \underline{t_{d_3}  t_{t_{\tilde{c}_{10}}(\tilde{c}_8)} t_{\tilde{c}_{10}}} t_{d_4} t_{d_5} t_{d_6} \cdot \underline{t_{d_9}  t_{t_{\tilde{c}_4}(\tilde{c}_2)} t_{\tilde{c}_4}} t_{d_{10}} t_{d_{11}} t_{d_{12}}\\ 
		&\sim   t_{t_{d_3}t_{\tilde{c}_{10}}(\tilde{c}_8)} t_{t_{d_3}(\tilde{c}_{10})} t_{d_3} t_{d_4} t_{d_5} t_{d_6} \cdot t_{t_{d_9}t_{\tilde{c}_4}(\tilde{c}_2)} t_{t_{d_9}(\tilde{c}_4)} t_{d_9} t_{d_{10}} t_{d_{11}} t_{d_{12}} \\
		&= t_{d_1} t_{d_2} t_{d_3} t_{d_4} t_{d_5} t_{d_6} t_{d_7} t_{d_8} t_{d_9} t_{d_{10}} t_{d_{11}} t_{d_{12}},
	\end{align*}
	}
	where the last equality follows from an easy observation that $t_{d_3}t_{\tilde{c}_{10}}(\tilde{c}_8)= d_1$, $t_{d_3}(\tilde{c}_{10}) = d_2$, $t_{d_9}t_{\tilde{c}_4}(\tilde{c}_2) = d_7$ and $t_{d_9}(\tilde{c}_4)=d_8$.
\end{proof}

\begin{remark} \label{R:HurwitzEquivalenceBaykur}
	As we repeated, the Lefschetz pencil constructed by Baykur \cite{Baykur_preprint_genus3LP} whose total space is homeomorphic to $T^4$ is isomorphic to the pencil corresponding \eqref{eq:CombinatorialSmith'sLP}, hence \eqref{eq:Smith'sLP}.
	To see this we first take the simultaneous conjugation of \eqref{eq:CombinatorialSmith'sLP} by $(t_ct_at_b)^2$ where the curves $a$, $b$, $c$ are as shown in Figure~\ref{F:SmithsLPConfiguration_1_3}.
	Letting $d_i^{\prime}$ be the resulting curve of $d_i$ mapped by $(t_ct_at_b)^2$, we see
	\begin{align*}
		t_{\delta_1}\cdots t_{\delta_4} &=t_{d_1}t_{d_2}t_{d_3}t_{d_4}t_{d_5}t_{d_6}t_{d_7}t_{d_8}t_{d_9}t_{d_{10}}t_{d_{11}}t_{d_{12}} \\
		&\sim t_{d_1^{\prime}}t_{d_2^{\prime}}t_{d_3^{\prime}}t_{d_4^{\prime}}t_{d_5^{\prime}}t_{d_6^{\prime}} \underline{t_{d_7^{\prime}}t_{d_8^{\prime}}t_{d_9^{\prime}}}t_{d_{10}^{\prime}}t_{d_{11}^{\prime}}t_{d_{12}^{\prime}} \\
		&\sim t_{d_1^{\prime}}t_{d_2^{\prime}}t_{d_3^{\prime}}t_{d_4^{\prime}}t_{d_5^{\prime}}t_{d_6^{\prime}} t_{d_8^{\prime}}t_{d_9^{\prime}} \underline{t_{t_{d_9^{\prime}}^{-1}t_{d_8^{\prime}}^{-1}(d_7^{\prime})}t_{d_{10}^{\prime}}} t_{d_{11}^{\prime}}t_{d_{12}^{\prime}} \\
		&\sim t_{d_1^{\prime}}t_{d_2^{\prime}}t_{d_3^{\prime}}t_{d_4^{\prime}}t_{d_5^{\prime}}t_{d_6^{\prime}} \underline{t_{d_8^{\prime}}t_{d_9^{\prime}} t_{d_{10}^{\prime}}} t_{t_{d_9^{\prime}}^{-1}t_{d_8^{\prime}}^{-1}(d_7^{\prime})} t_{d_{11}^{\prime}}t_{d_{12}^{\prime}} \\
		&\sim t_{d_1^{\prime}}t_{d_2^{\prime}}t_{d_3^{\prime}}t_{d_4^{\prime}}t_{d_5^{\prime}}t_{d_6^{\prime}} t_{t_{d_8^{\prime}}t_{d_9^{\prime}}(d_{10}^{\prime})} t_{d_8^{\prime}}t_{d_9^{\prime}}  t_{t_{d_9^{\prime}}^{-1}t_{d_8^{\prime}}^{-1}(d_7^{\prime})} t_{d_{11}^{\prime}}t_{d_{12}^{\prime}}.
	\end{align*}
	The last factorization is exactly the same as Baykur's factorization.
\end{remark}

The construction of the factorization \eqref{eq:CombinatorialSmith'sLP} can be generalized to higher genera, which provides a new family of symplectic Calabi-Yau Lefschetz pencils.
We consider the surface $\Sigma_g^{2g-2}$ of genus $g \geq 3$ with $2g-2$ boundary components in a circular position and the $(2\pi/(g-1))$-rotation $\phi$ around the center as shown in Figure~\ref{F:SmithsRelation_HigherGenera_Configuration_a}.
Take the configuration for the relation~\eqref{eq:SymMatsumoto'sLP} as in Figure~\ref{F:SmithsRelation_HigherGenera_Configuration_b},~\ref{F:SmithsRelation_HigherGenera_Configuration_c},~\ref{F:SmithsRelation_HigherGenera_Configuration_d}, then, as before, we have
\[
t_{d_{1,1}} t_{d_{2,1}} t_{d_{3,1}} t_{d_{4,1}} t_{d_{5,1}} t_{d_{6,1}} = t_{\delta_1}t_{\delta_2} \cdot t_{s_0} t_{s_3} t_{s_1}^{-1} t_{s_2}^{-1}.
\]
We put $W_1 = t_{d_{1,1}} t_{d_{2,1}} t_{d_{3,1}} t_{d_{4,1}} t_{d_{5,1}} t_{d_{6,1}} $ (as a factorization) and take the simultaneous conjugation $W_i := _{\phi^{i-1}}(W_1)$ of $W_1$ by the rotation map $\phi^{i-1}$ for $i= 1, \cdots, g-1$:  
\[
W_i = t_{d_{1,i}} t_{d_{2,i}} t_{d_{3,i}} t_{d_{4,i}} t_{d_{5,i}} t_{d_{6,i}} = t_{\delta_{2i-1}}t_{\delta_{2i}} \cdot t_{s_{2i-2}} t_{s_{2i+1}} t_{s_{2i-1}}^{-1} t_{s_{2i}}^{-1},
\]
where $d_{j,i}= \phi^{i-1}(d_{j,1})$, $s_{2i-2}=\phi^{i-1}(s_0) $, $s_{2i-1}=\phi^{i-1}(s_1) $, $s_{2i}=\phi^{i-1}(s_2) $, $s_{2i+1}=\phi^{i-1}(s_3) $, $\delta_{2i-1}= \phi^{i-1}(\delta_1)$ and $\delta_{2i}= \phi^{i-1}(\delta_2)$.
Note that $s_{2g-2}=s_0$ and $s_{2g-1}=s_1$.
When we combine $W_1, \cdots, W_{g-1}$, a similar cancelling process as before works well so that we obtain
\begin{equation}
	W_1 \cdots W_{g-1} = \prod_{i=1}^{g-1} t_{d_{1,i}} t_{d_{2,i}} t_{d_{3,i}} t_{d_{4,i}} t_{d_{5,i}} t_{d_{6,i}} = t_{\delta_1} \cdots t_{\delta_{2g-2}}. \label{eq:GeneralizedSmith'sLP}
\end{equation}
\begin{figure}[htbp]
	\centering
	\subfigure[The surface $\Sigma_g^{2g-2}$ and the rotation $\phi$. ]{\includegraphics[height=55mm]{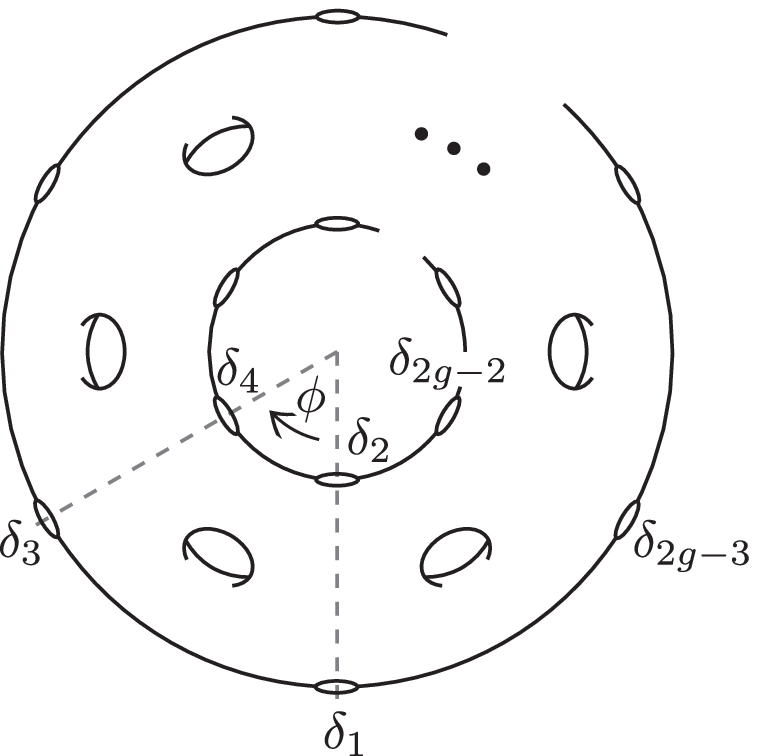}
		\label{F:SmithsRelation_HigherGenera_Configuration_a}}
	\hspace{.8em}
	\subfigure[The curves $s_1 = s_{2g-1}$, $s_2$, $s_3$, $s_0 = s_{2g-2}$, $\delta_1$ and $\delta_2$. ]{\includegraphics[height=55mm]{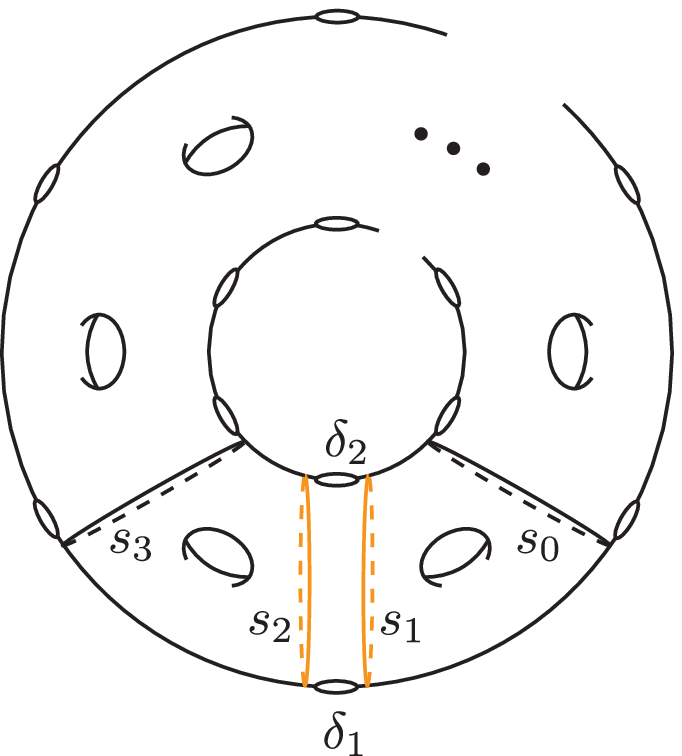}
		\label{F:SmithsRelation_HigherGenera_Configuration_b}}
	
	\hspace{.2em}
	\subfigure[The curves $d_{1,1}$, $d_{2,1}$, $d_{3,1}$. ]{\includegraphics[height=50mm]{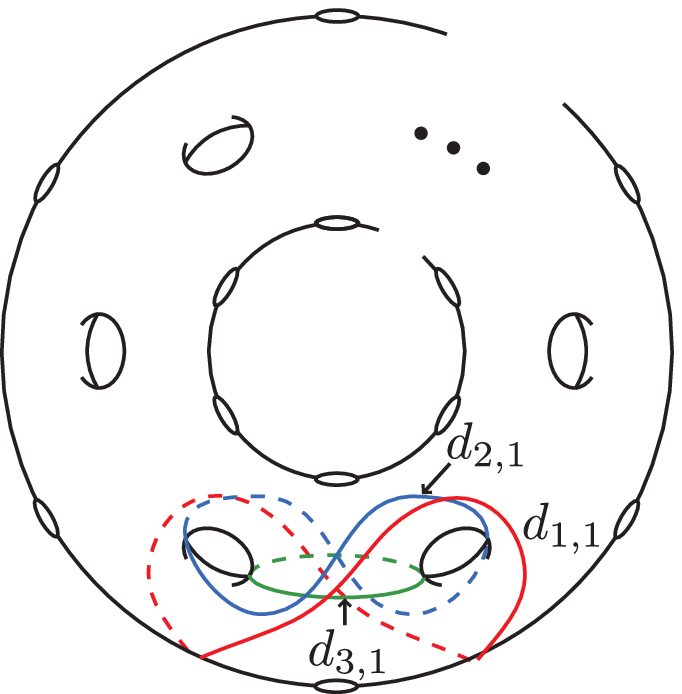}
		\label{F:SmithsRelation_HigherGenera_Configuration_c}}
	\hspace{2.2em}
	\subfigure[The curves $d_{4,1}$, $d_{5,1}$, $d_{6,1}$. ]{\includegraphics[height=50mm]{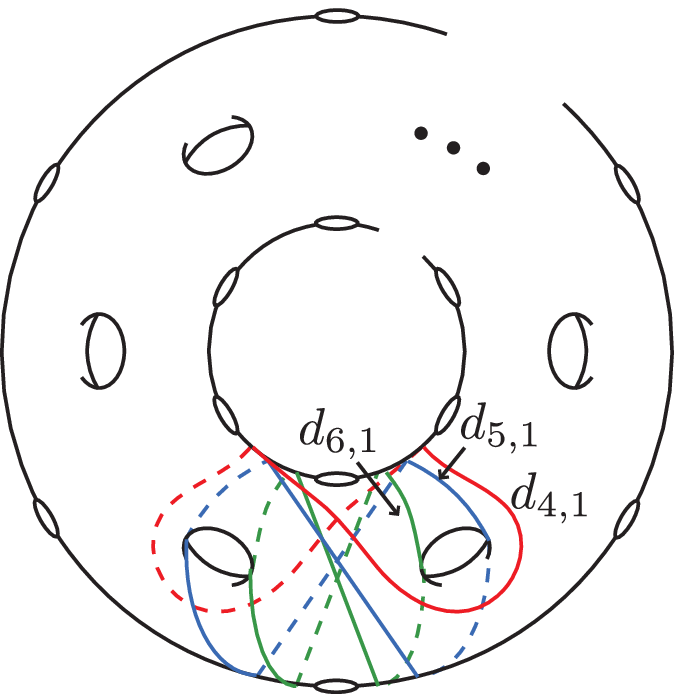}
		\label{F:SmithsRelation_HigherGenera_Configuration_d}}
	
	\caption{Construction of a generalization of Smith's pencil. } \label{F:SmithsRelation_HigherGenera_Configuration}
\end{figure}
This factorization gives a genus-$g$ Lefschetz pencil with $2g-2$ base points and $6g-6$ irreducible critical points for $g \geq 3$.
We denote this Lefschetz pencil by $f_g : X_g \setminus B_g \to \CP^1$ ($g\geq 3$). 
Note that $f_3$ is nothing but the Lefschetz pencil corresponding to the factorization~\eqref{eq:CombinatorialSmith'sLP}, i.e., Smith's pencil.
It is straightforward to see that the Euler characteristic of $X_g$ is $0$.
The signature of the relation~\eqref{eq:Matsumoto'sLP} is $0$~\cite{Hamada_preprint_sectionMCKLF} and 
the signature of a braid relation is also $0$ in the sense of Endo-Nagami~\cite{EN_2005}. 
Since the relation~\eqref{eq:GeneralizedSmith'sLP} is constructed by a combination of copies of the relation~\eqref{eq:Matsumoto'sLP} and braid relations, 
the signature of~\eqref{eq:GeneralizedSmith'sLP} is $0$, hence the signature of $X_g$ is $0$.
It is also easy to verify that the fundamental group of the total space $X_g$ of $f_g$ is $\Z^4$.
By a theorem by Baykur-Hayano~\cite[Theorem 4.1]{BaykurHayano_multisection} the Lefschetz pencil $f_g$ is symplectic Calabi-Yau.
Since a symplectic Calabi-Yau manifold whose fundamental group is isomorphic to $\pi_1(T^4)=\Z^4$ is indeed homeomorphic to $T^4$~\cite[Corollary 3.3]{FriedlVidussi}, so is the total space $X_g$. 
In fact, for odd $g$ we can even show that $X_g$ is diffeomorphic to the $T^4$ and $f_g$ is holomorphic.

\begin{lemma}\label{L:generalization_coveringSmith}

 For odd $g$, the Lefschetz pencil $f_g$ can be obtained by perturbing $f_3\circ q$, where $q:T^4\to T^4$ is a $((g-1)/2)$-fold unbranched covering. 

\end{lemma}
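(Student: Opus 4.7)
The plan is to realize $f_g$ (for odd $g$) as a small perturbation of $f_3 \circ q$, where $q\colon T^4\to T^4$ is a natural $n$-fold unbranched covering with $n=(g-1)/2$. The covering is built from the order-$(g-1)$ rotation $\phi$ on $\Sigma_g^{2g-2}$ appearing in the construction of $f_g$. Since $g$ is odd, the subgroup $\langle \phi^2\rangle$ has order $n$ and acts freely on $\Sigma_g^{2g-2}$, cyclically permuting the $2g-2$ boundary components in orbits of size $n$; an Euler characteristic count shows that the quotient has $4$ boundary components and Euler characteristic $-8$, hence is diffeomorphic to $\Sigma_3^4$, giving a model for the restriction of $q$ to a reference fiber.

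First I would globalize this fiberwise action to a free $\Z/n$-action on $X_g$. The key observation is that the vanishing-cycle system of $f_g$ is $\phi^2$-equivariant: the identity ${}_{\phi^2}(W_i)=W_{i+2}$ shows that the unordered list of the $W_i$'s (and hence of all vanishing cycles of $f_g$) is preserved by $\phi^2$. Hence the fiberwise rotation $\phi^2$ extends to a free $\Z/n$-action on $X_g$ covering the trivial action on $\CP^1$ (after possibly relabeling critical values via an ambient self-diffeomorphism of $\CP^1$). Taking the quotient yields a closed four-manifold $Y$ equipped with a genus-$3$ Lefschetz pencil whose monodromy factorization is obtained from~\eqref{eq:GeneralizedSmith'sLP} by collapsing each $\phi^2$-orbit of vanishing cycles to a single Dehn twist. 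A direct comparison of configurations identifies this collapsed factorization with~\eqref{eq:CombinatorialSmith'sLP} up to Hurwitz equivalence; Theorem~\ref{T:isom_Hurwitzeq} combined with Lemma~\ref{L:HurwitzEquivalence} then gives $Y\cong T^4$ and identifies the quotient pencil with $f_3$, so the induced quotient map $q\colon X_g\to T^4$ is an unbranched $n$-fold covering of the four-torus.

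Second, I would apply Lemma~\ref{L:relation_monodromy_covering} to $q$. The monodromy factorization of $f_3\circ q$ is obtained from~\eqref{eq:CombinatorialSmith'sLP} by replacing each Dehn twist $t_{d_j}$ with the product of $n$ commuting Dehn twists along the $n$ lifts of $d_j$, which form a single $\phi^2$-orbit in $\Sigma_g^{2g-2}$; by construction these lifts are precisely the curves $d_{j,i}$ appearing in~\eqref{eq:GeneralizedSmith'sLP}. The critical values of $f_3\circ q$ initially coincide in $n$-tuples, but Corollary~\ref{C:genericness_injectivity} provides a small perturbation to an honest Lefschetz pencil whose monodromy factorization is Hurwitz equivalent to~\eqref{eq:GeneralizedSmith'sLP} (the freedom to reorder follows from disjointness of the lifts of each $d_j$). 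Theorem~\ref{T:isom_Hurwitzeq} then yields the desired isomorphism between $f_g$ and the perturbation of $f_3 \circ q$.

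The hard part will be the globalization step: extending the free $\Z/n$-action from a reference fiber of $f_g$ to the total space $X_g$ so that the quotient is genuinely a Lefschetz pencil on a four-manifold, and so that the induced action on critical values can be trivialized by an ambient self-diffeomorphism of $\CP^1$. Once this is set up, the remaining identification of the lifts of the $d_j$'s with the curves $d_{j,i}$ is essentially a bookkeeping exercise based on the explicit rotational configuration of Figure~\ref{F:SmithsRelation_HigherGenera_Configuration}.
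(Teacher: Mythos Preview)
Your overall strategy—relating $f_g$ to $f_3\circ q$ via the free $\langle\phi^2\rangle$-action on the reference fiber—is the same idea the paper uses, but the globalization step you flag as ``hard'' contains a genuine gap. The inference ``the unordered list of vanishing cycles is $\phi^2$-invariant, hence $\phi^2$ extends to a free $\Z/n$-action on $X_g$ covering the trivial action on $\CP^1$'' is not valid: for a fiberwise diffeomorphism to extend across a Lefschetz critical point while covering the identity on the base, it must preserve that vanishing cycle, yet $\phi^2(d_{j,i})=d_{j,i+2}\neq d_{j,i}$. If instead you let the action cover a nontrivial rotation of $\CP^1$ permuting the critical values, then the induced map $X_g/(\Z/n)\to\CP^1/(\Z/n)$ has \emph{generic} fiber $\Sigma_g$ (one fiber per $\Z/n$-orbit in the base), not $\Sigma_3$, so no genus-$3$ pencil arises that way either. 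Relabeling critical values by a diffeomorphism of $\CP^1$ does not resolve this.

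The paper avoids constructing any global action on $X_g$ and works entirely with monodromy factorizations. The missing ingredient in your outline is Lemma~\ref{L:OrderOfW_i}: the blocks $W_1,\ldots,W_{g-1}$ can be \emph{arbitrarily reordered} up to Hurwitz equivalence, even though consecutive blocks have overlapping support. (The point is that $W_i$, as a mapping class, acts on the curves of $W_{i+1}$ only through $t_{s_{2i+1}}$, which can then be removed by a global conjugation.) Once the factorization is rewritten as $W_1W_3\cdots W_{g-2}\cdot W_2W_4\cdots W_{g-1}$, disjointness of non-adjacent supports allows regrouping into products $\prod_i t_{d_{j,2i-1}}$ and $\prod_i t_{d_{j,2i}}$, each a product of Dehn twists along a single $\phi^2$-orbit, i.e.\ along the $n$ lifts of $d_j$ under the free covering $\Sigma_g^{2g-2}\to\Sigma_3^4$. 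This is exactly the monodromy of $f_3\circ q$ before separating coincident critical values, and the lemma follows. Your second step, and your appeal to Corollary~\ref{C:genericness_injectivity} for the perturbation (taking $q$ holomorphic), are then fine.
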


In order to prove Lemma~\ref{L:generalization_coveringSmith} we first observe that, in general, the order among $W_1, \cdots W_{g-1}$ in \eqref{eq:GeneralizedSmith'sLP} does not matter.

\begin{lemma}\label{L:OrderOfW_i}
	For any permutation $\sigma$ of $\{1, \cdots, g-1 \}$, the factorization $W_{\sigma(1)} W_{\sigma(2)} \cdots W_{\sigma(g-1)} = t_{\delta_1} \cdots t_{\delta_{2g-2}}$ is Hurwitz equivalent to the factorization $W_1 W_2 \cdots W_{g-1} = t_{\delta_1} \cdots t_{\delta_{2g-2}}$.
\end{lemma}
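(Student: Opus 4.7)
The strategy is to reduce to a single adjacent swap via the rotational symmetry. Since the symmetric group $S_{g-1}$ is generated by adjacent transpositions, it suffices to show that swapping $W_i$ and $W_{i+1}$ in the factorization is a Hurwitz equivalence. Since $\phi\in \Mod(\Sigma_g^{2g-2};U)$ satisfies $\phi W_j \phi^{-1}=W_{j+1}$ (indices mod $g-1$, using $\phi^{g-1}=\id$), simultaneous conjugation by $\phi^{i-1}$ is a valid Hurwitz move that cyclically shifts the factorization so that $W_i$ occupies the first position, and converts the ``swap at positions $(i,i+1)$'' version into the ``swap at positions $(1,2)$'' version of the cyclically shifted factorization. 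Because each cyclic shift carries the same rotational structure as the original, the problem reduces to proving
\[
W_1 W_2 W_3 \cdots W_{g-1} \sim_H W_2 W_1 W_3 \cdots W_{g-1}.
\]

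For this reduced swap, I would apply elementary transformations to move each of the six Dehn twist factors of $W_2$ leftward past all six factors of $W_1$. This yields
\[
W_1 W_2 \sim_H\ _{W_1}(W_2)\cdot W_1,
\]
where $_{W_1}(W_2)=t_{W_1(d_{1,2})}\cdots t_{W_1(d_{6,2})}$ is the factorization obtained by applying $W_1$ as a mapping class to each vanishing cycle of $W_2$. The desired Hurwitz equivalence then reduces to verifying the invariance $W_1(d_{j,2})=d_{j,2}$ for each $j\in\{1,\ldots,6\}$.

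Using the mapping class identity $W_1 = t_{\delta_1} t_{\delta_2} \cdot t_{s_0} t_{s_3} t_{s_1}^{-1} t_{s_2}^{-1}$ coming from the relation~(\ref{eq:SymMatsumoto'sLP}), together with the fact that boundary twists $t_{\delta_k}$ act trivially on every non-boundary-parallel simple closed curve, the claim reduces to showing that $t_{s_0} t_{s_3} t_{s_1}^{-1} t_{s_2}^{-1}$ fixes each $d_{j,2}$. Since the curves $d_{j,2}=\phi(d_{j,1})$ are all supported in the second wedge of $\Sigma_g^{2g-2}$, while $s_0, s_1, s_2, s_3$ all lie in or adjacent to the first wedge, by inspection of Figure~\ref{F:SmithsRelation_HigherGenera_Configuration} the curves $s_k$ are disjoint from every $d_{j,2}$, and hence each $t_{s_k}^{\pm1}$ preserves $d_{j,2}$ up to isotopy.

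The main technical obstacle is this final geometric verification. The potentially delicate case is the pair $s_2,s_3$, which lie close to the interface between the first and second wedges; one must confirm from the explicit configuration that each $d_{j,2}$ lies entirely in the interior of the second wedge, away from this interface. Once this is verified, the reduction argument and the rotational symmetry together give the full lemma.
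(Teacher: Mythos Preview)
Your reduction to adjacent swaps is fine, and the move $W_1W_2\sim_H{}_{W_1}(W_2)\cdot W_1$ via elementary transformations is correct. The gap is in the final geometric claim: it is \emph{not} true that all of $s_0,s_1,s_2,s_3$ are disjoint from the curves $d_{j,2}$. In the embedding defining $W_2$, the curve $s_3=\phi(s_1)$ plays the role of one of the two separating $C$--curves of the Matsumoto configuration (just as $s_1$ does for $W_1$), and therefore $s_3$ genuinely intersects the $d_{j,2}$. Your own caveat about the ``potentially delicate'' pair $s_2,s_3$ is on point: $s_2$ is indeed disjoint from the $d_{j,2}$ (it is a boundary-type curve for the $W_2$ embedding), but $s_3$ is not. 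Consequently ${}_{W_1}(W_2)\neq W_2$ as factorizations, and your argument stops here.

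The paper's proof repairs exactly this point. Using the same mapping-class identity $W_1=t_{\delta_1}t_{\delta_2}t_{s_0}t_{s_3}t_{s_1}^{-1}t_{s_2}^{-1}$, one observes that the \emph{only} curve among $\delta_1,\delta_2,s_0,s_1,s_2,s_3$ meeting any $d_{j,2}$ is $s_3$, so ${}_{W_1}(W_2)={}_{t_{s_3}}(W_2)$. One then applies a \emph{global} simultaneous conjugation by $t_{s_3}^{-1}$ to the whole factorization. This undoes the conjugation on the $W_2$ block and is harmless elsewhere because $s_3$ is disjoint from every $d_{j,k}$ with $k\neq 2$ (for $k=1$ it is a boundary-type curve of the $W_1$ embedding, and for $k\geq 3$ the supports are in non-adjacent wedges). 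Note that this last step is not local to $W_1W_2$; you need the disjointness of $s_3$ from the remaining $W_k$, which is why the paper carries out the swap inside the full product rather than reducing first.
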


\begin{proof}
	We will show that $W_i W_j$ in the factorization can switch to $W_j W_i$ for $i \neq j$.
	We only need to consider the cases $|i-j|=1$ and $\{i,j\}= \{1,g-1\}$, otherwise $W_i W_j$ obviously switches since the supporting subsurfaces for $W_i$ and $W_j$ are disjoint.
	Recalling that $W_i = t_{\delta_{2i-1}}t_{\delta_{2i}} t_{s_{2i-2}} t_{s_{2i+1}} t_{s_{2i-1}}^{-1} t_{s_{2i}}^{-1}$ as a mapping class, and noticing that the only curve among them that intersect with the curves of $W_{i+1} = t_{d_{1,i+1}} t_{d_{2,i+1}} t_{d_{3,i+1}} t_{d_{4,i+1}} t_{d_{5,i+1}} t_{d_{6,i+1}}$ is $s_{2i+1}$, in addition, $s_{2i+1}$ is away from any other curve in $W_j$ for $k \neq i+1$,
	\begin{align*}
		t_{\delta_1} \cdots t_{\delta_{2g-2}}
		&= W_{\sigma(1)} \cdots W_i W_{i+1} \cdots  W_{\sigma(g-1)}\\
		&\sim W_{\sigma(1)} \cdots {}_{W_i}(W_{i+1}) W_{i}  \cdots W_{\sigma(g-1)}\\
		&= W_{\sigma(1)} \cdots {}_{t_{s_{2i+1}}}(W_{i+1}) W_{i}  \cdots W_{\sigma(g-1)}\\
		&\sim W_{\sigma(1)} \cdots W_{i+1} W_i \cdots  W_{\sigma(g-1)},
	\end{align*}
	where the last equivalence is achieved by taking a simultaneous conjugation by $t_{s_{2i+1}}^{-1}$.
	The same argument works when $W_i$ is replaced by $W_1$ and $W_{i+1}$ by $W_g$.
\end{proof}

\begin{proof}[Proof of Lemma~\ref{L:generalization_coveringSmith}]
By Lemma~\ref{L:OrderOfW_i} the factorization~\eqref{eq:GeneralizedSmith'sLP} is Hurwitz equivalent to the following factorization for odd $g$: 
\[
\prod_{i=1}^{(g-1)/2} t_{d_{1,2i-1}} t_{d_{2,2i-1}} \cdots t_{d_{6,2i-1}} \cdot \prod_{i=1}^{(g-1)/2} t_{d_{1,2i}} t_{d_{2,2i}} \cdots t_{d_{6,2i}} = t_{\delta_1} \cdots t_{\delta_{2g-2}}.
\]
Then, by only using commutativity relations, it can be reformed as
\begin{multline*}
	\prod_{i=1}^{(g-1)/2} t_{d_{1,2i-1}} \prod_{i=1}^{(g-1)/2}t_{d_{2,2i-1}} \cdots \prod_{i=1}^{(g-1)/2}t_{d_{6,2i-1}}  \cdot \prod_{i=1}^{(g-1)/2} t_{d_{1,2i}} \prod_{i=1}^{(g-1)/2}t_{d_{2,2i}} \cdots \prod_{i=1}^{(g-1)/2}t_{d_{6,2i}} \\
	= \prod_{i=1}^{(g-1)/2} t_{\delta_{4i-3}} \prod_{i=1}^{(g-1)/2} t_{\delta_{4i-2}} \prod_{i=1}^{(g-1)/2} t_{\delta_{4i-1}} \prod_{i=1}^{(g-1)/2} t_{\delta_{4i}}.
\end{multline*}
Each of the subfactorizations $\prod t_{d_{j,2i-1}}$, $\prod t_{d_{j,2i}}$, $\prod t_{\delta_{4i-3}}$, $\prod t_{\delta_{4i-2}}$, $\prod t_{\delta_{4i-1}}$ and $\prod t_{\delta_{4i}}$ is preserved by the $(4\pi/(g-1))$-rotation  $\phi^2$, which freely acts on the surface $\Sigma_g^{2g-2}$.
Now we can take the quotient by $\phi^2$, which gives the surface $\Sigma_3^4$ and the factorization~\eqref{eq:CombinatorialSmith'sLP}.
In this way, we can think of the factorization~\eqref{eq:GeneralizedSmith'sLP} for odd $g$ as a $((g-1)/2)$-fold unbranched covering of the factorization~\eqref{eq:CombinatorialSmith'sLP}, i.e., an unbranched covering of Smith's pencil $f_3$.
\end{proof}

\begin{remark} \label{R:generalization_covering_generalization}
Lemma~\ref{L:generalization_coveringSmith} can be easily generalized to the claim that for $g_1$, $g_2$ with $g_1-1 | g_2-1$ the pencil $f_{g_2}$ is obtained as a $((g_2-1)/(g_1-1))$-fold unbranched covering of $f_{g_1}$.
On the other hand, for $g$ such that $g-1$ is prime the pencil $f_g$ cannot be obtained as a finite unbranched covering of any Lefschetz pencil of lower genus since the surface $\Sigma_g$ of such a genus $g$ cannot be the total space of an unbranched covering of any surface of lower genus other than $2$, which is easily excluded in any case.

\end{remark}

\begin{lemma}\label{L:divisibility_generalization}

The divisibility of $f_g$ is $1$. 

\end{lemma}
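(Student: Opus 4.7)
The plan is to apply the algorithm developed in Appendix~\ref{A:2ndhomology}, which takes as input the monodromy factorization of a Lefschetz pencil and outputs an explicit presentation of the second homology of the total space along with the class $[F_g]$ represented by a regular fiber. Running this on the factorization~\eqref{eq:GeneralizedSmith'sLP} reduces the problem to the linear-algebraic question of whether $[F_g]$ is primitive in $H_2(X_g;\Z)$.

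The first step is to exploit the rotational $\phi$-symmetry of~\eqref{eq:GeneralizedSmith'sLP}. Because the factorization is a concatenation of rotates $W_1,\,W_2={}_{\phi}(W_1),\,\dots,\,W_{g-1}={}_{\phi^{g-2}}(W_1)$ of (the modified form of) Matsumoto's factorization glued along the boundary-parallel curves $s_k$, both $H_2(X_g;\Z)$ and the fiber class should inherit a $\Z/(g-1)$-action, and the presentation produced by the algorithm should split into $g$-independent blocks. One then seeks a class $\sigma_g\in H_2(X_g;\Z)$ with $\sigma_g\cdot[F_g]=1$, which would immediately force divisibility $1$. A natural source of candidates for $\sigma_g$ is the combinatorics of the $3$-chain substitution built into~\eqref{eq:SymMatsumoto'sLP}, which causes certain vanishing cycles of each $W_i$ to coincide in the homology of the reference fiber and therefore yields Lefschetz thimbles that can be glued into homology classes of low intersection with $[F_g]$.

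The main obstacle is the computation itself. One might hope, at least in the odd-$g$ case, to deduce the statement from Lemma~\ref{L:generalization_coveringSmith}, which writes $f_g=f_3\circ q$ for a specific unbranched $((g-1)/2)$-fold covering $q\colon T^4\to T^4$, combined with the fact that $d(f_3)=1$ (Smith's pencil realizes a $(1,2)$-polarization). However, Lemma~\ref{L:relation_covering_polarization} only forces $d_i\mid\tilde{d}_i$ and therefore imposes no constraint whatsoever on $\tilde{d}_1$ when $d_1=1$; as the pair of examples $f\circ q_1$ and $f\circ q_2$ in Subsection~\ref{S:example_holLP} already demonstrates, the same covering degree can produce pencils of different divisibilities depending on the choice of subgroup. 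Consequently, each $f_g$ must be analyzed on its own, and the bulk of the work consists in executing the algorithm of Appendix~\ref{A:2ndhomology} carefully enough that the conclusion $d(f_g)=1$ emerges uniformly in $g$; in practice this amounts to an explicit symbolic computation of the type performed (via Mathematica) for $f\circ q_1$ and $f\circ q_2$ in Subsection~\ref{S:example_holLP}, with the $\phi$-symmetry being crucial for keeping the computation manageable as $g$ varies.
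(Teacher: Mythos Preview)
Your plan is the right one and coincides with the paper's approach: feed the factorization~\eqref{eq:GeneralizedSmith'sLP} into the algorithm of Appendix~\ref{A:2ndhomology} and read off that the fiber class is primitive. Your discussion of why Lemma~\ref{L:generalization_coveringSmith} alone cannot settle the question is also correct.

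The gap is that you have described a strategy but not executed it. You end by saying the work ``amounts to an explicit symbolic computation of the type performed (via Mathematica)'', but for a statement that must hold for \emph{all} $g$ this is not enough: a machine check for finitely many $g$ is not a proof, and you have not produced the uniform computation. The paper in fact carries this out by hand, and the point is that the computation is much more tractable than your proposal suggests. Once one writes the homology classes of the vanishing cycles $d_{j,i}$ in $H_1(\Sigma_g^{2g-1},Q;\Z)$ with respect to the obvious curves $a_i,b_i,\delta_j$, they take an extremely simple form (each $d_{j,i}$ is a short linear combination of $a_i,a_{i-1},b_i,b_{i-1}$ and a single $\delta$). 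From this one reads off explicit generators of $Z_2$ (differences like $\widetilde{d_{1i}}-\widetilde{d_{4i}}$, together with $f$ and the section classes $\sigma_j$), computes $\partial_3$ on the $3$-chains coming from the $1$-handles of $\Sigma_g$ via Lemma~\ref{L:boundary operator Pa3}, and finds a basis of $Z_2$ in which $f$ itself appears. Hence $[f]$ is a member of a basis of $H_2(X_g;\Z)\cong\Z^6$ and the divisibility is $1$. No intersection-theoretic dual class $\sigma_g$ is needed, and no symbolic computation package is required; the $\phi$-symmetry you mention is exactly what makes the hand calculation close up uniformly in $g$.
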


\begin{proof}
Let $a,b,a_i,b_i$ ($i=1,\ldots,g-1$) and $\delta_j$ ($j=1,\ldots,2g-2$) be oriented simple closed curves in $\Sigma_g^{2g-2}$ as shown in Figure~\ref{F:generator_homology_generalization}. 
\begin{figure}[htbp]
\centering
\includegraphics[width=60mm]{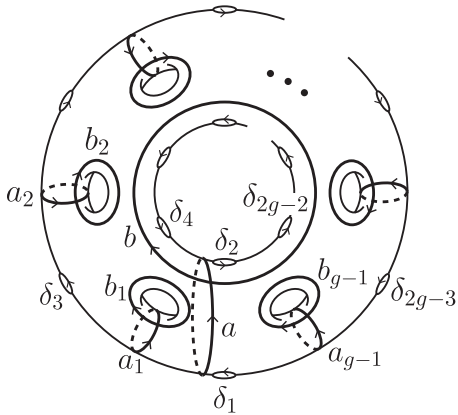}
\caption{The oriented curves in $\Sigma_g^{2g-2}$. 
The curves $\delta_1,\ldots,\delta_{2g-2}$ are on boundary components. }
\label{F:generator_homology_generalization}
\end{figure}
We take points $q_1,\ldots,q_{2g-2}\in \Pa \Sigma_g^{2g-2}$ so that the natural map $\pi_0(\{q_1,\ldots,q_{2g-2}\})\to \pi_0(\Pa\Sigma_g^{2g-2})$ is bijective. 
Let $D\subset \Int(\Sigma_g^{2g-2})$ be a disk sufficiently close to $\delta_1$, $\delta$ the simple closed curve $\Pa D$ with a suitable orientation, $\Sigma_g^{2g-1}=\Sigma_g^{2g-2}\setminus \Int(D)$, $q\in \Pa D$ and $Q=\{q,q_1,\ldots,q_{2g-2}\}$. 
We denote the homology classes in $H_1(\Sigma_g^{2g-1},Q;\Z)$ represented by $a,b,a_i,b_i,\delta,\delta_j$ by the same symbols $a,b,a_i,b_i,\delta,\delta_j$, respectively. 
It is easy to verify that the following equalities hold in $H_1(\Sigma_g^{2g-1},Q;\Z)$. 
{\allowdisplaybreaks
\begin{align*}
d_{1i} = & b_i-b_{i-1}+\delta_{2i-1}+\delta_{1,i}\delta, \\
d_{2i} = & a_i+b_i-a_{i-1}-b_{i-1}+\delta_{2i-1}+\delta_{1,i}\delta, \\
d_{3i} = & a_i-a_{i-1}+\delta_{2i-1}+\delta_{1,i}\delta, \\
d_{4i} = & b_i-b_{i-1}+\delta_{2i}+\delta_{1,i}\delta, \\
d_{5i} = & a_i+b_i-a_{i-1}-b_{i-1}+\delta_{2i}+\delta_{1,i}\delta, \\
d_{6i} = & a_i-a_{i-1}+\delta_{2i}+\delta_{1,i}\delta, 
\end{align*}
}
where $a_g=a_1$, $b_g=b_1$ and $\delta_{1,i}\in \{1,0\}$ denotes the Kronecker delta. 
We can take a handle decomposition of the blow-up $\tilde{X}_g$ of the total space $X_g$ of $f_g$ by applying the procedure explained in Appendix~\ref{A:2ndhomology}. 
Let $\widetilde{d_{ij}}\in C_2$ be the chain corresponding the vanishing cycle $d_{ij}$, $f\in C_2$ the chain corresponding a regular fiber and $\sigma_i\in C_2$ the chain represented by the $2$-handle in a neighborhood of the section corresponding the boundary component $\delta_i$. 
It is easy to see that the cycle group $Z_2$ is generated by the following elements: 
{\allowdisplaybreaks
\begin{align*}
&Z_1^i=\widetilde{d_{1i}}-\widetilde{d_{4i}},\hspace{.4em} Z_2^i=\widetilde{d_{2i}}-\widetilde{d_{5i}}, \hspace{.4em}Z_3^i=\widetilde{d_{3i}}-\widetilde{d_{6i}}\hspace{1em} (i=1,\ldots,g-1), \\
&W_i = d_{2i}-d_{1i}-d_{3i}\hspace{.5em}(i=1,\ldots,g-1), \hspace{.5em} X=\sum_{i=1}^{g-1}d_{1i}, \hspace{.5em} Y=\sum_{i=1}^{g-1}d_{3i}, \hspace{.5em} f,\hspace{.5em} \sigma_1,\ldots,\sigma_{2g-2}. 
\end{align*}
}
As we explained in Appendix~\ref{A:2ndhomology}, each $3$-handle in the handle decomposition of $\tilde{X}_g$ corresponds with a $1$-handle of $\Sigma_g$. 
We take a handle decomposition of $\Sigma_g$ so that each boundary component of $\Sigma_g^{2g-2}$ corresponds with a $0$-handle, $a_i, b_i\in H_1(\Sigma_g^{2g-1},Q;\Z)$ are represented by $1$-handles, and a regular neighborhood of a path $\gamma_i\subset \Sigma_g^{2g-1}\setminus (\cup_i (a_i\cup b_i))$ connecting $\delta_1$ with $\delta_{i+1}$ is a $1$-handle. 
Let $A_i,B_i\in C_3$ be the chain represented by the $3$-handle corresponding $a_i,b_i$, respectively, and $\widetilde{\gamma_i}\in C_3$ the chain represented by the $3$-handle corresponding $\gamma_i$. 
Using Lemma~\ref{L:boundary operator Pa3}, we can calculate the images of the $3$-chains under the boundary operator $\Pa_3$ as follows: 
{\allowdisplaybreaks
\begin{align*}
\Pa_3(A_i) = & Z_1^i-Z_2^i-Z_1^{i+1}+Z_2^{i+1} \hspace{.5em}(i=1\ldots,g-1), \\
\Pa_3(B_i) = & -Z_2^i+Z_3^i+Z_2^{i+1}-Z_3^{i+1} \hspace{.5em}(i=1\ldots,g-1), \\
\Pa_3(A_g)= & \Pa_3(B_g) =0,\\
\Pa_3(\widetilde{\gamma_{2j}}) = & W_1-W_{j+1}-f+\sigma_1-\sigma_{j+1} \hspace{.5em}(j=1\ldots,g-2), \\
\Pa_3(\widetilde{\gamma_{2j-1}}) = & W_1-W_{j}+Z_1^j-Z_2^j+Z_3^j-f+\sigma_1-\sigma_{j+1} \hspace{.5em}(i=1\ldots,g-1), 
\end{align*}
}
where $Z_k^g = Z_k^1$ for $k=1,2,3$. 
Thus, the following set is a basis of the cycle group $Z_2$: 
\[
\left\{\Pa_3(A_1),\Pa_3(B_1),\ldots,\Pa_3(A_{g-2}),\Pa_3(B_{g-2}),\Pa_3(\widetilde{\gamma_1}),\ldots,\Pa_3(\widetilde{\gamma_{2g-3}}),Z_1^1,Z_2^1, X,Y,f,W_1,\sigma_1,\ldots,\sigma_{2g-2}\right\}. 
\]
In particular, the homology group $H_2(X_g;\Z)$ is isomorphic to $\Z^6$ and $\{Z_1^1,Z_2^1,X,Y,f,W_1\}$ is a basis of $H_2(X_g;\Z)$. 
Since $f$ is represented by a regular fiber of $f_g$, the divisibility of $f_g$ is $1$. 
\end{proof}

Combining Lemmas~\ref{L:generalization_coveringSmith} and \ref{L:divisibility_generalization}, we eventually obtain: 

\begin{theorem}\label{T:polarization_generalization}

For odd $g$, $f_g$ is a holomorphic Lefschetz pencil on $T^4$ associated with a $(1,g-1)$-polarization. 

\end{theorem}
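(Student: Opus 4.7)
The plan is to chain together the two preceding lemmas with the general framework of Subsection~\ref{S:moduli_abelian_surface} and Lemma~\ref{L:relation_covering_polarization}. First, by Lemma~\ref{L:generalization_coveringSmith}, for odd $g$ the pencil $f_g$ is obtained by perturbing $f_3\circ q$ for some $((g-1)/2)$-fold unbranched covering $q:T^4\to T^4$. Since $f_3$ is the holomorphic pencil of Smith (constructed in Subsection~\ref{Subsection:Smith's_Pencil}) and any finite unbranched covering of the complex four-torus carries a complex structure with respect to which $q$ is holomorphic, the composition $f_3\circ q$ is a holomorphic pencil on the complex torus $T^4$ satisfying conditions (2) and (3) of a Lefschetz pencil. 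By Corollary~\ref{C:genericness_injectivity} we may perturb $f_3\circ q$ to a genuine holomorphic Lefschetz pencil, which coincides with $f_g$ up to isomorphism; in particular $X_g\cong T^4$ as smooth (indeed complex) manifolds and $f_g$ is holomorphic.

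It remains to identify the type of the polarization associated to $f_g$. Smith's pencil $f_3$ has $2g(f_3)-2=4$ base points, so by Lemma~\ref{L:holLP_fromlinebdl} its associated line bundle $L$ satisfies $c_1(L)^2=4$; since a $(d_1,d_2)$-polarization has self-intersection $2d_1d_2$, this forces $d_1d_2=2$, i.e.\ the polarization type of $f_3$ is $(1,2)$ (it cannot be $(2,2)$ since the genus would differ). Now apply Lemma~\ref{L:relation_covering_polarization} to the $((g-1)/2)$-fold unbranched covering $q$: the pulled-back polarization has type $(\tilde d_1,\tilde d_2)$ with $\tilde d_1\tilde d_2 = \tfrac{g-1}{2}\cdot 2 = g-1$, and $1\mid \tilde d_1$, $2\mid \tilde d_2$. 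In particular $\tilde d_1\in\{1,2\}$.

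To pin down $\tilde d_1$ we invoke Lemma~\ref{L:divisibility_generalization}, which asserts that the divisibility $d(f_g)$ equals $1$. Since the divisibility of the pencil coincides with the first entry $\tilde d_1$ of the polarization type (as recorded in the discussion following Lemma~\ref{L:relation_covering_polarization} and in Theorem~\ref{T:main_uniquenessLP}), we conclude $\tilde d_1 = 1$ and therefore $\tilde d_2 = g-1$. Putting everything together, $f_g$ is a holomorphic Lefschetz pencil on $T^4$ whose associated polarization is of type $(1,g-1)$, as claimed. The only place where one has to be a bit careful is the identification of $d(f_g)$ with the first entry of the polarization type; this is essentially tautological once one recalls that the fiber class of $f_g$ is Poincar\'e dual to $c_1$ of the ample line bundle producing $f_g$, and the maximal divisibility of that cohomology class in $H^2(T^4;\Z)$ is exactly $\tilde d_1$.
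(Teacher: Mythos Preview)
Your proof is correct and follows the same approach as the paper, which simply states that the theorem follows by combining Lemmas~\ref{L:generalization_coveringSmith} and~\ref{L:divisibility_generalization}. You have spelled out the details of this combination carefully, including the identification of the divisibility with the first entry of the polarization type via Poincar\'e duality. One small quibble: your parenthetical ``it cannot be $(2,2)$ since the genus would differ'' is slightly off, since $(2,2)$ already has $d_1d_2=4\neq 2$ and so was never a candidate; the constraint $d_1\mid d_2$ together with $d_1d_2=2$ forces $(d_1,d_2)=(1,2)$ directly.
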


\noindent
According to Lemma~\ref{L:divisibility_generalization} and the observation followed by Lemma~\ref{L:generalization_coveringSmith}, it is natural to expect that the following conjecture holds:

\begin{conjecture}\label{C:polarization_generalization}

For even $g$, $f_g$ is a holomorphic Lefschetz pencil on $T^4$ associated with a $(1,g-1)$-polarization. 

\end{conjecture}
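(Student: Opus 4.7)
The plan is to reduce the even-genus case to the odd-genus case by exploiting the covering relationship described in Remark~\ref{R:generalization_covering_generalization}. For even $g$, the pair $(g, 2g-1)$ satisfies $(g-1) \mid (2g-2)$, so $f_{2g-1}$ arises as a $2$-fold unbranched covering of $f_g$. Since $2g-1$ is odd, Theorem~\ref{T:polarization_generalization} guarantees that $X_{2g-1}$ is diffeomorphic to $T^4$ and that $f_{2g-1}$ is a holomorphic Lefschetz pencil associated with a $(1, 2(g-1))$-polarization. Let $q \colon X_{2g-1} \to X_g$ denote this covering and $\tau \colon X_{2g-1} \to X_{2g-1}$ the corresponding fixed-point-free involution.

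The strategy is to descend the smooth and holomorphic structures from $X_{2g-1}$ through the quotient. Since $X_g$ is a symplectic Calabi-Yau four-manifold with $\pi_1(X_g) \cong \Z^4$, the Friedl-Vidussi result cited in the paper shows that $X_g$ is homeomorphic to $T^4$. The short exact sequence $1 \to \pi_1(X_{2g-1}) \to \pi_1(X_g) \to \Z/2 \to 1$ together with $\pi_1(X_{2g-1}) \cong \pi_1(X_g) \cong \Z^4$ forces $\tau_\ast$ to be trivial on $\pi_1(X_{2g-1})$. Combined with the facts that $\tau$ is free and of order two, one expects that $\tau$ is smoothly conjugate to translation by a half-period of $T^4$; this in turn would yield $X_g \cong T^4/\tau \cong T^4$ as smooth manifolds. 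Choosing a complex torus structure on $X_{2g-1} = T^4$ realizing the $(1,2(g-1))$-polarization and invariant under $\tau$, one obtains a quotient complex structure on $T^4 \cong X_g$ with respect to which $f_g$ is holomorphic, and by Lemma~\ref{L:divisibility_generalization} the associated polarization has type $(1, g-1)$.

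The main obstacle is justifying that $\tau$ can be realized as a smoothly conjugated, and subsequently holomorphic, translation. Topologically, $\tau_\ast = \id$ on $\pi_1$ makes $\tau$ homotopic to a translation by a half-period, but promoting this to a smooth isotopy calls for non-trivial four-dimensional smoothing arguments; moreover, realizing $\tau$ as a holomorphic translation on an appropriate abelian surface requires a delicate compatibility between the combinatorial symmetry $\phi^{g-1}$ used in constructing $f_{2g-1}$ and the complex torus structure induced by the polarization. An alternative route for even $g \geq 6$, given that candidate holomorphic pencils with genus $g$ and divisibility $1$ exist on $T^4$ by Lemma~\ref{L:connectedness_lines}, is to verify directly that the factorization~\eqref{eq:GeneralizedSmith'sLP} is Hurwitz equivalent to that of such a holomorphic pencil; by Theorem~\ref{T:isom_Hurwitzeq} this would yield a smooth isomorphism between $f_g$ and a holomorphic pencil and thereby prove the conjecture in this range. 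The case $g = 4$ lies outside the scope of Theorem~\ref{T:main_uniquenessLP}, so it seems to require the covering descent argument directly.
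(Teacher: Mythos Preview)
This statement is a \emph{conjecture} in the paper, not a theorem: the paper does not prove it and explicitly leaves it open, remarking only that it would suffice to handle the case where $g-1$ is prime. So there is no ``paper's own proof'' to compare against.

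Your proposal is not a proof either, and you essentially acknowledge this. The covering $q\colon X_{2g-1}\to X_g$ with deck involution $\tau$ does give $\tau_\ast=\id$ on $\pi_1(X_{2g-1})\cong\Z^4$ (since $\pi_1(X_g)$ is abelian), and $X_g$ is homeomorphic to $T^4$ by the Friedl--Vidussi argument. But the step ``$\tau$ is smoothly conjugate to a translation'' is a genuine gap: this is a smooth rigidity statement for free involutions on $T^4$ that is not known in general and is precisely the kind of four-dimensional smoothing problem the paper is unable to resolve (the same issue keeps the paper from upgrading ``homeomorphic to $T^4$'' to ``diffeomorphic to $T^4$'' elsewhere). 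Even granting that, you would still need the complex structure on $X_{2g-1}$ realizing the holomorphic pencil $f_{2g-1}$ to be $\tau$-invariant; but that complex structure arises via the isomorphism of $f_{2g-1}$ with a perturbation of $f_3\circ q'$ (Lemma~\ref{L:generalization_coveringSmith}), and there is no reason the combinatorial involution $\phi^{g-1}$ should correspond to a holomorphic automorphism under that identification.

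Your alternative route---matching \eqref{eq:GeneralizedSmith'sLP} to the monodromy factorization of a known holomorphic $(1,g-1)$-pencil via Hurwitz equivalence---is in principle sound for $g\geq 6$, but it is not an argument, only a restatement of what needs to be checked. In short, the conjecture remains open and your proposal does not close it.
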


\noindent
Note that in order to prove Conjectures~\ref{C:polarization_generalization} it is sufficient to prove that $f_g$ is holomorphic for $g$ such that $g-1$ is prime by Remark~\ref{R:generalization_covering_generalization}.
If Conjectures~\ref{C:polarization_generalization} and \ref{C:uniqueness_holLP} hold, we can deduce the following from Lemma~\ref{L:relation_covering_polarization}:

\begin{conjecture} \label{C:covering_generalisedSmithsLP}

Let $f:T^4\dashedrightarrow \CP^1$ be a holomorphic Lefschetz pencil. 
There exists an unbranched covering $q:T^4\to T^4$ such that $f$ is isomorphic to the composition $f_g\circ q$ with $g-1$ prime. 

\end{conjecture}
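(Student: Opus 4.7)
The plan is to reduce the conjecture directly to the two preceding conjectures via Lemma~\ref{L:relation_covering_polarization}, as the paragraph after Conjecture~\ref{C:polarization_generalization} already hints. Let $f$ be a holomorphic Lefschetz pencil on $T^4$ of genus $g'$ and divisibility $d_1'$. By Lemma~\ref{L:holLP_fromlinebdl} the pencil arises from an ample line bundle on $T^4$, whose first Chern class is a polarization of some type $(d_1', d_2')$ with $d_1' \mid d_2'$ and $g' = d_1'd_2' + 1$. By the Remark following Lemma~\ref{L:holLP_fromlinebdl}, this type is not $(1,1)$, so $d_2' \geq 2$.

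Next I would choose any prime $p$ dividing $d_2'$ and set $g = p+1$, so that $g-1$ is prime. By Conjecture~\ref{C:polarization_generalization} (or by Theorem~\ref{T:polarization_generalization} when $g$ is odd), $f_g$ is a holomorphic Lefschetz pencil on $T^4$ associated with a $(1,p)$-polarization. Applying Lemma~\ref{L:relation_covering_polarization} to this $(1,p)$-polarization with target type $(d_1', d_2')$ — the divisibility conditions $1 \mid d_1'$ and $p \mid d_2'$ both hold by construction — produces a finite unbranched covering $q : \tilde T \to T^4$ (with $\tilde T$ again diffeomorphic to $T^4$) such that the pullback polarization is of type $(d_1', d_2')$.

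The composition $f_g \circ q$ is then a holomorphic Lefschetz pencil on $T^4$ whose associated polarization is of type $(d_1', d_2')$, hence of genus $g'$ and divisibility $d_1'$, the same invariants as $f$. Applying Conjecture~\ref{C:uniqueness_holLP} to the pair $(f, f_g \circ q)$ yields an isomorphism between them, completing the proof.

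The main obstacle is of course not the formal argument but the two unproved inputs. Conjecture~\ref{C:uniqueness_holLP} extends Theorem~\ref{T:main_uniquenessLP} to the low-genus, low-divisibility cases $(d_1, d_2) \in \{(1,2), (1,3), (1,4)\}$, which requires either establishing the technical condition ($\ast$) in Lemma~\ref{L:existence_pathLP} for those types or devising an alternative argument circumventing it. Conjecture~\ref{C:polarization_generalization} amounts to showing that for even $g$ the combinatorially constructed pencil $f_g$ is holomorphic; by Remark~\ref{R:generalization_covering_generalization} it suffices to establish this for $g$ with $g-1$ prime. One natural approach would be to realize these $f_g$ directly as linear pencils associated with appropriate $(1,g-1)$-polarized abelian surfaces and match monodromies, but identifying $f_g$ with such a projective construction combinatorially is the key difficulty.
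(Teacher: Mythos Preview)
Your conditional reduction is exactly what the paper intends: the paper does not prove this conjecture but states explicitly that it follows from Conjectures~\ref{C:polarization_generalization} and~\ref{C:uniqueness_holLP} via Lemma~\ref{L:relation_covering_polarization}, and your argument spells out precisely that deduction. Your identification of the two unproved inputs as the only obstacles matches the paper's own assessment.

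One small technical point worth flagging: the composition $f_g\circ q$ will typically fail condition~(1) in the definition of a Lefschetz pencil, since each critical value of $f_g$ acquires $\deg(q)$ critical points in its preimage. The paper handles this elsewhere (see the observation following Lemma~\ref{L:relation_covering_polarization} and Corollary~\ref{C:genericness_injectivity}) by perturbing the composed pencil to a genuine Lefschetz pencil before invoking uniqueness; the conjecture statement itself elides this step. Your argument inherits the same elision, which is harmless given the paper's conventions but would need to be made explicit in a fully rigorous treatment.
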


\noindent
Note that this conjecture holds under the following assumptions:

\begin{itemize}

\item
the genus of $f$ is odd, and

\item
the genus of $f$ is greater than $5$ or that the divisibility of $f$ is greater than $1$.

\end{itemize}

\noindent
In this case we can take $q$ so that $g$ is equal to $3$ (see the observation following Lemma~\ref{L:relation_covering_polarization}).

\begin{remark}\label{R:relation_generalizationSmith_nonholLP}

If Conjecture~\ref{C:covering_generalisedSmithsLP} holds, it is theoretically possible to obtain monodromy factorizations of all the holomorphic Lefschetz pencils on the four-torus. 
In particular, a Lefschetz pencil on the four-torus is not holomorphic if the associated monodromy factorization is not Hurwitz equivalent to any of them (see also Remark~\ref{R:nonhol_reduciblefiber}).  

\end{remark}

\subsection{Symplectic Calabi-Yau Lefschetz pencils on homotopy $T^2$-bundles over $T^2$}\label{Subsection:T2bdloverT2}
We have seen explicit monodromy factorizations of the genus-$3$ Lefschetz pencil on \textit{the four-torus $T^4$} constructed by Smith in~\cite{Smith_2001_torus}, geometorically in Subsection~\ref{Subsection:Smith's_Pencil} and combinatorially in Subsection~\ref{Subsection:Combinatorial_Smith's_Pencil}.
Smith also mentioned that by modifying the pencil on $T^4$ one can construct Lefschetz pencils on the total spaces of \textit{$T^2$-bundles over $T^2$}, provided that the bundles admit sections.
In this subsection we will follow Smith's idea in a combinatorial way; for any $\alpha,\beta\in \Mod(\Sigma_1^1;U)$ with $[\alpha,\beta]=1$, we will construct a genus-$3$ Lefschetz pencil $f_{\alpha,\beta}$ by modifying the factorization~\eqref{eq:CombinatorialSmith'sLP}, and prove the following theorem (which was also stated in Introduction): 

\LPtorusbdl*

We first observe presentations for the fundamental groups of $T^2$-bundles over $T^2$ with sections.
Let $p:X \rightarrow T^2$ be a torus bundle over the torus which has a section $S\subset X$ and $D\subset T^2$ a small disk.
We take a meridian $m$ and a longitude $l$ of the base $T^2$. 
We denote the monodromy along $m$ and $l$ by $\alpha$ and $\beta$, respectively. 
Since $p$ has a section, $\alpha$ and $\beta$ can be considered as elements in $\Mod(\Sigma_1^1;U)$, where $U=\{u\}\subset \Pa \Sigma_1^1$. 
A tubular neighborhood $\nu S$ can be decomposed into a $0$-handle contained in $p^{-1}(D)$, two $1$-handles $a,b$ whose cores are lifts of $m$ and $l$, and a $2$-handle.  
The preimage $p^{-1}(D)$ also admits a handle decomposition with the $0$-handle of $\nu S$, two $1$-handles $c,d$ whose cores are a longitude and a merdian of a regular fiber $T$, and a $2$-handle (We take $c$ for the longitude, $d$ for the meridian for the convenience of later calculations). 
The total space $X$ can be obtained from the union $p^{-1}(D)\cup \nu S$ by attaching four $2$-handles, four $3$-handles and a $4$-handle. 
Two of the $2$-handles are contained in the preimage of a neighborhood of $m$, while the other $2$-handles are contained in the preimage of a neighborhood of $l$. 
We can eventually obtain a handle decomposition of $X$ and the associated cell decomposition of $X$. 
We denote the $1$-cells corresponding to $a,b,c,d$ endowed with suitable orientations by the same symbols. 
Since $X$ has only one $0$-cell, the $1$-cells $a,b,c,d$ represent elements in $\pi_1(X)$. 
Furthermore, $c$ and $d$ also represent elements in $\pi_1(T)$, especially we can describe $\alpha(c),\alpha(d),\beta(c),\beta(d)$ as words consisting of $c$ and $d$.  
Analyzing attaching maps of the $2$-cells, we can easily prove the following:

\begin{lemma}\label{L:FundamentalGroup_T^2bdl}
	The fundamental group $\pi_1(X)$ has the following presentation:
	\[ \pi_1(X) = \left\langle 
	\begin{matrix}
	a, b, \\
	c, d 
	\end{matrix}
	\left|~
	\begin{matrix}
	[a,b], ~ca\alpha(c)^{-1}a^{-1}, ~cb\beta(c)^{-1}b^{-1},\\
	[c,d], ~da\alpha(d)^{-1}a^{-1}, ~db\beta(d)^{-1}b^{-1}
	\end{matrix}
	\right. \right\rangle. \] 

\end{lemma}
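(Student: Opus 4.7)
The plan is to apply the standard theorem that gives a presentation of $\pi_1(X)$ from a CW decomposition with a single $0$-cell: the generators are the $1$-cells (viewed as loops based at the $0$-cell) and the relations are read off from the attaching maps of the $2$-cells, while the $3$-cells and the $4$-cell contribute nothing. The handle decomposition of $X$ described just before the lemma has exactly four $1$-handles $a,b,c,d$ and six $2$-handles, so it suffices to identify these six $2$-cell attaching maps.

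Two of the $2$-cells lie inside the pieces $\nu S$ and $p^{-1}(D)$, both of which are diffeomorphic to $T^2\times D^2$. The standard CW structure on $T^2=S^1\times S^1$ has a single $2$-cell whose attaching map is the commutator of the two $1$-cells generating the homotopy type of the torus. Hence the $2$-handle sitting inside $\nu S$ contributes the relation $[a,b]=1$, and the $2$-handle inside $p^{-1}(D)$ contributes $[c,d]=1$.

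The remaining four $2$-cells are the ``monodromy squares.'' I will treat the one lying over a regular neighborhood of the meridian $m$ whose horizontal edges represent the cycle $c$ in the fiber; the other three are handled identically. Parametrize the square as $[0,1]\times[0,1]$ so that its bottom edge realizes $c$ in the starting fiber, its two vertical edges realize the two parallel lifts of $m$ along the section (the right one being the loop $a$ at the basepoint, the left one being $a^{-1}$), and its top edge realizes the image of $c$ under parallel transport once around $m$, which by definition of the monodromy is the loop $\alpha(c)$. Reading the boundary of the square counterclockwise based at the $0$-cell therefore spells the word $c\cdot a\cdot \alpha(c)^{-1}\cdot a^{-1}$, giving the relation $ca\alpha(c)^{-1}a^{-1}=1$. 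Repeating this argument with $(c,a,\alpha)$ replaced by $(d,a,\alpha)$, $(c,b,\beta)$, and $(d,b,\beta)$ yields the other three monodromy relations listed in the lemma.

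The only real care-point is the orientation bookkeeping in the monodromy squares: one must verify that the word one reads off is indeed $c\cdot a\cdot \alpha(c)^{-1}\cdot a^{-1}$ rather than $c\cdot a\cdot \alpha^{-1}(c)^{-1}\cdot a^{-1}$, but this is pinned down by the orientation conventions on $m$ and $l$ fixed earlier (together with the convention that mapping classes act on the right on based loops in the fiber). Since the $3$-cells only fill in higher syzygies and do not affect $\pi_1$, the six relations above give a complete presentation, proving the lemma.
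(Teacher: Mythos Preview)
Your proof is correct and follows precisely the approach the paper intends: the paper sets up the handle decomposition in the paragraph preceding the lemma and then simply says ``Analyzing attaching maps of the $2$-cells, we can easily prove the following,'' leaving the details to the reader. You have supplied exactly those details---identifying the two torus $2$-cells in $\nu S$ and $p^{-1}(D)$ as giving $[a,b]$ and $[c,d]$, and reading the four remaining $2$-cells (two over a neighborhood of $m$, two over a neighborhood of $l$) as mapping-torus squares yielding the conjugation relations---so there is nothing to add.
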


In order to modify the factorization~\eqref{eq:CombinatorialSmith'sLP} we need a key observation on a symmetrical property of some subwords in~\eqref{eq:CombinatorialSmith'sLP} as mapping classes.
We set $X_1 = t_{d_{1}} t_{d_{2}} t_{d_{3}}$, $Y_1 = t_{d_{4}} t_{d_{5}} t_{d_{6}}$, $X_2 = t_{d_{7}} t_{d_{8}} t_{d_{9}}$ and $Y_2 = t_{d_{10}} t_{d_{11}} t_{d_{12}}$ in $\Mod(\Sigma_3^4;U)$ and consider their actions on the curves $L_i$, $R_i$ ($i=1,2$), $S_L$ and $S_R$ on $\Sigma_3^4$ that are as depicted in Figure~\ref{F:Subfactors_Properties_1}.
The actions can be read off from similar actions on curves on the surface $\Sigma_2^4$.
Set $X = t_{{B}_{0,1}^{\prime}}t_{{B}_{1,1}^{\prime}}t_{{B}_{2,1}^{\prime}}$ and $Y= t_{{B}_{0,2}^{\prime}}t_{{B}_{1,2}^{\prime}}t_{{B}_{2,2}^{\prime}}$ in $\Mod(\Sigma_2^4;U)$ and consider the curves $A_i$, $B_i$ ($i=1,2$) on $\Sigma_2^4$ that are as in Figure~\ref{F:Subfactors_Properties_2}.
\begin{figure}[htbp]
	\centering
	\subfigure[The curves $L_1$, $L_2$, $S_L$, $R_1$, $R_2$, $S_R$ on $\Sigma_3^4$. ]{\includegraphics[height=25mm]{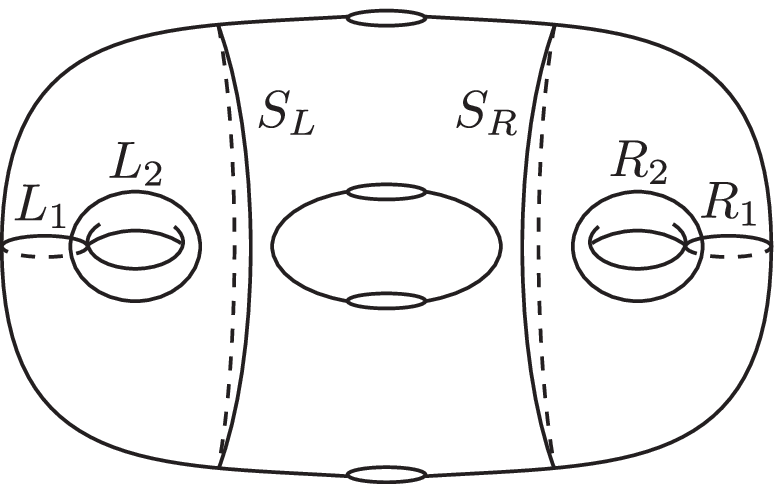}
		\label{F:Subfactors_Properties_1}}
	\hspace{.8em}
	\subfigure[The curves $A_1$, $A_2$, $B_1$, $B_2$ on $\Sigma_2^4$. ]{\includegraphics[height=25mm]{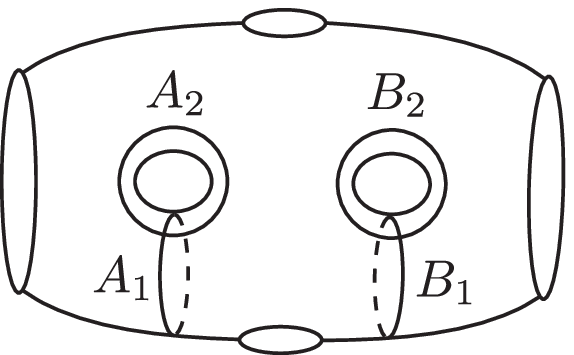}
		\label{F:Subfactors_Properties_2}}
	\hspace{.8em}
	\caption{Actions of $X$, $Y$, $X_1$, $Y_1$, $X_2$ and $Y_2$.} \label{F:ActionsOfSubfactors}
\end{figure}
\begin{lemma} \label{L:ActionsOfSubfactors}
	\textup{(1)} As a mapping class, each of $X$ and $Y$ in $\Mod(\Sigma_2^4;U)$ maps the $4$-tuple of simple closed curves $(A_1,A_2,B_1,B_2)$ to $(B_1,B_2,A_1,A_2)$ on $\Sigma_2^4$.\\
	\textup{(2)} Each of $X_1$, $Y_1$, $X_2$ and $Y_2$ in $\Mod(\Sigma_3^4;U)$ maps the $6$-tuple $(L_1,L_2,S_L,R_1,R_2,S_R)$ of simple closed curves to $(R_1,R_2,S_R,L_1,L_2,S_L)$ on $\Sigma_3^4$.
\end{lemma}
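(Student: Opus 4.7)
The plan is to prove both parts by direct Dehn-twist-by-Dehn-twist computation, using Part~(1) as the main input and reducing Part~(2) to it via subsurface considerations wherever possible.

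For Part~(1), I will compute $X(A_i)$, $X(B_i)$, $Y(A_i)$, $Y(B_i)$ for $i=1,2$ by applying the three constituent Dehn twists one at a time. Recall the standard description: for a simple closed curve $c$ and a curve $\gamma$, the twisted curve $t_c(\gamma)$ is obtained from $\gamma$ by adding a detour once around $c$ at each transverse intersection with $c$, with sign determined by orientation. The curves $A_i,B_j\subset \Sigma_2^4$ have small geometric intersection with the supporting curves $B_{k,1}'$ and $B_{k,2}'$, so the three-step cascade is tractable and can be displayed by drawing the intermediate images $t_{B_{2,1}'}(A_i)$, $t_{B_{1,1}'}t_{B_{2,1}'}(A_i)$, and finally $X(A_i)$. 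A direct inspection of the resulting isotopy class verifies $X(A_i)=B_i$ and $X(B_i)=A_i$; the corresponding identities for $Y$ follow either by an identical direct computation or by invoking the manifest symmetry of $\Sigma_2^4$ which exchanges $B_{i,1}'\leftrightarrow B_{i,2}'$ and permutes the $A$-curves with the $B$-curves in a compatible way.

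For Part~(2), I will argue that each of the four subfactors $X_1,Y_1,X_2,Y_2$ is supported in a subsurface of $\Sigma_3^4$ diffeomorphic to $\Sigma_2^4$, and that the three-curve configurations $(d_1,d_2,d_3)$, $(d_4,d_5,d_6)$, $(d_7,d_8,d_9)$, $(d_{10},d_{11},d_{12})$ match the configuration $(B_{0,1}',B_{1,1}',B_{2,1}')$ (or $(B_{0,2}',B_{1,2}',B_{2,2}')$) under a suitable identification. The 6-tuple $(L_1,L_2,S_L,R_1,R_2,S_R)$ is chosen so that under each such identification the pairs $(L_i,R_i)$ and $(S_L,S_R)$ are exchanged in a manner consistent with Part~(1): the portions of the curves lying inside the supporting subsurface are permuted exactly as the $(A_i,B_i)$ pairs were, while the portions outside the support are unaffected by the twist and glue back to complete the swap. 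Carrying this out for each of the four subfactors in turn establishes the simultaneous $6$-tuple swap.

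The main technical obstacle will be the bookkeeping in Part~(2): the two ``separating-like'' curves $S_L,S_R$ extend out of the subsurface on which $X_i$ or $Y_i$ is supported, so their exterior portions must be tracked carefully to confirm that the reassembled curves are indeed $S_R$ and $S_L$, respectively. This is a geometric check best performed by explicit figures analogous to those in Subsection~\ref{Subsection:Combinatorial_Smith's_Pencil}. Once Part~(1) is verified for $X$ (and transferred to $Y$ by symmetry), Part~(2) will then follow uniformly for all four subfactors from the geometric symmetries of $\Sigma_3^4$ built into the configuration of Figure~\ref{F:SmithsLPConfiguration}.
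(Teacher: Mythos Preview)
Your approach is essentially the same as the paper's: Part~(1) is declared routine in the paper as well, and for Part~(2) both you and the paper reduce to Part~(1) via the two embeddings $\Sigma_2^4\hookrightarrow\Sigma_3^4$ of Figure~\ref{F:SmithsLPConfiguration}, identifying $(X,Y,A_i,B_i)$ with $(X_1,Y_1,L_i,R_i)$ and with $(X_2,Y_2,R_i,L_i)$ respectively.

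There is one simplification in the paper that you should adopt, because it eliminates exactly the ``main technical obstacle'' you flag. You propose to track $S_L$ and $S_R$ explicitly through the twists, worrying about their portions outside the supporting subsurface. The paper instead observes that $S_L$ and $S_R$ are the boundaries of regular neighborhoods of $L_1\cup L_2$ and $R_1\cup R_2$ in $\Sigma_3^4$. Since any mapping class sends the boundary of a regular neighborhood of a curve system to the boundary of a regular neighborhood of the image system, the swap $S_L\leftrightarrow S_R$ is an \emph{automatic} consequence of $(L_1,L_2)\leftrightarrow(R_1,R_2)$; no separate curve-tracking is needed. This makes the bookkeeping you anticipated unnecessary and turns Part~(2) into a one-line deduction from Part~(1).
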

\begin{proof}
	It is simply routine work to check \textup{(1)}. 
	To see \textup{(2)}, we recall the embedding of $\Sigma_2^4$ to $\Sigma_3^4$ with which we dealt in Figures~\ref{F:SmithsLPConfiguration_1_1},~\ref{F:SmithsLPConfiguration_1_2},~\ref{F:SmithsLPConfiguration_1_3}.
	We can identify $X$, $Y$, $A_i$, $B_i$ with $X_1$, $Y_1$, $L_i$, $R_i$, respectively.
	Thus from \textup{(1)}, each of $X_1$ and $Y_1$ maps $(L_1,L_2,R_1,R_2)$ to $(R_1,R_2,L_1,L_2)$.
	Since $S_L$ and $S_R$ are the boundaries of the regular neighborhoods of $L_1 \cup L_2$ and $R_1 \cup R_2$, respectively, $X_1$ and $Y_1$ also switch $S_L$ and $S_R$.
	By considering the other embedding dealt with in Figures~\ref{F:SmithsLPConfiguration_2_1},~\ref{F:SmithsLPConfiguration_2_2},~\ref{F:SmithsLPConfiguration_2_3}, by which we can identify $X$, $Y$, $A_i$, $B_i$ with $X_2$, $Y_2$, $R_i$, $L_i$, respectively, we can verify the claims for $X_2$ and $Y_2$ in a similar manner.
\end{proof}

Now we construct a monodoromy factorization as a Lefschetz pencil corresponding to a given $T^2$-bundle over $T^2$ with an explicit monodromy factorization as a bundle.
We assume that the bundle has a section.
It is known that the section has to be of self-intersection number $0$ \footnote{One way to verify this is the following. Since a $T^2$-bundle over $T^2$ is a symplectic Calabi-Yau manifold, its canonical class $K$ is a torsion. Let $S$ be a section, which can be made symplectic. By the adjunction equality, we obtain $[S]^2=2\cdot 1-2+K(S)=0$.}.
Hence, the monodromy factorization has the form
\[
[\alpha,\beta]= t_{\delta}^0 = 1
\]
in $\Mod(\Sigma_1^1;U)$, where $\alpha$ and $\beta$ are the monodromies along the meridian $a$ and the londitude $b$ of the base torus, respectively, and $\delta$ is the boundary of the one-holed torus $\Sigma_1^1$.

We consider the two symmetrical embeddings $\varphi_L$ and $\varphi_R$ of $\Sigma_1^1$ into $\Sigma_3^4$ as shown in Figure~\ref{F:LPsOnHomotopyT2bundleoverT2}, one of which takes the meridian $d$ to $L_1$ and the longitude $c$ to $L_2$ and the other takes $d$ to $R_1$ and $c$ to $R_2$.
\begin{figure}[htbp]
	\centering
	\subfigure[The correspondence between the free loops.]{\includegraphics[height=70mm]{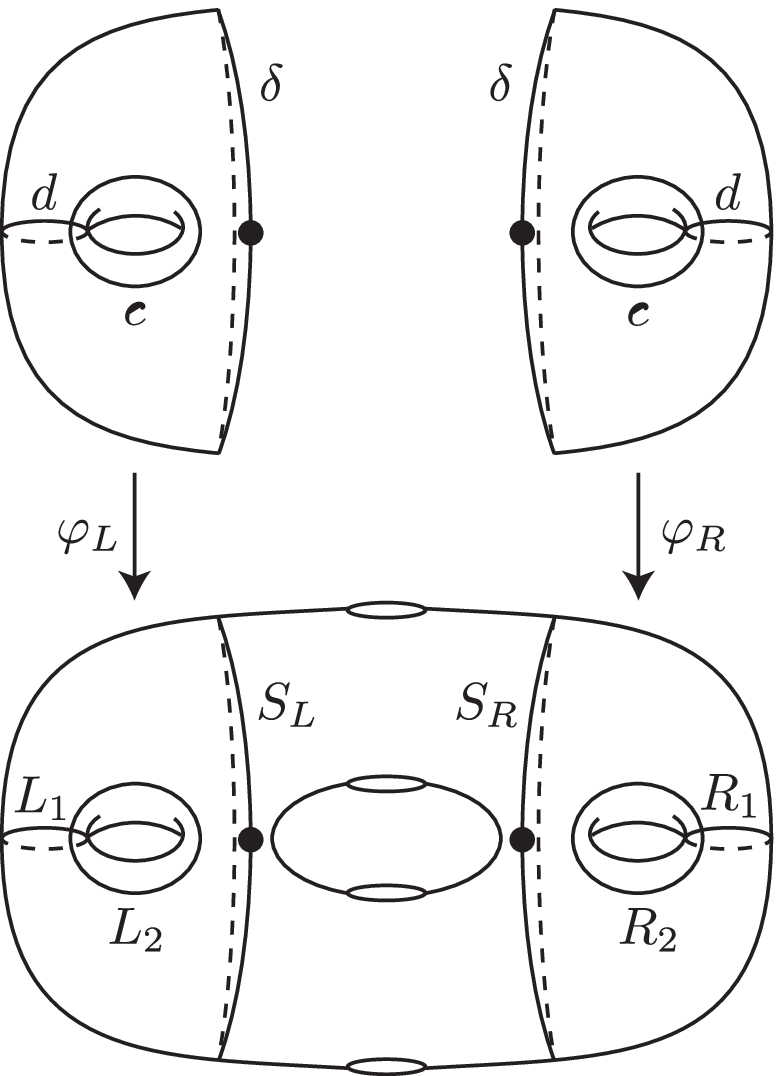}
		\label{F:LPsOnHomotopyT2bundleoverT2_Construction}}
	\hspace{3.8em}
	\subfigure[The correspondence between the based loops. Here $a_1$ stands for the loop $l_L \cdot \varphi_L(c) \cdot l_L^{-1}$. The loops $b_1$, $a_3$ and $b_3$ are similarly defined. ]{\includegraphics[height=70mm]{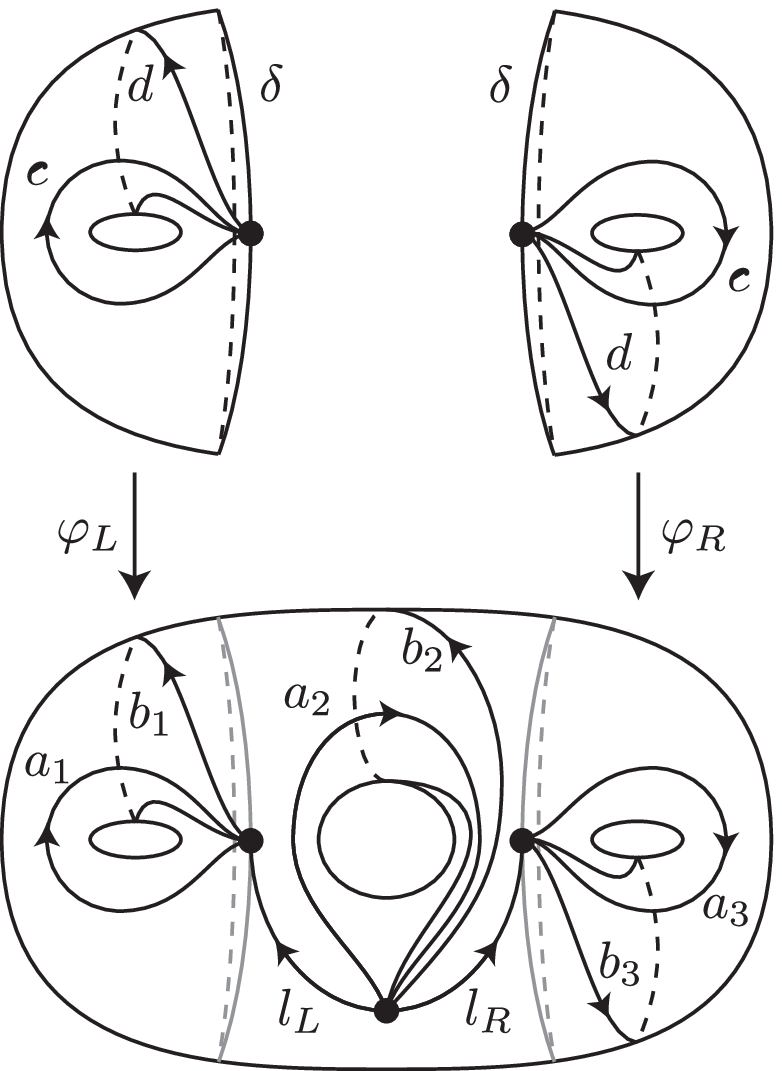}
		\label{F:LPsOnHomotopyT2bundleoverT2_pi1}}
	\caption{The embeddings $\varphi_L$ and $\varphi_R$.} \label{F:LPsOnHomotopyT2bundleoverT2}
\end{figure}
Then we can regard $\alpha$ and $\beta$ as elements in $\Mod(\Sigma_3^4;U)$ via those embeddings; for instance, take $\varphi_L \circ \alpha \circ \varphi_L^{-1}$ on the image of $\varphi_L$ and extend it as the inentity map on the complement $\Sigma_3^4 \setminus \varphi_L(\Sigma_1^1)$. 
Let $\alpha_L$ denote the resulting mapping class in $\Mod(\Sigma_3^4;U)$.
Similarly we have $\alpha_R$, $\beta_L$, $\beta_R$ corresponding to $\alpha$ via $\varphi_R$, $\beta$ via $\varphi_L$, $\beta$ via $\varphi_R$, respectively.
Note that we can deduce from the comutativity between $\alpha$ and $\beta$ that $\alpha_L$ and $\beta_L$ commute and that $\alpha_R$ and $\beta_R$ commute.
Obviously any other pair among $\alpha_L$, $\alpha_R$, $\beta_L$, $\beta_R$ also commutes.
Since $t_c$ and $t_d$ are generators of $\Sigma_1^1$, $\alpha$ and $\beta$ may be written as words of $t_c$ and $t_d$.
Fix such word expressions.
Then $\alpha_L$ and $\beta_L$ are written as the words of $t_{L_1}$ and $t_{L_2}$ corresponding to the fixed expressions, while $\alpha_R$ and $\beta_R$ are written as the corresponding words of $t_{R_1}$ and $t_{R_2}$.
By Lemma~\ref{L:ActionsOfSubfactors} the conjugation of $t_{L_i}$ by any of $X_1$, $Y_1$, $X_2$, $Y_2$ is $t_{R_i}$, hence, the conjugation of the fixed word for $\alpha_L$ by $X_i$ or $Y_i$ is exactly the fixed word for $\alpha_R$.
This simply means that the conjugation of $\alpha_L$ by $X_i$ or $Y_i$ is $\alpha_R$.
We can apply similar arguments to the conjugations of $\alpha_R$, $\beta_L$ and $\beta_R$.
In summary, we have the following \textit{switching property}:
\begin{align*}
	X_i \alpha_L X_i^{-1} &= \alpha_R, & X_i \alpha_R X_i^{-1} &= \alpha_L, & X_i \beta_L X_i^{-1} &= \beta_R, & X_i \beta_R X_i^{-1} &= \beta_L,\\
	Y_i \alpha_L Y_i^{-1} &= \alpha_R, & Y_i \alpha_R Y_i^{-1} &= \alpha_L, & Y_i \beta_L Y_i^{-1} &= \beta_R, & Y_i \beta_R Y_i^{-1} &= \beta_L,
\end{align*}
for $i=1,2$.

In order to create a desired factorization, we modify the factorization~\eqref{eq:CombinatorialSmith'sLP} by using $\alpha_R$, $\alpha_L$, $\beta_R$ and $\beta_L$ as follows:
{\allowdisplaybreaks
\begin{align*}
	t_{\delta_1}\cdots t_{\delta_4} &= t_{d_1} \cdots t_{d_{12}} \\ 
	&= X_1 Y_1 X_2 Y_2 \\ 
	&= \alpha_R \cdot X_1 Y_1 \cdot \alpha_R^{-1} \underline{\alpha_R \cdot X_2 \cdot \alpha_R^{-1}} ~\underline{\alpha_R \cdot Y_2 \cdot \alpha_R^{-1}}  \\
	&= \underline{\alpha_R X_1} Y_1  \alpha_R^{-1} {}_{\alpha_R}(X_2) {}_{\alpha_R}(Y_2)  \\ 
	&= X_1 \underline{\alpha_L Y_1}  \alpha_R^{-1} {}_{\alpha_R}(X_2) {}_{\alpha_R}(Y_2)  \\
	&= X_1  Y_1 \alpha_R  \alpha_R^{-1} {}_{\alpha_R}(X_2) {}_{\alpha_R}(Y_2)  \\
	\displaybreak[0] &= X_1  Y_1  {}_{\alpha_R}(X_2) {}_{\alpha_R}(Y_2)  \\
	&= X_1 \cdot \beta_R^{-1} \underline{\beta_R \cdot Y_1 \cdot \beta_R^{-1}} \beta_R \cdot {}_{\alpha_R}(X_2) {}_{\alpha_R}(Y_2)  \\
	&= \underline{X_1 \beta_R^{-1}} {}_{\beta_R}(Y_1) \underline{\beta_R {}_{\alpha_R}(X_2)} {}_{\alpha_R}(Y_2)  \\
	&= \beta_L^{-1} X_1  {}_{\beta_R}(Y_1) \underline{\beta_R \alpha_R X_2 \alpha_R^{-1}} {}_{\alpha_R}(Y_2)  \\
	&= X_1  {}_{\beta_R}(Y_1) \underline{\alpha_R \beta_R X_2 \alpha_R^{-1}} {}_{\alpha_R}(Y_2) \beta_L^{-1} \\
	&= X_1  {}_{\beta_R}(Y_1) \underline{\alpha_R X_2 \beta_L \alpha_R^{-1}} {}_{\alpha_R}(Y_2) \beta_L^{-1} \\
	&=  X_1  {}_{\beta_R}(Y_1) \underline{\alpha_R X_2 \alpha_R^{-1}} ~ \underline{\beta_L {}_{\alpha_R}(Y_2) \beta_L^{-1}} \\
	&=  X_1  {}_{\beta_R}(Y_1) {}_{\alpha_R}(X_2)   {}_{\beta_L \alpha_R}(Y_2),
\end{align*}
}
where the sign ``$=$'' above means equality as a mappig class, not as a factorization.
Here we freely used the switching property explained above as well as the commutativity among $\alpha_L$, $\alpha_R$, $\beta_L$, $\beta_R$ (and $t_{d_1}, \cdots, t_{d_4}$).
In other words, we have obtained the following factorization:
\begin{equation}
	t_{d_1}t_{d_2}t_{d_3} t_{\beta_R(d_4)}t_{\beta_R(d_5)}t_{\beta_R(d_6)} t_{\alpha_R(d_7)}t_{\alpha_R(d_8)}t_{\alpha_R(d_9)} t_{\beta_L\alpha_R(d_{10})}t_{\beta_L\alpha_R(d_{11})}t_{\beta_L\alpha_R(d_{12})} = t_{\delta_1}t_{\delta_2}t_{\delta_3}t_{\delta_4}. \label{eq:LPonT2bdl}
\end{equation}

Let $f_{\alpha,\beta} : X_{\alpha,\beta} \setminus B_{\alpha,\beta} \to \CP^1$ be 
the Lefschetz pencil corresponding to the monodromy factorization~\eqref{eq:LPonT2bdl}.
The pencil $f_{\alpha,\beta}$ has $12$ critical points and $4$ base points, hence the Euler characteristic of $X_{\alpha,\beta}$ is $0$.
The signature of $X_{\alpha,\beta}$ is also $0$ since we modified the factorization~\eqref{eq:CombinatorialSmith'sLP}, whose corresponding pencil has the signature $0$, by only using braid relations, which do not change the signature~\cite{EN_2005}.

\begin{lemma} \label{L:FundamentalGroupOfXalphabeta}
	The fundamental group $\pi_1(X_{\alpha,\beta})$ of the total space of the Lefschetz pencil $f_{\alpha,\beta}$ is isomorphic to that of the total space of the $T^2$-bundle over $T^2$ associated with the monodoromy factorization $[\alpha,\beta]=1$ in $\Mod(\Sigma_1^1;U)$.
\end{lemma}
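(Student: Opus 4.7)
The plan is to compute $\pi_1(X_{\alpha,\beta})$ as a quotient of $\pi_1(\Sigma_3)$ by the normal closure of the twelve vanishing cycles read off from \eqref{eq:LPonT2bdl}, and then match this presentation directly with the four-generator, six-relation presentation of Lemma~\ref{L:FundamentalGroup_T^2bdl}. First, since every base point of $f_{\alpha,\beta}$ lies on a $(-1)$-section, a standard van Kampen argument (using a basepoint on such a section, neighborhoods of singular fibers, and the complement) gives
\[
\pi_1(X_{\alpha,\beta}) \;\cong\; \pi_1(\Sigma_3)\big/N,
\]
where $N$ is the normal closure of the based loops $d_1,d_2,d_3,\beta_R(d_4),\beta_R(d_5),\beta_R(d_6),\alpha_R(d_7),\alpha_R(d_8),\alpha_R(d_9),\beta_L\alpha_R(d_{10}),\beta_L\alpha_R(d_{11}),\beta_L\alpha_R(d_{12})$.

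Second, I would choose a basepoint in the middle handle of $\Sigma_3$ lying between $\varphi_L(\Sigma_1^1)$ and $\varphi_R(\Sigma_1^1)$, together with the paths $l_L,l_R$ of Figure~\ref{F:LPsOnHomotopyT2bundleoverT2_pi1}, and take a standard surface generating system $a_1,b_1,a_2,b_2,a_3,b_3$ for $\pi_1(\Sigma_3)$ so that $a_1=l_L\cdot\varphi_L(c)\cdot l_L^{-1}$, $b_1=l_L\cdot\varphi_L(d)\cdot l_L^{-1}$, $a_3=l_R\cdot\varphi_R(c)\cdot l_R^{-1}$, $b_3=l_R\cdot\varphi_R(d)\cdot l_R^{-1}$, while $a_2,b_2$ generate the middle handle, subject to the surface relation $[a_1,b_1][a_2,b_2][a_3,b_3]=1$.

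Third, and this is the heart of the argument, I would trace each of the twelve relations imposed by $N$. The six cycles coming from $X_1Y_1$ correspond exactly to the six vanishing cycles of Matsumoto's factorization inside the subsurface $\Sigma_2^4\subset\Sigma_3^4$ pictured in Figure~\ref{F:SmithsLPConfiguration_1_3}; in the untwisted case $\alpha=\beta=1$ (Smith's pencil) these force the identifications $a_1=a_3$, $b_1=b_3$ together with the commutations $[a_1,a_2]=[a_1,b_2]=[b_1,a_2]=[b_1,b_2]=1$. Conjugating the three $Y_1$-cycles by $\beta_R$ twists the last two of those commutations into the relations $c\,a_2\,\alpha(c)^{-1}a_2^{-1}$ and $c\,b_2\,\beta(c)^{-1}b_2^{-1}$ once we rename $c:=a_1$, $d:=b_1$, $a:=a_2$, $b:=b_2$; here I would use the switching property of Lemma~\ref{L:ActionsOfSubfactors} together with the fact that, by construction of $\alpha_R,\beta_R$ via $\varphi_R$, their action on a loop supported in $\varphi_R(\Sigma_1^1)$ is precisely conjugation by the corresponding word in $\alpha,\beta$. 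The remaining six cycles coming from $X_2Y_2$, twisted by $\alpha_R$ and $\beta_L\alpha_R$, yield by the symmetric calculation the relations $[c,d]=1$, $d\,a\,\alpha(d)^{-1}a^{-1}$ and $d\,b\,\beta(d)^{-1}b^{-1}$, completing the presentation of Lemma~\ref{L:FundamentalGroup_T^2bdl}.

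The principal obstacle will be the combinatorial bookkeeping in this third step: one has to fix explicit reference arcs from the basepoint to each vanishing cycle, track how conjugation by $\alpha_R$, $\beta_R$ and $\beta_L\alpha_R$ acts on these based loops, and verify that the twelve resulting relations collapse to exactly the six of Lemma~\ref{L:FundamentalGroup_T^2bdl}, with the surface relation $[a_1,b_1][a_2,b_2][a_3,b_3]=1$ becoming redundant in the quotient. A reassuring consistency check is the untwisted specialization $\alpha=\beta=1$: all conjugations are then trivial, and the quotient must recover $\pi_1(T^4)\cong\Z^4$, which matches Smith's pencil whose total space is $T^4$.
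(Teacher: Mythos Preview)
Your overall strategy is exactly the paper's: compute $\pi_1(X_{\alpha,\beta})$ as $\pi_1(\Sigma_3)$ modulo the normal closure of the twelve vanishing cycles, use the generators $a_1,b_1,a_2,b_2,a_3,b_3$ of Figure~\ref{F:LPsOnHomotopyT2bundleoverT2_pi1}, first treat the untwisted case, then track how the conjugations by $\alpha_R,\beta_R,\beta_L\alpha_R$ modify the relators. The paper carries this out in full and obtains precisely the presentation of Lemma~\ref{L:FundamentalGroup_T^2bdl}.

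However, your bookkeeping in the third step is scrambled and would lead you astray if you followed it literally. In the untwisted case the six cycles of $X_1Y_1$ do \emph{not} yield all four commutations $[a_1,a_2]=[a_1,b_2]=[b_1,a_2]=[b_1,b_2]=1$: the three cycles $d_1,d_2,d_3$ of $X_1$ give $a_1=a_3$, $b_1=b_3$, $[a_1,b_1]=1$, while $d_4,d_6$ of $Y_1$ give only the two commutations involving $b_2$. The commutations involving $a_2$ come from $d_7,d_9$ in $X_2$. Consequently, twisting $Y_1$ by $\beta_R$ produces the \emph{$\beta$}-relations $cb\beta(c)^{-1}b^{-1}$ and $db\beta(d)^{-1}b^{-1}$ (not any $\alpha$-relation), and twisting $X_2$ by $\alpha_R$ produces the $\alpha$-relations $ca\alpha(c)^{-1}a^{-1}$ and $da\alpha(d)^{-1}a^{-1}$. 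The $\beta_L\alpha_R$-twisted $Y_2$ cycles contribute nothing new. Finally, the surface relation $[a_1,b_1][a_2,b_2][a_3,b_3]=1$ is \emph{not} redundant: it is precisely what, combined with $[a_1,b_1]=[a_3,b_3]=1$, yields the relator $[a,b]=[a_2,b_2]$ in the target presentation. Once you correct these attributions, your sketch and the paper's proof coincide.
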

\begin{proof}
It is a standard fact that the fundamental group $\pi_1(X)$ of the total space $X$ of a genus-$g$ Lefschetz pencil with a monodromy factorization $t_{c_n}\cdots t_{c_1}=t_{\delta_1}\cdots t_{\delta_p}$  is isomorphic to the quotient $\pi_1(\Sigma_g)/\left<c_1,\ldots,c_n\right>$, where $\left<c_1,\ldots,c_n\right>$ is the normal subgroup generated by the curves $c_1,\ldots,c_n$.
Let us begin with the easiest case that $\alpha=\beta=\id$, which is Smith's pencil itself.
We give an explicit presentation of $\pi_1(X_{\id,\id})$, which is of course $\pi_1(T^4) = \Z^4$, by deriving from the monodromy factorization~\eqref{eq:CombinatorialSmith'sLP}.
Starting from the standard generators $a_1,b_1,\cdots, a_3,b_3$ of $\pi_1(\Sigma_3)$ as in Figure~\ref{F:LPsOnHomotopyT2bundleoverT2_pi1}, we get a presentation of $\pi_1(X_{\id,\id})$ with the same generators and the following defining relators:
{\allowdisplaybreaks
\begin{align}
	& [a_1,b_1][a_2,b_2][a_3,b_3]=1, \tag{R-$0$}\\
	& a_1 [b_1,a_1] a_3^{-1} =1, \tag{R-$d_1$}\\
	& a_1 a_1 b_1^{-1} a_1^{-1} b_3 a_3^{-1} =1, \tag{R-$d_2$}\\
	& a_1 b_1^{-1} a_1^{-1} b_3 =1, \tag{R-$d_3$}\\
	& a_1 a_2 b_2 a_2^{-1} a_3^{-1} b_2^{-1} [a_3, b_3] =1, \tag{R-$d_4$}\\
	& a_1 b_1^{-1} a_2 b_2 a_2^{-1} b_3 a_3^{-1} b_2^{-1} [a_3,b_3] =1, \tag{R-$d_5$}\\
	& b_1 [b_3,a_3] b_2 b_3^{-1} a_2 b_2^{-1} a_2^{-1} =1, \tag{R-$d_6$}\\ 
	& a_1 a_2 [a_3, b_3] a_3^{-1} a_2^{-1} =1, \tag{R-$d_7$}\\
	& a_1 b_1^{-1} a_2 a_3 b_3 a_3^{-1} a_3^{-1} a_2^{-1} =1, \tag{R-$d_8$}\\
	& b_1 a_2 a_3 b_3^{-1} a_3^{-1} a_2^{-1} =1, \tag{R-$d_9$}\\
	& a_1 a_2 b_2^{-1} [a_3,b_3] a_3^{-1} b_2 a_2^{-1} =1,  \tag{R-$d_{10}$}\\
	& a_1 b_1^{-1} a_2 b_2^{-1} a_3 b_3 a_3^{-1} a_3^{-1} b_2 a_2^{-1} =1, \tag{R-$d_{11}$}\\
	& b_1 a_2 b_2^{-1} a_3 b_3^{-1} a_3^{-1} b_2 a_2^{-1} =1. \tag{R-$d_{12}$}
\end{align}
}
Here the relator (R-$d_i$) comes from the vanishing cycle $d_i$.
By substituting (R-$d_3$)  to (R-$d_2$) we obtain the relation $a_1 = a_3$, with which (R-$d_1$) implies that $[b_1,a_1]=1$.
Then (R-$d_3$) gives that $b_1 = b_3$.
Now we know that $[a_1,b_1]=[a_3,b_3]=1$, hence, (R-$0$) reduces to $[a_2,b_2]=1$.
Note that we so far have the set of the relations $a_1=a_3$, $b_1=b_3$, $[a_1,b_1]=[a_2,b_2]=1$, and that this is indeed equivalent to the set of the relations (R-$0$), (R-$d_1$), (R-$d_2$), (R-$d_3$).
With those new relations in mind, (R-$d_4$) becomes $[a_1, b_2]=1$, (R-$d_6$) becomes $[b_1,b_2]=1$, (R-$d_7$) becomes $[a_1,a_2]=1$ and (R-$d_9$) becomes $[b_1,a_2]=1$.
None of the other defining relators gives a new relation among $a_i$ and $b_j$.
Therefore, by renaming $a=a_2$, $b=b_2$, $c=a_1=a_3$, $d=b_1=b_3$, $\pi_1(X_{\id,\id})$ is the free abelian group generated by $a$, $b$, $c$ and $d$.

We can modify the presentation of $\pi_1(X_{\id,\id})$ to obtain a presentation of $\pi_1(X_{\alpha,\beta})$ with generators $a_1,b_1,\cdots, a_3,b_3$ and with defining relators (R-$0$), (R-$d_1$), (R-$d_2$), (R-$d_3$) and the following ones:
{\allowdisplaybreaks
	\begin{align}
		& a_1 a_2 b_2 a_2^{-1} \beta_R(a_3)^{-1} b_2^{-1} [\beta_R(a_3), \beta_R(b_3)] =1, \tag{R-$\beta_R(d_4)$}\\
		& a_1 b_1^{-1} a_2 b_2 a_2^{-1} \beta_R(b_3) \beta_R(a_3)^{-1} b_2^{-1} [\beta_R(a_3),\beta_R(b_3)] =1, \tag{R-$\beta_R(d_5)$}\\
		& b_1 [\beta_R(b_3),\beta_R(a_3)] b_2 \beta_R(b_3)^{-1} a_2 b_2^{-1} a_2^{-1} =1, \tag{R-$\beta_R(d_6)$}\\ 
		& a_1 a_2 [\alpha_R(a_3), \alpha_R(b_3)] \alpha_R(a_3)^{-1} a_2^{-1} =1, \tag{R-$\alpha_R(d_7)$}\\
		& a_1 b_1^{-1} a_2 \alpha_R(a_3) \alpha_R(b_3) \alpha_R(a_3)^{-1} \alpha_R(a_3)^{-1} a_2^{-1} =1, \tag{R-$\alpha_R(d_8)$}\\
		& b_1 a_2 \alpha_R(a_3) \alpha_R(b_3)^{-1} \alpha_R(a_3)^{-1} a_2^{-1} =1, \tag{R-$\alpha_R(d_9)$}\\
		& \beta_L(a_1) a_2 b_2^{-1} [\alpha_R(a_3),\alpha_R(b_3)] \alpha_R(a_3)^{-1} b_2 a_2^{-1} =1,  \tag{R-$\beta_L\alpha_R(d_{10})$}\\
		& \beta_L(a_1) \beta_L(b_1)^{-1} a_2 b_2^{-1} \alpha_R(a_3) \alpha_R(b_3) \alpha_R(a_3)^{-1} \alpha_R(a_3)^{-1} b_2 a_2^{-1} =1, \tag{R-$\beta_L\alpha_R(d_{11})$}\\
		& \beta_L(b_1) a_2 b_2^{-1} \alpha_R(a_3) \alpha_R(b_3)^{-1} \alpha_R(a_3)^{-1} b_2 a_2^{-1} =1. \tag{R-$\beta_L\alpha_R(d_{12})$}
	\end{align}
}
Again the relator (R-$*$) corresponds to each vanishing cycle.
As we discussed above, the relators (R-$0$), (R-$d_1$), (R-$d_2$), (R-$d_3$) imply that 
$a_1=a_3$, $b_1=b_3$ and $[a_1,b_1]=[a_2,b_2]=1$.
We regard $\pi_1(\Sigma_1)$ as the qutient $\pi_1(\Sigma_1^1) / \left< \delta \right> = \left< c, d \right> / \left< [c,d] \right> $ and then construct a homomorphism ${\varphi_L}_* : \pi_1(\Sigma_1) \to \pi_1(X_{\alpha,\beta})$ as follows: 
for an element $g\in \pi_1(\Sigma_1^1)$ we define ${\varphi_L}_*([g])$ to be $l_L \cdot \varphi_L(g) \cdot l_L^{-1}$, where $[g] \in \pi_1(\Sigma_1)=\pi_1(\Sigma_1^1)/\left< \delta \right>$ is an element represented by $g$. 
The map ${\varphi_L}_* $ is well-defined since $l_L\cdot {\varphi_L}(c)\cdot l_L^{-1} = a_1$, $l_L\cdot {\varphi_L}(d)\cdot l_L^{-1} =b_1$ and $[a_1,b_1]=1$.
Similarly we can define another homomorphism ${\varphi_R}_* : \pi_1(\Sigma_1) \to \pi_1(X_{\alpha,\beta})$ as $[g] \mapsto l_R \cdot \varphi_R(g) \cdot l_R^{-1}$.
However, ${\varphi_L}_*$ and ${\varphi_R}_*$ in fact coincide since ${\varphi_L}_*(c) = a_1 = a_3 = {\varphi_R}_*(c)$ and ${\varphi_L}_*(d) = b_1 = b_3 = {\varphi_R}_*(d)$.
It follows that $\alpha_L(a_1) = {\varphi_L}_*(\alpha(c)) = {\varphi_R}_*(\alpha(c)) = \alpha_R(a_3)$, and similarly $\alpha_L(b_1)=\alpha_R(b_3)$, $\beta_L(a_1) = \beta_R(a_3)$ and $\beta_L(b_1) = \beta_R(b_3)$.
Therefore we can identify $c$ with $a_1=a_3$ and $d$ with $b_1=b_3$ in a way that $\alpha(c)=\alpha_L(a_1)=\alpha_R(a_3)$ as a word of $c=a_1=a_3$ and $d=b_1=b_3$, and $\alpha(d)=\alpha_L(b_1)=\alpha_R(b_3)$, and so on.
We also rename $a=a_2$ and $b=b_2$.
Then (R-$\beta_R(d_4)$) becomes $c b \beta(c)^{-1} b^{-1}=1$,
(R-$\beta_R(d_6)$) becomes $d b \beta(d)^{-1} b^{-1}=1$,
(R-$\alpha_R(d_7)$) becomes $c a \alpha(c)^{-1} a^{-1}=1$ and 
(R-$\alpha_R(d_9)$) becomes $d a \alpha(d)^{-1} a^{-1}=1$.
No other defining relators give a new relation.
In conclusion, $\pi_1(X_{\alpha,\beta})$ is isomorphic to the group described in Lemma~\ref{L:FundamentalGroup_T^2bdl}, hence the fundanmental group of the $T^2$-bundle over $T^2$ associated with the monorodomy factorization $[\alpha,\beta]=1$, as desired.
\end{proof}

We are now ready to prove Theorem~\ref{T:LPtorusbdl}. 

\begin{proof}[Proof of Theorem~\ref{T:LPtorusbdl}]
The fundamental group of a $T^2$-bundle over $T^2$ cannot be isomorphic to that of a rational or ruled surface.
The theorem by Baykur-Hayano~\cite[Theorem 4.1]{BaykurHayano_multisection} impies that the Lefschetz pencil $f_{\alpha,\beta}$ is symplectic Calabi-Yau.
Furthermore, we can deduce from \cite[Corollary 3.3]{FriedlVidussi} that a symplectic Calabi-Yau four-manifold $M$ is homeomorphic to the total space $X$ of a $T^2$-bundle $X\to T^2$ with a section if and only if $\pi_1(M)$ is isomorphic to $\pi_1(X)$. 
By Lemma~\ref{L:FundamentalGroupOfXalphabeta} we can conclude that $X_{\alpha,\beta}$ is homeomorphic to the total space of $T^2$-bundle over $T^2$ with a section whose monodromy representation sends two elements generating $\pi_1(T^2)$ to $\alpha$ and $\beta$.
\end{proof}

We end this subsection with the following conjecture: 

\begin{conjecture}

The pencil $f_{\alpha,\beta}$ is isomorphic to that constructed by Smith~\cite{Smith_2001_torus}. 
In particular the total space $X_{\alpha,\beta}$ is \emph{diffeomorphic} to that of a $T^2$-bundle over $T^2$. 

\end{conjecture}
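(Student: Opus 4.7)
My plan is to follow exactly the template the authors established for Smith's genus-$3$ pencil and apply each step to the modified factorization~\eqref{eq:LPonT2bdl}. The overall strategy has four stages: first, verify that~\eqref{eq:LPonT2bdl} is a bona fide positive factorization in $\Mod(\Sigma_3^4;U)$; second, compute the basic invariants $\chi$ and $\sigma$ of $X_{\alpha,\beta}$; third, identify $\pi_1(X_{\alpha,\beta})$ with the fundamental group of the target $T^2$-bundle; and fourth, invoke the symplectic Calabi--Yau criterion of Baykur--Hayano together with Friedl--Vidussi's classification to upgrade the $\pi_1$ coincidence to a homeomorphism.

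Stage one is essentially algebraic: the chain of equalities displayed immediately above~\eqref{eq:LPonT2bdl} shows that~\eqref{eq:LPonT2bdl} represents the same mapping class as~\eqref{eq:CombinatorialSmith'sLP}, namely $t_{\delta_1}\cdots t_{\delta_4}$, provided that the switching identities of Lemma~\ref{L:ActionsOfSubfactors}(2) are used correctly to move $\alpha_R,\beta_R$ across the $X_i,Y_i$ blocks, and that $\alpha_L,\alpha_R,\beta_L,\beta_R$ mutually commute. The last fact holds because the pairs $(\alpha_L,\beta_L)$ and $(\alpha_R,\beta_R)$ come from the embeddings $\varphi_L,\varphi_R$ of the commuting pair $(\alpha,\beta)$, while pairs from different embeddings have disjoint supports. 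So I would first carefully verify that each rewriting step is legitimate. Stage two is then immediate: with $12$ critical points and $4$ base points on a genus-$3$ pencil one gets $\chi(X_{\alpha,\beta})=4-4\cdot 3+12-4=0$, and signature vanishes because~\eqref{eq:LPonT2bdl} is obtained from~\eqref{eq:CombinatorialSmith'sLP} by inserting and canceling inverse pairs together with braid relations, both of which preserve the signature of a positive factorization in the sense of Endo--Nagami~\cite{EN_2005}.

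Stage three is the main technical heart, and the place I expect the most work. Following the proof of Lemma~\ref{L:FundamentalGroupOfXalphabeta}, I present $\pi_1(X_{\alpha,\beta})$ on the standard generators $a_1,b_1,\ldots,a_3,b_3$ of $\pi_1(\Sigma_3)$ with relator (R-$0$) together with one relator per vanishing cycle of~\eqref{eq:LPonT2bdl}. The first four relators arise from $d_1,d_2,d_3$ and the surface relation and are unaffected by the modification, so the reductions $a_1=a_3$, $b_1=b_3$, $[a_1,b_1]=[a_2,b_2]=1$ go through exactly as in the $\alpha=\beta=\id$ case. Next I would show that, under these identifications, the two induced homomorphisms ${\varphi_L}_*,{\varphi_R}_*\colon \pi_1(\Sigma_1)\to \pi_1(X_{\alpha,\beta})$ coincide, so that $\alpha_L(a_1)=\alpha_R(a_3)$, $\beta_L(b_1)=\beta_R(b_3)$, and similarly for the $b_j$. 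With this in hand, setting $c=a_1=a_3$, $d=b_1=b_3$, $a=a_2$, $b=b_2$, the relators coming from $\beta_R(d_4),\beta_R(d_6),\alpha_R(d_7),\alpha_R(d_9)$ reduce to precisely the four mixed relations $cb\beta(c)^{-1}b^{-1}=db\beta(d)^{-1}b^{-1}=ca\alpha(c)^{-1}a^{-1}=da\alpha(d)^{-1}a^{-1}=1$ listed in Lemma~\ref{L:FundamentalGroup_T^2bdl}, and the remaining five relators are seen to be redundant. The delicate point is rewriting each $\beta_R(d_i)$, $\alpha_R(d_i)$ and $\beta_L\alpha_R(d_i)$ as a concrete loop on $\Sigma_3$ and showing that the associated word in the $a_j,b_j$ collapses, after the four known reductions, to exactly the expected bundle relation; tracking which relations are consequences of the others (rather than genuinely new) is where the computation must be handled with care.

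Once $\pi_1(X_{\alpha,\beta})$ has been identified with the fundamental group of the target bundle $X$, the conclusion follows quickly. The fundamental group of a $T^2$-bundle over $T^2$ with section is neither the fundamental group of a rational nor of a ruled surface, so Baykur--Hayano~\cite[Theorem 4.1]{BaykurHayano_multisection} applies to the Lefschetz pencil $f_{\alpha,\beta}$ and shows that $X_{\alpha,\beta}$ is a symplectic Calabi--Yau four-manifold. Both $X_{\alpha,\beta}$ and $X$ are then symplectic Calabi--Yau four-manifolds with isomorphic fundamental groups, hence Friedl--Vidussi~\cite[Corollary 3.3]{FriedlVidussi} yields a homeomorphism $X_{\alpha,\beta}\cong X$, which is precisely the statement of Theorem~\ref{T:LPtorusbdl}.
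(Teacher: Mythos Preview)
The statement you were asked to address is the \emph{Conjecture} at the end of Subsection~\ref{Subsection:T2bdloverT2}, which asserts that $f_{\alpha,\beta}$ is \emph{isomorphic as a pencil} to Smith's construction and that $X_{\alpha,\beta}$ is \emph{diffeomorphic} to the total space of the corresponding $T^2$-bundle. The paper does not prove this; it is left as an open problem. What you have written is instead a correct outline of the proof of Theorem~\ref{T:LPtorusbdl}, which is the weaker statement that $X_{\alpha,\beta}$ is \emph{homeomorphic} to the bundle total space. Your own final sentence acknowledges this: you conclude with ``a homeomorphism $X_{\alpha,\beta}\cong X$, which is precisely the statement of Theorem~\ref{T:LPtorusbdl}.''

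Nothing in your argument addresses the two genuinely harder claims in the Conjecture. Friedl--Vidussi~\cite[Corollary 3.3]{FriedlVidussi} gives only a homeomorphism, not a diffeomorphism, and your argument produces no comparison whatsoever between the monodromy factorization~\eqref{eq:LPonT2bdl} and the vanishing cycles of Smith's geometric pencil on the bundle total space; an isomorphism of pencils would require a Hurwitz equivalence between the two factorizations (Theorem~\ref{T:isom_Hurwitzeq}), which you have not attempted. As a proof of Theorem~\ref{T:LPtorusbdl} your outline is essentially identical to the paper's own argument (Lemma~\ref{L:FundamentalGroupOfXalphabeta} followed by Baykur--Hayano and Friedl--Vidussi), but as a proof of the Conjecture it has a fundamental gap: the step from homeomorphism to diffeomorphism, and from same-total-space to isomorphic-pencil, is precisely what is missing and is the reason the authors stated this as a conjecture rather than a theorem.
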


%%%%%%%%%%%%%%%%%%%%%%%%%%%%%%%%%%%%%%%%%%%%%%%%%%%%%%%%%%%%%%%%%%%%%

%%%%%%%%%%%%%%%%%%%%%%%%%%%%%%%%%%%%%%%%%%%%%%%%%%%%%%%%%%%%%%%%%%%%%
%%%%%%%%%%%%%%%%%%%%%%%%%%%%%%%%%%%%%%%%%%%%%%%%%%%%%%%%%%%%%%%%%%%%%

\appendix

\section{Homology groups of Lefschetz pencils from monodoromy factorizations}\label{A:2ndhomology}

Let $f:X\setminus B\to \CP^1$ be a genus-$g$ Lefschetz pencil and $t_{c_n}\cdots t_{c_1}=t_{\delta_1}\cdots t_{\delta_p}$ a monodromy factorization of $f$. 
As we used in the proof of Lemma~\ref{L:FundamentalGroupOfXalphabeta}, the fundamental group $\pi_1(X)$ is calculated from the vanishing cycles  $c_1,\ldots,c_n$.
In particular, we can easily calculate $H_1(X;\Z)$, which is isomorphic to the abelianization of $\pi_1(X)$. 
Since the Euler characteristic of $X$ is equal to $4-4g+n-p$, we can deduce from the universal coefficient theorem that $H_2(X;\Z)$ is isomorphic to $\Z^{r}\oplus T(H_1(X;\Z))$, 
where $r = 2-4g+n-p+2\rank(H_1(X;\Z))$ and $T(H_1(X;\Z))$ is the torsion part of $H_1(X;\Z)$. 
However, the above observation does not give any information on the fiber class of $f$, especially the divisibility of $f$. 
In this appendix, we will explain how to obtain the divisibility of $f$ by calculating the second homology of $X$ from a handlebody structure associated with $f$. 

Let $\tilde{X}$ be the blow-up of $X$ at the points in $B$ and $\tilde{f}$ the Lefschetz fibration on $\tilde{X}$ derived from $f$. 
The manifold $\tilde{X}$ can be decomposed as follows: 
\begin{equation}\label{E:decomposition_totalsp}
\tilde{X} = D^2\times \Sigma_g \cup \mbox{($2$-handles)} \cup D^2\times \Sigma_g, 
\end{equation}
where the two $D^2\times \Sigma_g$'s are tubular neighborhoods of regular fibers and each $2$-handle corresponds with a Lefschetz singularity of $\tilde{f}$ and are attached along a vanishing cycle with framing $(-1)$ with respect to the fiber framing (see \cite{Kas_1980}). 
We denote the $2$-handle in the above decomposition corresponding the vanishing cycle $c_i$ by $h_L^i$. 
The two $D^2\times \Sigma_g$'s above can be further decomposed as follows: 

\begin{itemize}

\item
the former $D^2\times \Sigma_g$ can be decomposed into a $0$-handle, $2g$ $1$-handles and a $2$-handle, which we denote by $h_F$, 

\item
the latter $D^2\times \Sigma_g$ can be decomposed into $p$ $2$-handles, $2g+p-1$ $3$-handles and a $4$-handle, where the cores of the $2$-handles are contained in the exceptional spheres arising in the blow-up. 
We denote the $2$-handle in this decomposition by $h_S^1,\ldots,h_S^p$. 

\end{itemize}

\noindent
The decompositions above gives rise to a handlebody structure of $\tilde{X}$, especially we can obtain a chain complex $\mathcal{C} =\{(C_i,\Pa_i)\}_i$ such that $C_i$ is a free abelian group generated by the $i$-handles above. 
The homology group $H_i(\mathcal{C})$ is isomorphic to $H_i(\tilde{X};\Z)$, and the fiber class of $\tilde{f}$ (resp.~the exceptional classes) are represented by $h_F$ (resp.~$h_S^1,\ldots,h_S^p$). 

The group $C_1$ can be identified with $H_1(\Sigma_g;\Z)$ in the obvious way. 
Under this identification, $\Pa_2:C_2\to C_1$ sends $h_L^i$ to the homology class of the corresponding vanishing cycle. 
Since $h_F$ and $h_S^j$ are contained in $Z_2=\ker \Pa_2$, we can obtain a generating set $\{z_1,\ldots,z_m,h_F,h_S^1,\ldots,h_S^p\}$ of $Z_2$ such that each $z_i$ is a linear combination of $h_L^1,\ldots,h_L^n$. 

The decomposition of the latter $D^2\times \Sigma_g$ in \eqref{E:decomposition_totalsp} is induced from the handle decomposition of $\Sigma_g$, which consists of $p$ $0$-handles, $2g+p-1$ $1$-handles and a $2$-handles. 
Indeed, we can regard an $i$-handle of this $D^2\times \Sigma_g$ as the product of $D^2$ and an $(i-2)$-handle of $\Sigma_g$. 
Let $\Sigma_g^p$ be the surface obtained by removing the $p$ $0$-handles of $\Sigma_g$. 
We take points $q_1,\ldots,q_p\in \Pa\Sigma_g^p$ so that the map $\pi_0(\{q_1,\ldots,q_p\})\to\pi_0(\Pa\Sigma_g^p)$ induced by the inclusion is bijective. 
Let $\delta_i$ be the simple closed curve in $\Sigma_g^p$ parallel to the boundary component containing $q_i$. 
We take a disk $D\subset \Int(\Sigma_g^p)$, a point $q\in \Pa D$ and a path $\alpha\subset \Sigma_g^p\setminus \Int(D)$ connecting $q_1$ and $q$. 
Let $\Sigma_g^{p+1}$ be the surface $\Sigma_g^p\setminus \Int(D)$ and $Q$ the set $\{q_1,\ldots,q_p,q\}$. 
It is easy to see that the homology group $H_1(\Sigma_g^{p+1},Q;\Z)$ is isomorphic to $\Z^{2g+2p}$ and is generated by elements represented by the cores of $1$-handles of $\Sigma_g$, $\delta_1,\ldots,\delta_p$ and $\alpha$. 
We can define an intersection pairing
\[
\varrho: H_1(\Sigma_g^{p+1},Q;\Z)\times H_1(\Sigma_g^{p+1};\Z)\to \Z
\]
which assigns $(\alpha,\beta)\in H_1(\Sigma_g^{p+1},Q;\Z)\times H_1(\Sigma_g^{p+1};\Z)$ to the algebraic intersection between a union of oriented paths in $\Pi(\Sigma_g^{p+1},Q)$ representing $\alpha$ and a closed curve representing $\beta$, where $\Pi(\Sigma_g^{p+1},Q)$ is the set of paths whose edges are points in $Q$. 

\begin{lemma}\label{L:boundary operator Pa3}

Let $\eta$ be a $3$-handle in the latter $D^2\times \Sigma_g$ in \eqref{E:decomposition_totalsp} and $\eta'$ the $1$-handle of $\Sigma_g$ corresponding $\eta$. 
Suppose that $\eta'$ is attached to two $0$-handles corresponding $h_S^i$ and $h_S^j$. 
We inductively take $k_i\in \Z$ and $\eta_i\in H_1(\Sigma_g^{p+1},Q;\Z)$ as follows: 

\begin{itemize}

\item
$\eta_0=\eta'$, 

\item
$k_i = \varrho(\eta_{i-1},{h_L^{i}}')$ and $\eta_i = \eta_{i-1}-k_i {h_L^i}'$, 

\end{itemize}

\noindent
where ${h_L^i}'$ is the $1$-handle corresponding $h_L^i$.
Then the element $\eta_n$ is equal to $\eta' + {h_S^i}'-{h_S^j}' - k_F[\Pa D]$ for some $k_F\in \Z$, and the following equality holds: 
\[
\Pa_3(\eta) = \sum_{l=1}^n k_l h_L^l + k_Fh_F + h_S^i-h_S^j. 
\]

\end{lemma}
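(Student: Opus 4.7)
The plan is to compute $\partial_3\eta$ geometrically by analyzing the attaching 2-sphere of the 3-handle $\eta$ and carrying it through the Lefschetz region to the 2-skeleton. Using the dual handle decomposition of the second $D^2\times\Sigma_g$, I would identify $\eta$ with $\eta'\times D^2$, whose attaching sphere decomposes as $\partial\eta'\times D^2\cup \eta'\times\partial D^2$. The two disks $\partial\eta'\times D^2$ are co-cores of $h_S^i$ and $h_S^j$, sitting inside the exceptional spheres over $q_i,q_j$, and contribute $h_S^i-h_S^j$ to $\partial_3\eta$ with signs determined by the orientation of $\eta'$.

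The real work is with the cylinder $\eta'\times\partial D^2\subset S^1\times\Sigma_g=\partial(\text{second }D^2\times\Sigma_g)$, which meets the rest of the 2-skeleton only after passing through the Lefschetz 2-handles. I would construct a 3-chain $\Xi$ in $\tilde X$ by parallel-transporting $\eta'$ along an arc $A\subset\CP^1$ that joins the reference points of the two copies of $D^2\times\Sigma_g$ and crosses the critical values $a_1,\ldots,a_n$ in the prescribed order. Parallel transport is well-defined away from the critical values; at each $a_l$, the transport fails to extend along the vanishing cycle $c_l$, and the failure is patched by inserting a multiple of the disk $h_L^l$. The multiplicity is precisely the geometric intersection of $c_l$ with the current transported arc, which after re-expression in $H_1(\Sigma_g^{p+1},Q;\Z)$ is $k_l=\varrho(\eta_{l-1},(h_L^l)')$. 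Each patch reduces the relative 1-class of the transport by $k_l(h_L^l)'$, which is exactly the recursion $\eta_l=\eta_{l-1}-k_l(h_L^l)'$.

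Once all $n$ singular levels are patched, the transported arc lies in the first $D^2\times\Sigma_g$, and the 3-chain $\Xi$ caps off the cylindrical part of $\partial\eta$ on one side and terminates on the other as a cylinder over a relative 1-chain representing $\eta_n$. In the generating system for $H_1(\Sigma_g^{p+1},Q;\Z)$ given by 1-handle cores, the $\delta_i$'s, and $\alpha$, the only contributions beyond $\eta'$ itself can be the two path-segments $(h_S^i)'-(h_S^j)'$ from $q$ to $q_i$ and $q_j$ (routed through the exceptional sphere data) together with a multiple $-k_F[\partial D]$ of the boundary meridian of $D$, which corresponds cellularly to the fiber 2-handle $h_F$. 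This establishes $\eta_n=\eta'+(h_S^i)'-(h_S^j)'-k_F[\partial D]$ and identifies the coefficient $k_F$ as the multiplicity of $h_F$ in $\partial_3\eta$. Combining the exceptional contributions, the vanishing-cycle insertions $\sum_l k_l h_L^l$, and the fiber term $k_F h_F$ gives the claimed formula.

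The main obstacle is the rigorous bookkeeping of the parallel-transport 3-chain $\Xi$, together with the verification of signs. In particular one must explain why the multiplicity at the $l$-th step is $\varrho(\eta_{l-1},(h_L^l)')$ rather than $\varrho(\eta',(h_L^l)')$: this reflects that the preceding Dehn twists $t_{c_1},\ldots,t_{c_{l-1}}$ have already modified the arc being transported, and re-expressing the modified arc in the fixed generating set for $H_1(\Sigma_g^{p+1},Q;\Z)$ is exactly what subtracting the earlier $k_m (h_L^m)'$ accomplishes. The final orientation conventions (signs of $h_S^i-h_S^j$ and of $k_F$ versus $-k_F$) are fixed by choosing, once and for all, compatible orientations on $\eta'$, on $D$, and on the exceptional spheres.
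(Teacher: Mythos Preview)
Your approach is essentially the same as the paper's: both decompose the attaching $2$-sphere of $\eta$ as $(\partial\eta'\times D^2)\cup(\eta'\times\partial D^2)$, read off $h_S^i-h_S^j$ from the two disks, and extract the remaining coefficients from the cylinder. The paper does the last step by direct intersection with belt spheres, saying only ``it is easy to see'' for the $k_l$ and, for $k_F$, observing that $t_{c_n}\cdots t_{c_1}$ acts on $\Sigma_g^{p+1}$ by pushing the disk $D$, so the number of times the transported core of $\eta'$ sweeps across the center of $D$ is the intersection with the belt sphere of $h_F$. Your parallel-transport description is a more explicit version of the same computation and has the merit of actually explaining why the \emph{recursive} intersection number $\varrho(\eta_{l-1},(h_L^l)')$ appears rather than $\varrho(\eta',(h_L^l)')$: this is exactly the Picard--Lefschetz bookkeeping as the arc is carried past successive Dehn twists. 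One small slip: parallel-transporting the $1$-dimensional arc $\eta'$ along the $1$-dimensional arc $A$ sweeps out a $2$-chain, not a $3$-chain; what you call $\Xi$ is really the $2$-chain in the $2$-skeleton realizing the cellular boundary of $\eta$, but this does not affect the argument.
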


\begin{proof}
Note first that the coefficients of $\Pa_3(\eta)$ are equal to the algebraic intersections between attaching sphere of $\eta$ and the belt spheres of the corresponding $2$-handles.  
The attaching sphere of $\eta$ is the union of 
\begin{itemize}

\item
the product of $D^2$ and the edges of the core of $\eta'$, and

\item
the product of $\Pa D^2$ and the core of $\eta'$. 

\end{itemize}
The former part intersects the belt spheres of $h_S^i$ and $h_S^j$ geometrically once (and the signs of these intersections are opposite).
It is easy to see that the latter part intersects the belt sphere of $h_L^i$ algebraically $k_i$ times. 
Let $\Mod(\Sigma_g^{p+1})$ be the set of isotopy classes of self-diffeomorphisms of $\Sigma_g^{p+1}$ \textit{which is not necessarily the identity map on the boundary}. 
The product $t_{c_n}\cdots t_{c_1}$ in $\Mod(\Sigma_g^{p+1})$ is a diffeomorphism obtained by pushing the disk $D$. 
We can verify that the path $t_{c_n}\cdots t_{c_1}(\mbox{the core of }\eta')$ goes though the center of $D$ algebraically $k_F$ times when we move it to $\eta$ by an isotopy. 
This observation implies that the product of $\Pa D^2$ and the core of $\eta'$ intersects the belt sphere of $h_F$ algebraically $k_F$ times. 
\end{proof}

Let $\{\alpha_1,\ldots,\alpha_{2g+p-1}\}$ be a generating set of $C_3$. 
Using Lemma~\ref{L:boundary operator Pa3} we can obtain the representation matrix $A_0$ of $\Pa_3$ with respect to the generating sets $\{\alpha_1,\ldots,\alpha_{2g+p-1}\}$ and $\{z_1,\ldots,z_m,h_F,h_S^1\ldots,h_S^p\}$: 
\[
(\Pa_3(\alpha_1),\ldots,\Pa_3(\alpha_{2g+p-1})) = (z_1,\ldots,z_m,h_F,h_S^1,\ldots,h_S^p) A_0. 
\]
Furthermore, applying fundamental operations to $A_0$ we can find another generating set $\{\alpha_1',\ldots,\alpha_{2g+p-1}'\}$ of $C_3$ and elements $z_1',\ldots,z_{m+1}'$ of $Z_2$ such that 

\begin{itemize}

\item
$z_i'$ is a linear combination of $z_1,\ldots,z_m,h_F$, 

\item
$\{z_1',\ldots,z_{m+1}',h_S^1,\ldots,h_S^p\}$ is a generating set of $C_2$, 

\item
the first $m+1$ rows of the representation matrix $A_1$ of $\Pa_3$ with respect to $\{\alpha_1',\ldots,\alpha_{2g+p-1}'\}$ and $\{z_1',\ldots,z_{m+1}',h_S^1,\ldots,h_S^p\}$ has non-zero entries on only its diagonal part, 

\item
let $e_1,\ldots,e_{m+1}$ be the first $m+1$ diagonal entries of $A_1$. 
There exists a positive integer $k$ such that $e_l =0$ for any $l\geq k$ and $e_i|e_{i+1}$ for any $i \leq m$. 

\end{itemize}

\begin{lemma}\label{L:property_coefficient_A1}

For any $i \leq k$ and $j \geq m+2$, all the entries in the $j$-th column of $A_1$ is eqaul to $0$ and the $(j,i)$-entry of $A_1$ is divisible by $e_i$. 

\end{lemma}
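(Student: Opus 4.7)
The plan is to analyze the operations that preserve the partial diagonal form in the first $m+1$ rows of $A_1$, and to exploit the explicit description of $\partial_3$ from Lemma~\ref{L:boundary operator Pa3} to deduce the two claims about the remaining $p$ rows. First, I would catalog the operations that can be performed on $A_1$ without disturbing the diagonal block $\mathrm{diag}(e_1, \ldots, e_{m+1})$ in the upper-left: namely, (i) row operations among rows $m+2, \ldots, m+1+p$, (ii) column operations among columns $m+2, \ldots, 2g+p-1$, and (iii) subtraction of an integer multiple of row $i$ (for $i \leq m+1$) from row $j$ (for $j \geq m+2$), which modifies only the single entry $(j, i)$ and does so by a multiple of $e_i$.

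For the zero-column claim, the key input is the structural description of $\partial_3$ given by Lemma~\ref{L:boundary operator Pa3}: the $h_S^\bullet$-coefficients of $\partial_3(\eta)$ are $\pm 1$ at the two endpoints of the $1$-handle corresponding to $\eta$ and zero elsewhere. Hence the restriction of $A_0$ to the rows indexed by $h_S^1, \ldots, h_S^p$ is the signed incidence matrix of the graph whose vertices are the $0$-handles of $\Sigma_g$ and whose edges are its $1$-handles; since $\Sigma_g$ is connected, this matrix has rank $p-1$. I would arrange the $3$-handles via initial column operations so that $p-1$ of them (a spanning tree's worth) carry all of the $h_S^\bullet$-contribution while the remaining $2g$ carry none. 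Since the subsequent reduction producing the diagonal entries $e_1, \ldots, e_{m+1}$ can be carried out within the ``loop columns'' — those with vanishing $h_S^\bullet$-contribution — the non-pivot columns indexed $m+2, \ldots, 2g+p-1$ inherit zero $h_S^\bullet$-components, and any residual contribution can be cleared by further column operations of type (ii). This establishes that the $j$-th column of $A_1$ vanishes identically for $j \geq m+2$.

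For the divisibility claim, I would use operations of type (iii) inductively. Given the $(j, i)$-entry $a$ with $j \geq m+2$ and $i \leq k-1$ (so that $e_i \neq 0$), subtracting $\lfloor a/e_i \rfloor$ times row $i$ from row $j$ reduces this entry modulo $e_i$. The stronger claim that $e_i \mid a$ already holds in $A_1$ (so the residue is exactly $0$) is enforced by choosing the reduction from $A_0$ to $A_1$ to propagate divisibility: at each stage, the column operations that achieve the pivot $e_i$ in the $i$-th row automatically impose $e_i$-divisibility on the $h_S^\bullet$-row entries in column $i$, via the chain $e_1 \mid e_2 \mid \cdots \mid e_{k-1}$. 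The case $i = k$ is vacuous, because $e_k = 0$ together with the tree-column arrangement used for the first claim forces the $(j, k)$-entry to be zero.

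The main obstacle is coordinating the two claims, as column operations used to enforce (A) can perturb entries constrained by (B), and pivot-subtractions enforcing (B) must not reintroduce non-trivial $h_S^\bullet$-components in the non-pivot columns. The resolution is to sequence the reduction carefully: first segregate the loop and tree columns using the incidence-matrix structure (giving claim (A) together with the diagonal form), then perform only operations of type (iii), which act one entry at a time and automatically respect the divisibility chain among the $e_i$'s because a modification by a multiple of $e_i$ is in particular a multiple of every $e_{i'}$ with $i' < i$.
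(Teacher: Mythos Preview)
Your approach has a genuine gap: you are missing the key topological input that drives the paper's proof, and the purely linear-algebraic substitutes you propose do not work.

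The paper's argument is short and uses no combinatorics of the reduction $A_0 \rightsquigarrow A_1$ at all. It observes that $\partial_3(\alpha_i')$ is a boundary, hence zero in $H_2(\tilde X;\Z)$, and invokes the fact that the exceptional classes $[h_S^1],\ldots,[h_S^p]$ span a direct summand of $H_2(\tilde X;\Z)$ (the blow-up formula). For a column $i \geq m+2$ the first $m+1$ rows of $A_1$ already vanish, so $\partial_3(\alpha_i') = \sum_{j\geq m+2} a_{ji} h_S^{j-m-1}$; its vanishing in $H_2$ together with linear independence of the $[h_S^\bullet]$ forces every $a_{ji}=0$. For a column $i \leq k$ one gets $e_i[z_i'] = -\sum_{j\geq m+2} a_{ji}[h_S^{j-m-1}]$; since the $[h_S^\bullet]$ are part of a $\Z$-basis, comparing coefficients gives $e_i \mid a_{ji}$. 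This works for \emph{any} $A_1$ satisfying the stated hypotheses.

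Your proposal, by contrast, tries to \emph{construct} a favorable $A_1$, which is already weaker than what the lemma asserts. More seriously, both of your constructive steps fail:

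\emph{Claim (A).} The assertion that ``the subsequent reduction producing the diagonal entries $e_1,\ldots,e_{m+1}$ can be carried out within the loop columns'' is unsupported. Diagonalizing the first $m+1$ rows generally requires column operations involving all $2g+p-1$ columns, and adding a tree column into a loop column destroys the very separation you set up. You give no reason why the pivot columns can be confined to the loop columns.

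\emph{Claim (B).} The assertion that ``the column operations that achieve the pivot $e_i$ in the $i$-th row automatically impose $e_i$-divisibility on the $h_S^\bullet$-row entries in column $i$'' is false. The one-column matrix $\begin{pmatrix} 2 \\ 1 \end{pmatrix}$ already has its first row ``diagonal'' with pivot $2$, yet the entry below is $1$. In the Smith normal form algorithm, divisibility below a pivot is obtained by \emph{row} operations mixing the pivot row into the lower rows; here such operations are forbidden, since the $z_i'$ are required to remain linear combinations of $z_1,\ldots,z_m,h_F$ only.

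The missing idea is precisely the homological one: exceptional spheres give primitive, linearly independent classes, and this is what forces both conclusions without any control over how $A_1$ was produced.
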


\begin{proof}
Let $a_{ij}$ be the $(i,j)$-entry of $A_1$. 
For any $i\geq m+2$, the cycle $\sum_{j\geq m+2} a_{ji}h_S^{j-m-1}$ is a boundary. 
Since the handle $h_S^{j}$ represents the exceptional class, $[h_S^1],\ldots,[h_S^p]$ is linearly independent in $H_2(\tilde{X})$. 
Thus, $a_{ji}=0$ if $i\geq m+2$. 
Since the cycle $e_iz_i' + \sum_{j\geq m+2} a_{ji}h_S^{j-m-1}$ is a boundary, $e_i[z_i']$ is equal to $\sum_{j\geq m+2} a_{ji}[h_S^{j-m-1}]$ in $H_2(\mathcal{C})$. 
The divisibility of $\sum_{j\geq m+2} a_{ji}[h_S^{j-m-1}]$ in $H_2(\tilde{X})$ is equal to $\gcd(\{a_{ji}\}_{j\geq m+2})$ since $[h_S^{j-m-1}]$ is the exceptional class. 
On the other hand, the divisibility of $e_i[z_i']$ is divisible by $e_i$. 
Thus, $e_i|a_{ji}$ for any $j\geq m+2$. 
\end{proof}

\noindent
Lemma~\ref{L:property_coefficient_A1} implies that we can find cycles $z_1'',\ldots,z_{m+1}''$ such that 
\begin{itemize}

\item
$\{z_1'',\ldots,z_{m+1}'',h_S^1,\ldots,h_S^p\}$ is a generating set of $C_2$, 

\item
the representation matrix $A_2$ of $\Pa_3$ with respect to $\{\alpha_1',\ldots,\alpha_{2g+p-1}'\}$ and $\{z_1'',\ldots,z_{m+1}'',h_S^1,\ldots,h_S^p\}$ has non-zero entries on only its diagonal part, 

\item
the first $k$ diagonal entries of $A_2$ are $e_1,\ldots,e_k$ and the other entries are $0$. 

\end{itemize}

\noindent
In particular, the second homology group $H_2(\tilde{X};\Z)$ is isomorphic to $(\oplus_i\Z/e_i\Z)\oplus \Z^{m+1-k}\oplus \Z^p$, where the former two components are generated by $z_1'',\ldots,z_{m+1}''$, while the last component is generated by $h_S^1,\ldots,h_S^p$. 
Furthermore, the natural homomorphism $H_2(\tilde{X};\Z) \to H_2(X;\Z)$ coincides with the projection $(\oplus_i\Z/e_i\Z)\oplus \Z^{m+1-k}\oplus \Z^p\to (\oplus_i\Z/e_i\Z)\oplus \Z^{m+1-k}$. 
The fiber class of $\tilde{f}$ is represented by $h_F$, which we can represents as a linear combination of $z_1'',\ldots,z_{m+1}'',h_S^1,\ldots,h_S^p$. 
Thus, the fiber class of $f$ is the image of $[h_F]$ under the projection $H_2(\tilde{X};\Z) \to H_2(X;\Z)$. 

In summary, the procedure above does not only give an isomorphism between $H_2(X;\Z)$ and an abelian group, but also determine which element in the abelian group corresponds with the fiber class of $f$. 
In particular we can calculate the divisibility of $f$. 
Moreover, in order to apply this procedure in practice we need only (possibly tedious) linear algebraic calculations, which we can do using a computer.

\vspace{1em}
\noindent
{\it Acknowledgements.} 
The authors would like to thank Refik \.{I}nan\c{c} Baykur and Tian-Jun Li for having fruitful discussions and making helpful comments on an earlier draft of this paper.  
The second author was supported by JSPS KAKENHI (26800027).

\end{document}